\titleformat{\section}[block]{\filcenter\normalfont\bfseries\large}{\thesection.}{.5em}{}\titlespacing*{\section}{0pt}{2\baselineskip}{1\baselineskip}
\titleformat{\subsection}[runin]{\normalfont\bfseries}{\thesubsection.}{.4em}{}[.]\titlespacing{\subsection}{0pt}{2ex plus .1ex minus .2ex}{.8em}
\titleformat{\subsubsection}[runin]{\normalfont\itshape}{\thesubsubsection.}{.3em}{}[.]\titlespacing{\subsubsection}{0pt}{1ex plus .1ex minus .2ex}{.5em}
\titleformat{\paragraph}[runin]{\normalfont\itshape}{\theparagraph.}{.3em}{}[.]\titlespacing{\paragraph}{0pt}{1ex plus .1ex minus .2ex}{.5em}
\definecolor{darkred}{rgb}{0.9,0,0.3}
\definecolor{darkblue}{rgb}{0,0.3,0.9}
\newcommand{\nc}{\normalcolor}
\definecolor{vdarkred}{rgb}{0.7,0,0.2}
\definecolor{vdarkblue}{rgb}{0,0.2,0.7}
\numberwithin{equation}{section}
\numberwithin{figure}{section}
\theoremstyle{plain} 
\newtheorem{theorem}{Theorem}[section]
\newtheorem*{theorem*}{Theorem}
\newtheorem{lemma}[theorem]{Lemma}
\newtheorem*{lemma*}{Lemma}
\newtheorem{corollary}[theorem]{Corollary}
\newtheorem*{corollary*}{Corollary}
\newtheorem{proposition}[theorem]{Proposition}
\newtheorem*{proposition*}{Proposition}
\newtheorem*{conjecture*}{Conjecture}
\theoremstyle{definition} 
\newtheorem{definition}[theorem]{Definition}
\newtheorem*{definition*}{Definition}
\newtheorem*{example*}{Example}
\newtheorem{remark}[theorem]{Remark}
\newtheorem*{remark*}{Remark}
\newtheorem*{assumption*}{Assumption}
\newtheorem*{convention*}{Convention}
\newcommand{\f}[1]{\boldsymbol{\mathrm{#1}}} 
\newcommand{\bb}{\mathbb} 
\renewcommand{\cal}{\mathcal}
\newcommand{\ul}[1]{\underline{#1} \!\,} 
\newcommand{\ol}[1]{\overline{#1} \!\,} 
\newcommand{\wh}{\widehat}
\newcommand{\wt}{\widetilde}
\newcommand{\txt}[1]{\text{\rm{#1}}}
\renewcommand{\P}{\mathbb{P}}
\newcommand{\E}{\mathbb{E}}
\newcommand{\R}{\mathbb{R}}
\newcommand{\C}{\mathbb{C}}
\newcommand{\N}{\mathbb{N}}
\newcommand{\e}{\mathrm{e}}
\newcommand{\ii}{\mathrm{i}}
\newcommand{\dd}{\mathrm{d}}
\newcommand{\col}{\mathrel{\vcenter{\baselineskip0.75ex \lineskiplimit0pt \hbox{.}\hbox{.}}}}
\newcommand*{\deq}{\mathrel{\vcenter{\baselineskip0.65ex \lineskiplimit0pt \hbox{.}\hbox{.}}}=}
\newcommand*{\eqd}{=\mathrel{\vcenter{\baselineskip0.65ex \lineskiplimit0pt \hbox{.}\hbox{.}}}}
\newcommand{\todist}{\overset{\dd}{\longrightarrow}}
\renewcommand{\leq}{\leqslant}
\renewcommand{\le}{\leqslant}
\renewcommand{\geq}{\geqslant}
\renewcommand{\ge}{\geqslant}
\renewcommand{\epsilon}{\varepsilon}
\newcommand{\floor}[1] {\lfloor {#1} \rfloor}
\newcommand{\ceil}[1]  {\lceil  {#1} \rceil}
\newcommand{\p}[1]{({#1})}
\newcommand{\pb}[1]{\bigl({#1}\bigr)}
\newcommand{\pbb}[1]{\biggl({#1}\biggr)}
\newcommand{\abs}[1]{\lvert #1 \rvert}
\DeclareMathOperator{\tr}{Tr}
\DeclareMathOperator{\supp}{supp}
\DeclareMathOperator{\re}{Re}
\DeclareMathOperator{\im}{Im}
\newcommand{\bN}{ {\mathbb N} }
\newcommand{\bP}{ {\mathbb P} }
\newcommand{\bR}{ {\mathbb R} }
\newcommand*{\rom}[1]{\expandafter\@slowromancap\romannumeral #1@}
\title{Fluctuations of extreme eigenvalues of sparse Erd\H{o}s-R\'enyi graphs}
\author{Yukun He \and Antti Knowles}
\begin{document}
\maketitle
\begin{abstract}
We consider a class of sparse random matrices which includes the adjacency matrix of the Erd\H{o}s-R\'enyi graph $\cal G(N,p)$. We show that if $N^{\varepsilon} \leq Np  \leq N^{1/3-\varepsilon}$ then all nontrivial eigenvalues away from 0 have asymptotically Gaussian fluctuations. These fluctuations are governed by a single random variable, which has the interpretation of the total degree of the graph. This extends the result \cite{HLY} on the fluctuations of the extreme eigenvalues from $Np \geq N^{2/9 + \varepsilon}$ down to the optimal scale $Np \geq N^{\varepsilon}$. The main technical achievement of our proof is a rigidity bound of accuracy $N^{-1/2-\epsilon} (Np)^{-1/2}$ for the extreme eigenvalues, which avoids the $(Np)^{-1}$-expansions from \cite{EKYY1,LS1,HLY}. Our result is the last missing piece, added to \cite{EKYY2,LS1, HLY, H19}, of a complete description of the eigenvalue fluctuations of sparse random matrices for $Np \geq N^{\varepsilon}$.
\end{abstract}

\section{Introduction and main results}  \label{sec:2}

Let $\cal A$ be the adjacency matrix of the Erd\H{o}s-R\'enyi graph $\cal G(N,p)$. Explicitly, $\cal A = (\cal A_{ij})_{i,j = 1}^N$ is a symmetric $N\times N$ matrix with independent upper triangular entries $(\cal A_{ij} \col i \leq j)$ satisfying
\begin{equation*}
{\cal A}_{ij}=\begin{cases}
1 & \txt{with probability } p
\\
0 & \txt{with probability } 1-p\,.
\end{cases}
\end{equation*} 
We introduce the normalized adjacency matrix
\begin{equation} \label{1.11}
A\deq \sqrt{\frac{1}{p(1-p)N}}\,\cal A\,,
\end{equation}
where the normalization is chosen so that the eigenvalues of $A$ are typically of order one.

The goal of this paper is to obtain the asymptotic distribution of the extreme eigenvalues of $A$.
The extreme eigenvalues of graphs are of fundamental importance in spectral graph theory and have attracted much attention in the past thirty years; see for instance \cite{Chu, HLW06, Alo98} for reviews. The Erd\H{o}s-R\'enyi graph is the simplest model of a random graph and its adjacency matrix is the canonical example of a sparse random matrix.

Each row and column of $A$ has typically $Np$ nonzero entries, and hence $A$ is sparse whenever $p \to 0$ as $N \to \infty$.  In the complementary dense regime, where $p$ is of order one, $A$ is a Wigner matrix (up to a centring of the entries). The edge statistics of Wigner matrices have been fully understood in \cite{Sosh1,EKYY2,SS,TV2,EYY3,LY}, where it was shown that the distribution of the largest eigenvalue is asymptotically given by the GOE Tracy-Widom distribution \cite{TW1,TW2}.

To discuss the edge statistics of $A$ in the sparse regime, we introduce the following conventions. Unless stated otherwise, all quantities depend on the fundamental parameter $N$, and we omit this dependence from our notation. We write $X \ll Y$ to mean $X = O_\epsilon (N^{-\epsilon} Y)$ for some fixed $\epsilon > 0$. We write $X \asymp Y$ to mean $X = O(Y)$ and $Y = O(X)$.
We denote the eigenvalues of $A$ by $\lambda_1 \leq \cdots \leq \lambda_N$.
The largest eigenvalue $\lambda_N$ of $A$ is its Perron-Frobenius eigenvalue. For $Np \gg 1$, it is typically of order $\sqrt{Np}$, while the other eigenvalues $\lambda_1, \lambda_2, \dots, \lambda_{N-1}$ are typically of order one.

The edge statistics of sparse matrices were first studied in \cite{EKYY1,EKYY2}, where it was proved that when $Np \gg N^{2/3}$ the second largest eigenvalue of $A$ exhibits GOE Tracy–Widom
fluctuations, i.e.
\[
\lim_{N\to \infty} \bb P\big(N^{2/3}(\lambda_{N-1}-\bb E \lambda_{N-1})\leq s\big)=F_1(s)\,,
\] 
where  $F_1(s)$ is the distribution function of the GOE Tracy–Widom distribution. In \cite{LS1}, this result was extended to $Np \gg N^{1/3}$, which it turns out is optimal. Indeed, in \cite{HLY} it was shown that when $N^{2/9}\ll Np \ll N^{1/3}$ the Tracy-Widom distribution for $\lambda_{N - 1}$ no longer holds, and the extreme eigenvalues have asymptotically Gaussian fluctuations. More precisely, in \cite{HLY} it was shown that if $N^{2/9}\ll Np \ll N^{1/3}$ then
\begin{equation} \label{1.0}
\sqrt{\frac{N^2 p}{2}}(\lambda_{N-1}-\bb E \lambda_{N-1}) \todist \cal N(0,1)\,.
\end{equation}

In this paper we show \eqref{1.0} for the whole range $1 \ll Np \ll N^{1/3}$. In fact, we show this for a general class of sparse random matrices introduced in \cite{EKYY1, EKYY2}. It is easy to check that the normalized adjacency matrix $A$ \eqref{1.11} of $\cal G(N,p)$ satisfies the following definition with the choice
\begin{equation} \label{pq}
q\deq \sqrt{Np} \,.
\end{equation}

\begin{definition} [Sparse matrix] \label{def:sperse} Let $1 \leq q \leq \sqrt{N}$. A \emph{sparse matrix} is a real symmetric $N\times N$ matrix $H=H^* \in \bb R^{N \times N}$ whose entries $H_{ij}$ satisfy the following conditions.
	\begin{enumerate}
		\item[(i)] The upper-triangular entries ($H_{ij}\col 1 \leq i \leq j\leq N$) are independent.
		\item[(ii)] We have $\bb E H_{ij}=0$, $ \bb E H_{ij}^2=(1+O(\delta_{ij}))/N$, and $\bb E H_{ij}^4\asymp 1/(Nq^2)$ for all $i,j$.
		\item[(iii)] For any $k\geq 3$, we have
		$\bb E|H_{ij}|^k \leq C_k/ (Nq^{k-2})$ for all $i,j$.
	\end{enumerate}
	We define the random matrix
	$$
	A = H + f \f e \f e^*\,,
	$$
	where $\f e \deq N^{-1/2}(1,1,\dots,1)^*$, and $f \geq 0$.
\end{definition}

For simplicity of presentation, in this paper we focus only on real matrices, although our results and proofs extend to matrices with complex entries with minor modifications which we omit; see also Remark \ref{rem:complex_proof} below.

To describe the fluctuations of the eigenvalues of $A$, we define the random variable
\begin{equation} \label{zzzz}
\cal Z \deq  \frac{1}{N}\tr H^2-1\,.
\end{equation}
Defining
\[
\Sigma \deq \bigg(\frac{1}{N^2}\sum_{i,j}\bb E H_{ij}^4\bigg)^{1/2}\,,
\]
one easily finds
\begin{equation} \label{1.6}
	  \frac{1}{\sqrt{2}\Sigma}\cal Z\todist\cal N(0,1) \quad \mbox{and} \quad \Sigma\asymp \frac{1}{\sqrt{N}q}
	  \,.
\end{equation}

We denote by $\gamma_{\mathrm{sc},i}$ be the $i$th $N$-quantile of the semicircle distribution, which is the limiting empirical eigenvalue measure of $A$ for $Np \gg 1$. Explicitly,
$
\int_{-2}^{\gamma_{\mathrm{sc},i}}\frac{1}{2\pi}\sqrt{4-x^2} \, \dd x =\frac{i}{N}\,.
$

Throughout the following we fix an exponent $\beta \in (0,1/2]$ and set
\begin{equation} \label{def_beta}
q = N^\beta\,.
\end{equation}
If $A$ is the normalized adjacency matrix \eqref{1.11} of $\cal G(N,p)$ then from \eqref{pq} and \eqref{def_beta} we find that the condition $1 \ll Np \ll N^{1/3}$ reads $1 \ll q \ll N^{1/6}$, i.e.\ $\beta \in (0,1/6)$.  We may now state our main result.

\begin{theorem} \label{mainthm}
Fix $\beta \in (0,1/6)$ and set
\begin{equation} \label{delta}
\delta\equiv\delta(\beta)\deq \frac{1}{10} \min \{\beta,1/6-\beta\}\,.
\end{equation}
Let $H$ be as in Definition \ref{def:sperse} with $q$ given by \eqref{def_beta}.
Fix $\epsilon > 0$ and $D > 0$. Then for large enough $N$ we have with probability at least $1 - N^{-D}$
	\begin{equation} \label{1.1}
	\Big|\lambda_i-\bb E \lambda_i-\frac{\gamma_{\mathrm{sc},i}}{2} \cal Z \Big| = O(N^{\epsilon - \delta} \Sigma)
	\end{equation}
for all $1\leq  i \leq N-1$.
\end{theorem}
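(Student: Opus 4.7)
Proof plan. The approach is via resolvent analysis of $G(z) \deq (H-z)^{-1}$ and $m_N(z) \deq \frac{1}{N}\tr G(z)$, aimed at a refined local law that isolates the contribution of $\cal Z$ from all other randomness. The coefficient $\gamma_{\mathrm{sc},i}/2$ in \eqref{1.1} is transparent from the following scaling heuristic: the rescaled matrix $\tilde H \deq H/\sqrt{1+\cal Z}$ satisfies $\frac{1}{N}\tr \tilde H^2 = 1$ identically, and its eigenvalues $\tilde\lambda_i = \lambda_i/\sqrt{1+\cal Z}$ obey
\[
\lambda_i \;=\; \sqrt{1+\cal Z}\,\tilde\lambda_i \;\approx\; \tilde\lambda_i + \tfrac{1}{2}\tilde\lambda_i\,\cal Z \;\approx\; \tilde\lambda_i + \tfrac{1}{2}\gamma_{\mathrm{sc},i}\,\cal Z\,.
\]
Since $\bb E\cal Z = O(\Sigma^2)$ is negligible on the scale $N^{-\delta}\Sigma$, the theorem reduces to showing that the eigenvalues of $\tilde H$ fluctuate about their means at most on scale $N^{\epsilon-\delta}\Sigma$, i.e.\ a factor $N^{-\delta}$ smaller than the natural scale of $\cal Z$ itself. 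The rank-one shift $f\f e\f e^*$ in Definition \ref{def:sperse} moves only $\lambda_N$ appreciably and is harmless for $i \leq N-1$ by Cauchy interlacing / standard rank-one perturbation.

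The main analytic tool will be a refined local law
\[
m_N(z) - m_{\mathrm{sc}}(z) \;=\; F(z)\,\cal Z \;+\; R(z)\,, \qquad F(z) \deq \frac{m_{\mathrm{sc}}(z)^3}{1-m_{\mathrm{sc}}(z)^2}\,,
\]
with $R(z) = O(N^{\epsilon-\delta}\Sigma)$ uniformly on a contour around the bulk. The coefficient $F(z)$ is pinned down via the Schur identity $G_{ii} = 1/(H_{ii} - z - Z_i)$ with $Z_i = \sum_{j,k\neq i}H_{ij}G^{(i)}_{jk}H_{ki}$: after subtracting the deterministic piece $-m_{\mathrm{sc}}$, the diagonal fluctuation $\sum_j (H_{ij}^2 - 1/N)G^{(i)}_{jj}$, averaged over $i$, yields exactly $m_{\mathrm{sc}}(z)\,\cal Z$ at leading order, and linearizing the semicircle equation $1 + zm + m^2 = 0$ at $m_{\mathrm{sc}}$ produces the stated $F$. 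As a consistency check, the same $F$ is obtained by differentiating $m_{\mathrm{sc}}$ against the eigenvalue shift $\gamma_{\mathrm{sc},i}\mapsto\gamma_{\mathrm{sc},i}(1+\cal Z/2)$.

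The hard part, where I expect the bulk of the work to concentrate, is controlling $R(z)$ down to the sharp order $N^{\epsilon-\delta}\Sigma$ announced as the accuracy $N^{-1/2-\epsilon}(Np)^{-1/2}$ in the abstract. The strategy of \cite{EKYY1,LS1,HLY} would iterate the Schur/cumulant expansion in powers of $q^{-2}$, but for $q = N^\beta$ with $\beta \in (0,1/6)$ the number of required iterations grows without bound, so that route cannot deliver \eqref{1.1}. The idea I would pursue is to extract the $\cal Z$ contribution in a single clean cumulant step, so that the remaining fluctuation consists purely of off-diagonal sums such as $\sum_{j\neq k}H_{ij}H_{ik}G^{(i)}_{jk}$ and their averaged variants; these enjoy improved concentration by the orthogonality of distinct entries and can be bounded directly by a second-moment or martingale argument in the filtration generated by the rows of $H$, with no further $q^{-2}$ expansion. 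Matching the explicit exponent $\delta$ of \eqref{delta} and ruling out self-improvement loops that would reintroduce the forbidden expansion is the crux of the argument.

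Once the refined local law is established on a contour enclosing the bulk, a standard Helffer--Sj\"ostrand (or counting-function) calculus converts the pointwise bound on $m_N(z) - m_{\mathrm{sc}}(z) - F(z)\cal Z$ into the bound $\lambda_i - \tilde\gamma_i - (\gamma_{\mathrm{sc},i}/2)\cal Z = O(N^{\epsilon-\delta}\Sigma)$, where $\tilde\gamma_i$ is the deterministic $i$th quantile determined by $m_{\mathrm{sc}}$. Taking expectations and using $\bb E\cal Z = O(\Sigma^2) \ll N^{-\delta}\Sigma$ identifies $\tilde\gamma_i$ with $\bb E\lambda_i$ up to an error absorbed in \eqref{1.1}, which completes the proof.
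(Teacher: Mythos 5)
Your heuristic for the coefficient $\gamma_{\mathrm{sc},i}/2$ via the rescaling $\tilde H = H/\sqrt{1+\cal Z}$ is exactly right, and the overall scheme (a $\cal Z$-refined local law followed by a Helffer--Sj\"ostrand conversion to eigenvalue rigidity, with Cauchy interlacing to dispose of the rank-one shift) is in the correct spirit. However, there are two genuine gaps, and they are precisely where the paper's difficulty concentrates.

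First, the claimed refined local law cannot hold as stated near the spectral edge, and that is the only place where the theorem is nontrivial. You assert $m_N(z) - m_{\mathrm{sc}}(z) = F(z)\cal Z + R(z)$ with $R = O(N^{\epsilon-\delta}\Sigma)$ uniformly on a contour around the bulk. But a contour that detects extreme eigenvalues must pass within $\kappa \asymp (\sqrt N q)^{-1} \asymp \Sigma$ of the edge, and there the natural size of $\ul G - m$ is $\sqrt{\kappa} \gg \Sigma$; the paper's Proposition \ref{prop1}/\eqref{3.1} gives only $|\ul G - m| \prec \sqrt{\kappa}\,N^{-\delta}$, which is already sharp in the sense that $\sqrt{\kappa}$ cannot be improved to $\Sigma$ by this kind of moment argument. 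To rule out stray eigenvalues one instead needs $\im \ul G \ll (N\eta)^{-1}$ just outside the spectrum, and the crucial observation of the paper is that $\im(\ul G - m)$ is much smaller than $|\ul G - m|$ there; Proposition \ref{prop3.3} estimates $\im P(z,\ul G)$ directly, exploiting a cancellation that is invisible in $|P|$, and the resulting $|\im \ul G - \im m| \prec (N^{1+\delta}\eta)^{-1}$ is what closes the argument. Your plan has no analogue of this step, and the bound you write down would not follow from any version of the full local law the paper (or prior work) actually proves.

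Second, the proposal to "extract the $\cal Z$ contribution in a single clean cumulant step" and bound the remaining off-diagonal sums by a second-moment or martingale argument underestimates the structure of the problem. The self-consistent polynomial $P_0$ that captures $\bb E \ul G$ to the required accuracy has degree $2\lceil\beta^{-1}\rceil$, which is unbounded as $\beta\to 0$; the corrections decrease only in powers of $q^{-2}$, and each one contributes at a scale that is large compared with the target $N^{-\delta}\Sigma$. Moreover, in the high-moment expansion $\bb E|P|^{2n}$ the interaction of distinct factors of $P$ generates terms in $\partial_2 P$ that destroy the built-in cancellation, and the paper has to recover it by rewriting $\partial_2 P$ through $\partial_w P$ and re-expanding recursively (Sections \ref{sec} and \ref{sec4}); nothing in a single cumulant step or a variance bound would reveal this. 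The bulk rigidity piece (Lemma \ref{theorem 2.1}) is indeed a Helffer--Sj\"ostrand argument as you suggest, but it is the edge upper bound (Proposition \ref{prop: upper bound}), driven by the $\im P$ mechanism, that carries the weight of Theorem \ref{mainthm}, and your plan does not supply it.
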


Theorem \ref{mainthm} implies, for all $i\in \{1,2,\dots,N-1\}$ such that $\gamma_{\mathrm{sc},i}$ is away from 0, that the fluctuations of $\lambda_i$ are simultaneously governed by those of $\cal Z$. In fact, by the rigidity result of \cite[Theorem 2.13]{EKYY1} and a simple moment estimate of $\cal Z$ (see \eqref{cal Z} below), we deduce from \eqref{1.6} and Theorem \ref{mainthm} that under its conditions, with probability at least $1 - N^{-D}$ we have
\begin{equation} \label{lambda_i_fluct}
\lambda_i = \E \lambda_i \pbb{1 + \frac{\cal Z}{2}} + O \p{N^{- \delta/2} \Sigma}
\end{equation}
for all $i = 1, \dots, N - 1$.
Thus, for $1 \ll q \ll N^{1/6}$, the fluctuation of all eigenvalues away from $0$ is given by a global random scaling by the factor $1 + \cal Z/2$.

\begin{remark}
If $f = 0$ in Definition \ref{def:sperse}, i.e.\ $A = H$ is centred, then the conclusion of Theorem \ref{mainthm} holds for all eigenvalues $\lambda_1, \dots, \lambda_N$. Indeed, if $f = 0$ then $A$ and $-A$ both satisfy Definition \ref{def:sperse}, and $\lambda_N(A) = - \lambda_1(-A)$.
\end{remark}

Our main result is a rigidity estimate for the eigenvalues of $A$ with accuracy
\begin{equation*}
\frac{1}{N^{1/2 + \delta/2} q}\,.
\end{equation*}
In contrast, the corresponding rigidity results of \cite{EKYY1, LS1, HLY} have accuracy up to a fixed power of $q^{-1}$: up to $q^{-2}$ in \cite{EKYY1}, $q^{-4}$ in \cite{LS1}, and $q^{-6}$ in \cite {HLY}. For arbitrarily small polynomial values of $q$, the rigidity provided by an expansion up to a fixed power of $q^{-1}$ is not sufficient to analyse the fluctuations of the extreme eigenvalues. Thus, the main technical achievement of our paper is the avoidance of $q^{-1}$-expansions in the error bounds.

\begin{remark}
The variable $\cal Z$ was introduced in \cite{HLY}, where its importance for the edge fluctuations of sparse random matrices was first recognized. Using it, the authors proved \eqref{1.1} for $\beta \in (1/9,1/6)$.
\end{remark}

\begin{remark} \label{rem:scaling}
Let $A$ be the rescaled adjacency matrix \eqref{1.11} of $\cal G(N,p)$. The fluctuations of the eigenvalues of $A$ have a particularly transparent interpretation in terms of the fluctuation of the average degree of $\cal G(N,p)$, or, equivalently, its total number of edges. To that end, denote by $\cal D \deq \frac{1}{N} \sum_{i,j} \cal A_{ij}$ the average degree of $\cal G(N,p)$ and by $d \deq \E \cal D = Np$ its expectation. Defining the randomly rescaled adjacency matrix
\begin{equation} \label{def_A_hat}
\wh A \deq \frac{1}{\sqrt{\cal D}} \cal A\,,
\end{equation}
we claim that under the assumptions of Theorem \ref{mainthm} we have
\begin{equation} \label{wh_A_fluct}
\lambda_i(\wh A) = \E \lambda_i(A) + O(N^{-\delta/2} \Sigma)
\end{equation}
with probability at least $1 - N^{-D}$. Indeed, a short calculation yields $\cal D = d \pb{1 + (1 - p) \cal Z + O (p)}$, from which \eqref{wh_A_fluct} follows using \eqref{lambda_i_fluct} and the bounds $p = O(N^{-\delta} \Sigma)$ and $\abs{\cal Z}^2 = O(N^{-\delta} \Sigma)$ with probability at least $1 - N^{-D}$ (by \eqref{cal Z} below).

In \eqref{wh_A_fluct}, the Gaussian fluctuations \eqref{lambda_i_fluct} present for $\lambda_i(A)$ are absent for $\lambda_i(\wh A)$. Hence, the fluctuations of the eigenvalues of $A$ can be all simultaneously eliminated to leading order by an appropriate \emph{random} rescaling. Note that we can write $A = d^{-1/2} \cal A (1 + O(N^{-\delta} \Sigma))$, in analogy to \eqref{def_A_hat}. Thus, \eqref{wh_A_fluct} states that if one replaces the deterministic normalization $d^{-1/2}$ with the random normalization $\cal D^{-1/2}$ the fluctuations vanish to leading order. In fact, although it is not formulated that way, our proof can essentially be regarded as a rigidity result for the matrix $\wh A$.
\end{remark}

Remark \ref{rem:scaling} is consistent with the fact that for more rigid graph models where the average degree is fixed, $\cal Z$ does not appear: for a random $d$-regular graph, the second largest eigenvalue of the adjacency matrix has Tracy-Widom fluctuations for $N^{2/9}\ll d \ll N^{1/3}$ \cite{BHKY19}. Moreover, in \cite{HLY} it was proved that the second largest eigenvalue of $\wh A$ has Tracy-Widom fluctuations for $q \gg N^{1/9}$.

Theorem \ref{mainthm} trivially implies the following result.

\begin{corollary}
We adopt the conditions in Theorem \ref{mainthm}. Fix $\varepsilon>0$. Define
	\[
	X_{i}\deq \frac{\lambda_{i}-\bb E \lambda_{i}}{\gamma_{\mathrm{sc},i}\Sigma/\sqrt{2}}
	\] 
	for all $i \in \{1,2,.., \floor{(\frac{1}{2}-\varepsilon) N}, \floor{(\frac{1}{2}+\varepsilon) N},\dots,N-1\}\eqd \cal I$.
	We have
	\begin{equation*}
	(X_{i_1},\dots,X_{i_k}) \todist \cal N_k({\f 0}, \cal J)\,,
	\end{equation*} 
	for all fixed $k$ and $i_1,\dots,i_k \in \cal I$. Here $\cal J\in \bR^{k\times k}$ is the matrix of ones, i.e.\ $\cal J_{ij}= 1$ for all $i,j\in\{1,2,\dots,k\}$.
\end{corollary}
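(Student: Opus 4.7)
The plan is to read the corollary off directly from the uniform rigidity bound \eqref{1.1} in Theorem \ref{mainthm} combined with the CLT \eqref{1.6} for $\cal Z$, using the key feature that a single random variable $\cal Z$ controls the fluctuation of $\lambda_i$ simultaneously for every $i \in \cal I$.

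First I would divide \eqref{1.1} by $\gamma_{\mathrm{sc},i}\Sigma/\sqrt 2$ to obtain, with probability at least $1 - N^{-D}$,
\[
X_i - \frac{\cal Z}{\sqrt 2 \, \Sigma} = O\!\pa{\frac{N^{\epsilon-\delta}}{\gamma_{\mathrm{sc},i}}}
\]
uniformly in $i$. By construction, the index set $\cal I$ excludes $\qb{\floor{(1/2-\varepsilon) N}, \floor{(1/2+\varepsilon) N}}$ as well as the Perron index $i = N$, so $|\gamma_{\mathrm{sc},i}| \asymp 1$ for $i \in \cal I$ (with implicit constant depending on $\varepsilon$). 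Choosing $\epsilon < \delta$ in Theorem \ref{mainthm}, which is permitted since $\epsilon > 0$ was arbitrary there, the right-hand side becomes $o(1)$ deterministically. Hence for any fixed $k$ and $i_1, \dots, i_k \in \cal I$,
\[
(X_{i_1}, \dots, X_{i_k}) = \pa{G_N, \dots, G_N} + o_{\bb P}(1)\,, \qquad G_N \deq \frac{\cal Z}{\sqrt 2 \, \Sigma}\,.
\]

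Second, by \eqref{1.6} we have $G_N \todist \cal N(0,1)$. An application of Slutsky's theorem to the above display yields $(X_{i_1}, \dots, X_{i_k}) \todist (G, \dots, G)$ with $G \sim \cal N(0,1)$. The covariance matrix of the limiting vector has all entries equal to $\bb E[G^2] = 1$, which is precisely $\cal J$; the limit law is therefore the degenerate Gaussian $\cal N_k(\f 0, \cal J)$, as required.

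No substantive obstacle is anticipated: all the analytic work has already been carried out in Theorem \ref{mainthm}. The sole function of the index set $\cal I$ is to keep $|\gamma_{\mathrm{sc},i}|$ of order one so that the rigidity error $N^{\epsilon-\delta}\Sigma$ is negligible compared with the scale $\gamma_{\mathrm{sc},i}\Sigma$ of the typical fluctuation; for indices near the spectral centre, $\gamma_{\mathrm{sc},i}$ becomes small and this normalization no longer dominates the error, which is why those indices must be excluded from the statement.
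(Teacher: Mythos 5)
Your proposal is correct and is exactly the "trivial" implication the paper has in mind: the paper introduces this corollary with "Theorem \ref{mainthm} trivially implies the following result" and gives no further proof, while you supply the straightforward chain of dividing \eqref{1.1} by $\gamma_{\mathrm{sc},i}\Sigma/\sqrt 2$, using that $\abs{\gamma_{\mathrm{sc},i}}\asymp 1$ on $\cal I$, invoking the CLT \eqref{1.6} for $\cal Z$, and finishing with Slutsky to identify the degenerate limit $\cal N_k(\f 0,\cal J)$. The only wording nit is that the error bound in \eqref{1.1} holds with probability $\ge 1-N^{-D}$ rather than surely, so "becomes $o(1)$ deterministically" would be more precisely phrased as "$o_{\bb P}(1)$"; you in fact use $o_{\bb P}(1)$ in the next line, so the argument is sound.
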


Next, we remark on the fluctuations of single eigenvalues inside the bulk. This problem was first addressed in \cite{G2005} for GUE, extended to GOE in \cite{O2010}, and recently extended to general Wigner matrices in \cite{BM18,LS18}. In these works, it was proved that the bulk eigenvalues of Wigner matrices fluctuate on the scale $\sqrt{\log N}/N$. More precisely,
\[
\frac{\mu_i-\gamma_{\mathrm{sc},i}}{\sqrt{\frac{8\log N}{(4-\gamma_{\mathrm{sc},i}^2)N^2}}} \todist \cal N(0,1)
\]
for all bulk eigenvalues $\mu_i$, $\epsilon N \leq i \leq (1 - \epsilon) N$, of a real Wigner matrix. The bulk eigenvalue fluctuation of sparse matrices was studied in \cite{H19}, where it was shown that for fixed $\beta \in (0,1/2)$, there exists $c\equiv c(\beta)>0$ such that with probability at least $1 - N^{-D}$
\begin{equation*}
\Big|\lambda_i-\bb E \lambda_i-\frac{\gamma_{\mathrm{sc},i}}{2}\cal Z \Big| = O(N^{-c} \Sigma)
\end{equation*}
for all bulk eigenvalues $\lambda_i$, $\epsilon N \leq i \leq (1 - \epsilon) N$, of $A$.

In summary, we have the following general picture of fluctuations of eigenvalues for sparse random matrices. The fluctuations of any single eigenvalue consists of two components: a \emph{random matrix} component and a \emph{sparseness} component. The random matrix component is independent of the sparseness and coincides with the corresponding fluctuations of GOE. It has order $N^{-2/3}$ at the edge and order $\sqrt{\log N} / N$ in the bulk. The sparseness component is captured by the random variable $\cal Z$ and has order $1/(\sqrt{N} q)$ throughout the spectrum except near the origin. Thus, the sparseness component dominates in the bulk as soon as $q \ll \sqrt{N}$ and at the edge as soon as $q \ll N^{1/6}$. In fact, our proof suggests that $\cal Z$ is only the leading order such Gaussian contribution arising from the sparseness, and that there is an infinite hierarchy of strongly correlated and asymptotically Gaussian random variables of which $\cal Z$ is the largest and whose magnitudes decrease in powers of $q^{-2}$. In order to obtain random matrix Tracy-Widom statistics near the edge, one would have to subtract all of such contributions up order $N^{-2/3}$. For $q = N^{\beta}$ with $\beta$ arbitrarily small, the number of such terms becomes arbitrarily large.

For completeness, we mention that the bulk eigenvalue statistics have also been analysed in terms of their correlation functions and eigenvalue spacings, which have a very different behaviour from the single eigenvalue fluctuations described above. It was proved in \cite{EKYY1, EKYY2, HLY15, LY15} that the asymptotics of the local eigenvalue correlation functions in the bulk coincide with those of GOE for any $q \gg 1$. Thus, the sparseness has no impact on the asymptotic behaviour of the correlation functions and the eigenvalue spacings.
 
We conclude this section with a few words about the proof. The fluctuations of the extreme eigenvalues are considerably harder to analyse than those of the bulk eigenvalues, and in particular the method of \cite{H19} breaks down at the edge because the self-consistent equations on which it relies become unstable. The key difficulty near the edge is to obtain strong rigidity estimates on the locations of the extreme eigenvalues, while no such estimates are needed in the bulk. Indeed,
 the central step of the proof is Proposition \ref{prop: upper bound} below, which provides an upper bound for the fluctuations of the largest eigenvalue of $H$. This is obtained by showing, for suitable $E$ outside the bulk of the spectrum and $\eta>0$, that  the imaginary part of the Green's function $G(E+\ii\eta)\deq(H-E-\ii \eta)^{-1}$ satisfies $ \im \tr G(E+\ii\eta) \ll 1/\eta$.
Our basic approach is the self-consistent polynomial method for sparse matrices developed in \cite{HLY, LS1}.
Thus, we first obtain a highly precise bound on the self-consistent polynomial $P$ of the Green's function, which provides a good estimate of $\tr G$ outside the bulk. The key observation in this part is that the cancellation built into $P$ persists also in the derivative of $P$. Armed with the good estimate of $\tr G$, our second key idea is to estimate the imaginary part of $P$, which turns out to be much smaller than $P$ itself; from this we deduce strong enough bounds on the imaginary part of $G$. These two estimates together conclude the proof. We refer to Section \ref{sec} below for more details of the proof strategy.

The rest of the paper is organized as follows. In Section \ref{sec:pre} we introduce the notations and previous results that we use in this paper. In Section \ref{sec} we explain the strategy of the proof. In Section \ref{sec44} we prove Theorem \ref{mainthm}, assuming key rigidity estimates at the edge (Proposition \ref{prop: upper bound}) and inside the bulk (Lemma \ref{theorem 2.1}). In Section \ref{sec6.1} we give a careful construction of the self-consistent polynomial $P$ of the Green's function. In Sections \ref{sec3}--\ref{sec5}, we prove Proposition \ref{prop: upper bound}, by assuming several improved estimates for large classes of polynomials of Green's functions. In Section \ref{sec8} we prove Lemma \ref{theorem 2.1}. 
Finally in Section \ref{sec inf} we prove the estimates that we used in Sections \ref{sec3}--\ref{sec5}.

\paragraph{Acknowledgments}
The authors would like to thank Zhigang Bao, Jiaoyang Huang, and Benjamin Schlein for helpful discussions. Y.H.\ gratefully acknowledges partial support from the NCCR SwissMAP and the Swiss National Science Foundation through the Grant 200020\_172623 ``Dynamical and energetic properties of Bose-Einstein condensates''. A.K.\ gratefully acknowledges funding from the European Research Council (ERC) under the European Union’s Horizon 2020 research and innovation programme (grant agreement No.\ 715539\_RandMat) and from the Swiss National Science Foundation through the NCCR SwissMAP grant.

\section{Preliminaries} \label{sec:pre}
In this section we collect notations and tools that will be used.  For the rest of this paper we fix $\beta \in (0,1/6)$ and define $\delta$ as in \eqref{delta}.

Let $M$ be an $N \times N$ matrix. We denote $M^{*n}\deq (M^{*})^n$, $M^{*}_{ij}\deq (M^{*})_{ij} = \ol M_{ji}$, $M^n_{ij}\deq (M_{ij})^n$, and the normalized trace of $M$ by $\ul M \deq \frac{1}{N} \tr M$. We denote the \emph{Green's function of $H$} by
\begin{equation} \label{def_G}
G \equiv G(z)\deq (H-z)^{-1}\,.
\end{equation}
\begin{convention*}
Throughout the paper, the argument of $G$ and of any Stieltjes transform is always denoted by $z \in \C \setminus \R$, and we often omit it from our notation.
\end{convention*}
The Stieltjes transform of the eigenvalue density at $z$ is denoted by $\ul{G}(z)$. For deterministic $z$ we have the differential rule
\begin{equation} \label{diff}
\frac{\partial G_{ij}}{\partial H_{kl}} =-(G_{ik}G_{jl}+G_{il}G_{kj})(1+\delta_{kl})^{-1}.
\end{equation}
If $h$ is a real-valued random variable with finite moments
of all order, we denote by $\cal C_k(h)$ the $k$th cumulant of $h$, i.e.
\[
\cal C_k(h)\deq (-\ii)^k \cdot\big(\partial_{\lambda}^k \log \bb E \e^{\ii  \lambda h}\big) \big{|}_{\lambda=0}\,.
\]
We state the cumulant expansion formula, whose proof is given in e.g. \cite[Appendix A]{HKR}. 
\begin{lemma}[Cumulant expansion] \label{lem:cumulant_expansion}
	Let $f\col\R\to\C$ be a smooth function, and denote by $f^{(k)}$ its $k$th derivative. Then, for every fixed $\ell \in\N$, we have 
	\begin{equation}\label{eq:cumulant_expansion}
	\mathbb{E}\big[h\cdot f(h)\big]=\sum_{k=0}^{\ell}\frac{1}{k!}\mathcal{C}_{k+1}(h)\mathbb{E}[f^{(k)}(h)]+\cal R_{\ell+1},
	\end{equation}	
	assuming that all expectations in \eqref{eq:cumulant_expansion} exist, where $\cal R_{\ell+1}$ is a remainder term (depending on $f$ and $h$), such that for any $t>0$,
	\begin{equation} \label{remainder}
	\cal R_{\ell+1} = O(1) \cdot \bigg(\E\sup_{|x| \le |h|} \big|f^{(\ell+1)}(x)\big|^2 \cdot \E \,\big| h^{2\ell+4} \mathbf{1}_{|h|>t} \big| \bigg)^{1/2} +O(1) \cdot \bb E |h|^{\ell+2} \cdot  \sup_{|x| \le t}\big|f^{(\ell+1)}(x)\big|\,.
	\end{equation}
\end{lemma}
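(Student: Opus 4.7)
The plan is to reduce the identity to the classical moment-cumulant recursion via a Taylor expansion of $f$ at the origin, and then estimate the resulting remainder by elementary means.

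First, I would verify the formula algebraically in the polynomial case. Differentiating the cumulant generating function identity $\phi(\lambda) = \exp K(\lambda)$, where $\phi(\lambda) \deq \E \me^{\ii \lambda h}$ and $K(\lambda) = \log \phi(\lambda) = \sum_{k \geq 1} \cal C_k(h) (\ii \lambda)^k / k!$, gives $\phi'(\lambda) = K'(\lambda)\phi(\lambda)$. Matching coefficients of $\lambda^m$ on both sides, using $\phi(\lambda) = \sum_n \E[h^n](\ii\lambda)^n / n!$ and $K'(\lambda) = \ii\sum_{k\geq 0} \cal C_{k+1}(h)(\ii\lambda)^k / k!$, yields the classical moment-cumulant recursion
\[
\E[h^{m+1}] = \sum_{k=0}^m \binom{m}{k} \cal C_{k+1}(h)\, \E[h^{m-k}], \qquad m \geq 0.
\]
For a monomial $f(x) = x^m$ with $m \leq \ell$, the right-hand side of \eqref{eq:cumulant_expansion} reduces to precisely this sum (derivatives of order $> m$ vanish), so the identity holds with $\cal R_{\ell+1} = 0$.

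For general smooth $f$, I would write $f(x) = P(x) + \rho(x)$, where $P(x) \deq \sum_{j=0}^\ell f^{(j)}(0)\,x^j/j!$ is its Taylor polynomial of order $\ell$ at $0$ and $\rho(x) = (\ell!)^{-1} \int_0^x (x-s)^\ell f^{(\ell+1)}(s)\,\dd s$ is the Lagrange remainder. Likewise $f^{(k)}(x) = P^{(k)}(x) + \rho_k(x)$, where $\rho_k$ is the Taylor remainder of $f^{(k)}$ of order $\ell - k$. Plugging both decompositions into \eqref{eq:cumulant_expansion} and applying the first step monomial by monomial to $P$, the polynomial contributions cancel exactly by the recursion, leaving
\[
\cal R_{\ell+1} = \E[h\,\rho(h)] - \sum_{k=0}^\ell \frac{\cal C_{k+1}(h)}{k!} \E[\rho_k(h)].
\]

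Finally, I would estimate this remainder. The integral form yields the pointwise bound $|\rho_k(x)| \leq \sup_{|y| \leq |x|}|f^{(\ell+1)}(y)| \cdot |x|^{\ell+1-k}/(\ell+1-k)!$. Splitting each expectation as $\E[\,\cdot\,] = \E[\,\cdot\,\mathbf{1}_{|h| \leq t}] + \E[\,\cdot\,\mathbf{1}_{|h| > t}]$, on the inner region I replace $\sup_{|y|\leq |h|}$ by the deterministic $\sup_{|y|\leq t}$, pull it out of the expectation, and bound the remaining moments of $|h|$; this yields the second summand in \eqref{remainder}. On the outer region I apply the Cauchy-Schwarz inequality, separating $\bigl(\E \sup_{|y|\leq |h|}|f^{(\ell+1)}(y)|^2\bigr)^{1/2}$ from $\bigl(\E |h|^{2\ell+4}\mathbf{1}_{|h|>t}\bigr)^{1/2}$, which yields the first summand. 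The cumulants $|\cal C_{k+1}(h)|$ and all lower-order moments of $|h|$ are absorbed into the $\ell$-dependent $O(1)$ constants. The main bookkeeping subtlety is that the summands with $k \geq 1$ carry the lower power $|h|^{\ell+1-k}$ rather than the leading $|h|^{\ell+1}$ of the $k=0$ term; after splitting at $t$ they produce lower moments of $|h|$, which are nevertheless dominated by the stated bound up to $\ell$-dependent constants.
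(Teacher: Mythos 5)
The paper does not prove this lemma itself but cites [HKR, Appendix A], so I can only evaluate your sketch directly. The first two steps are sound and are essentially the natural route: the moment--cumulant recursion from $\phi'=K'\phi$ correctly shows that \eqref{eq:cumulant_expansion} holds with zero remainder for polynomials of degree at most $\ell$, and the resulting decomposition $\cal R_{\ell+1}=\E[h\rho(h)]-\sum_{k=0}^\ell \frac{\cal C_{k+1}(h)}{k!}\E[\rho_k(h)]$ upon writing $f=P+\rho$ is correct. The $k=0$ piece $\E[h\rho(h)]$, bounded by $\frac{1}{(\ell+1)!}\E\bigl[\sup_{|y|\le|h|}|f^{(\ell+1)}(y)|\,|h|^{\ell+2}\bigr]$, split at $t$, and estimated by Cauchy--Schwarz on the outer region, reproduces \eqref{remainder} exactly.

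The gap is in the $k\geq 1$ terms. You write that "the cumulants $|\cal C_{k+1}(h)|$ and all lower-order moments of $|h|$ are absorbed into the $\ell$-dependent $O(1)$ constants," but the cumulants depend on the law of $h$ and cannot be hidden in a constant that, per the statement, must be independent of $h$ and $t$. The needed input is the standard bound $|\cal C_{k+1}(h)|\leq C_k\,\E|h|^{k+1}$ together with H\"older. This closes the inner region, since $\E|h|^{k+1}\,\E|h|^{\ell+1-k}\leq\E|h|^{\ell+2}$. But for the outer region, applying Cauchy--Schwarz to $\E\bigl[\sup_{|y|\leq|h|}|f^{(\ell+1)}(y)|\,|h|^{\ell+1-k}\mathbf 1_{|h|>t}\bigr]$ and then promoting $|h|^{2(\ell+1-k)}\mathbf 1_{|h|>t}$ to $|h|^{2\ell+4}\mathbf 1_{|h|>t}$ costs a factor $t^{-(k+1)}$, leaving an uncontrolled $\E|h|^{k+1}/t^{k+1}$ that is not $O(1)$ uniformly in $h$ and $t$. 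You flag a "bookkeeping subtlety" here but do not resolve it; a sharper estimate of $\E[\rho_k(h)\mathbf 1_{|h|>t}]$ for $k\geq1$ that avoids the $t^{-(k+1)}$ loss is needed to actually reach the form \eqref{remainder}.
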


The following result gives bounds on the cumulants of the entries of $H$, whose proof follows from Definition \ref{def:sperse} and the homogeneity of the cumulants.
\begin{lemma} \label{Tlemh}
	For every $k \in \bb N$ we have
	\begin{equation*}
	\cal C_{k}(H_{ij})=O_{k}(1/(Nq^{k-2}))
	\end{equation*}
	uniformly for all $i,j$.
\end{lemma}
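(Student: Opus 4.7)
The plan is to apply the moment--cumulant inversion formula to express $\cal C_k(H_{ij})$ as a finite sum indexed by set partitions, and then to bound each summand using the moment hypotheses in Definition \ref{def:sperse}. Writing $m_b \deq \bb E H_{ij}^b$, I would start from
\[ \cal C_k(H_{ij}) = \sum_{\pi \in \cal P(k)} (-1)^{|\pi|-1} (|\pi|-1)! \prod_{B \in \pi} m_{|B|} \,, \]
where $\cal P(k)$ denotes the set of all partitions of $\{1,\dots,k\}$. Because Definition \ref{def:sperse}(ii) gives $m_1 = 0$, any partition containing a singleton block contributes zero, and the remaining sum runs effectively over partitions whose blocks all have size at least $2$.

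Next, I would bound each surviving factor by the uniform estimate $|m_b| = O_b(1/(Nq^{b-2}))$, valid for every $b \geq 2$: the case $b=2$ gives $|m_2| = (1+O(\delta_{ij}))/N$ from Definition \ref{def:sperse}(ii), and for $b \geq 3$ we have $|m_b| \leq \bb E |H_{ij}|^b \leq C_b / (Nq^{b-2})$ from Definition \ref{def:sperse}(iii). For a partition with $m$ blocks of sizes $b_1, \dots, b_m \geq 2$ summing to $k$, this yields
\[ \absbb{\prod_{B \in \pi} m_{|B|}} = O_k \pbb{\frac{1}{N^m q^{k-2m}}} \,. \]
The final input is the sparsity constraint $q \leq \sqrt{N}$ from Definition \ref{def:sperse}, which gives $q^2/N \leq 1$ and hence
\[ \frac{1}{N^m q^{k-2m}} = \frac{1}{Nq^{k-2}} \pbb{\frac{q^2}{N}}^{m-1} \leq \frac{1}{Nq^{k-2}} \qquad \text{for every } m \geq 1. \]
Since the number of partitions of $\{1,\dots,k\}$ depends only on $k$, summing over $\pi$ yields $\cal C_k(H_{ij}) = O_k(1/(Nq^{k-2}))$.

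There is no serious obstacle here; the argument is a direct book-keeping computation. The only conceptual point worth highlighting is that the centring $m_1 = 0$ is essential---without it, partitions with many singleton blocks would contribute terms carrying too few factors of $1/N$ relative to the number of ``missing'' factors of $1/q^2$, and the above estimate would fail. Once singletons are ruled out, the sparsity condition $q^2 \leq N$ forces the coarsest admissible partition, namely the single block $\{1,\dots,k\}$, to provide the dominant contribution, which matches the claimed bound exactly.
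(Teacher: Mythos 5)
Your proof is correct and makes explicit exactly what the paper leaves implicit when it says the result "follows from Definition 1.1 and the homogeneity of the cumulants": the moment–cumulant inversion, the vanishing of singleton-block terms because $m_1=0$, and the use of $q\leq\sqrt N$ to absorb the excess $q^{2(m-1)}/N^{m-1}$. No issues.
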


We use the following convenient notion of high-probability bound from \cite{EKY2}.
\begin{definition}[Stochastic domination] \label{def:2.3} 
	Let $$X=\pb{X^{(N)}(u)\col N \in \bN, u \in U^{(N)}}\,,\qquad Y=\pb{Y^{(N)}(u)\col N \in \bN, u \in U^{(N)}}$$ be two families of random variables, where $Y^{(N)}(u)$ are nonnegative and $U^{(N)}$ is a possibly $N$-dependent parameter set. We say that $X$ is stochastically dominated by $Y$, uniformly in $u$, if for all (small) $\varepsilon>0$ and (large) $D>0$ we have
	\begin{equation*} 
	\sup\limits_{u \in U^{(N)}}	\bP \left[ \big|X^{(N)}(u)\big| > N^{\varepsilon} Y^{(N)}(u) \right] \le N^{-D}
	\end{equation*} 
	for large enough $N \ge N_0(\varepsilon,D)$. If $X$ is stochastically dominated by $Y$, uniformly in $u$, we use the notation $X \prec Y$, or, equivalently $X=O_{\prec}(Y)$.
\end{definition}

Note that for
	deterministic $X$ and $Y$, $X =O_\prec(Y)$ means $X= O_{\epsilon}(N^{\epsilon}Y)$ for any $\epsilon> 0$. Sometimes we say that an event  $\Xi \equiv \Xi^{(N)}$ holds with \emph{very high probability} if for all $D > 0$ we have $\P(\Xi) \geq 1 - N^{-D}$ for $N \geq N_0(D)$.

By estimating the moments of $\cal Z$ defined in \eqref{zzzz} and invoking Chebyshev's inequality, we find
\begin{equation} \label{cal Z}
\cal Z \prec \frac{1}{\sqrt{N}q}\,.
\end{equation}
We have the following elementary result about stochastic domination.

\begin{lemma} \label{prop_prec}
	\begin{enumerate}
		\item
		If $X_1 \prec Y_1$ and $X_2 \prec Y_2$ then $X_1 X_2 \prec Y_1 Y_2$.
		\item
		Suppose that $X$ is a nonnegative random variable satisfying $X \leq N^C$ and $X \prec \Phi$ for some deterministic $\Phi \geq N^{-C}$. Then $\E X \prec \Phi$.
	\end{enumerate}
\end{lemma}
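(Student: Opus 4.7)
Both parts follow directly from the definition of stochastic domination via elementary arguments, and no serious obstacle is expected; this lemma serves only as a convenient packaging of the definitions.

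For part (i), the plan is a simple union bound. Given $\varepsilon > 0$ and $D > 0$, I would invoke the hypothesis $X_i \prec Y_i$ with exponent $\varepsilon/2$ and probability threshold $D$, uniformly in the parameter $u$, to conclude that each event $\h{|X_i| > N^{\varepsilon/2} Y_i}$ has probability at most $N^{-D}$. On the complement of the union of these two bad events, which has probability at least $1 - 2N^{-D}$, one has $|X_1 X_2| \le N^{\varepsilon} Y_1 Y_2$. Since $D$ was arbitrary, the factor $2$ is harmless, and uniformity in $u$ is preserved by the argument; this establishes $X_1 X_2 \prec Y_1 Y_2$.

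For part (ii), the plan is to split the expectation according to whether the high-probability bound for $X$ holds. Fix $\varepsilon > 0$ and $D > 0$, and write
\begin{equation*}
\E X = \E\qb{X \mathbf{1}\h{X \le N^\varepsilon \Phi}} + \E\qb{X \mathbf{1}\h{X > N^\varepsilon \Phi}}.
\end{equation*}
The first summand is at most $N^\varepsilon \Phi$ by construction. For the second, I would combine the deterministic bound $X \le N^C$ with $\P(X > N^\varepsilon \Phi) \le N^{-D'}$, which is available for any $D' > 0$ by the hypothesis $X \prec \Phi$; this contributes at most $N^{C - D'}$. Using the lower bound $\Phi \ge N^{-C}$ and choosing $D' = D + 2C$ makes the second term at most $N^{-D}\Phi$, which is negligible compared to $N^\varepsilon \Phi$. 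Altogether, $\E X \le 2 N^\varepsilon \Phi$ for large $N$; since $\varepsilon$ and $D$ were arbitrary, this is precisely $\E X \prec \Phi$.
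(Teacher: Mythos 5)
The paper states this lemma without proof, treating it as elementary. Your argument is correct and is the standard one: a union bound with exponent $\varepsilon/2$ for part (i), and for part (ii) splitting $\E X$ on the event $\{X\le N^\varepsilon\Phi\}$, controlling the complement by the deterministic bound $X\le N^C$ together with $\P(X>N^\varepsilon\Phi)\le N^{-D'}$ and the floor $\Phi\ge N^{-C}$. Nothing to add.
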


Fix (a small) $c>0$ and define the spectral domains
\begin{equation} \label{ss}
\f S  \deq \{z=E + \ii \eta \col   |E |\leq 10, N^{-1 + c}\leq\eta \leq 10\}\,, \quad \widetilde{\f S}\deq \{z=E+\ii \eta\col |E|\leq 10, 0<\eta\leq 10\}\,.
\end{equation}
We recall the local semicircle law for Erd\H{o}s-R\'enyi graphs from \cite{EKYY1}.

\begin{proposition}[Theorem 2.8,\cite{EKYY1}] \label{refthm1}
	Let $H$ be a sparse matrix defined as in Definition \ref{def:sperse}, and $m_{\mathrm{sc}}$ be the Stieltjes transform of the semicircle distribution. We have
	\begin{equation*} 
	\max\limits_{i,j}|G_{ij}(z)-\delta_{ij}m_{\mathrm{sc}}(z)| \prec \frac{1}{q}+\sqrt{\frac{1}{N\eta}}
	\end{equation*} 
	uniformly in $z =   
	E+\mathrm{i}\eta \in \f S$. 
\end{proposition}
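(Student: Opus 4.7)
The plan is to derive weak self-consistent equations for the diagonal Green's function entries $G_{ii}$, then invoke the stability of the semicircle equation to conclude. I would first apply the Schur complement formula to each $i$:
\[
\frac{1}{G_{ii}} \;=\; -z + H_{ii} - \sum_{k,l \neq i} H_{ik} \, G_{kl}^{(i)} \, H_{li}\,,
\]
where $G^{(i)}$ denotes the Green's function of the minor of $H$ obtained by removing the $i$th row and column. Splitting the quadratic form into a diagonal part $\sum_{k\neq i} H_{ik}^2 G_{kk}^{(i)}$ and an off-diagonal remainder, the variance condition $\E H_{ik}^2 = (1 + O(\delta_{ik}))/N$ from Definition \ref{def:sperse} reduces the diagonal part to $\ul{G}^{(i)}$ up to a concentration error, while eigenvalue interlacing gives $\absb{\ul{G}^{(i)} - \ul{G}} \prec 1/(N\eta)$. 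Setting $\Psi(z) \deq \frac{1}{q} + \sqrt{\frac{1}{N\eta}}$, the goal is to arrive at
\[
-\frac{1}{G_{ii}} \;=\; z + \ul G + \Upsilon_i\,, \qquad \Upsilon_i \prec \Psi \cdot \pb{1 + \max_{j,k} \abs{G_{jk}}}\,.
\]

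The key ingredient is a large deviation estimate for $\Upsilon_i$. For the off-diagonal quadratic form, the independence of $(H_{ik})_{k\neq i}$ from $G^{(i)}$ combined with a high-moment Markov bound exploiting the cumulant estimates $\cal C_k(H_{ij}) = O_k(1/(Nq^{k-2}))$ of Lemma \ref{Tlemh} gives
\[
\absBB{\sum_{\substack{k\neq l \\ k,l\neq i}} H_{ik} \, G_{kl}^{(i)} \, H_{li}} \;\prec\; \frac{1}{q}\max_{k,l} \abs{G_{kl}^{(i)}} + \sqrt{\frac{1}{N^2}\sum_{k,l\neq i} \abs{G_{kl}^{(i)}}^2}\,,
\]
where the sum over $|G_{kl}^{(i)}|^2$ is bounded by $\eta^{-1} \im \ul{G}^{(i)}$ via the Ward identity. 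The $1/q$ term is what distinguishes the sparse from the Wigner setting; it arises because $H_{ik}$ takes values of size $1/q$ on a set of mass $q^{-2}$, so the usual Gaussian-like bound picks up one extra factor of $\|H_{ik}\|_\infty \sim 1/q$. The fluctuation of the diagonal piece $\sum_k (H_{ik}^2 - \E H_{ik}^2) G_{kk}^{(i)}$ is handled analogously, using the fourth-moment bound $\E H_{ik}^4 \asymp 1/(Nq^2)$ to extract the same factor of $1/q$, and the linear term $H_{ii}$ is trivially $\prec 1/\sqrt{N} \leq \Psi$.

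Averaging the $i$-equations over $i$ produces a scalar equation $m^2 + zm + 1 = \mathcal{E}$ for $m = \ul G$ with $|\mathcal{E}| \prec \Psi \cdot \pb{1 + \max_{j,k}|G_{jk}|}$. Classical stability of the semicircle equation on $\f S$, applied in the weak form sufficient for the bound $\Psi$ claimed here, yields $|\ul G - m_{\mathrm{sc}}| \prec \Psi$; substituting back into the individual equation gives $|G_{ii} - m_{\mathrm{sc}}| \prec \Psi$. The off-diagonal estimate $|G_{ij}| \prec \Psi$ for $i \neq j$ follows from the resolvent identity $G_{ij} = -G_{ii} G_{jj}^{(i)}\bigl(H_{ij} + \sum_{k,l \neq i,j} H_{ik} G_{kl}^{(ij)} H_{lj}\bigr)$ combined with the same large deviation machinery applied to the minor $G^{(ij)}$.

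The main obstacle is the circularity in the above estimates: the large deviation bounds produce errors of the form $\Psi \cdot \max_{jk}|G_{jk}|$, which is only useful once $\max_{jk}|G_{jk}|$ is already known to be $O(1)$. The standard remedy is a bootstrap (continuity) argument in $\eta$. At the initial scale $\eta = 10$ one has the deterministic bound $\max_{jk}|G_{jk}| \leq 1/\eta = O(1)$, so the scheme closes trivially. One then decreases $\eta$ in small multiplicative steps down to $N^{-1+c}$, using the Lipschitz continuity of $G$ in $\eta$ (with Lipschitz constant $O(\eta^{-2})$) to chain the discrete steps into a continuous improvement: at each scale the previous bound on $\max_{jk}|G_{jk}|$ feeds into the large deviation estimate, and the stability of the semicircle equation then upgrades this into a bound at the next scale. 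The bootstrap closes uniformly in $z \in \f S$ because the coefficient of the quadratic term $(\ul G - m_{\mathrm{sc}})^2$ in the self-consistent equation is bounded, and the linear stability is retained throughout $\f S$ at the level of the target error $\Psi$.
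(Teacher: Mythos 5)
The paper does not prove Proposition \ref{refthm1}; it is imported verbatim from \cite{EKYY1} (Theorem 2.8), with a remark noting that the argument there extends unchanged to the slightly more general diagonal-variance assumption $\E H_{ii}^2 = C_i/N$. Your proposal reconstructs precisely the proof strategy of \cite{EKYY1}: Schur complement to derive a self-consistent system for $1/G_{ii}$, a large-deviation bound for the linear and quadratic forms in the $i$th row in which the extra $1/q$ term (compared to the Wigner case) emerges from the cumulant bounds $\cal C_k(H_{ij}) = O(1/(Nq^{k-2}))$, stability of the semicircle equation away from the edges, and a continuity bootstrap in $\eta$ from the trivial scale $\eta \asymp 1$ down to $N^{-1+c}$. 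This is the correct route and matches the cited source; you are not obliged to redo it, and the paper does not either.

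One small remark on rigor: when you write that at the scale $\eta = 10$ the scheme ``closes trivially,'' and then chain down in $\eta$ via Lipschitz continuity, you should be explicit that the bootstrap hypothesis needed is $\max_{j,k}\abs{G_{jk}} \leq 2$, say, with an a priori gap between the trivial bound $\leq 1/\eta$ and the target $\Psi$: at each step the a priori bound plus Lipschitz continuity (constant $\eta^{-2}$, so over a step of size $N^{-C}$ the increment is $\prec N^{-C+2}$) lets the conclusion at the previous scale serve as the bootstrap hypothesis at the next. As written the phrasing ``previous bound feeds into the large deviation estimate'' is a touch vague but the underlying idea is right. Also, the quadratic stability of $m^2 + zm + 1 = \cal E$ is delicate near the edges $E \approx \pm 2$; on the domain $\f S$ (which reaches the edges since only $|E|\leq 10$ and $\eta \geq N^{-1+c}$ are imposed), the inversion gives $\abs{\ul G - m_{\mathrm{sc}}} \prec \min\{\abs{\cal E}/\sqrt{\kappa + \eta}, \sqrt{\abs{\cal E}}\}$, and one must use the weaker $\sqrt{\abs{\cal E}}$ branch near the edge; this still lands within the target $\Psi = q^{-1} + (N\eta)^{-1/2}$ after absorbing constants, so the claim survives, but the ``linear stability is retained throughout $\f S$'' sentence glosses over the edge case.
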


As a standard consequence  of the local law, we have the complete delocalization of eigenvectors.

\begin{lemma} \label{lem:delocalization}
	Let $\f u_1,\dots,\f u_N$ be the ($L^2$-normalized) eigenvectors of $H$. We have
	\[
    \f u_i(k)^2 \prec \frac{1}{N}
	\]
	uniformly for all $i,k \in \{1,2,\dots,N\}$.
\end{lemma}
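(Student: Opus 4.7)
The plan is to derive complete delocalization from the local semicircle law (Proposition \ref{refthm1}) via the standard spectral decomposition trick.

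First, I would use the spectral decomposition of the Green's function to write, for $z = E + \ii\eta$,
\begin{equation*}
\im G_{kk}(E+\ii\eta) = \sum_{i=1}^N \frac{\eta\, \f u_i(k)^2}{(\lambda_i - E)^2 + \eta^2}.
\end{equation*}
Since every summand on the right-hand side is nonnegative, for any fixed index $i$ and any $\eta > 0$,
\begin{equation*}
\f u_i(k)^2 \;\leq\; \eta\, \im G_{kk}(\lambda_i + \ii\eta).
\end{equation*}

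Next, I would fix a small $c > 0$ and set $\eta \deq N^{-1+c}$. Proposition \ref{refthm1} together with the boundedness of $m_{\mathrm{sc}}$ on $\f S$ yields
\begin{equation*}
\max_k \absb{G_{kk}(z)} \prec 1
\end{equation*}
uniformly in $z \in \f S$. To apply this at $z = \lambda_i + \ii\eta$, one needs $|\lambda_i| \leq 10$; this holds with very high probability as a standard consequence of Proposition \ref{refthm1} (no eigenvalue can lie far from the support of the semicircle, since otherwise $\im \ul G$ at an appropriate spectral parameter would exceed its deterministic approximation). Taking a union bound over $i, k \in \{1, \dots, N\}$, which only costs polynomial factors absorbed by $\prec$, we obtain $\im G_{kk}(\lambda_i + \ii\eta) \prec 1$ simultaneously for all $i, k$. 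Combining with the previous display,
\begin{equation*}
\f u_i(k)^2 \prec \eta = N^{-1+c}.
\end{equation*}

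Finally, since $c > 0$ was arbitrary, the very definition of $\prec$ (Definition \ref{def:2.3}) immediately upgrades this to $\f u_i(k)^2 \prec N^{-1}$, uniformly in $i, k$. There is no substantive obstacle; the only minor care required is justifying the restriction $\lambda_i \in [-10, 10]$ with very high probability so that $\lambda_i + \ii\eta$ lies in the admissible spectral domain $\f S$, which as noted follows from the local law.
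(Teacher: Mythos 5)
Your proof follows the standard derivation of complete delocalization from a local law, which is precisely what the paper intends by calling Lemma \ref{lem:delocalization} a ``standard consequence'' of Proposition \ref{refthm1}; the inequality $\f u_i(k)^2 \leq \eta\,\im G_{kk}(\lambda_i + \ii\eta)$, the choice $\eta = N^{-1+c}$, and the upgrade to $N^{-1}$ by letting $c\to 0$ via the definition of $\prec$ are all correct. One imprecision worth flagging: the step ``taking a union bound over $i,k$ \dots we obtain $\im G_{kk}(\lambda_i + \ii\eta) \prec 1$ simultaneously'' is not justified as written, because Proposition \ref{refthm1} controls $G_{kk}(z)$ uniformly over \emph{deterministic} $z \in \f S$, while $z = \lambda_i + \ii\eta$ is random. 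The union bound that actually does the work is over a deterministic net of $E$-values in $[-10,10]$ with mesh $N^{-4}$, say (cardinality polynomial in $N$, hence still absorbed by $\prec$); one then uses the deterministic Lipschitz bound $|\partial_E G_{kk}(E+\ii\eta)| \leq \eta^{-2}$ to pass from the net to all $E \in [-10,10]$, and only then evaluates at the random $E = \lambda_i$ (in range with very high probability, as you note). With this routine repair your argument is complete and coincides with the standard one.
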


\begin{remark}
	Proposition \ref{refthm1} was proved in \cite{EKYY1} under the additional assumption $\bb E H^2_{ii}=1/N$ for all $i$. However, the proof is insensitive to the variance of the diagonal entries, and one can easily repeat the steps in \cite{EKYY1} under the general assumption $\bb E H_{ii}^2=C_i/N$. A weak local law for $H$ with general variances on the diagonal can also be found in \cite{HKM18}.
\end{remark}

\begin{lemma}[Ward identity] \label{Ward}
	We have 
	\begin{equation*}
	\sum_j |G_{ij}|^2 =\frac{\im G_{ii}}{\eta}
	\end{equation*}
	for all $z=E+\ii \eta \in \f S$.
\end{lemma}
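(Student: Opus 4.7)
The plan is to recognize that the sum $\sum_j |G_{ij}|^2$ is simply the $(i,i)$ entry of $GG^*$, and then to use a partial-fraction (resolvent) identity to rewrite $GG^*$ in terms of $G$ and $G^*$ alone.

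First I would observe that since $H$ is real symmetric, $G(z)^* = (H-\bar z)^{-1} = G(\bar z)$, so in particular $\overline{G_{ij}(z)} = G_{ji}(\bar z) = (G(z)^*)_{ij}$. Hence
\begin{equation*}
\sum_j |G_{ij}|^2 \;=\; \sum_j G_{ij}(G^*)_{ji} \;=\; (GG^*)_{ii}.
\end{equation*}

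Next I would apply the scalar partial-fraction identity $\frac{1}{(x-z)(x-\bar z)} = \frac{1}{z-\bar z}\bigl(\frac{1}{x-z} - \frac{1}{x-\bar z}\bigr)$ at the operator level (which is legitimate because $H-z$ and $H-\bar z$ commute), obtaining
\begin{equation*}
GG^* \;=\; (H-z)^{-1}(H-\bar z)^{-1} \;=\; \frac{1}{z-\bar z}\bigl(G - G^*\bigr) \;=\; \frac{1}{2\ii\eta}(G - G^*).
\end{equation*}
Taking the $(i,i)$ entry gives $(GG^*)_{ii} = \frac{G_{ii} - \overline{G_{ii}}}{2\ii\eta} = \frac{\im G_{ii}}{\eta}$, which is the claim.

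There is no real obstacle here; the identity is algebraic and uses only the commutativity of $H-z$ with $H-\bar z$ and the reality of $H$. The restriction $z \in \f S$ plays no role beyond ensuring $\eta > 0$ so that $G$ is well-defined; in fact the identity holds for any $z\in\C\setminus\R$.
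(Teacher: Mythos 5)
Your proof is correct, and it is the standard algebraic derivation of the Ward identity via the resolvent identity $G - G^* = (z - \bar z)\,GG^*$; the paper states this lemma without proof precisely because this is a routine fact. An equivalent route, which the paper uses implicitly elsewhere (e.g.\ in Lemma \ref{lem:delocalization}), is the spectral decomposition $G_{ij} = \sum_k \f u_k(i)\f u_k(j)/(\lambda_k - z)$, from which $\sum_j |G_{ij}|^2 = \sum_k \f u_k(i)^2/|\lambda_k - z|^2 = \eta^{-1}\im G_{ii}$ by orthonormality of the eigenvectors; your operator-identity version is arguably cleaner and requires no spectral theorem.
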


The following Lemmas \ref{lem:exp P_0}--\ref{lem 3.9} characterize the asymptotic eigenvalues density of $H$. The proof of the following result is postponed to Section \ref{sec6.1}.
\begin{lemma} \label{lem:exp P_0}
	There exists a deterministic polynomial 
	\[
	P_0(z,x)=1+zx+x^2+\frac{a_2}{q^2}x^4+\frac{a_3}{q^4}x^6+\cdots
	\]	
	of degree $2\ceil{\beta^{-1}}$ such that
	\[
	\bb E P_0(z,\ul{G}(z))\prec \frac{\bb E\im \ul{G}(z)}{N\eta}+\frac{1}{N}
	\]
	uniformly for all deterministic $z \in \f S$. Here $a_2, a_3, \dots$ are real, deterministic, and bounded. They depend on the law of $H$.
\end{lemma}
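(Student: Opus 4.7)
The starting identity is
\begin{equation*}
\frac{1}{N}\sum_{i,j}\E[H_{ij}G_{ji}(z)] = 1 + z\,\E\ul G(z),
\end{equation*}
obtained from $(H-z)G=I$ by taking the normalized trace. To the left-hand side I would apply, separately in each $(i,j)$, the cumulant expansion of Lemma~\ref{lem:cumulant_expansion} with $h = H_{ij}$ and $f(h) = G_{ji}$, truncated at order $\ell = 2\lceil\beta^{-1}\rceil$. The cumulant bound $|\cal C_{k+1}(H_{ij})|= O_k((Nq^{k-1})^{-1})$ from Lemma~\ref{Tlemh}, the a priori estimate $|H_{ij}| \prec q^{-1}$, and the cutoff $t \asymp N^{\epsilon} q^{-1}$ in \eqref{remainder} together control the remainder (after summing and normalizing) by $N^{-1}q^{-(\ell-1)} \ll 1/N$, absorbable into the final error.

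Iterating \eqref{diff}, each $\partial_{H_{ij}}^k G_{ji}$ is an explicit finite sum of monomials $c_\sigma \prod_{s=1}^{k+1} G_{a_sb_s}$ with all indices in $\{i,j\}$. Call a factor \emph{off-diagonal} when $a_s \neq b_s$, and let $O$ denote the number of off-diagonal factors in a monomial. A direct inspection of \eqref{diff} shows that differentiating any single factor, diagonal or off-diagonal, produces a sum of monomials each with $O$ differing from the original by $\pm 1$; by the product rule, one application of $\partial_{H_{ij}}$ therefore flips the parity of $O$ in every resulting monomial. Since $G_{ji}$ has $O=1$, induction yields the key structural fact: \emph{every monomial in $\partial_{H_{ij}}^k G_{ji}$ carries $O \equiv k+1 \pmod{2}$}. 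Consequently, for even $k$ (odd cumulant order $\cal C_{k+1}$) every monomial (with $i \ne j$) has $O \geq 1$. Such off-diagonal contributions are controlled by combining the off-diagonal bound $|G_{ab}| \prec q^{-1}+(N\eta)^{-1/2}$ from Proposition~\ref{refthm1} with the Ward identity of Lemma~\ref{Ward} to pair off-diagonal factors; a Cauchy--Schwarz/AM--GM accounting to redistribute $q^{-1}$ and $(N\eta)^{-1/2}$ factors then bounds the full odd-cumulant contribution by $\frac{\im \ul G}{N\eta}+\frac{1}{N}$, which passes to expectation by Lemma~\ref{prop_prec}(ii).

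For odd $k$ (even cumulants $\cal C_{k+1} = \cal C_{2m}$), the parity count allows $O = 0$, and the purely diagonal monomials $G_{ii}^{p}G_{jj}^{k+1-p}$ summed over $(i,j)$ give $N^2 \bigl(\tfrac1N\sum_i G_{ii}^p\bigr)\bigl(\tfrac1N\sum_j G_{jj}^{k+1-p}\bigr)$. Writing $G_{ii} = \ul G + (G_{ii}-\ul G)$, using $\sum_i(G_{ii}-\ul G)=0$ to kill the linear term, and using $|G_{ii}-\ul G|\prec q^{-1}+(N\eta)^{-1/2}$ from Proposition~\ref{refthm1}, one replaces $\tfrac1N\sum_i G_{ii}^p$ by $(\ul G)^p$ modulo an error of order $q^{-2}+(N\eta)^{-1}$. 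Combined with the prefactor $\cal C_{2m}/(k!N)\asymp q^{-(2m-2)}/N^2$, the surviving term is $c_m\,q^{-(2m-2)}(\ul G)^{2m}$ for an explicit combinatorial constant $c_m$ determined by the derivative combinatorics and the moments from Definition~\ref{def:sperse}. The Gaussian case $k=1$ reproduces $-\E(\ul G)^2$, which combined with the left-hand side gives the semicircle polynomial $1 + zx + x^2$; for $m \geq 2$, setting $a_m \deq c_m$ assembles $P_0(z,x) = 1 + zx + x^2 + \sum_{m\geq 2} a_m\,q^{-(2m-2)}x^{2m}$ of the claimed form and degree.

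The main obstacle is the systematic bookkeeping in the second paragraph: verifying that every off-diagonal monomial shape arising from odd cumulants is controlled by the claimed error $\frac{\E\im \ul G}{N\eta}+\frac{1}{N}$, uniformly in $z \in \f S$, and that no shape produces a dangerous larger error. The diagonal boundary case $i = j$, in which the $(1+\delta_{ij})^{-1}$ factor in \eqref{diff} and the special cumulants $\cal C_k(H_{ii})$ slightly modify the analysis, contributes only $O(1/N)$ overall and is handled as a minor technicality.
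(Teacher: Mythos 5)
Your high-level structure — starting from $\bb E(1+z\ul G)=\frac1N\sum_{i,j}\bb E H_{ij}G_{ji}$, cumulant-expanding, using the parity observation to split even vs.\ odd $k$, and reading the polynomial coefficients off the diagonal monomials — is exactly the paper's. But two of the estimates you invoke do not achieve the claimed error $\bb E\Gamma + 1/N$ (where $\Gamma = \im\ul G/(N\eta)$) when $\beta$ is small, and the fix is the one ingredient missing from your sketch.

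First, for the even-$k$ (odd-cumulant) terms containing \emph{exactly one} off-diagonal factor — e.g.\ the $k=2$ term $\asymp N^{-2-\beta}\sum_{i,j}\bb E[G_{ij}G_{ii}G_{jj}]$ — the Ward identity gives only $\prec N^{-\beta}\sqrt{\bb E\Gamma}$, and AM--GM then yields $\prec N^{-2\beta}+\bb E\Gamma$. Since $q^{-2}=N^{-2\beta}\gg 1/N$ for $\beta<1/2$, your ``Cauchy--Schwarz/AM--GM accounting'' does not close; this is precisely the point of Remark~\ref{remark 4.3}. Second, for the odd-$k$ diagonal terms, the quadratic correction $\binom p2 \ul G^{p-2}\cdot\frac1N\sum_i(G_{ii}-\ul G)^2\prec q^{-2}$ survives after the linear term cancels; multiplying by the prefactor $q^{-(2m-2)}$ gives an error $\asymp q^{-2m}$, which for $m=2$ is $q^{-4}\gg 1/N$ throughout $\beta\in(0,1/6)$. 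This contribution cannot be dropped, nor is it of the polynomial form $c\,q^{-4}\ul G^6$ that could be absorbed into $a_3$: $\frac1N\sum_i(G_{ii}-\ul G)^2$ is not a priori a polynomial in $\ul G$. Consequently, setting $a_m\deq c_m$ from a single cumulant expansion produces both an uncontrolled $O(q^{-4})$ error and, for $m\geq 3$, incorrect coefficients, since the true $a_m$ absorb recursively generated corrections from the lower terms.

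Both gaps are repaired by the same idea, which the paper supplies in Lemmas~\ref{lem4.2}, \ref{lem4.22}, and \ref{lem:nte}: substitute $G_{ij}=\delta_{ij}\ul G+G_{ij}\ul{HG}-(HG)_{ij}\ul G$ (Lemma~\ref{lem:replace}), cumulant-expand the $H$-containing terms again, and observe that the leading contributions of the two $H$-terms cancel, leaving new monomials with an extra factor $N^{-2\beta}$ plus an $O(\bb E\Gamma+1/N)$ error. Iterating this $O(\beta^{-1})$ times drives the polynomial corrections below $1/N$ while never losing more than $O(\bb E\Gamma+1/N)$. That recursive cancellation mechanism is the content you are missing.
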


Lemma \ref{lem:exp P_0} states that when $x$ is replaced with $\ul G(z)$, the expectation of $P_0(z,x)$ is very small. This is because of a cancellation built into $P$, which however holds only in expectation and not with high probability.
The following two results are essentially proved in \cite[Propositions 2.5--2.6]{HLY}, and we state them without proof. We denote by $\C_+$ the complex upper half-plane.

\begin{lemma} \label{lem:m_0} There exists a deterministic algebraic function $m_0\col \bb C_+ \to \bb C_+$ satisfying $P_0(z,m_0(z))=0$, such that $m_0$ is the Stieltjes transform of a deterministic symmetric probability measure $\varrho_0$. We have $\supp \varrho_0=[-L_0,L_0]$, where
	\[
	L_0=2+O(1/q^2)\,.
	\]
	Moreover,
	\[
	\im m_0(z)\asymp \begin{cases} \sqrt{\tau_0+\eta}\quad & \mbox{if} \quad  E \in [-L_0,L_0]
	\\
	\frac{\eta}{\sqrt{\tau_0+\eta}}  \quad &  \mbox{if} \quad  E \notin [-L_0,L_0]\,,
	\end{cases}
	\]
	and
	\[
	|\partial_2 P_0(z,m_0(z))| \asymp\sqrt{\tau_0+\eta}\,, \quad  \quad	|\partial^2_2 P_0(z,m_0(z))|= 2+O(q^{-2})
	\]
	for all $z \in \wt {\f S}$, where $\tau_0 \equiv \tau_0(z)\deq |E^2-L_0^2|$.
\end{lemma}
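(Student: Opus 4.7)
My plan is to construct $m_0$ as an $O(q^{-2})$ perturbation of the semicircle Stieltjes transform $m_{\mathrm{sc}}$, which is the root of the $q = \infty$ limit $1 + zx + x^2$ of $P_0(z,x)$ taking values in $\C_+$, and then verify the claimed analytic and asymptotic properties by direct perturbative analysis. First, for each $z \in \wt{\f S}$ I apply the implicit function theorem to $P_0(z,\cdot)$ around $m_{\mathrm{sc}}(z)$: away from a small neighborhood of $\{\pm 2\}$ the derivative $\partial_2(1 + zx + x^2)|_{x = m_{\mathrm{sc}}(z)} = z + 2m_{\mathrm{sc}}(z)$ is bounded away from zero, yielding a unique root $m_0(z) = m_{\mathrm{sc}}(z) + O(q^{-2})$. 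Near the edges the same argument is carried out using the local quadratic structure of $P_0$ in $x$, and the analytic branch with $\im m_0(z) > 0$ on $\C_+$ is selected. Since $P_0$ has real coefficients, $m_0(\bar z) = \overline{m_0(z)}$; since the nonlinear part of $P_0$ in $x$ has only even powers, $P_0(-z,-x) = P_0(z,x)$, so by uniqueness of the chosen branch we have $m_0(-\bar z) = -\overline{m_0(z)}$. Together with the asymptotic $m_0(z) = -1/z + O(z^{-3})$ at infinity (read off directly from $P_0$), the Nevanlinna representation theorem yields a symmetric Borel probability measure $\varrho_0$ having $m_0$ as its Stieltjes transform.

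Next, the support edges of $\varrho_0$ are branch points of the algebraic function $m_0$, which solve the coupled system $P_0(z,x) = 0 = \partial_2 P_0(z,x)$. At $q = \infty$ the positive-real solution is $(z,x) = (2,-1)$, and the Jacobian $\det\begin{pmatrix}\partial_1 P_0 & \partial_2 P_0 \\ \partial_1\partial_2 P_0 & \partial_2^2 P_0\end{pmatrix}$ equals $2x = -2 \neq 0$ there, so the implicit function theorem gives $L_0 = 2 + O(q^{-2})$ and $m_0(L_0) = -1 + O(q^{-2})$. Taylor-expanding $P_0$ around $(L_0, m_0(L_0))$ yields $P_0(z,x) = c_1(z - L_0) + c_2(x - m_0(L_0))^2 + \cdots$ with $c_1, c_2 = 1 + O(q^{-2})$, and inverting gives the square-root behavior $m_0(z) - m_0(L_0) \asymp \sqrt{L_0 - z}$ near the upper edge, with the branch selected so that $\im m_0 > 0$ in $\C_+$. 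Elementary computation of $\sqrt{L_0 - z}$ for $z = E + \ii \eta$ then produces the stated two-sided bounds on $\im m_0(z)$. The estimate $|\partial_2^2 P_0(z,m_0(z))| = 2 + O(q^{-2})$ is immediate from the explicit form of $P_0$ and the boundedness of $m_0$ on $\wt{\f S}$. For $|\partial_2 P_0(z,m_0(z))|$, differentiating $P_0(z, m_0(z)) = 0$ in $z$ gives $\partial_2 P_0(z, m_0(z)) \, m_0'(z) = -\partial_1 P_0(z, m_0(z))$; since $\partial_1 P_0(z, m_0(z)) = m_0(z) + O(q^{-2}) \asymp 1$ and $|m_0'(z)| \asymp 1/\sqrt{\tau_0 + \eta}$ by differentiating the square-root singularity, we obtain $|\partial_2 P_0(z, m_0(z))| \asymp \sqrt{\tau_0 + \eta}$.

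The chief technical issue is uniformity in the degree $2\ceil{\beta^{-1}}$ of $P_0$, which grows as $\beta \to 0$. One must check that the other $2\ceil{\beta^{-1}} - 2$ roots of $P_0(z,\cdot)$ do not interfere with the perturbative construction of $m_0$, which could spoil both the implicit function theorem estimates and the analytic branch selection. Since the coefficient of $x^{2k}$ in $P_0$ is $O(q^{-2(k-1)})$, the rescaling $x = q^{k-1} y$ reveals that these spurious roots have magnitude $\asymp q^{k-1} \gg 1$ and are thus uniformly separated from the bounded root $m_0 \asymp 1$; this separation is what makes the perturbative construction go through uniformly for $z \in \wt{\f S}$ and is the main point to verify carefully.
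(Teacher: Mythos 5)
The paper does not prove this lemma; it is cited as ``essentially proved in [Propositions 2.5--2.6, HLY],'' so there is no in-paper argument to compare against. Your perturbative construction of $m_0$ from $m_{\mathrm{sc}}$ via the implicit function theorem, with symmetry of $\varrho_0$ read off from the algebraic symmetry $P_0(-z,-x)=P_0(z,x)$ and the Nevanlinna representation, $L_0$ from the branch-point system $P_0=\partial_2 P_0=0$, and square-root edge behaviour from a Taylor expansion at $(L_0,m_0(L_0))$, is the natural route and presumably what HLY do; the overall structure is sound.

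Two of your technical claims are, however, off. First, the ``chief technical issue'' you worry about---uniformity in the degree of $P_0$ as $\beta\to0$---does not arise in this paper: $\beta\in(0,1/6)$ is fixed, so the degree $2\ceil{\beta^{-1}}$ is a fixed finite number and only $q=N^\beta\to\infty$; there is no simultaneous limit $\beta\to0$. Second, your rescaling for the spurious roots is miscalculated: the correct substitution is $x=qy$, not $x=q^{k-1}y$, under which
$P_0(z,qy)=1+qzy+q^2y^2\bigl(1+\sum_{k\geq2}a_ky^{2(k-1)}\bigr)$,
so the spurious roots are of magnitude $\asymp q$, not $\asymp q^{k-1}$. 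The needed conclusion (that they are $\gg1$ and hence uniformly separated from $m_0\asymp1$) still holds, but it rests on the roots of the fixed-degree, bounded-coefficient polynomial $1+\sum_{k\geq2}a_ky^{2(k-1)}$ being bounded away from $0$, which you should check explicitly (it holds because the constant term is $1$ and the degree and coefficients are $O(1)$). Smaller inaccuracies: since $\partial_1 P_0(z,x)=x$ exactly, the coefficient $c_1$ of $(z-L_0)$ in your edge expansion is $m_0(L_0)=-1+O(q^{-2})$, not $+1+O(q^{-2})$ (and the cross term $\partial_1\partial_2 P_0\,(z-L_0)(x-m_0(L_0))$ is nonzero, though subleading); and the bound $|m_0'(z)|\asymp(\tau_0+\eta)^{-1/2}$ that you invoke must hold on all of $\wt{\f S}$, not just near the edge---the clean way is to obtain it from the closed formula $|m_{\mathrm{sc}}'(z)|=|m_{\mathrm{sc}}(z)|/|\sqrt{z^2-4}|$ and then transfer to $m_0$ via the $O(q^{-2})$ perturbation.
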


Next, define $P(z,x)\deq P_0(z,x)+\cal Z x^2$.
\begin{lemma} \label{lem:m} 
	There exists a random algebraic function $m\col \bb C_+ \to \bb C_+$ satisfying $P(z,m(z))=0$, such that $m$ is the Stieltjes transform of a random symmetric probability measure $\varrho$. We have $\supp \varrho=[-L,L]$, where
	\[
	L=L_0+\cal Z+O_{\prec}\Big(\frac{1}{\sqrt{N}q^3}\Big)\,.
	\]
	Moreover,
	\[
	\im m(z)\asymp \begin{cases} \sqrt{\tau+\eta}\quad & \mbox{if} \quad  E \in [-L,L]
	\\
	\frac{\eta}{\sqrt{\tau+\eta}}  \quad &  \mbox{if} \quad  E \notin [-L,L]\,,
	\end{cases}
	\]
	and
	\begin{equation} \label{2.5}
	|\partial_2 P(z,m(z))| \asymp\sqrt{\tau+\eta}\,, \quad  \quad	|\partial^2_2 P(z,m(z))|= 2+O(q^{-2})
	\end{equation}
	for all $z \in \wt {\f S}$, where $\tau \equiv \tau(z) \deq |E^2-L^2|$.
\end{lemma}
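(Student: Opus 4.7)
The plan is to reduce the study of $P$ to that of $P_0$ via a random change of variables. Define $w \deq z/\sqrt{1+\cal Z}$ and $y \deq x\sqrt{1+\cal Z}$. A direct computation from
\[
P(z,x) \;=\; P_0(z,x) + \cal Z x^2 \;=\; 1 + zx + (1+\cal Z) x^2 + \sum_{k \geq 2} \frac{a_k}{q^{2(k-1)}} x^{2k}
\]
yields $P(z,x) = \wt P_0(w,y)$, where
\[
\wt P_0(w,y) \deq 1 + wy + y^2 + \sum_{k \geq 2} \frac{\wt a_k}{q^{2(k-1)}} y^{2k}\,, \qquad \wt a_k \deq \frac{a_k}{(1+\cal Z)^k}\,.
\]
By \eqref{cal Z} we have $\wt a_k - a_k = O_\prec(1/(\sqrt N q))$, so on the very-high-probability event $\{\abs{\cal Z} \leq 1/2\}$, the polynomial $\wt P_0$ has the same structure as $P_0$ with random but uniformly bounded coefficients.

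Conditioning on this event, I would apply a straightforward extension of Lemma \ref{lem:m_0} to $\wt P_0$. Since the proof of that lemma (see \cite{HLY} and Section \ref{sec6.1}) is essentially an algebraic / implicit-function analysis in the coefficients $a_k$ and is stable under small perturbations, its conclusions go through verbatim: there is a random algebraic function $\wt m_0\col \C_+ \to \C_+$ satisfying $\wt P_0(w, \wt m_0(w))=0$, which is the Stieltjes transform of a random symmetric probability measure $\wt\varrho_0$ with support $[-\wt L_0, \wt L_0]$, together with the obvious analogues of the derivative and imaginary-part bounds of Lemma \ref{lem:m_0}. The standard expansion of the edge of such a polynomial in powers of $q^{-2}$ shows that the leading correction to $L_0$ is $(\wt a_2 - a_2)/q^2 = O_\prec(1/(\sqrt N q^3))$, so $\wt L_0 = L_0 + O_\prec(1/(\sqrt N q^3))$.

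I then define $m(z) \deq \wt m_0(z/\sqrt{1+\cal Z})/\sqrt{1+\cal Z}$; by construction $P(z, m(z)) = \wt P_0(w, \wt m_0(w)) = 0$. Using the standard fact that the pushforward of a probability measure $\mu$ by $x \mapsto cx$ ($c>0$) has Stieltjes transform $m_\mu(\,\cdot\,/c)/c$, one sees that $\varrho$ is the pushforward of $\wt\varrho_0$ under $x \mapsto x\sqrt{1+\cal Z}$; this is manifestly a random symmetric probability measure with support $[-L, L]$, where $L = \wt L_0 \sqrt{1+\cal Z}$. Combining $\sqrt{1+\cal Z} = 1 + \cal Z/2 + O(\cal Z^2)$, $L_0 = 2 + O(q^{-2})$, $\cal Z^2 \prec 1/(Nq^2)$, and $\cal Z/q^2 \prec 1/(\sqrt N q^3)$, and noting $1/(Nq^2) \ll 1/(\sqrt N q^3)$ since $q \leq \sqrt N$, yields $L = L_0 + \cal Z + O_\prec(1/(\sqrt N q^3))$.

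The remaining estimates follow from their $\wt P_0$-analogues through the change of variables. Since $\cal Z$ is real with $\abs{\cal Z} \leq 1/2$ on the relevant event, $\sqrt{1+\cal Z} \asymp 1$, so $\re w \asymp E$, $\im w \asymp \eta$, and $\abs{(\re w)^2 - \wt L_0^2} \asymp \tau$; the bounds on $\im m = \im \wt m_0(w)/\sqrt{1+\cal Z}$ are then immediate. The chain rule gives $\partial_2 P(z,x) = \sqrt{1+\cal Z}\,\partial_2 \wt P_0(w,y)$ and $\partial_2^2 P(z,x) = (1+\cal Z)\, \partial_2^2 \wt P_0(w,y)$; evaluating at $x = m(z)$ (so $y = \wt m_0(w)$) and using the analogues of \eqref{2.5} for $\wt P_0$, together with $\cal Z \prec 1/(\sqrt N q) \prec 1/q^2$, yields \eqref{2.5} for $P$ and $m$. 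The only mildly delicate point is checking that the proof of Lemma \ref{lem:m_0} is robust under $O_\prec(1/(\sqrt N q))$-perturbations of the coefficients $a_k$; this is routine given the algebraic nature of that argument, but it is the one step where some care is needed.
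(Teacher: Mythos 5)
Your argument is correct, and it gives a self-contained proof where the paper itself simply defers to \cite[Propositions 2.5--2.6]{HLY}. The closest analogue within this paper is the Appendix \ref{appA} proof of Lemma \ref{lem 3.9}, where the authors relate $m$ to $m_0$ via a $z$-dependent correction $\Delta(z)$ determined implicitly from $F(z,\Delta)=0$, tuned so that $m(\sqrt{1+\Delta}\,z)=m_0(z)/\sqrt{1+\Delta}$ holds exactly. You instead use the \emph{fixed} rescaling $w=z/\sqrt{1+\cal Z}$, $y=x\sqrt{1+\cal Z}$, under which $P$ transforms exactly into a polynomial $\wt P_0$ of the same form as $P_0$ but with (random, uniformly bounded) coefficients $\wt a_k=a_k/(1+\cal Z)^k$. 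You then invoke the Lemma \ref{lem:m_0}-type analysis for $\wt P_0$, transfer everything back through the algebraic change of variables, and read off $L=\wt L_0\sqrt{1+\cal Z}=L_0+\cal Z+O_\prec(1/(\sqrt N q^3))$ together with the $\im m$ and $\partial_2 P$, $\partial_2^2 P$ bounds. This decomposition --- a $z$-independent scaling plus an $O_\prec(1/(\sqrt N q))$ perturbation of the coefficients --- is conceptually cleaner than the implicitly defined $\Delta(z)$, at the cost of having to re-run the whole $P_0$-analysis on $\wt P_0$ rather than directly comparing $m$ to the already-constructed $m_0$. You correctly single out the one genuinely delicate point: Lemma \ref{lem:m_0} is stated for one fixed deterministic polynomial, so to apply it to $\wt P_0$ one needs the implicit constants in that analysis (edge location, square-root density, derivative bounds) to be uniform and Lipschitz in the bounded coefficient tuple $(a_2,\dots,a_{\ceil{\beta^{-1}}})$; this is standard given the analyticity of the discriminant and edge in the coefficients, but it goes beyond a literal citation of the statement and is what ultimately yields the $O_\prec(1/(\sqrt N q^3))$ estimate for $\wt L_0-L_0$. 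A further minor point you should state explicitly: on the event $\abs{\cal Z}\leq 1/2$, the map $z\mapsto w$ carries $\wt{\f S}$ into a slightly dilated copy of $\wt{\f S}$, so the $\wt P_0$-analogue of Lemma \ref{lem:m_0} must be applied on this enlarged domain (harmless, since the original constants $10$ in \eqref{ss} are not tight).
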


Let $\gamma_i$ denote the $i$th $N$-quantile of $\varrho$, i.e.
\[
\int_{-L}^{\gamma_i} \varrho(x)\,\dd x =\frac{i}{N}\,.
\]  
Similarly, let $\gamma_{0,i}$ and $\gamma_{\mathrm{sc},i}$ denote the $i$th $N$-quantile of $\varrho_0$ and the semicircle distribution respectively. We have the following result, whose proof is given in Appendix \ref{appA} below.
\begin{lemma} \label{lem 3.9}
	We have
	\[
	\gamma_i =\gamma_{0,i} +\frac{\gamma_{\mathrm{sc},i}}{2}\cal Z +O_{\prec}\Big(\frac{1}{\sqrt{N}q^3}\Big)
	\]
	uniformly for $i \in \{1,2,\dots,N\}$. Here $\gamma_{0,i}$ is deterministic and satisfies $\gamma_{0,i}=\gamma_{\mathrm{sc},i}+O(q^{-2})$. 
\end{lemma}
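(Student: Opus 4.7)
}
The plan is to exhibit an explicit rescaling of $m_0$ that approximately solves the equation $P(z,\,\cdot\,)=0$ satisfied by $m$, and then transfer closeness of Stieltjes transforms to closeness of quantiles. Define $\alpha \deq (1+\cal Z)^{-1/2}$ and set $\tilde m(z) \deq \alpha\,m_0(\alpha z)$; then $\tilde m$ is the Stieltjes transform of the rescaled probability measure $\tilde\varrho(x)\deq \alpha\,\varrho_0(\alpha x)$, whose $i$th $N$-quantile is exactly
\[
\tilde \gamma_i \;=\; \alpha^{-1}\gamma_{0,i} \;=\; \sqrt{1+\cal Z}\,\gamma_{0,i} \;=\; \gamma_{0,i} + \tfrac{\gamma_{0,i}}{2}\cal Z + O(\cal Z^2)\,.
\]
Using $\gamma_{0,i} = \gamma_{\mathrm{sc},i}+O(q^{-2})$ together with $\cal Z\prec (\sqrt{N}q)^{-1}$ and $q\leq \sqrt N$, the deterministic error $O(q^{-2})\cdot\cal Z/2$ and the quadratic remainder $O(\cal Z^2)$ are both $\prec(\sqrt{N}q^3)^{-1}$. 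Thus it suffices to prove $\gamma_i-\tilde\gamma_i = O_\prec((\sqrt{N}q^3)^{-1})$.

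The choice of $\alpha$ is made so that, after substituting $x=\alpha y$ in $P(z,x)=1+zx+(1+\cal Z)x^2+\sum_{k\geq 1}a_k q^{-2k}x^{2k+2}$, the coefficient of $y^2$ becomes exactly $1$, producing $P_0(\alpha z,y)$ plus a higher-order discrepancy. Specializing to $y=m_0(\alpha z)$ and using $P_0(\alpha z,m_0(\alpha z))=0$ gives
\[
P\big(z,\tilde m(z)\big) \;=\; \sum_{k\geq 1}\frac{a_k}{q^{2k}}\big(\alpha^{2k+2}-1\big)\,m_0(\alpha z)^{2k+2}\,.
\]
Since $\alpha^{2k+2}-1 = -(k+1)\cal Z + O(\cal Z^2)$ and $m_0(\alpha z)$ is bounded on $\wt{\f S}$, we obtain $|P(z,\tilde m(z))| = O_\prec((\sqrt{N}q^3)^{-1})$ uniformly on $\wt{\f S}$. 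Combined with the stability estimate from Lemma \ref{lem:m} (or the analogous bound at $\tilde m$, which holds since $\tilde m$ is a small perturbation of $m_0$), namely $|\partial_2 P(z,\tilde m(z))|\asymp \sqrt{\tau+\eta}$ and $|\partial_2^2 P|=O(1)$, a Taylor expansion of $P(z,\cdot)$ around $\tilde m(z)$ yields
\[
|m(z)-\tilde m(z)| \;\prec\; \frac{1}{\sqrt{N}q^3\,\sqrt{\tau+\eta}}
\]
uniformly on $\wt{\f S}$.

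The final step is to translate this Stieltjes transform bound into a quantile bound. Both $m$ and $\tilde m$ are Stieltjes transforms of symmetric probability measures with square-root behaviour at their respective edges $L$ and $\alpha^{-1}L_0$, which differ by $O_\prec((\sqrt{N}q^3)^{-1})$ thanks to Lemma \ref{lem:m}. A standard Helffer--Sj\"ostrand argument (integrating against a smooth cutoff of the indicator $\f 1_{(-\infty,x]}$ on the imaginary-axis scale $\eta$ just above $N^{-1}$) converts $|m-\tilde m|\prec (\sqrt{N}q^3\sqrt{\tau+\eta})^{-1}$ into $|F(x)-\tilde F(x)|\prec (\sqrt{N}q^3)^{-1}$ for the two cumulative distribution functions, uniformly in $x$. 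Inverting $F(\gamma_i)=\tilde F(\tilde \gamma_i)=i/N$ and using that in the bulk $\varrho\asymp 1$ while near the edge $\varrho(\gamma_i)\asymp \sqrt{|L-\gamma_i|}\gtrsim N^{-1/3}$ (so that $1/\varrho(\gamma_i) = O(N^{1/3})$ at worst) gives $|\gamma_i-\tilde\gamma_i|\prec (\sqrt{N}q^3)^{-1}$ in the bulk, while at the edge the $3/2$-power law $F(L)-F(\gamma_i)\asymp(L-\gamma_i)^{3/2}$ combined with $|F-\tilde F|\prec (\sqrt{N}q^3)^{-1}$ yields $|\gamma_i-\tilde\gamma_i|\prec (\sqrt{N}q^3)^{-2/3}$, which is even smaller than needed.

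The main obstacle is the edge quantile comparison in the previous paragraph: one must carry the Stieltjes-transform bound, which degenerates like $(\tau+\eta)^{-1/2}$ at the edge, through Helffer--Sj\"ostrand without losing a factor of $\log N$ one cannot afford, and then invert the non-Lipschitz CDF. This is handled by choosing the imaginary-axis scale $\eta$ judiciously at $N^{-2/3}$ (the natural edge scale), by exploiting the symmetry of both measures to reduce work at the two endpoints, and by absorbing any polylogarithmic losses into the $\prec$ relation.
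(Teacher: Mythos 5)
Your choice of comparison function $\tilde m(z) \deq (1+\cal Z)^{-1/2}\,m_0\big((1+\cal Z)^{-1/2}z\big)$ is a clean alternative to the paper's implicit-function-theorem construction of a $z$-dependent shift $\Delta(z)$: instead of solving $F(z,\Delta)=0$ exactly, you just plug the explicit rescaling into $P$ and find $P(z,\tilde m(z))=O_\prec\big((\sqrt N q^3)^{-1}\big)$, which is correct and matches the accuracy obtained in the paper. The Taylor/stability step giving $|m-\tilde m|\prec (\sqrt N q^3)^{-1}(\tau+\eta)^{-1/2}$ is also essentially right (modulo the usual regularization of the denominator at small $\tau+\eta$).

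The gap is in the last step, the CDF-to-quantile transfer, and it is not a technicality. From a \emph{uniform} CDF bound $|F-\tilde F|\prec(\sqrt N q^3)^{-1}$, the $3/2$-power-law inversion at the edge gives a quantile error of size $\big((\sqrt N q^3)^{-1}\big)^{2/3}=N^{-1/3}q^{-2}$, which you assert is ``even smaller than needed.'' This is backwards: since $(\sqrt N q^3)^{-1}<1$, its $2/3$ power is \emph{larger}, and indeed $N^{-1/3}q^{-2}\big/\big(N^{-1/2}q^{-3}\big)=N^{1/6}q\gg1$. Worse, $N^{-1/3}q^{-2}$ is also larger than the accuracy $1/(N^{1/2+\delta}q)$ that Lemma \ref{lem 3.9} must deliver to the proof of Theorem \ref{mainthm} (since $q\le N^{1/6}$ and $\delta<1/6$), so this loss is fatal for the downstream application. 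The underlying reason is that a uniform $L^\infty$ CDF bound throws away information near the edge: a shift of the support by $(\sqrt N q^3)^{-1}$ only moves $O\big((\sqrt N q^3)^{-1}\sqrt{|L-x|}\big)$ of mass near $x$, and it is this weighted bound — not the uniform one — that inverts to $|\gamma_i-\tilde\gamma_i|\prec(\sqrt Nq^3)^{-1}$ after a case analysis on the location of $\gamma_i$. The paper obtains the weighted bound by comparing densities pointwise, with the extra factor $|\widetilde L^2-E^2|^{-1/2}$, and then integrating; a Helffer–Sj\"ostrand argument starting from $|m-\tilde m|\prec(\sqrt N q^3)^{-1}(\tau+\eta)^{-1/2}$ does not recover it, because near the edge $\re(m-\tilde m)$ dominates $\im(m-\tilde m)$ and the modulus bound is far from sharp for the imaginary part that governs the density. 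You would need to track $\im(m-\tilde m)$ (or the densities directly, as the paper does) to close the argument.
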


\section{Outline of the proof} \label{sec}
In this section we describe the strategy of the proof. The foundation of the proof is the method of recursive self-consistent estimates for high moments using the cumulant expansion introduced in \cite{HK}, building on the previous works \cite{KKP96, Kho1, Kho2}. It  was first used to study sparse matrices in \cite{LS1}, which also introduced the important idea of estimating moments of a self-consistent polynomial in the trace of the Green's function. There, the authors derived a precise local law near the edge and obtained the extreme eigenvalue fluctuations for $p \gg N^{-2/3}$. Subsequently, in \cite{HLY}, by developing the key insight that for $N^{-7/9} \ll p \ll  N^{-2/3}$ the leading fluctuations are fully captured by the random variable $\cal Z$ from \eqref{zzzz}, the authors obtained the extreme eigenvalue fluctuations for $N^{-7/9}\ll p \ll N^{-2/3}$. In this paper we use the same basic strategy as \cite{HLY,LS1}. As in most results on the extreme eigenvalue statistics, the main difficulty is to establish rigidity bounds for the extreme eigenvalues.

The proof of Theorem \ref{mainthm} consists of essentially two separate results: an upper bound on the largest eigenvalue of $H$ (Proposition \ref{prop: upper bound} below) and a rigidity estimate in the bulk (Lemma \ref{theorem 2.1} below). The latter is a modification of \cite[Proposition 2.9]{HLY}, and our main task is to show the former.

We use the random spectral parameter $z=L_0+\cal Z +w$ introduced in \cite{HLY}, where $w=\kappa+\ii \eta$ is deterministic. In order to obtain the estimate of Proposition \ref{prop: upper bound} for the largest eigenvalue of $H$ using the Green's function, one has to preclude the existence of an eigenvalue near $\re z$ for a suitable $z$, which follows provided one can show
\begin{equation} \label{kfc}
\im \ul{G} \ll \frac{1}{N\eta}
\end{equation}
(see \eqref{leijun} and the discussions afterwards for more details). The proof of \eqref{kfc} is the main work of our proof. It relies on the following key new ideas.

\begin{enumerate}
	\item[1.] In the previous works \cite{HLY,LS1}, following the work \cite{EYY3} on Wigner matrices, \eqref{kfc} is always proved using
	\[
	\im \ul{G}\leq \im m + |\ul{G}-m|
	\]
	and estimating the two terms on right-hand side separately. There, the term $|\ul{G}-m|$ is estimated by obtaining an estimate on $\abs{P(z, \ul G)}$ from which an estimate on $|\ul{G}-m|$ follows by inverting a self-consistent equation associated with the polynomial $P$. In our current setting,
	$\abs{\ul G - m}$ turns out to be much larger than $\im \ul{G}$ and hence this approach does not work. Thus, we have to estimate $|\im (\ul G - m)|$ instead of $|\ul{G}-m|$ and take advantage of the fact that it is much smaller than $|\ul{G}-m|$. To that end, we first estimate $\abs{\im P(z, \ul G)}$ by exploiting a crucial cancellation arising from taking the imaginary part, which yields stronger bounds on $\abs{\im P(z,\ul G)}$ than are possible for $\abs{P(z, \ul G)}$.
	
	\item[2.]
	To estimate $\abs{\im (\ul G - m)}$ from $\abs{\im P(z, \ul G)}$, we have to invert a self-consistent equation associated with $\im P$. This equation is only stable provided that $\abs{\ul G - m}$ is small enough.
	\item[3.]
	The main work is to derive a strong enough bound on $\abs{\ul G - m}$ to ensure the stability of the self-consistent equation for $\im (\ul G - m)$. The precision required for this step is much higher than that obtained in \cite{HLY}. Our starting point is the same as in \cite{LS1,HLY}: estimating high moments $\E \abs{P}^{2n}$ of $P \equiv P(z,\ul{G})$ using the cumulant expansion. Note that $P$ is constructed in such a way that the expectation $\E P(z, \ul G)$ is very small by a near-exact cancellation (see Lemma \ref{lem:exp P_0}). In the high moments, the interactions between different factors of $P$ and $\ol P$, corresponding to the fluctuations of $P$, give rise to error terms whose control is the key difficulty of the proof. They cannot be estimated naively and have to be re-expanded to arbitrarily high order using a recursive application of the cumulant expansion. These error terms typically contain the partial derivative $\partial_2 P$ of $P$ in the second argument $\ul G$. As soon as $P$ is differentiated, the cancellation built into $P$ is lost. However, we nevertheless need to exploit remnants of this cancellation that are inherited by these higher-order terms containing derivatives of $P$. We track them by rewriting the partial derivative $\partial_2 P$ in terms of the derivative $\partial_w P = \partial_1 P + \partial_2 P \partial_w \ul G$ and an error term, and then use that $\partial_w$ commutes with the derivative $\frac{\partial}{\partial H_{ij}}$ from the cumulant expansion to obtain a form where the cancellation from the next cumulant expansion is obvious also for the derivative of $P$.
\end{enumerate}

Let us explain the above points in more detail. The proof of \eqref{kfc} contains two steps. The main step is to bound the high moments of $P$ in Proposition \ref{prop1}. We start with
\[
\bb E |P|^{2n}=\frac{1}{N}\sum_{i,j} \bb E H_{ij}G_{ji}P^{n-1}P^{*n}+\bb E (P-\ul{HG}) P^{n-1}P^{*n}\,.
\]
We expand the first term on the right-hand side by Lemma \ref{lem:cumulant_expansion} to get 
\begin{multline} \label{3.2}
\bb E |P|^{2n}=\frac{1}{N}\sum_{k=1}^\ell\frac{1}{k!}\sum_{s=1}^k {k \choose s}\sum_{i,j} \cal C_{k+1}(H_{ij}) \bb E \bigg[  \frac{\partial^s( P^{n-1}P^{*n})}{\partial H_{ij}^s} \frac{\partial^{k-s} G_{ij}}{\partial H_{ij}^{k-s}}\bigg]\\
+\frac{1}{N}\sum_{k=1}^\ell\frac{1}{k!}\sum_{i,j} \cal C_{k+1}(H_{ij}) \bb E \bigg[  \frac{\partial^k G_{ij}}{\partial H_{ij}^k} P^{n-1}P^{*n}\bigg]
+\bb E (P-\ul{HG}) P^{n-1}P^{*n}+\mbox{error}\,.
\end{multline}
Note that the polynomial $P$ is designed such that
\begin{equation*}
\bb E P=\frac{1}{N}\sum_{i,j} \bb E H_{ij}G_{ji}+\bb E (P-\ul{HG}) =\frac{1}{N}\sum_{k=1}^\ell\frac{1}{k!}\sum_{i,j} \cal C_{k+1}(H_{ij}) \bb E \bigg[  \frac{\partial^k G_{ij}}{\partial H_{ij}^k}\bigg]
+\bb E (P-\ul{HG}) +\mbox{error}\approx 0\,,
\end{equation*}
and for the same reason there are cancellations between the second and third terms on right-hand side of \eqref{3.2}.  It turns out that the most dangerous terms on right-hand side of \eqref{3.2} are contained within the first sum. One representative error term, arising from $k = 3$ and $s = 2$ in \eqref{3.2}, is
\begin{equation}  \label{2.1}
\frac{1}{N}\sum_{i,j} \cal C_{4}(H_{ij}) \bb E \Big[  (\partial_2 \ol P) N^{-1}(G^{*2})_{ii}G^*_{jj} G_{ii}G_{jj}|P|^{2n-2}\Big]\,,
\end{equation}
which involves the interaction of $P$ and $\ol P$ and hence depends on the fluctuations of $P$. 

To get a sharp enough estimate of \eqref{2.1}, it is not enough to take absolute value inside the expectation and then estimate $|\partial_2 \ol P|$ and $|N^{-1}(G^{*2})_{jj}|$ by Lemmas \ref{lem:m} and \ref{Ward} respectively. Instead, the key idea is to rewrite the error term, so that it becomes amenable to another expansion step, as\footnote{Here $\partial_{\ol w}$ denotes the antiholomorphic derivative in the complex variable $w$.}
\begin{equation} \label{2.3}
(\partial_2\ol P) N^{-1}(G^{*2})_{ii}G^*_{jj}G_{ii}G_{jj}=N^{-1} \partial_{\ol w} P(\ol z ,\ul{G}^*) \ul{G^*}\,\ul{G}^2+\mbox{error}\,,
\end{equation}
which comes from the approximations
\[
(G^{*2})_{ii}\approx \ul{G^{*2}}\,, \quad G^*_{jj} \approx \ul{G^*}\,, \quad G_{ii},G_{jj} \approx \ul{G}\,, \quad \mbox{and} \quad (\partial_2\ol P) \ul{G^{*2}}= \partial_{\ol w} P(\ol z,\ul{G}^*) -\ul{G^*}
\]
which of course have to be justified.
Ignoring the error terms generated in this process, we find that \eqref{2.1} is reduced to
\begin{multline*}
\frac{1}{N^2}\sum_{i,j}\cal C_4(H_{ij})\bb E\Big[ \partial_{\ol w} P(\ol z,\ul{G}^*) \ul{G^*}\,\ul{G}^2|P|^{2n-2}\Big]\\=\frac{1}{N^3}\sum_{i,j,k,l}\cal C_4(H_{ij})\bb E \Big[ \partial_{\ol w} (H_{kl}G^{*}_{lk}) \ul{G^*}\,\ul{G}^2|P|^{2n-2}\Big]
+\frac{1}{N^2}\sum_{i,j}\cal C_4(H_{ij})\bb E\Big[ \partial_{\ol w} ( \ol P-\ul{HG^*}) \ul{G^*}\,\ul{G}^2|P|^{2n-2}\Big]\,.
\end{multline*}
Since $\partial_{\ol w}$ and $\partial/\partial H_{ij}$ commute, we can again expand the first term on the right-hand side  with Lemma \ref{lem:cumulant_expansion}. In this way the operator $\partial_{\ol w}$ plays no role in our computation, and we can get the desired estimate using the smallness of $\bb E \ol P$. 

A major difficulty in the above argument results from the fact that we need to track carefully the algebraic structure of the error terms arising from repeated applications of simplifications of the form \eqref{2.3}. In particular, such terms occur inside expectations multiplying lots of other terms, and we need to ensure that such approximations remain valid in general expressions. In order to achieve this, we implement the
ideas in \cite{HK2,H19} to construct a hierarchy of Schwinger-Dyson equations for a sufficiently large class of polynomials in the entries of the Green's function.

A desired bound for $P$, Proposition \ref{prop1}, together with the stability analysis of the self-consistent equation associated with $P$ (Lemma \ref{lem6.2} below), yields the key estimate
\begin{equation} \label{rookie}
|\ul{G}-m |\ll \sqrt{\kappa}\,,
\end{equation}
where we recall that $\re z = L_0 + \cal Z + \kappa$.
This estimate is crucial in establishing the stability of the self-consistent equation associated with $\im P$ (see Lemma \ref{mainprop}). More precisely, a Taylor expansion shows
\[
P(z,\ul{G})=\partial_2 P(z,m)(\ul{G}-m)+\frac{1}{2}\partial_2^2 P(z,m)(\ul{G}-m)^2 +\cdots\,.
\]
As $\partial_2^2 P(z,m)\approx 2$, taking the imaginary part and rearranging terms yields
\begin{multline} \label{jkl}
\re \partial_2 P(z,m) \im (\ul{G}-m)=\im P(z,\ul{G})-\im \partial_2 P(z,m) \re (\ul{G}-m)\\
-2  \re \nc (\ul{G}-m)\im (\ul{G}- m)+\cdots\,.
\end{multline}
It can be showed that $\abs{\re \partial_2 P(z,m)} \nc \asymp \sqrt{\kappa}$, and we move this factor to the right-hand side of \eqref{jkl} to obtain a recursive estimate of $\im (\ul{G}-m)$. The third term on right-hand side of \eqref{jkl} says that in order for this estimate to work, we need 
\[
|\ul{G}-m| \ll \abs{\re \partial_2 P(z,m)} \nc \asymp \sqrt{\kappa}\,,
\]
which is exactly \eqref{rookie}.

The final step in showing \eqref{3.2} is to bound the high moments of $\im P$ in Proposition \ref{prop3.3}. As $\im P$ is much smaller than $P$ near the edge, compared to $\bb E |P|^{2n}$, we obtain a much smaller bound for $\bb E \abs{\im P}^{2n}$. The proof is similar to that of Proposition \ref{prop1}, but contains significantly fewer expansions. Combining Proposition \ref{prop3.3} and Lemma \ref{mainprop} leads to our desired estimate of $\im G$, which is
\[
|\im \ul{G}-\im m| \prec \frac{1}{N^{1+\delta}\eta} \,.
\]
As we prove the above for $z$ satisfying $\im m \ll \frac{1}{N\eta}$, we get \eqref{kfc} as desired.

\section{Proof of Theorem \ref{mainthm}} \label{sec44}
In this section we prove Theorem \ref{mainthm}. The key result is the following upper bound on the largest eigenvalue of $H$. The proof is postponed to Section \ref{sec3}.

\begin{proposition} \label{prop: upper bound}
Denoting by $\mu_N$ the largest eigenvalue of $H$, we have
	\begin{equation} \label{goal}
(	\mu_{N}-L_0-\cal Z )_+\prec \frac{1}{N^{1/2+\delta}q}\,.
	\end{equation}
\end{proposition}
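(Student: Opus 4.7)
My plan follows the strategy outlined in Section \ref{sec}. The goal is to show that for a random spectral parameter of the form $z = L_0 + \cal Z + \kappa + \ii \eta$ with $\kappa \asymp N^{\varepsilon - 1/2 - \delta}/q$ and $\eta$ chosen much smaller than $\kappa$ (but still $\geq N^{-1+c}$ so that $z$ lies in a suitable extension of $\f S$),
\[
\im \ul G(z) \ll \frac{1}{N\eta}
\]
holds with very high probability. Via the spectral representation $N \im \ul G(z) = \sum_i \eta /\pb{(\lambda_i - \re z)^2 + \eta^2}$, this precludes any eigenvalue of $H$ within distance $\eta$ of $\re z = L_0 + \cal Z + \kappa$. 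A standard grid-and-continuity argument in $\kappa$, seeded by the a priori edge bound from Proposition \ref{refthm1} (valid for $\kappa$ of order one) and decreased down to $\kappa \asymp N^{\varepsilon - 1/2 - \delta}/q$ along a polynomially fine net, then yields \eqref{goal}.

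The Green's function bound is the output of two successive steps. The first step is a preliminary rigidity bound $|\ul G(z) - m(z)| \ll \sqrt\kappa$, derived via a high-moment estimate on $P = P(z, \ul G)$. Starting from
\[
\bb E |P|^{2n} = \frac{1}{N}\sum_{i,j} \bb E\qb{ H_{ij} G_{ji}\, P^{n-1} P^{*n}} + \bb E\qb{(P - \ul{HG})\, P^{n-1} P^{*n}}
\]
and applying Lemma \ref{lem:cumulant_expansion} to the first term, three types of contributions appear. Those in which no factor of $P^{n-1}P^{*n}$ is differentiated cancel to leading order against $\bb E(P - \ul{HG})$, by the very construction of $P$ (Lemma \ref{lem:exp P_0}). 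The error terms in which $P$ or $\ol P$ is differentiated --- for instance the $k=3$, $s=2$ contribution $\frac{1}{N}\sum_{i,j}\cal C_4(H_{ij})\, \bb E\qb{(\partial_2 \ol P)\, N^{-1}(G^{*2})_{ii} G^*_{jj} G_{ii} G_{jj}\, |P|^{2n-2}}$ --- must be re-expanded. The essential trick is to replace local Green's function factors by their normalized-trace counterparts using $G_{ii} \approx \ul G$ and the Ward identity, and then exploit the identity
\[
(\partial_2 \ol P)\, \ul{G^{*2}} \;=\; \partial_{\ol w} P(\ol z, \ul G^*) - \ul G^*
\]
to pull out an antiholomorphic derivative $\partial_{\ol w}$, which commutes with $\partial/\partial H_{ij}$. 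The remaining factor $\ol P - \ul{HG^*}$ is then again amenable to cumulant expansion benefiting from the cancellation built into $P$. Iterated carefully within a hierarchy of Schwinger-Dyson-type identities for polynomials in the entries of $G$ (along the lines of \cite{HK2, H19}), this yields a sharp bound on $\bb E|P|^{2n}$; combined with stability of the algebraic equation $P(z, \cdot) = 0$ near $m(z)$ (via Lemma \ref{lem:m} and $|\partial_2^2 P| \asymp 1$), this converts to $|\ul G - m| \ll \sqrt\kappa$.

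The second step improves this to a bound on the imaginary part of $\ul G - m$, working with $\im P$ rather than $|P|$. A Taylor expansion of $P(z, \ul G)$ around $m(z)$ together with extraction of imaginary parts yields
\[
\re \partial_2 P(z, m)\, \im(\ul G - m) = \im P(z, \ul G) - \im \partial_2 P(z, m)\, \re(\ul G - m) - 2 \re(\ul G - m)\, \im(\ul G - m) + \cdots,
\]
using $\partial_2^2 P(z, m) = 2 + O(q^{-2})$. By \eqref{2.5}, $|\re \partial_2 P(z, m)| \asymp \sqrt\kappa$, so the equation is stable precisely when $|\ul G - m| \ll \sqrt\kappa$, which is exactly the output of step one --- this is why such high precision was required. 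A parallel but simpler moment analysis for $\im P$ (which is much smaller than $|P|$ near the edge because taking the imaginary part kills the leading real contributions) yields $|\im \ul G - \im m| \prec 1/(N^{1+\delta}\eta)$. Combined with $\im m(z) \asymp \eta/\sqrt{\kappa + \eta}$ from Lemma \ref{lem:m} and the choice $\eta \sim \kappa^{1/4} N^{-1/2-c}$ for small $c > 0$, one obtains $\im \ul G(z) \ll 1/(N\eta)$, as required.

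The principal obstacle is the first step: tracking the algebraic structure of the error terms through iterated applications of the cumulant expansion and the rewriting $\partial_2 P \leadsto \partial_{\ol w} P$ inside arbitrary polynomial expressions in entries of $G$, with controllable error sizes at every stage. This requires a carefully constructed recursive scheme on a sufficiently rich class of observables so that the rewriting procedure closes on itself, which is the substantive technical content of the paper.
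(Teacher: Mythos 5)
Your proposal follows the paper's own strategy essentially verbatim: the same two-layer scheme, first a high-moment bound on $P(z,\ul G)$ over the domain $\f Y$ which, combined with the stability Lemma \ref{lem6.2}, yields $|\ul G - m| \prec \sqrt\kappa\, N^{-\delta}$, then the imaginary-part Taylor expansion together with the moment bound on $\im P$ over $\f Y_*$ giving $|\im\ul G - \im m| \prec (N^{1+\delta}\eta)^{-1}$; the same key rewriting of $\partial_2 P$ in terms of the $w$-derivative $\partial_w P = \partial_1 P + \partial_2 P\, \partial_w\ul G$ (equivalently, the $\partial_{\ol w}$ identity you display) so that the cancellation built into $P$ survives iterated cumulant expansions; and the same recourse to a closed hierarchy of abstract polynomials in entries of $G$ to organize the re-expansions.

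One small slip in the preamble: ruling out eigenvalues at macroscopic distance above $L_0 + \cal Z$, i.e.\ the range $\kappa\gtrsim 1$, is not a consequence of Proposition \ref{refthm1} alone. At $E=3$, $\eta = N^{-1+c}$, that local law only yields $\im\ul G \prec 1/q + N^{-c/2}$, which does not contradict the eigenvalue signature $\im\ul G \gtrsim 1/(N\eta)=N^{-c}$ when $q$ is a small power of $N$: the stated local law lacks the usual improvement by a factor of $\im m_{\mathrm{sc}}$ outside the spectrum. The paper instead cites the separate bound $\|H\|-2 \prec 1/q$ from \cite[Lemma 4.4]{EKYY1}, obtained by a moment-method argument, to seed the union-over-a-net argument. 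With that substitution your outline is faithful to the paper's proof.
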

We also need the following result to estimate the eigenvalues away from the spectral edges. The proof is postponed to Section \ref{sec8}.
\begin{lemma}\label{theorem 2.1}
Let $\rho$ denote the empirical eigenvalue density of $H$, and set
$$
I_1\deq \left[-\frac{1}{2},L_0+\cal Z-\frac{2}{N^{1/2+\delta}q}\right]\,, \quad I_2\deq \left[L_0+\cal Z-\frac{2}{N^{1/2+\delta}q},L_0+\cal Z+\frac{2}{N^{1/2+\delta}q}\right]\,.
$$ 
We have
\begin{equation} \label{eqnappA}
|\rho(I)-\varrho(I)| \prec \frac{1}{N}+\sqrt{\frac{|I|}{Nq^3}}
\end{equation}
for all $I \subset I_1$ and $I=I_2$.
\end{lemma}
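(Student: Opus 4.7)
The plan is to represent $\rho(I)-\varrho(I)$ via the Helffer-Sjöstrand formula as an integral of $\ul G - m$ over the upper half-plane, and then insert sharp control on $\ul G - m$ coming from Proposition \ref{refthm1}, the refinements developed in Sections \ref{sec6.1}--\ref{sec5}, and the edge rigidity of Proposition \ref{prop: upper bound}. The scheme is standard and parallels \cite[Proposition 2.9]{HLY}; the novelty is the sharper precision $\sqrt{|I|/(Nq^3)}$ and the uniformity over the bulk intervals $I \subset I_1$.

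Concretely, I would approximate $\f 1_I$ by a smooth cutoff $\chi$ with mesoscopic transition width $\eta_0 \ll 1/N$, form an almost-analytic extension $\wt\chi$ supported in $\{|\im z|\le 1\}$, and apply Helffer-Sjöstrand to obtain
\begin{equation*}
\int \chi\, \dd(\rho-\varrho) \;=\; \frac{1}{\pi}\int_{\bb C} \partial_{\bar z}\wt\chi(z)\,[\ul G(z)-m(z)]\,\dd^2 z\,.
\end{equation*}
Replacing $\chi$ by $\f 1_I$ contributes an endpoint error controlled by counting eigenvalues in the $\eta_0$-transition region via the local law, yielding an $O(1/N)$ contribution. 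For $I\subset I_1$, whose endpoints lie at distance $\ge 2N^{-1/2-\delta}/q$ from the edge, a dyadic decomposition in $\eta$ combined with a refined averaged local-law bound of the form $|\ul G - m| \prec 1/(N\eta) + 1/(q^2\sqrt{\kappa+\eta})$ (in which the first term is the standard averaged local law and the second comes from the $\cal Z$-corrected refinement of $|m - m_\mathrm{sc}|$ together with the sparse corrections built into $P_0$) produces both stated error terms: the $1/N$ from integration of $1/(N\eta)$ down to the mesoscopic scale, and the $\sqrt{|I|/(Nq^3)}$ from the localisation of the $q^{-2}/\sqrt{\kappa+\eta}$ term near the two endpoints of $I$.

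For $I = I_2$, I would deploy Proposition \ref{prop: upper bound}: with very high probability $\mu_N \le L_0 + \cal Z + N^{-1/2-\delta}/q$, so no eigenvalues of $H$ lie in the upper half of $I_2$. This reduces the estimate to controlling $\rho([L_0+\cal Z - 2N^{-1/2-\delta}/q,\,L_0+\cal Z])$, which is handled by the same Helffer-Sjöstrand scheme, now near the edge and exploiting the square-root density of $\varrho$ from Lemma \ref{lem:m} so that $\varrho(I_2) \asymp (N^{1/2+\delta}q)^{-3/2}$, together with the sharp imaginary-part bound $|\im(\ul G - m)| \prec (N\eta)^{-1}$ below the edge window coming from Proposition \ref{prop3.3}.

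The main obstacle is the production of the $q^{-3/2}$ rather than the $q^{-1}$ scaling of Proposition \ref{refthm1}; this upgrade requires the refined local law for $\ul G - m$ that is an output of the high-moment machinery for $P$ and $\im P$ developed in Propositions \ref{prop1} and \ref{prop3.3}. Once these sharp inputs are in hand, the remaining work is Helffer-Sjöstrand bookkeeping to extract the two stated error scales, together with the care needed at the edge to splice the ``no eigenvalue past $L_0+\cal Z + N^{-1/2-\delta}/q$'' estimate with the Helffer-Sjöstrand bound on the lower half-interval.
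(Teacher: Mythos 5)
Your overall strategy (Helffer--Sj\"ostrand applied to a refined averaged local law for $\ul G - m$) does match the paper's plan, but the specific local law you write down is not the one the paper proves, and it is genuinely too weak to yield \eqref{eqnappA}. You also miss the key ingredient of the argument.

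The paper first proves a bulk analogue of Proposition~\ref{prop1}, namely Proposition~\ref{prop9}: on the domain $\f D = \{\kappa+\ii\eta \col \eta+|\kappa|\ge N^{-1/2-\delta}/q\}$ one has $P(z,\ul G)\prec \bigl(\tfrac1{N\eta}+\tfrac1{\sqrt{N\eta}q^{3/2}}\bigr)(\Psi+\sqrt{\eta+|\kappa|})$, assuming $|\ul G-m|\prec\Psi$. Combined with the stability Lemma~\ref{lem6.2} and a bootstrap, this yields the refined local law
\[
|\ul G - m| \prec \frac{1}{N\eta} + \frac{1}{(N\eta)^{1/2}q^{3/2}}
\]
uniformly on $\f D$, and it is \emph{this} bound, fed into the standard Helffer--Sj\"ostrand counting argument (\cite[Prop.~3.2]{H19}), that produces $1/N + \sqrt{|I|/(Nq^3)}$: choosing the transition width of the cutoff $\sim|I|$, the second term contributes $N|I|\cdot \tfrac{1}{(N|I|)^{1/2}q^{3/2}} = \sqrt{|I|/(Nq^3)}$. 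Your proposed local law $1/(N\eta) + 1/(q^2\sqrt{\kappa+\eta})$ is a different object. The second term does not decay in $N\eta$, only in $q$ and $\kappa+\eta$; at $\eta\sim 1$ it is of size $q^{-2}=N^{-2\beta}$, whereas the paper's bound is $N^{-1/2}q^{-3/2}=N^{-1/2-3\beta/2}$, which is strictly smaller for all $\beta<1$. Integrating your second term over $I$ and over the $\eta$-range gives something of order $\sqrt{|I|}/q^2$ (or $|I|/q^2$ in the interior), which exceeds $\sqrt{|I|/(Nq^3)}$ by a factor $\sqrt{N/q}$; the ``localisation near the endpoints'' you invoke does not save this. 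Also note that your claimed origin of the $q^{-2}/\sqrt{\kappa+\eta}$ term (``the $\cal Z$-corrected refinement of $|m-m_\mathrm{sc}|$'') is off: $m$ already incorporates the $P_0$ and $\cal Z$ corrections by construction, so $|\ul G-m|$ has no such deterministic $q^{-2}$ piece.

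Two further issues. First, Propositions~\ref{prop1} and \ref{prop3.3}, which you cite as the source of the refined local law, are stated only on the near-edge domains $\f Y$ and $\f Y_*$; they do not cover the bulk region $E\in I_1$. The bulk estimate requires the separate Proposition~\ref{prop9} on $\f D$, which you never invoke. Second, your treatment of $I_2$ via Proposition~\ref{prop: upper bound} is logically admissible (no circularity) but not needed, and it is not watertight as stated: $(\mu_N - L_0 - \cal Z)_+\prec N^{-1/2-\delta}/q$ carries an $N^{\epsilon}$ slack, so one cannot conclude that with very high probability there are no eigenvalues in the upper half of $I_2$, whose width is only $2N^{-1/2-\delta}/q$. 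The paper instead treats $I_2$ like any other interval: its endpoints have $|\kappa| = 2N^{-1/2-\delta}/q$ and hence lie in $\f D$, so the same local law and Helffer--Sj\"ostrand argument apply directly.
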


\begin{proof}[Proof of Theorem \ref{mainthm}]
We prove \eqref{1.1} for $i \in \{\floor{N/2}-1,\dots,N-1\}$, and the same analysis works for the other half of the spectrum.
Let $i \in \{\floor{N/2}-1,\dots,N-1\}$ and suppose first that
\begin{equation} \label{zzz}
\gamma_i\,, \lambda_i \geq L_0+\cal Z-\frac{2}{N^{1/2+\delta}q}\,.
\end{equation}
Then trivially we have $\gamma_i \in I_2$ with very high probability. In addition, by the Cauchy interlacing theorem we have $\lambda_i \leq \mu_N$, and together with Proposition \ref{prop: upper bound} we obtain
\[
(\lambda_i-L_0-\cal Z)_+ \prec  \frac{1}{N^{1/2+\delta}q}\,.
\]
Thus by the triangle inequality we get
\begin{equation} \label{eqn:edge}
\lambda_i-\gamma_i \prec \frac{1}{N^{1/2+\delta}q}\,.
\end{equation}

Next, suppose \eqref{zzz} does not hold, namely
\[
\min \{\gamma_i, \lambda_i\}=L_0 + \cal Z -\frac{2}{N^{1/2+\delta}q}-a
\]
for some $a\in (0,3)$. Let $\nu$ be the empirical eigenvalue density of $A$. By the Cauchy interlacing theorem,
\begin{equation*}
|\rho(I) -\nu(I)| \leq \frac{1}{N}
\end{equation*} 
for any $I \subset \bb R$. Together with \eqref{eqnappA}, we have
\begin{equation}\label{huangjinshidai}
|\nu(I)-\varrho(I)| \prec \frac{1}{N}+\sqrt{\frac{|I|}{Nq^3}}
\end{equation}
for all $I \subset I_1$ or $I=I_2$. Let $f(E)\deq \varrho([E,\infty))$. Then
\[
f(\gamma_i)=\frac{N+1-i}{N}=\nu((\lambda_i,\infty])=f(\lambda_i)+O_{\prec}\bigg(\frac{1}{N}+\sqrt{\frac{|I_2|+a}{Nq^3}}\,\bigg)\,,
\]
where in the last step we used \eqref{huangjinshidai}. By the definition of $I_2$ we get $ |f(\gamma_i)-f(\lambda_{i})| \prec N^{-\delta}(|I_2|+a)^{3/2}.$ Together with the uniform square root behaviour of the density of $\varrho$ near $L$ from Lemma \ref{lem:m} we therefore have
\[
f(\lambda_i) \vee f(\gamma_i) \geq c (|I_2|+a)^{3/2}\geq N^{\delta} |f(\gamma_i)-f(\lambda_i)|
\]
with very high probability, where $c > 0$ is a constant. Thus
\[
f(\gamma_i)=f(\lambda_i)(1+O(N^{-\delta}))
\]
with very high probability. Since $f(x)\asymp (L-x)^{3/2}$ for $x \in I_1$,  we deduce that $L-\gamma_i\asymp L-\lambda_{i}$ with very high probability. Moreover,  by Lemma \ref{lem:m} we have $f'(x)\asymp (L-x)^{1/2}$ for $x \in I_1$, which implies $f'(\lambda_i)\asymp f'(\gamma_i)$ with very high probability, and hence
that $f'(x)\asymp f'(\gamma_i)$ with very high probability for any $x$ between $\lambda_{i}$ and $\gamma_i$. Thus the mean value theorem yields
\[
|\lambda_i-\gamma_i|\asymp \frac{|f(\lambda_i)-f(\gamma_i)|}{f'(\gamma_i)}\prec \frac{1}{N\sqrt{|I_2|+a}}+\frac{1}{\sqrt{N}q^{3/2}} \prec \frac{1}{\sqrt{N}q^{3/2}}\,.
\]
Using the above relation, together with \eqref{eqn:edge} and Lemma \ref{lem 3.9}, we conclude that
\[
\lambda_i-\gamma_{0,i}-\frac{\gamma_{\mathrm{sc},i}}{2}\cal Z \prec \frac{1}{N^{1/2+\delta}q}\,.
\]
We then take the expectation using Lemma \ref{prop_prec}, which yields
\[
\bb E \lambda_i -\gamma_{0,i} \prec \frac{1}{N^{1/2+\delta}q}\,.
\]
Combining the above two formulas we have \eqref{1.1} as desired.
\end{proof}

\section{Abstract polynomials and the construction of $P_0$}\label{sec6.1}

\begin{convention*}
Throughout this section, $z \in \f S$ is deterministic.
\end{convention*}

In this section we construct the polynomial $P_0$ and prove Lemma \ref{lem:exp P_0}. It was essentially proved in \cite[Proposition 2.9]{HLY}; here we follow a more systematic approach, based on a class of abstract polynomials in the Green's function entries, which provides an explicit proof. We shall generalize this class further in Section \ref{sec4}.

\subsection{Abstract polynomials, Part I} \label{sec4.1}
We start by introducing a notion of formal monomials in a set of formal variables, which are used to construct $P_0$.  Here the word \emph{formal} refers to the fact that these definitions are purely algebraic and we do not assign any values to variables or monomials.

\begin{definition} \label{def:cal_V}
Let $\{i_1,i_2,\dots\}$ be an infinite set of formal indices. To $\sigma, \nu_1 \in \N$, $\theta \in \R$, $x_1, y_1, \dots, x_\sigma, y_\sigma \in \{i_1, \dots, i_{\nu_1}\}$, and a family $(a_{i_1,\dots,i_{\nu_1}})_{1\leq i_1,\dots,i_{\nu_1}\leq N}$ of uniformly bounded complex numbers we assign a formal monomial
\begin{equation} \label{def_T}
T = a_{i_1,\dots,i_{\nu_1}}N^{-\theta} G_{x_1 y_1} \cdots G_{x_\sigma y_\sigma}\,.
\end{equation}
We denote $\sigma(T) = \sigma$, $\nu_1(T) = \nu_1$, $\theta(T) = \theta$, and $\nu_2(T) \deq \sum_{k = 1}^\sigma \f 1_{x_k \neq y_k}$. Thus, $\sigma(T)$ is the degree of $T$ and $\nu_2(T)$ is the number of off-diagonal $G$s. We denote by $\cal T$ the set of formal monomials $T$ of the form \eqref{def_T}.
\end{definition}

\begin{definition} \label{def:evaluation}
We assign to each monomial $T \in  \cal T$ with $\nu_1 = \nu_1(T)$ its \emph{evaluation}
\begin{equation*}
T_{i_1,\dots,i_{\nu_1}} \equiv T_{i_1,\dots,i_{\nu_1}}(z)\,,
\end{equation*}
which is a random variable depending on an $\nu_1$-tuple $(i_1,\dots,i_{\nu_1})\in \{1,2,\dots,N\}^{\nu_1}$. It is obtained by replacing, in the formal monomial $T$, the formal indices $i_1,\dots,i_{\nu_1}$ with the integers $i_1,\dots,i_{\nu_1}$ and the formal variables $G_{xy}$ with elements $G_{xy}$ of the Green's function \eqref{def_G} with parameter $z$. We define
\begin{equation} \label{5.11}
\cal S (T) \deq \sum_{i_1,\dots,i_{\nu_1}}  T_{i_1,\dots,i_{\nu_1}}\,.
\end{equation}
\end{definition}

Defining the random variable
\begin{equation} \label{WWard}
\Gamma\equiv \Gamma(z)\deq \frac{ \im \ul{G}(z)}{N\eta}\,.
\end{equation}
we have the following result, whose proof is given in Section \ref{sec10.1} below.
\begin{lemma} \label{lem4.2}
	For any fixed $T \in \cal T$ we have
	\begin{equation} \label{417}
	\bb E \,\cal S(T) \prec N^{\nu_1(T)-\theta(T)}(\delta_{ 0 \nu_2(T)}+ \bb E\Gamma+N^{-1}) \,.
	\end{equation}
\end{lemma}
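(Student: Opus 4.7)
The plan is to split the argument on the value of $\nu_2(T)$, combining the local semicircle law (Proposition~\ref{refthm1}) to handle diagonal $G$-factors with the Ward identity (Lemma~\ref{Ward}) to extract factors of $\Gamma$ from off-diagonal ones.

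First consider $\nu_2(T)=0$: every $G$-factor in $T$ is diagonal, so Proposition~\ref{refthm1} gives $|G_{ii}(z)| \leq |m_{\mathrm{sc}}(z)| + O_\prec(q^{-1}+(N\eta)^{-1/2}) \prec 1$ uniformly on $\f S$. Combined with uniform boundedness of the coefficients $(a_{i_1,\dots,i_{\nu_1}})$, this yields $|T_{i_1,\dots,i_{\nu_1}}|\prec N^{-\theta}$ and hence $|\cal S(T)|\prec N^{\nu_1-\theta}$; Lemma~\ref{prop_prec}(ii) transfers the bound to the expectation, matching the $\delta_{0\nu_2}$-term.

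For $\nu_2(T)\geq 1$, the plan is to first estimate every diagonal $G$-factor in $T$ by $O_\prec(1)$ via Proposition~\ref{refthm1} and absorb the resulting factors into a new uniformly bounded coefficient, reducing to a sum over a monomial whose $\alpha\deq\nu_2(T)$ remaining $G$-factors are all off-diagonal. Applying Cauchy-Schwarz in the $\nu_1$-fold sum and then iterating the Ward identity $\sum_j|G_{ij}|^2=\im G_{ii}/\eta$ together with the entrywise bound $\im G_{ii}\prec 1$ yields the pointwise estimate $|\cal S(T)|\prec N^{\nu_1-\theta}\Gamma^{\alpha/2}$. For $\alpha\geq 2$ the bound $\Gamma\prec 1$ on $\f S$ collapses $\Gamma^{\alpha/2}\leq\Gamma$, and Lemma~\ref{prop_prec}(ii) then delivers $\bb E\cal S(T)\prec N^{\nu_1-\theta}\bb E\Gamma$.

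The main obstacle is the remaining case $\alpha=1$, in which naive Cauchy-Schwarz produces only $\sqrt{\Gamma}$, which is strictly weaker than $\bb E\Gamma+N^{-1}$ even after taking expectations and cannot be salvaged by elementary inequalities alone. To close this gap, the plan is to exploit the fact that $\bb E G_{xy}$ for $x\neq y$ is much smaller than $|G_{xy}|$: starting from the resolvent identity $zG_{xy}+\delta_{xy}=(HG)_{xy}=\sum_k H_{xk}G_{ky}$ and applying the cumulant expansion of Lemma~\ref{lem:cumulant_expansion} to $\bb E H_{xk}G_{ky}$, the leading second-cumulant contribution replaces the off-diagonal factor by a diagonal one multiplied by $\bb E\ul G - m_{\mathrm{sc}}$, which the local law in expectation controls by $O(\bb E\Gamma+N^{-1})$. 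The higher-cumulant remainders decay as $\cal C_k(H_{ij})=O_k(1/(Nq^{k-2}))$ by Lemma~\ref{Tlemh} and are absorbed into the deterministic $N^{-1}$ term; careful book-keeping of these contributions, extended additively across the sum $\cal S(T)$, then yields the sharp estimate $\bb E\cal S(T)\prec N^{\nu_1-\theta}(\bb E\Gamma+N^{-1})$ in this subtle case. Combining the two cases completes the proof.
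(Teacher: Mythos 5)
Your handling of the cases $\nu_2(T)=0$ and $\nu_2(T)\geq 2$ is correct, and you correctly identify $\nu_2(T)=1$ as the only nontrivial case (the paper says as much in Remark~\ref{remark 4.3}). However, the argument you propose for $\nu_2(T)=1$ has a genuine gap.

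You start from $zG_{xy}+\delta_{xy}=(HG)_{xy}$ and cumulant-expand $\bb E[(HG)_{xy}R]$, where $R$ is the remainder of the monomial. The leading second-cumulant contribution is $-\bb E[\ul G\, G_{xy}\,R]-\tfrac1N\bb E[(G^2)_{xy}R]+\cdots$, so after rearranging one obtains
\[
(z+m_{\mathrm{sc}})\,\bb E[G_{xy}R]=-\bb E\big[(\ul G-m_{\mathrm{sc}})G_{xy}R\big]-\tfrac1N\bb E[(G^2)_{xy}R]-\delta_{xy}\bb E R+\cdots.
\]
Two problems arise. First, the error that remains after inverting $(z+m_{\mathrm{sc}})$ is $\bb E\big[(\ul G-m_{\mathrm{sc}})G_{xy}R\big]$, not $(\bb E\ul G-m_{\mathrm{sc}})\cdot\bb E[G_{xy}R]$; the factorization implicit in your sentence is not justified, since $\ul G$ is strongly correlated with $G_{xy}$ and $R$. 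Second, and more seriously, even the scalar bound you invoke is false: $\bb E\ul G$ approaches $m_0$, not $m_{\mathrm{sc}}$, and $m_0-m_{\mathrm{sc}}=O(q^{-2})$ (Lemma~\ref{lem:m_0}), so $\bb E\ul G-m_{\mathrm{sc}}=O(q^{-2})$, which for $q=N^{\beta}$ with small $\beta$ is far larger than $\bb E\Gamma+N^{-1}$. If instead one bounds $\bb E[(\ul G-m_{\mathrm{sc}})G_{xy}R]$ directly via Proposition~\ref{refthm1} and the Ward identity, one gets at best $O_\prec\big((q^{-1}+(N\eta)^{-1/2})\sqrt{\Gamma}\big)$ after summing over the free indices, and $q^{-1}\sqrt{\Gamma}$ is not $\prec\Gamma+N^{-1}$ since $q^{-2}\gg N^{-1}$. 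A self-consistent improvement would require exactly the kind of averaged estimate one is trying to establish, making the argument circular.

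The paper avoids this entirely by using the tailored identity of Lemma~\ref{lem:replace}, $G_{ij}=\delta_{ij}\ul G+G_{ij}\ul{HG}-(HG)_{ij}\ul G$, in which the two $H$-dependent terms, when cumulant-expanded, produce leading contributions $\mp\bb E[\ul G^2 G_{i_1i_2}\cdots]$ that cancel \emph{exactly} at the level of random variables (see $X_1^{(5)}+X_1^{(6)}$ in \eqref{A1}). What survives are either terms with at least two off-diagonal factors (handled by the Ward identity, giving $\bb E\Gamma$) or terms carrying higher cumulants $\cal C_{k+1}$ with an extra $N^{-\beta}$, which the paper then iterates $\ceil{\beta^{-1}}$ times. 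This explicit algebraic cancellation is the mechanism by which the single off-diagonal factor is upgraded from $\sqrt{\Gamma}$ to $\Gamma$; the naive resolvent identity does not produce it, and no stability argument can substitute for it with only the a priori input of Proposition~\ref{refthm1}.
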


\begin{remark} \label{remark 4.3}
	When $\nu_2(T)\ne 1$, Lemma \ref{lem4.2} is a straightforward consequence of Lemma \ref{Ward} and Proposition \ref{refthm1}. When $\nu_2(T)=1$, naively applying the Ward identity shows 
	\[
	\bb E \,\cal S(T) \prec N^{\nu_1(T)-\theta(T)}\bb E\sqrt{\Gamma}\,.
	\]
	In this case, therefore, Lemma \ref{lem4.2} extracts an additional factor of $\sqrt{\Gamma}$.
\end{remark}

In the sequel we also need the subset
\[
\cal T_0 \deq \{T\in \cal T \col \nu_2(T)=0\}
\]
of formal monomials without off-diagonal entries.
We define an \emph{averaging map} $\cal M$ from $\cal T_0$ to the space of random variables through
\begin{equation} \label{MT}
\cal M(T)=\sum_{i_1,\dots,i_{\nu_1}}a_{i_1,\dots,i_{\nu_1}}N^{-\theta} \ul{G}^{\sigma}\,,
\end{equation}
for $T =a_{i_1,\dots,i_{\nu_1}}N^{-\theta}G_{x_1x_1}G_{x_2x_2}\cdots G_{x_{\sigma}x_{\sigma}} \in \cal T_0$. The interpretation of $\cal M(T)$ is that it replaces all diagonal entries of $G$ in $T$ by their average $\ul G$ and then applies $\cal S$. Note that it is only applied to monomials $T \in \cal T_0$ without off-diagonal entries.
The following result is proved in Section \ref{sec10.2} below. 

\begin{lemma} \label{lem4.22}
	For any fixed $T \in \cal T_0$ there exists $k \in \N$ and $T^{(1)},\dots,T^{(k)} \in \cal T_0$ such that
	\begin{equation}  \label{o2}
	\bb E \,\cal S(T) =\bb E \cal M(T)+\sum_{l=1}^k\bb E\, \cal S(T^{(l)})+O_{\prec}\big(N^{\nu_1(T)-\theta(T)}(\bb E \Gamma+N^{-1})\big)\,.
	\end{equation}
Each $T^{(l)}$ satisfies $\sigma(T^{(l)})-\sigma(T) \in 2\bb N+4$,
	\[
\nu_1(T^{(l)})=\nu_1(T)+1\,, 
	 \quad  \mbox{and} \quad  \theta(T^{(l)})=\theta(T)+1+\beta (\sigma(T^{(l)})-\sigma(T)-2)\,.
	\]
\end{lemma}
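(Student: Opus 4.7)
The strategy is to systematically rewrite $\E\cal S(T)-\E\cal M(T)$ via the cumulant expansion (Lemma \ref{lem:cumulant_expansion}), using the resolvent identity $(H-z)G=I$ to introduce $H$-factors on which we can expand, and using the cumulant bounds of Lemma \ref{Tlemh} and the differentiation rule \eqref{diff} to classify the resulting terms. Since $T\in\cal T_0$ is a product of diagonal entries, the difference $\cal S(T)-\cal M(T)$ decomposes by telescoping into a sum of terms each containing a single factor $G_{x_k x_k}-\ul G$. Writing $G_{ii}-\ul G=\frac{1}{N}\sum_j(G_{ii}-G_{jj})$ and using $zG_{ii}=-1+(HG)_{ii}$ together with $z\ul G=-1+\ul{HG}$, we obtain the key identity
\begin{equation*}
z(G_{ii}-\ul G) \;=\; \sum_l H_{il}G_{li} - \frac{1}{N}\sum_{m,l} H_{ml}G_{lm},
\end{equation*}
which converts the difference into a linear functional of $H$-entries. (The factor $z$ is carried algebraically and absorbed at the end; its possible smallness is compatible with the error bound, since near $z=0$ the error $N^{\nu_1-\theta}(\E\Gamma+N^{-1})$ is itself relatively large.)

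Next, apply Lemma \ref{lem:cumulant_expansion} separately to each $H$-sum, truncated at an order $\ell=\ell(\beta)$ chosen large enough that the remainder \eqref{remainder} is $O_\prec(N^{-D})$ (using the a priori bound $|G_{ij}|\prec 1$ from Proposition \ref{refthm1} and Definition \ref{def:sperse}(iii)). The $k$-th cumulant contributes monomials with $k$ extra $G$-factors, weight $N^{-1}q^{1-k}$ from $\cal C_{k+1}(H_{ij})$, and a new summation index $l$. The Gaussian ($k=1$) pieces from the two sums, after $N^{-1}\sum_l G_{ll}=\ul G$, cancel exactly against one another at the level of purely diagonal contributions—reconstructing $\E\cal M(T)$ as the leading term—and leave only off-diagonal "seagull" pieces $N^{-1}\sum_l G_{l\cdot}^2(\mathrm{products})$ with $\nu_2=2$, which are of size $N^{\nu_1-\theta}(\E\Gamma+N^{-1})$ by Lemma \ref{lem4.2}. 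Higher cumulants ($k\geq 2$) produce, after differentiation via \eqref{diff}, both diagonal monomials (which survive as the $\cal S(T^{(l)})$ in the statement) and off-diagonal monomials with $\nu_2\geq 1$ (again absorbed into the error via Lemma \ref{lem4.2}).

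The stated parameters of $T^{(l)}$ follow from this bookkeeping: $\nu_1(T^{(l)})=\nu_1(T)+1$ is the new summation index $l$; $\theta(T^{(l)})=\theta(T)+1+\beta(\sigma(T^{(l)})-\sigma(T)-2)$ accounts for the $N^{-1}$ and $q^{-(k-1)}=N^{-\beta(k-1)}$ factors coming from $\cal C_{k+1}(H_{ij})$; and the parity constraint $\sigma(T^{(l)})-\sigma(T)\in 2\N+4$ reflects two features: the antisymmetric combination $G_{ii}-G_{jj}$ forces the $k\in\{1,2,3\}$ purely-diagonal contributions to either cancel against each other or appear only in the leading $\cal M(T)$ piece, and the pairing in \eqref{diff} forces purely diagonal survivors to contain an even number of extra $G$s. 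Thus the first nontrivial diagonal correction has $\sigma$-increment $4$, and subsequent corrections appear at increments $6,8,\dots$.

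\textbf{Main obstacle.} The principal difficulty is the combinatorial bookkeeping: at each cumulant order we must verify that every off-diagonal monomial produced is genuinely of size $N^{\nu_1-\theta}(\E\Gamma+N^{-1})$ as provided by Lemma \ref{lem4.2}, and that the surviving diagonal pieces indeed match the claimed form of $T^{(l)}$. The most delicate step is establishing that low-order ($k\leq 3$) purely-diagonal cumulant contributions either reconstruct $\cal M(T)$ or cancel by the subtraction $(HG)_{ii}-\ul{HG}$—the algebraic identity underlying the lower bound $\sigma(T^{(l)})-\sigma(T)\geq 4$. Once this matching is in place, the identification of the remaining contributions as the $\cal S(T^{(l)})$ of the statement and the control of the truncation remainder are relatively routine.
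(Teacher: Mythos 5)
Your linear-in-$H$ identity
\[
z(G_{ii}-\ul G)=(HG)_{ii}-\ul{HG}
\]
is correct, but the cancellation you rely on afterwards does not happen, and this is a genuine gap. Apply the cumulant expansion to the two $H$-sums on the right. From $\sum_l H_{il}G_{li}$, the $k=1$ (Gaussian) purely diagonal contribution is $\frac{1}{N}\sum_l(-G_{ll}G_{ii})=-\ul G\,G_{ii}$; from $\frac{1}{N}\sum_{m,l}H_{ml}G_{lm}$ it is $\frac{1}{N^2}\sum_{m,l}(-G_{ll}G_{mm})=-\ul G^2$. Combining the two with the signs in your identity gives $-\ul G\,G_{ii}+\ul G^2=-\ul G(G_{ii}-\ul G)$, which is \emph{not} zero. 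What you actually obtain (with the rest of the monomial multiplied through) is the self-consistent relation
\[
(z+\ul G)\,(G_{ii}-\ul G)\cdot(\text{rest})=\text{higher cumulants}+\text{off-diagonal seagulls},
\]
so you would have to divide by $z+\ul G$ to isolate $G_{ii}-\ul G$. That factor is bounded away from zero, so the division is analytically harmless, but it immediately destroys the algebraic structure the lemma is built on: $(z+\ul G)^{-1}$ is not a factor allowed in the monomial class $\cal T_0$, so the resulting $T^{(l)}$ are no longer in $\cal T_0$ and the recursion that the statement asserts (and that Lemma~\ref{lem:nte} then iterates) cannot proceed. Your parenthetical about the factor of $z$ being ``absorbed at the end'' also does not address this, because after the expansion the factor you would actually need to divide by is $z+\ul G$, not $z$.

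The paper's proof avoids both problems by using the multiplicative identity of Lemma~\ref{lem:replace}, $G_{ii}=\ul G+G_{ii}\ul{HG}-(HG)_{ii}\ul G$, which is algebraically what you get upon multiplying your identity by $\ul G$ and substituting $z\ul G=\ul{HG}-1$. The extra factor of $\ul G$ on the $(HG)_{ii}$ term is precisely what makes the $k=1$ Gaussian pieces cancel \emph{exactly}: $\frac{1}{N}\sum_{x,y}(-G_{xx}G_{yy})G_{x_1x_1}\widehat T$ from the $\ul{HG}$ term and $+\frac{1}{N}\sum_x G_{xx}G_{x_1x_1}\ul G\,\widehat T$ from the $(HG)_{x_1x_1}\ul G$ term add to zero, with no leftover proportional to $G_{ii}-\ul G$. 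Consequently there is no factor of $(z+\ul G)$ to invert, every term produced stays inside $\cal T$ or $\cal T_0$, and the iteration \eqref{axiba} closes. You should replace your starting identity with the one of Lemma~\ref{lem:replace} (or derive it, as above) before running the cumulant expansion; the rest of your classification of off-diagonal versus diagonal terms and the parity bookkeeping can then be carried through.
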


Lemma \ref{lem4.22} leads to the following result.

\begin{lemma} \label{lem:nte}
	Fix $T \in \cal T_0$. Fix $r \in \bb N_+$. Then there exists deterministic and bounded $b_1,\dots,b_{r}$ such that
	\begin{equation} \label{5.6}
	\cal M(r,T)\deq \cal M(T)+N^{\nu_1(T)-\theta(T)}\sum_{l=2}^{r} b_{l} N^{-l\beta} \ul{G}^{\sigma(T)+2l}
    \end{equation}
	satisfies
	\[
	\bb E \cal S(T)=\bb E \cal M(r, T) +O_{\prec}\big(N^{\nu_1(T)-\theta(T)}(\bb E \Gamma+N^{-1}+N^{-\beta(r+1)})\big)\,.
	\]
\end{lemma}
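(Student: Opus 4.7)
The strategy is to iterate Lemma~\ref{lem4.22} a controlled number of times. Applying it once gives $\bb E \cal S(T) = \bb E \cal M(T) + \sum_l \bb E \cal S(T^{(l)}) + O_{\prec}(N^{\nu_1(T) - \theta(T)}(\bb E \Gamma + N^{-1}))$, and the key structural point is that the relation $\theta(T^{(l)}) = \theta(T) + 1 + \beta(\sigma(T^{(l)}) - \sigma(T) - 2)$ combined with $\nu_1(T^{(l)}) = \nu_1(T) + 1$ forces $\nu_1(T^{(l)}) - \theta(T^{(l)}) \le \nu_1(T) - \theta(T) - 2\beta$, since $\sigma(T^{(l)}) - \sigma(T) \ge 4$. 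I iterate this process on each $\cal S(T^{(l)})$. After $K \deq \lceil (r+1)/2 \rceil$ iterations, every $\cal S$ term that has not yet been decomposed satisfies $\nu_1 - \theta \le \nu_1(T) - \theta(T) - 2\beta K \le \nu_1(T) - \theta(T) - \beta(r+1)$, and can therefore be absorbed into the error by Lemma~\ref{lem4.2} (these terms remain in $\cal T_0$, so $\nu_2 = 0$).

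The iteration produces a finite family of extracted terms $\cal M(T^\alpha)$, indexed by branches $\alpha$ of the iteration tree at depths $k_\alpha \in \{0, 1, \dots, K-1\}$. Telescoping the recursion from Lemma~\ref{lem4.22} yields, for the branch $\alpha$ with $\sigma(T^\alpha) = \sigma(T) + 2 j_\alpha$, the relations $\theta(T^\alpha) - \theta(T) = k_\alpha + \beta(2 j_\alpha - 2 k_\alpha)$ and $\nu_1(T^\alpha) - \nu_1(T) = k_\alpha$, with $j_\alpha \ge 2 k_\alpha$. Writing $\cal M(T^\alpha) = c^\alpha \, \ul{G}^{\sigma(T) + 2 j_\alpha}$, where $c^\alpha$ is the deterministic constant obtained by summing the coefficient array of $T^\alpha$ and dividing by $N^{\theta(T^\alpha)}$, one obtains $|c^\alpha| \le C N^{\nu_1(T) - \theta(T) - \beta(2 j_\alpha - 2 k_\alpha)}$. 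Since Lemma~\ref{lem4.22} produces at most a fixed finite number of descendants per iteration, the total number of branches is bounded. I then group the extracted terms by the value $j_\alpha$: for $l \in \{2, \dots, r\}$, define
\[
b_l \deq N^{-(\nu_1(T) - \theta(T) - l\beta)} \sum_{\alpha \,:\, j_\alpha = l} c^\alpha\,,
\]
which is deterministic; the bound $|b_l| \le C \sum_\alpha N^{-\beta(l - 2 k_\alpha)} \le C$ follows from $2 k_\alpha \le l$ and the finiteness of the branch count.

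Finally I verify that the contributions with $j_\alpha > r$ and the accumulated errors are absorbed into the claimed remainder. For $j_\alpha > r$, the bound $2 j_\alpha - 2 k_\alpha \ge j_\alpha \ge r+1$ combined with $\ul{G} = O_{\prec}(1)$ from Proposition~\ref{refthm1} yields $|c^\alpha \ul{G}^{\sigma(T) + 2 j_\alpha}| \prec N^{\nu_1(T) - \theta(T) - \beta(r+1)}$. The cumulative error from the iterations themselves is dominated by the very first application and thus of order $O_{\prec}(N^{\nu_1(T) - \theta(T)}(\bb E \Gamma + N^{-1}))$, and the final undecomposed $\cal S$ terms contribute $O_{\prec}(N^{\nu_1(T) - \theta(T) - \beta(r+1)})$ as noted above. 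The main technical obstacle I anticipate is the bookkeeping that guarantees $b_l$ is both deterministic and bounded uniformly in $N$; this reduces to the inequality $j_\alpha \ge 2 k_\alpha$, which is precisely what Lemma~\ref{lem4.22} provides at each iteration step, ensuring that the natural polynomial expansion has coefficients of exactly the size encoded in the $N^{-l\beta}$ prefactor.
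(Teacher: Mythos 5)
Your proof is correct and follows essentially the same approach as the paper: iterate Lemma~\ref{lem4.22} until the remaining $\cal S$ terms are small enough to absorb via Lemma~\ref{lem4.2}, and collect the extracted $\cal M$ contributions into the polynomial $\cal M(r,T)$. Your bookkeeping — the iteration tree indexed by $\alpha$, the telescoped identities $\theta(T^\alpha)-\theta(T)=k_\alpha+\beta(2j_\alpha-2k_\alpha)$ and $\nu_1(T^\alpha)-\nu_1(T)=k_\alpha$ with $j_\alpha\ge 2k_\alpha$, and the resulting bound $|b_l|\le C\sum_\alpha N^{-\beta(l-2k_\alpha)}\le C$ — is in fact somewhat more explicit than the paper's own presentation, which states the key regrouping identity \eqref{wer2} without writing out the branch-level inequality that guarantees boundedness of the $b_l$ and without separately addressing the extracted $\cal M$ contributions of degree $\sigma(T)+2j_\alpha$ with $j_\alpha>r$ (which you correctly absorb into the $N^{-\beta(r+1)}$ error using $2j_\alpha-2k_\alpha\ge j_\alpha\ge r+1$). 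No gaps.
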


\begin{proof}
When $r=1$, the Lemma is trivially true from Lemma \ref{lem4.22}. When $r \geq 2$, the proof  is essentially a repeated use of Lemma \ref{lem4.22}. More precisely, by Lemma \ref{lem4.22}, 
\begin{equation} \label{joyside}
\bb E \,\cal S(T) =\bb E \cal M(T)+\sum_{l=1}^k\bb E\, \cal S(T^{(l)})+O_{\prec}\big(N^{\nu_1(T)-\theta(T)}(\bb E \Gamma+N^{-1})\big)
\end{equation}
for some fixed $k \in \bb N$, where each $T^{(l)}$ satisfies $\sigma(T^{(l)})-\sigma(T) \in 2\bb N+4$,
$\nu_1(T^{(l)})=\nu_1(T)+1$  and $\theta(T^{(l)})=\theta(T)+1+\beta (\sigma(T^{(l)})-\sigma(T)-2)$. As a result, $\bb E \cal S(T^{(l)})=O_{\prec}\big(N^{\nu_1(T)-\theta(T)-2\beta}\big)$ for each $l$. Now we apply Lemma \ref{lem4.22} to each $T^{(l)}$ on RHS of \eqref{joyside}, and get
\begin{equation} \label{joyside2}
	\begin{aligned}
	\bb E \,\cal S(T^{(l)}) &=\bb E \cal M(T^{(l)})+\sum_{l_1=1}^{k_l}\bb E\, \cal S(T^{(l,l_1)})+O_{\prec}\big(N^{\nu_1(T^{(l)})-\theta(T^{(l)})}(\bb E \Gamma+N^{-1})\big)\\
	&=\bb E \cal M(T^{(l)})+\sum_{l_1=1}^{k_l}\bb E\, \cal S(T^{(l,l_1)})+O_{\prec}\big(N^{\nu_1(T)-\theta(T)}(\bb E \Gamma+N^{-1})\big)
	\end{aligned}
\end{equation}
for some fixed $k_l \in \bb N$, where each $T^{(l,l_1)}$ satisfies $\sigma(T^{(l,l_1)})-\sigma(T) \in 2\bb N+8$,
$\nu_1(T^{(l,l_1)})=\nu_1(T)+2$  and $\theta(T^{(l)})=\theta(T)+2+\beta (\sigma(T^{(l,l_1)})-\sigma(T)-4)$. Moreover, by our conditions on $\theta(T^{(l)})$, $\nu_1(T^{(l)})$ and $\theta(T^{(l)})$, we can write
\begin{equation} \label{wer2}
	\sum_{l=1}^{k}\bb E \cal M({T^{(l)}})=N^{\nu_1(T)-\theta(T)}\sum_{l=2}^{r} b_{l,1} N^{-l\beta} \bb E\ul{G}^{\sigma(T)+2l}
\end{equation}
for some deterministic and bounded $b_{l,1},...,b_{r,1}$. Combining \eqref{joyside} -- \eqref{wer2}, we have
\begin{equation} \label{sh}
\begin{aligned}
\bb E \,\cal S(T) =&\,\bb E \cal M(T)+N^{\nu_1(T)-\theta(T)}\sum_{l=2}^{r} b_{l,1} N^{-l\beta} \bb E\ul{G}^{\sigma(T)+2l}\\
&+\sum_{l=1}^k\sum_{l_1=1}^{k_l}\bb E\, \cal S(T^{(l,l_1)})+O_{\prec}\big(N^{\nu_1(T)-\theta(T)}(\bb E \Gamma+N^{-1})\big)\,.
\end{aligned}
\end{equation}
Note that $\bb E \cal S(T^{(l,l_1)})=O_{\prec}\big(N^{\nu_1(T)-\theta(T)-4\beta}\big)$ for each $(l,l_1)$. Thus we can again apply Lemma \ref{lem4.22} to each $T^{(l,l_1)}$ on RHS of \eqref{sh}. Repeating the above steps finitely many times completes the proof.
\end{proof}

Note that we in particular have $\cal M(1,T)=\cal M(T)$ through \eqref{5.6}.

\subsection{The construction of $P_0$ and proof of Lemma \ref{lem:exp P_0}} \label{sec4.2}

We compute
\[
\bb E (1+z\ul{G})=
\bb E\ul{HG}=\frac{1}{N}\sum_{i,j} \bb E H_{ij}G_{ji}\,,
\]
and we shall find a polynomial $Q_0$ such that 
$$
\bb E (1+z\ul{G})+\bb E Q_0(\ul{G}) \prec \bb E \Gamma+\frac{1}{N}\,.
$$ 
We then set $P_0(z,x)=1+zx+Q_0(x)$. Using Lemma \ref{lem:cumulant_expansion} with $h=H_{ij}$ and $f=f_{ji}(H)=G_{ji}$, we have
\begin{equation} \label{?}
\bb E (1+z\ul{G})=\frac{1}{N}\sum_{k=1}^{\ell}\frac{1}{k!}\sum_{i,j}\cal C_{k+1}(H_{ij}) \bb E\frac{\partial^k G_{ji}}{\partial H_{ij}^k}+\frac{1}{N}\sum_{i,j}\bb E\cal R^{(ji)}_{\ell+1}\eqd \sum_{k=1}^{\ell}\widetilde{X}_k+\frac{1}{N}\sum_{i,j}\bb E\cal R^{(ji)}_{\ell+1}\,,
\end{equation}
where $\ell$ is a fixed positive integer to be chosen later, and $\cal R^{(ji)}_{\ell+1}$ is a remainder term defined analogously to $\cal R_{\ell +1}$ in \eqref{remainder}. One can follow, e.g.\,the proof of Lemma 3.4 (iii) in \cite{HKR}, and readily check that
\[
\frac{1}{N}\sum_{i,j}\bb E\cal R^{(ji)}_{\ell+1}=O(N^{-1})
\]
for $\ell\equiv \ell (\beta) $ large enough.  From now on, we always assume the remainder term in cumulant expansion is negligible.

Now let us look at each $\widetilde{X}_k$. For $k=1$, by the differential rule \eqref{diff} and $\cal C_{2}(H_{ij})=1/N$ for $i \ne j$, we have
\begin{equation} \label{4.22}
\widetilde{X}_1=-\frac{1}{N^2}\sum_{i,j} \bb E (G_{ij}^2+G_{ii}G_{jj})-\frac{1}{N^2}\sum_{i} (N\cal C_2(H_{ii})-2)\bb E G_{ii}^2=-\bb E \ul{G}^2+O_{\prec}\Big(\bb E \Gamma+\frac{1}{N}\Big)\,.
\end{equation}
For $k=2$, the most dangerous term is
\[
\frac{1}{N} \sum_{i,j} \cal C_3(H_{ij})\bb E G_{ij}G_{ii}G_{jj}\eqd \sum_{i,j}\bb E T_{ij}\,,
\]
and by $\cal C_3(H_{ij})=O(N^{-1-\beta})$, we see that $\nu_1(T) = 2$, $\theta(T) = 2 + \beta$, and $\nu_2(T)=1$. Thus by Lemma \ref{lem4.2} we have
\[
\sum_{i,j} T_{ij} \prec N^{-\beta}\Big(\bb E \Gamma+\frac{1}{N}\Big)\,.
\]
Other terms in $\widetilde{X}_2$ also satisfy the same bound. Similar estimates can also be done for all even $k$, which yield
\begin{equation} \label{4.3}
\sum_{s=1}^{\ceil{\ell/2}} \widetilde{X}_{2s} \prec \bb E \Gamma+\frac{1}{N}\,.
\end{equation} 
For odd $k \geq 3$, we split
\[
\widetilde{X}_k=\widetilde{X}_{k,1}+\widetilde{X}_{k,2}\,,
\] 
where terms in $\widetilde{X}_{k,1}$ contain no off-diagonal entries of $G$. Use Lemma \ref{lem4.2}, we easily find
\[
\widetilde{X}_{k,2} \prec \bb E \Gamma+\frac{1}{N}\,.
\]
By Lemma \ref{Tlemh}, we see that
\[
\widetilde{X}_{k,1}=\frac{1}{N^{2+(k-1)\beta}}\sum_{i,j} a^{(k)}_{i,j}\bb E G^{(k+1)/2}_{ii}G_{jj}^{(k+1)/2}\,,
\]
where $a^{(k)}_{ij}$ is deterministic and uniformly bounded. Combining with \eqref{?}--\eqref{4.3}, we have
\begin{equation} \label{4.6}
\bb E (1+z\ul{G})+\bb E \ul{G}^2+O_{\prec}\Big(\bb E \Gamma+\frac{1}{N}\Big)=\sum_{s=2}^{\ceil{\ell/2}}\frac{1}{N^{2+(2s-2)\beta}}\sum_{i,j} a^{(2s-1)}_{ij}\bb E G^{s}_{ii}G_{jj}^{s}\eqd \sum_{s=2}^{\ceil{\ell/2}} \bb E \,\cal S(T^{(s)}) \,,
\end{equation}
where
\begin{equation} \label{TS}
T^{(s)}=\frac{1}{N^{2+(2s-2)\beta}} a_{ij}^{(2s-1)} G_{ii}^sG_{jj}^s\,.
\end{equation}
To handle the right-hand side of \eqref{4.6} we invoke Lemma \ref{lem:nte}. Naively, we have
\begin{equation} \label{n1}
\bb E \,\cal S(T^{(s)}) \prec N^{(2-2s)\beta}
\end{equation}
for each $n$. By Lemma \ref{lem:nte}, we can write
\begin{equation} \label{5.13}
\bb E \,\cal S(T^{(s)})= \bb E \cal M\big(\ceil{\beta^{-1}}-2s+2,T^{(s)}\big)+O_{\prec}(\bb E \Gamma+N^{-1})\,.
\end{equation}
Thus \eqref{4.6} becomes
\[
\bb E (1+z\ul{G})+\bb E \ul{G}^2-\sum_{s=2}^{\ceil{\ell/2}}\bb E \cal M\big(\ceil{\beta^{-1}}-2s+2,T^{(s)}\big)=O_{\prec}\Big(\bb E \Gamma+N^{-1}\Big)\,.
\]
Thus we can set
\begin{equation*}
Q_0(\ul{G})= \ul{G^2}-\sum_{s=2}^{\ceil{\ell/2}}\bb E \cal M\big(\ceil{\beta^{-1}}-2s+2,T^{(s)}\big)\,,
\end{equation*} 
and note that $Q_0$ is a polynomial of degree $2\ceil{\beta^{-1}}$. This concludes the proof of Lemma \ref{lem:exp P_0}.

\begin{remark}
	After the construction of $P_0$ (and consequently $P$), we shall construct a more general class of abstract polynomials associated with $P$ in Section \ref{sec 6.2} below.
\end{remark}

\section{Proof of Proposition \ref{prop: upper bound}} \label{sec3}

\begin{convention*}
Throughout this section,
\begin{equation} \label{def_z_random}
z=L_0+\cal Z+w\,,
\end{equation}
where $w=\kappa+\ii \eta$ deterministic.
\end{convention*}

The proof of Proposition \ref{prop: upper bound} consists of two steps; in the first we first estimate $\ul{G}$ and in the second we apply this estimate to obtain a more precise bound of $\im\ul{G}$.
\subsection{Estimate of $\ul{G}$}
Define the spectral domain
\begin{equation} \label{hahaha}
\f Y\equiv \f Y(\delta)=\bigg\{ w=\kappa+\ii \eta \in \bb C_+\col  \frac{N^{-\delta}}{\sqrt{N}q}\leq \kappa\leq 1,  N^{-\delta} N^{-5/8}q^{-1/4}\leq \eta \leq 1\bigg\}\,.
\end{equation} 
As a guide to the reader, the lower bound on $\kappa$ is chosen to be slightly smaller than the scale $\frac{1}{\sqrt{N}q}$ on which the extreme eigenvalues fluctuate; analogously, the lower bound on $\eta$ is chose to be slightly smaller than the scale $N^{-5/8} q^{-1/4}$, which is the solution of the equation $\frac{\eta}{\sqrt{\kappa}} = \frac{1}{N \eta}$ with $\kappa = \frac{1}{\sqrt{N}q}$. Using that $\im m(z) \asymp \frac{\eta}{\sqrt{\kappa}}$ (see Lemma \ref{lem:m}), this choice of lower bound on $\eta$ will allow us to rule out the presence of eigenvalues (see \eqref{xinkuzi} below), and hence establish rigidity.

Recall the definition of $\tau $ in Lemma \ref{lem:m}, and note that the lower bound on $\kappa$ ensures, with very high probability,
\begin{equation} \label{5.22}
\tau(z) =|(L_0+\cal Z+\kappa)^2-L^2|\asymp \kappa
\end{equation} for all $w \in \f Y$. 
The main technical step is the following bound for $P(z,\ul{G})$, whose proof is postponed to Section \ref{sec4}.

\begin{proposition} \label{prop1}
	Let $w \in \f Y$. Suppose $|\ul{G}-m| \prec \Psi$ for some deterministic $\Psi \in [\sqrt{\kappa}N^{-\delta},1]$. Then
	\[
	P(z, \ul{G}) \prec \big(\kappa +\Psi^2\big) N^{-\delta}\,.
	\]
\end{proposition}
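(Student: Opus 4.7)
My plan is to bound $\bb E|P(z,\ul G)|^{2n}$ for arbitrary fixed $n\in\bb N$ and then conclude by Markov's inequality. Using the identity $\ul{HG}=1+z\ul G$, I replace one factor of $P$ in the $2n$-th moment by $\frac{1}{N}\sum_{i,j}H_{ij}G_{ji}+(P-\ul{HG})$ and expand the $H$-factor via Lemma \ref{lem:cumulant_expansion}, arriving at the scheme \eqref{3.2}. I then split each resulting term according to whether the $\partial/\partial H_{ij}$ derivatives hit only $G_{ji}$ (\emph{diagonal} terms) or also the product $P^{n-1}P^{*n}$ (\emph{fluctuation} terms).

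The diagonal terms, combined with $\bb E[(P-\ul{HG})P^{n-1}P^{*n}]$, produce---by the very design of $P_{0}$ in Section \ref{sec6.1} (the additional summand $\cal Z\ul G^{2}$ in $P=P_{0}+\cal Z\ul G^{2}$ is there precisely to absorb the $\cal C_{2}(H_{ii})-1/N$ correction of \eqref{4.22})---a contribution of order $(\bb E\Gamma+N^{-1})\cdot\bb E|P|^{2n-2}$, small enough on the domain $\f Y$ to be absorbed into the inductive bound.

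The heart of the proof is the control of the fluctuation terms, the representative example being \eqref{2.1}. I would follow the strategy outlined in Section \ref{sec}: first apply the Ward identity and the local law (Proposition \ref{refthm1}) to approximate each diagonal factor $G_{ii}$, $G_{jj}$, $(G^{*2})_{ii}$, $G^{*}_{jj}$ by its average $\ul G$, $\ul{G^{*2}}$, $\ul{G^{*}}$, reducing \eqref{2.1} up to controlled errors to
\begin{equation*}
\frac{1}{N^{2}}\sum_{i,j}\cal C_{4}(H_{ij})\,\bb E\!\qB{(\partial_{2}P^{*})\,\ul{G^{*2}}\,\ul{G^{*}}\,\ul G^{2}\,|P|^{2n-2}}\,;
\end{equation*}
then invoke the identity $(\partial_{2}P^{*})\ul{G^{*2}}=\partial_{\ol w}P(\ol z,\ul G^{*})-\ul{G^{*}}$ (cf.\ \eqref{2.3}) to turn $\partial_{2}P^{*}$ into $\partial_{\ol w}P^{*}$; and finally exploit the commutativity of $\partial_{\ol w}$ with $\partial/\partial H_{ij}$ to apply a second cumulant expansion to the resulting $\partial_{\ol w}(H_{kl}G^{*}_{lk})$, which by the smallness of $\bb E P^{*}$ again produces a cancellation. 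Iterating this recursion and tracking all algebraic errors through the abstract-polynomial framework developed in Section \ref{sec4.1} (and its necessary extension to accommodate off-diagonal Green-function entries and derivatives of $P$) should yield $\bb E|P|^{2n}\prec \pb{(\kappa+\Psi^{2})N^{-\delta}}^{2n}$.

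The hypothesis $|\ul G-m|\prec\Psi$ enters through Taylor expansion of $P(z,\cdot)$ around $m$, using $P(z,m)=0$, $|\partial_{2}P(z,m)|\asymp\sqrt{\tau+\eta}\asymp\sqrt{\kappa}$ from Lemma \ref{lem:m} and \eqref{5.22}, and $|\partial_{2}^{2}P|=2+O(q^{-2})$, which together give the naive a priori bound $|P|\prec\sqrt{\kappa}\,\Psi+\Psi^{2}\leq\kappa+\Psi^{2}$ by AM--GM. The cumulant-expansion machinery then harvests the extra factor $N^{-\delta}$ from the smallness of $\bb E\Gamma+N^{-1}$ on $\f Y$ (which is also why the lower bound on $\eta$ in \eqref{hahaha} is taken as small as $N^{-\delta}N^{-5/8}q^{-1/4}$). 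The principal obstacle I expect is the combinatorial and algebraic bookkeeping: at each recursion step new polynomials in entries of $G$ are generated, and one must verify that the approximations of the form \eqref{2.3} remain valid inside expectations multiplying arbitrarily complex factors---this is exactly the function of the extended abstract-polynomial hierarchy, whose development will be the technical backbone of the proof.
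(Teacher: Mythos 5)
Your overall strategy---bounding $\bb E|P|^{2n}$ via a recursive cumulant expansion, extracting the extra $N^{-\delta}$ factor through the parameter $\Upsilon=(\Psi+\sqrt{\kappa+\eta})/(N\eta)$, applying the $\partial_w$ trick to trade $\partial_2 P$ for $\partial_w P-\ul G$, and tracking the resulting cascade of errors through an extended abstract-polynomial hierarchy---matches the paper's proof closely. The role you assign to the hypothesis $|\ul G-m|\prec\Psi$ (controlling $P'=\partial_2 P(z,\ul G)$ through the mean value theorem and Lemma \ref{lem:m}) is also the right one.

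There is, however, a genuine gap in how you group the four terms, rooted in a misreading of the role of $\cal Z\ul G^2$ in $P=P_0+\cal Z\ul G^2$. You claim that $\cal Z\ul G^2$ is there "to absorb the $\cal C_2(H_{ii})-1/N$ correction of \eqref{4.22}", and accordingly you merge $\bb E[\cal Z\ul G^2\,P^{n-1}P^{*n}]$ (the paper's term $(\mathrm{II})$) together with the diagonal terms $(\mathrm{III})$ and $\bb E[Q_0\,P^{n-1}P^{*n}]$ (the paper's $(\mathrm{I})$) into a single error handled "by the design of $P_0$", leaving only $(\mathrm{IV})$ for the $\partial_w$ machinery. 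This is not right: the diagonal variance correction $N\cal C_2(H_{ii})-2=O(1)$ in \eqref{4.22} contributes only $O_\prec(1/N)$ and has nothing to do with $\cal Z$. The summand $\cal Z\ul G^2$ is present to capture the leading-order \emph{fluctuation} of $P_0(z,\ul G)$ around its mean, and the design of $P_0$ makes only $(\mathrm{I})+(\mathrm{III})$ small. The correct pairing is $(\mathrm{I})+(\mathrm{III})$ and $(\mathrm{II})+(\mathrm{IV})$, each small only through a cancellation \emph{within} the pair. In the second pair this cancellation is the crux of the argument: after the $\partial_w$-expansion of $(\mathrm{IV'})$ the leading contributions are $-\frac{b_1}{N^{2+2\beta}}\bb E[\ul{G^3}\,\ul G^3P^{2n-2}]$ and $-\frac{b_1}{N^{2+2\beta}}\bb E[\ul{G^3}\,\ul G^2P^{2n-2}]$ (equation \eqref{dingding}), which individually exceed the target order $\cal E^2\cal P^{2n-2}$ with $\cal E=(\kappa+\Psi^2)N^{-\delta}$ (indeed $\Upsilon/(N\eta)\gg\cal E^2$ and $N^{-1-2\beta}\gg\cal E^2$ in the critical regime of $\f Y$); an analogous cumulant expansion of $(\mathrm{II'})$, based on $\cal Z=\tfrac{1}{N}\sum_{i,j}(H_{ij}^2-N^{-1})$, produces exactly the same two terms with opposite signs (equation \eqref{7.48}). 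If you carry $(\mathrm{II})$ over to the "small" side as your proposal does, this cancellation is invisible and the estimate for $(\mathrm{IV})$ fails precisely at the step your outline identifies as the heart of the proof. (A minor additional slip: the error you want for $(\mathrm{I})+(\mathrm{III})$ is $\prec\cal E\,\cal P^{2n-1}$ at leading order, not $\prec(\bb E\Gamma+N^{-1})\cal P^{2n-2}$, since in those terms no derivative ever hits $P^{n-1}P^{*n}$.)
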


\begin{lemma} \label{lem6.2}
	Suppose $\varepsilon\col \f Y\to [N^{-1},N^{-\delta}]$ is a function so that
	$$
	P(z, \ul{G}) \prec \epsilon(w)
	$$
	for all $w \in \f Y$. Suppose $\epsilon(w)$ is Lipschitz continuous with Lipschitz constant $N$ and
	moreover that for each fixed $\kappa$ the function $\eta \to \epsilon(\kappa+\ii \eta)$ is nonincreasing for $\eta>0$. Then
	\[
	|\ul{G}-m| \prec \frac{\epsilon}{\sqrt{|\kappa|
			+\eta+\epsilon}}\,.
	\] 
\end{lemma}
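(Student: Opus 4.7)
The key identity is $P(z,m) = 0$, which defines $m = m(z)$. Since $P(z,\cdot)$ is a polynomial of degree $2\lceil \beta^{-1}\rceil$ with uniformly bounded coefficients, Taylor expanding $P(z,\ul G)$ around $m$ gives the exact identity
\begin{equation*}
P(z,\ul G) \;=\; \partial_2 P(z,m)\,u \;+\; \tfrac12 \partial_2^2 P(z,m)\,u^2 \;+\; O(|u|^3), \qquad u \deq \ul G - m.
\end{equation*}
By \eqref{2.5} together with \eqref{5.22}, $|\partial_2 P(z,m)| \asymp \sqrt{\kappa+\eta}$ and $\partial_2^2 P(z,m) = 2 + O(q^{-2})$ uniformly on $\f Y$, so provided we have a weak a priori bound $|u| \leq c$ for a small constant $c$, the identity reduces to the perturbed complex quadratic
\begin{equation*}
u^2 \;+\; \partial_2 P(z,m)\,u \;=\; P(z,\ul G) \;+\; O\bigl(q^{-2}|u|^2 + |u|^3\bigr).
\end{equation*}

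Solving this quadratic, its two roots split into a \emph{small branch} of size $\lesssim |P(z,\ul G)|/\sqrt{\kappa + \eta + |P(z,\ul G)|}$ and a \emph{large branch} of size $\asymp \sqrt{\kappa+\eta}$, as long as $|P(z,\ul G)| \ll \kappa+\eta$; in the opposite regime the two roots merge and both satisfy the small-branch bound. Under the hypothesis $P(z,\ul G) \prec \epsilon$, the small branch yields precisely the target estimate $|u| \prec \epsilon/\sqrt{\kappa + \eta + \epsilon}$, so the entire remaining task is to verify that $u$ lies on the small branch for every $w \in \f Y$.

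This is carried out by a standard continuity argument in $w$. At $\eta = 1$, Proposition \ref{refthm1} together with $|m - m_{\rm sc}| = O(q^{-2} + |\cal Z|)$ (from Lemmas \ref{lem:m_0}--\ref{lem:m}) gives $|u| \prec q^{-1}$, which is far below the branch gap, so $u$ starts on the small branch. Fixing $\kappa$ and decreasing $\eta$ along the vertical segment in $\f Y$, the pointwise-in-$w$ hypothesis $P(z,\ul G) \prec \epsilon(w)$ is promoted to a bound uniform over $\f Y$ on a single event of very high probability by a union bound over a grid of mesh $N^{-C}$. This is where the Lipschitz assumption on $\epsilon$ (constant $N$) and the polynomial-in-$N$ Lipschitz constant of $w \mapsto \ul G(w)$ on $\f Y$ enter. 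The monotonicity of $\eta \mapsto \epsilon(\kappa + \ii\eta)$ controls how the small-branch bound varies under this deformation, and the lower bounds on $\kappa$ and $\eta$ in \eqref{hahaha} together with $\epsilon \leq N^{-\delta}$ keep the two branches quantitatively separated; continuity of $u$ in $w$ then prevents $u$ from crossing to the large branch.

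The main obstacle is precisely the branch-selection step: one must check, using only the input $\epsilon \leq N^{-\delta}$ and the lower bounds defining $\f Y$, that the Lipschitz increment of $u$ between consecutive grid points in the deformation is much smaller than the branch gap $\sqrt{\kappa+\eta}$ throughout the regime $\epsilon \ll \kappa+\eta$. The calibration of $\f Y$ in \eqref{hahaha} is designed so that this separation holds with a polynomial margin. In the complementary regime $\epsilon \gtrsim \kappa+\eta$ no branch selection is needed since both roots of the quadratic satisfy the claimed bound automatically.
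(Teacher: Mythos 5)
Your proposal is correct and captures the standard stability argument that the paper delegates to \cite[Proposition 2.11]{HLY}: Taylor expand $P(z,\cdot)$ to second order around the algebraic root $m$, reduce to a perturbed quadratic in $u = \ul G - m$ with linear coefficient $\asymp \sqrt{\kappa+\eta}$ and quadratic coefficient $\approx 2$, split into the small and large branches, and run a continuity-in-$\eta$ argument combined with a net/union-bound to select the small branch. A few details worth being explicit about, all of which your sketch handles correctly but which are the actual content: the branch selection is anchored at $\eta = 1$, where $|u|\prec 1/q$ follows from Proposition \ref{refthm1} together with $|m-m_0|\prec |\cal Z|$ and $|m_0 - m_{\mathrm{sc}}| = O(q^{-2})$, and $1/q \ll \sqrt{\kappa+1} \asymp 1$; the nonincreasing hypothesis on $\eta\mapsto\epsilon(\kappa+\ii\eta)$ guarantees that as $\eta$ decreases one can only pass from the branch-separated regime $\epsilon\ll\kappa+\eta$ into the merged regime $\epsilon\gtrsim\kappa+\eta$ (never back), so the bootstrap never has to re-select a branch; and the Lipschitz hypothesis on $\epsilon$ plus the polynomial-in-$N$ Lipschitz constants of $\ul G$ and $m$ make the union bound over a mesh-$N^{-C}$ grid close. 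Note also that $z = L_0 + \cal Z + w$ is random through $\cal Z$, so the whole argument is run on the high-probability event where $\cal Z$ is controlled by \eqref{cal Z}; this is consistent with your treatment but deserves a sentence in a full write-up.
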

\begin{proof}
	See \cite[Proposition 2.11]{HLY}.
\end{proof}

Combining Proposition \ref{prop1} and Lemma \ref{lem6.2}, we find that for any deterministic $\Psi$ that does not depend on $\eta$ we obtain the implication
\[
|\ul{G}-m| \prec \Psi \implies |\ul{G}-m| \prec \sqrt{\kappa}N^{-\delta}+\Psi N^{-\delta/2}\,.
\]
Using the initial estimate $|\ul{G}-m| \prec 1$ from Proposition \ref{refthm1}, we therefore conclude the key bound
\begin{equation} \label{3.1}
|\ul{G}-m| \prec \sqrt{\kappa} N^{-\delta}\,.
\end{equation}

\subsection{Estimate of $\im \ul{G}$} \label{sec6.2}
Define the subset
\begin{equation} \label{leijun}
\f Y_*\equiv \f Y_*(\delta)=\bigg\{ w=\kappa+\ii \eta \in \bb C_+\col  \frac{N^{-\delta}}{\sqrt{N}q}\leq \kappa\leq 1,  \eta=N^{-\delta} N^{-5/8}q^{-1/4}\bigg\} \subset \f Y\,.
\end{equation}
In this section we show that
\begin{equation} \label{xinkuzi}
\im \ul{G} \prec \frac{1}{N^{1+\delta}\eta}
\end{equation}
for all $w \in \f Y_*$. This immediately implies that whenever $\kappa+\ii \eta \in \f Y_*$, with very high probability there is no eigenvalue in the interval $(L_0+\cal Z+\kappa-\eta,L_0+\cal Z+\kappa+\eta)$. In addition, \cite[Lemma 4.4]{EKYY1} implies
\[
\|H\|-2 \prec \frac{1}{q}\,,
\]
and hence the largest eigenvalue $\mu_N$ of $H$ satisfies \eqref{goal}, and Proposition \ref{prop: upper bound} is proved.

What remains, therefore, is the proof of \eqref{xinkuzi}. In analogy to Proposition \ref{prop1}, we have the following estimate for $ \im P(z, \ul{G})$, whose proof is postponed to Section \ref{sec5}.
\begin{proposition} \label{prop3.3}
	Let $w \in \f Y_*$. Suppose $| \im \ul{G}- \im {m}| \prec \Phi$ for some deterministic $\Phi \equiv \Phi\in [N^{-1-\delta}\eta^{-1},1]$. Then
	\[
	\im P(z, \ul{G}) \prec \Big(\frac{1}{N\eta} +\Phi\Big) \sqrt{\kappa}N^{-\delta}\,.
	\]
\end{proposition}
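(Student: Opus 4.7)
The plan is to establish Proposition \ref{prop3.3} by estimating the high moments $\bb E |\im P(z, \ul G)|^{2n}$ via the cumulant expansion, closely paralleling the proof of Proposition \ref{prop1}. As noted in the outline of Section \ref{sec}, the proof is simpler than that of Proposition \ref{prop1} because taking imaginary parts automatically produces a substantial cancellation, so significantly fewer re-expansion steps are required. The starting identity is
\[
\bb E (\im P)^{2n} = \bb E \qb{\im \ul{HG}}(\im P)^{2n-1} + \bb E \qb{\im(P - \ul{HG})}(\im P)^{2n-1},
\]
and since $H$ is real symmetric we have $\im \ul{HG} = \ul{H \im G}$. I would then use the resolvent identity $\im G = \eta G G^*$ to rewrite the first term as $\frac{\eta}{N}\sum_{i,j,k} \bb E H_{ij} G_{jk} G^*_{ki} (\im P)^{2n-1}$; the explicit prefactor $\eta$ extracted here is the source of the $(N\eta)^{-1}$ factor in the target bound, once it is combined with a Ward-type summation $\frac{1}{N}(GG^*)_{ii} = \im G_{ii}/(N\eta)$.

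Applying Lemma \ref{lem:cumulant_expansion} to this sum yields an expansion whose combinatorial structure mirrors \eqref{3.2}, with every ambient $P$-factor replaced by $\im P$ and with an additional $G^*$-entry attached on the distinguished side. To analyse the resulting error terms I would extend the abstract formal-monomial framework of Section \ref{sec4.1} to allow both $G$ and $G^*$ entries; since Proposition \ref{refthm1} and Lemma \ref{Ward} hold verbatim for $G^*$, the analogs of Lemmas \ref{lem4.2}, \ref{lem4.22}, and \ref{lem:nte} go through with no essential modification. The $\im P$-factors that we carry along in the expansion are controlled inductively by the hypothesis $|\im \ul G - \im m| \prec \Phi$ together with $|\im m| \asymp \eta/\sqrt{\kappa}$ from Lemma \ref{lem:m}, giving a gain of order $\Phi + \eta/\sqrt{\kappa}$ per copy relative to the analogous expansion for $P$ in Proposition \ref{prop1}. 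Combined with the $\eta$-prefactor and the rigidity estimate \eqref{3.1} $|\ul G - m| \prec \sqrt{\kappa} N^{-\delta}$, this produces the targeted bound $\bb E |\im P|^{2n} \prec \bigl((1/(N\eta) + \Phi)\sqrt{\kappa} N^{-\delta}\bigr)^{2n}$, whence the stated stochastic domination follows by Chebyshev's inequality.

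The principal obstacle, as in Proposition \ref{prop1}, is the treatment of the dangerous fluctuation terms (analogous to \eqref{2.1}), where the $H_{ij}$-derivative hits a factor of $\im P$ inside $(\im P)^{2n-1}$ and produces $\partial_2(\im P)$. As in Section \ref{sec}, I would rewrite $\partial_2 P$ in terms of $\partial_w P$ via the identity $\partial_w P = \partial_1 P + \partial_2 P \cdot \partial_w \ul G$ plus a controllable error, exploiting that $\partial_w$ and $\partial_{\ol w}$ commute with $\partial / \partial H_{ij}$ so that the cancellation built into $\bb E P \approx 0$ (Lemma \ref{lem:exp P_0}) is inherited by the derivative and remains effective after a further cumulant expansion. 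Taking imaginary parts is compatible with this mechanism since $\im$ commutes with both $\partial_w$ and $\partial_{\ol w}$ on the $\ul G$-argument. The rigidity input \eqref{3.1} is essential in bounding the errors produced by this rewriting. Because $\im P$ is already much smaller than $P$, the required precision is less stringent than in Proposition \ref{prop1}; the delicate point is simply to ensure that the gain of $(1/(N\eta) + \Phi)/\sqrt{\kappa}$ per $\im P$-factor is preserved throughout all intermediate absolute-value estimates, rather than being lost in the combinatorics of the recursive expansions.
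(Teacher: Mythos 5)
Your proposal diverges from the paper's proof of Proposition~\ref{prop3.3} in two substantive ways, and both touch on the mechanism rather than just the bookkeeping. First, you factor $\im G = \eta G G^*$ at the outset and claim that the explicit $\eta$ prefactor is ``the source of the $(N\eta)^{-1}$ factor'' in the target bound. This is not right: once you apply the cumulant expansion and resum over the extra index via the Ward identity, you get $(GG^*)_{ji} = \im G_{ji}/\eta$, so the $\eta$ you extracted cancels identically and you are returned to expressions in $\im G_{ij}$. Nothing is gained from the detour through $GG^*$, and it introduces mixed $G/G^*$ monomials that would require a parallel redevelopment of the Section~\ref{sec4.1} machinery. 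The paper avoids this entirely by keeping $\im G_{ij}$ intact throughout: since $H$ is real, $\im$ commutes with $\partial/\partial H_{ij}$, so $\partial^k (\im G_{ij})/\partial H_{ij}^k = \im(\partial^k G_{ij}/\partial H_{ij}^k)$, and the imaginary part of a product of \emph{diagonal} Green's function entries is then bounded by $O_\prec(\im \ul G)$ (using the spectral decomposition and delocalization, as in \eqref{marin}). Combined with $\im \ul G \prec \Phi + \eta/\sqrt\kappa$, this is where the gain actually comes from.

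Second, you invoke the $\partial_w/\partial_{\ol w}$ re-expansion as the ``principal obstacle,'' mirroring Section~\ref{sec7.3.3}. But the paper explicitly states at the start of Section~\ref{sec5} that the proof of Proposition~\ref{prop3.3} ``does not require a secondary expansion as in Section~\ref{sec7.3.3}.'' The reason it is not needed is that the rigidity output \eqref{3.1} of Proposition~\ref{prop1} is fed into Lemma~\ref{lemimP}, giving the improved bounds $P' \prec \sqrt\kappa$ and $\partial^k P/\partial H_{ij}^k \prec \sqrt\kappa\,\Theta$ with $\Theta = (\Phi+\eta/\sqrt\kappa)/(N\eta)$. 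Plugging these into the analogue of the fluctuation terms $X_k^{(2)}$ already yields $(\sqrt\kappa\,\Theta)(\Phi+\Theta^{1/2}) \prec \cal E_{\im}^2$ by a direct computation using that $\Theta^{1/2} \prec \Psi + (N\eta)^{-1}$ on $\f Y_*$, so the moment recursion closes without any further cumulant re-expansion, let alone a $\partial_w$-commutation argument. Your plan would likely produce a correct bound in the end (the $\eta$-cancellation returns you to the paper's quantities, and the $\partial_w$ machinery, while superfluous, is not wrong), but the rationale you give for the two crucial factors $(N\eta)^{-1}$ and the absence of a secondary expansion is not the one that actually makes the proof work.
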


\begin{lemma} \label{mainprop}
	Let $w \in \f Y_*$. Suppose that
	$$\im P(z, \ul{G}) \prec \epsilon$$
	for some deterministic $\epsilon \in [N^{-1},N^{-\delta}]$. Then 
	\[
	|\im \ul{G}-\im m| \prec \frac{\epsilon}{\sqrt{\kappa}}+\frac{1}{N^{1+\delta}\eta}\,.
	\] 
\end{lemma}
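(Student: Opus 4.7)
The plan is to invert a self-consistent equation for $\im(\ul G - m)$ obtained by taking the imaginary part of the Taylor expansion of $P(z, \cdot)$ around $m(z)$, along the lines sketched in \eqref{jkl}. Since $P(z, m(z)) = 0$ by Lemma \ref{lem:m} and $P$ is a polynomial of fixed degree $d = 2\ceil{\beta^{-1}}$ in its second argument, the expansion
\begin{equation*}
P(z, \ul G) = \sum_{k = 1}^{d} \frac{1}{k!}\, \partial_2^k P(z, m)\, (\ul G - m)^k
\end{equation*}
is exact. From the explicit form of $P$ (Lemmas \ref{lem:exp P_0} and \ref{lem:m}) I find $\partial_2^2 P(z, m) = 2 + 2\cal Z + O(q^{-2})$ and $\partial_2^k P(z, m) = O(q^{-2})$ for each $k \geq 3$. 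Taking the imaginary part, using $\im[(\ul G - m)^2] = 2 \re(\ul G - m) \im(\ul G - m)$, and isolating the coefficient of $\im(\ul G - m)$ yields
\begin{equation*}
\bigl[\re \partial_2 P(z, m) + 2 \re(\ul G - m)\bigr]\, \im(\ul G - m) = \im P(z, \ul G) - \im \partial_2 P(z, m) \cdot \re(\ul G - m) + \cal E,
\end{equation*}
where $\cal E$ collects the contributions of $k \geq 3$ together with the $O(q^{-2}) + 2\cal Z$ correction in $\partial_2^2 P$.

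Three ingredients are needed to solve this equation. First, to invert the bracket on the left, I will combine the lower bound $|\re \partial_2 P(z, m)| \asymp \sqrt{\kappa}$ with the already-established rigidity \eqref{3.1}, which gives $|\ul G - m| \prec \sqrt{\kappa}\, N^{-\delta}$; together these yield $|\re \partial_2 P(z, m) + 2 \re(\ul G - m)| \asymp \sqrt{\kappa}$. Second, I will need the sharp upper bound $|\im \partial_2 P(z, m)| \lesssim \eta/\sqrt{\kappa}$. Since $P$ has real coefficients, $m(\bar z) = \overline{m(z)}$ by Schwarz reflection and hence
\begin{equation*}
2\ii\, \im \partial_2 P(z, m(z)) = \partial_2 P(z, m(z)) - \partial_2 P(\bar z, m(\bar z))\,;
\end{equation*}
expanding the right-hand side to first order in $z - \bar z = 2\ii \eta$ and $m(z) - m(\bar z) = 2\ii \im m$, and using $\partial_1 \partial_2 P = 1$, $\partial_2^2 P = 2 + O(q^{-2})$, and $\im m \asymp \eta/\sqrt{\kappa}$ from Lemma \ref{lem:m}, delivers the claim. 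Third, using \eqref{3.1}, the error $\cal E$ is controlled by $O_\prec\bigl(q^{-2} |\ul G - m|^2 |\im(\ul G - m)| + q^{-2}\, \im m \cdot |\ul G - m|^3\bigr)$; after division by $\sqrt{\kappa}$ the first piece is a subleading factor times $|\im(\ul G - m)|$ (absorbable into the left-hand side) while the second is comfortably smaller than $1/(N^{1+\delta}\eta)$ on $\f Y_*$.

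Dividing through by $\sqrt{\kappa}$ and invoking the hypothesis $|\im P(z, \ul G)| \prec \epsilon$ will then give
\begin{equation*}
|\im(\ul G - m)| \prec \frac{\epsilon}{\sqrt{\kappa}} + \frac{\eta\, N^{-\delta}}{\sqrt{\kappa}} + \frac{1}{N^{1+\delta}\eta}\,.
\end{equation*}
The lemma follows as soon as I verify the arithmetic claim $\eta N^{-\delta}/\sqrt{\kappa} \prec 1/(N^{1+\delta}\eta)$ on $\f Y_*$, equivalently $N^2 \eta^4 \prec \kappa$; since on $\f Y_*$ we have $\eta = N^{-\delta - 5/8 - \beta/4}$ and $\kappa \geq N^{-\delta - 1/2 - \beta}$, this reduces to the trivial inequality $N^{-4\delta - 1/2 - \beta} \leq N^{-\delta - 1/2 - \beta}$.

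The main obstacle is the estimate $|\im \partial_2 P(z, m)| \lesssim \eta/\sqrt{\kappa}$: the trivial bound $|\im \partial_2 P(z, m)| \leq |\partial_2 P(z, m)| \asymp \sqrt{\kappa}$ supplied directly by Lemma \ref{lem:m} is far too weak and would break the estimate. Extracting the correct size of the imaginary part relies on the reality of the coefficients of $P$, in the same spirit as the cancellation that makes $\im P(z, \ul G)$ much smaller than $|P(z, \ul G)|$ near the edge --- the phenomenon exploited by Proposition \ref{prop3.3}. The second crucial input is that \eqref{3.1} supplies rigidity with the improved accuracy $\sqrt{\kappa}\, N^{-\delta}$ rather than merely $O(\sqrt{\kappa})$, without which the quadratic feedback $2 \re(\ul G - m)\, \im(\ul G - m)$ could not be absorbed into the left-hand side.
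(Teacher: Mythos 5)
Your proposal is correct and follows essentially the same argument as the paper: Taylor-expand $P(z,\cdot)$ around $m$, take imaginary parts, bound $\abs{\im\partial_2 P(z,m)}$ using the reality of the coefficients of $P$ (the paper asserts $\im\partial_2^k P(z,m)\prec\im z+\im m$, your Schwarz-reflection computation is an explicit version of the same fact), and use the rigidity \eqref{3.1} both to control $\abs{\re\partial_2 P+2\re(\ul G-m)}$ from below and to absorb the quadratic self-coupling term $N^{-\delta}\abs{\im(\ul G-m)}$. The only minor slip is your bound on $\cal E$: the $k=2$ correction $(\partial_2^2 P(z,m)-2)(\ul G-m)^2$ contributes terms of size $O_\prec\pb{q^{-2}\abs{\ul G-m}\abs{\im(\ul G-m)}+q^{-2}\im m\,\abs{\ul G-m}^2}$, which are larger than the $k\geq 3$ contributions you wrote down, but they are still comfortably absorbable after dividing by $\sqrt{\kappa}$, so the conclusion is unaffected.
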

\begin{proof}
	A Taylor expansion gives
	\begin{equation} \label{5.2}
	P(z,\ul{G})=\partial_2 P(z,m)(\ul{G}-m)+\sum_{k=2}^{\ceil{2\beta^{-1}}}\frac{1}{k!}\partial_2^k P(z,m)(\ul{G}-m)^k\,.
	\end{equation}
	Note that $\partial^k_2 P(z,m) \prec 1$, and, recalling the definition of $P$, we find from \eqref{cal Z} and by Lemma \ref{lem:m} that
	\begin{equation} \label{senlidefa}
	\im \partial^k_2 P(z,m) \prec \im z+\im m \prec \frac{\eta}{\sqrt{\kappa+\eta}}  \leq \frac{1}{N^{1+\delta}\eta}
	\end{equation}
	for all $k \geq 1$, where the last inequality holds for any $w \in Y_*$ we have
\begin{equation} \label{5.1}
\frac{\eta}{\sqrt{\kappa+\eta}} \leq \frac{1}{N^{1+\delta}\eta}\,.
\end{equation}
 This implies, for all $k \geq 2$,
	\begin{multline} \label{6.6}
	\im \Big(	\partial_2^k P(z,m)(\ul{G}-m)^k \Big) \prec \frac{1}{N^{1 +\delta}\eta}|\ul{G}-m|^k+ |\im \ul{G}-\im m||\ul{G}-m|^{k-1}\\
	\prec\Big( \frac{1}{N^{1 +\delta}\eta} +|\im \ul{G}-\im m|\Big)\sqrt{\kappa}N^{-\delta}\,,
	\end{multline}
	where in the second step we used \eqref{3.1}. Taking imaginary part of \eqref{5.2} and rearranging the terms, we have
	\begin{multline*}
	\re \partial_2 P(z,m) \im (\ul{G}-m)=\im P(z,\ul{G})-\im \partial_2 P(z,m) \re (\ul{G}-m)\\+O_{\prec}(\sqrt{\kappa}N^{-\delta})\Big( \frac{1}{N^{1 +\delta}\eta} +|\im \ul{G}-\im m|\Big)\,.
	\end{multline*}
	Note that $|\im \partial_2 P(z,m)| \asymp |\im m| \ll \sqrt{\kappa}$, and by \eqref{2.5} we have $|\partial_2 P(z,m)|\asymp\sqrt{\kappa}$. Thus
	\[
	|\re\partial_2 P(z,m)|\asymp \sqrt{\kappa} \,,
	\]
   and together with \eqref{3.1} and \eqref{senlidefa} we have
	\begin{multline*}
	|\im \ul{G}-\im m| \prec \frac{\abs{\im P(z,\ul{G})}}{\sqrt{\kappa}}+\frac{1}{\sqrt{\kappa}}\frac{1}{N^{1+\delta}\eta}\sqrt{\kappa}N^{-\delta}\\+\frac{1}{\sqrt{\kappa}}\sqrt{\kappa}N^{-\delta}\Big( \frac{1}{N^{1 +\delta}\eta} +|\im \ul{G}-\im m|\Big)
	\prec \frac{\epsilon}{\sqrt{\kappa}}+\frac{1}{N^{1+\delta}\eta}+N^{-\delta} |\im \ul{G}-\im m|\,.
	\end{multline*}
	This yields the claim.
\end{proof}
From Proposition \ref{prop3.3} and Lemma \ref{mainprop} we obtain the implication 
\begin{equation} \label{im_G_imm}
|\im \ul{G}-\im m| \prec \Phi \implies |\im\ul{G}-\im m| \prec \frac{1}{N\eta}N^{-\delta}+\Phi N^{-\delta}\,.
\end{equation}
Iterating \eqref{im_G_imm} $O(1/\delta)$ times and recalling Definition \ref{def:2.3} yields
\begin{equation} \label{3.3}
|\im \ul{G}-\im m| \prec \frac{1}{N^{1+\delta}\eta} 
\end{equation}
for all $w \in \f Y_*$. Since
\[
\im m  \asymp\frac{\eta }{\sqrt{\eta+\kappa}} \leq \frac{1}{N^{1+\delta}\eta}\,,
\]
we thus conclude \eqref{xinkuzi}. This concludes the proof of Proposition \ref{prop: upper bound}.

\section{Proof of Proposition \ref{prop1}}\label{sec4}
\begin{convention*}
Throughout this section, $z$ is given by \eqref{def_z_random}, where $w \in \f Y$ is deterministic.
\end{convention*}

Fix $n \in \bb N_+$ and set
\[
\cal P \deq \|P(z,\ul{G})\|_{2n}=\Big(\bb E |P(z,\ul{G})|^{2n}\Big)^{\frac{1}{2n}}\,, \quad \cal E \deq \big(\kappa +\Psi^2\big) N^{-\delta}\,.
\]
We shall show, for any fixed $n \in \N$, that
\begin{equation} \label{goal sec7}
\bb E |P(z,\ul{G})|^{2n}=\cal P^{2n} \prec \cal E^{2n}\,,
\end{equation}
from which Proposition \ref{prop1} follows by Chebyshev's inequality.  The rest of this section is therefore devoted to the proof of \eqref{goal sec7}.

Set
\[
Q_0(\ul{G})\deq P(z,\ul{G})-(1+z\ul{G}+\cal Z \ul{G}^2)\,, \quad Q(\ul{G})\deq P(z,\ul{G})-(1+z\ul{G})\,,
\]
and abbreviate 
\begin{equation} \label{eqn PQQ_0}
P\equiv P(z,\ul{G})\,,\quad P'\deq \partial_2 P(z,\ul{G})\,, \quad Q_0=Q_0(\ul{G}) \quad \mbox{and} \quad Q=Q(\ul{G})\,.
\end{equation}
Note that argument $z$ of $G$ is random, and
\begin{equation}\label{diffz}
\frac{\partial G_{kl}}{\partial H_{ij}}=-(G_{ki}G_{jl}+G_{li}G_{jk})(1+\delta_{ij})^{-1}+4N^{-1}(G^2)_{kl}H_{ij}(1+\delta_{ij})^{-1}\,,
\end{equation}
and as a result
\begin{equation} \label{diffP}
\frac{\partial P}{\partial H_{ij}}= (-2P'N^{-1}(G^2)_{ij}+4P'N^{-1}H_{ij}\ul{G^2}+4N^{-1}H_{ij}\ul{G}^2)(1+\delta_{ij})^{-1}\,.
\end{equation}
We define the parameter
\begin{equation} \label{Upsilon}
\Upsilon \deq \frac{\Psi+\sqrt{\kappa+\eta}}{N\eta}\,.
\end{equation}
Recalling the random variable $\Gamma$ from \eqref{WWard}, we find
\begin{equation} \label{theshy}
\Gamma \prec \Upsilon\,.
\end{equation}
Moreover, we have
\begin{equation} \label{ruo}
(\Psi+\sqrt{\kappa+\eta})\Upsilon\geq \sqrt{\kappa+\eta}\cdot \frac{\sqrt{\kappa+\eta}}{N\eta} \geq N^{-1}\,.
\end{equation}
The next lemma collects basic estimates for the derivatives of $P$.

\begin{lemma} \label{lemP}
	Under the assumptions of Proposition \ref{prop1}, for any fixed $k \in \N_+$ we have
	\begin{equation} \label{1}
	P' \prec \Psi+\sqrt{|\kappa|+\eta}\,, \quad	 \quad \bigg|\frac{\partial^k \ul{G}}{\partial H_{ij}^k}\bigg| \prec  \max_{x,y} N^{-1}|(G^2)_{xy} |\prec \Upsilon
	\end{equation}
	and
	\begin{equation} \label{2}
	\frac{\partial^k P}{\partial H_{ij}^k} \prec \Big(\Psi+\sqrt{\abs{\kappa}+\eta}\,\Big)\Upsilon\,.
	\end{equation}
\end{lemma}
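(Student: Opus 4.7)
My plan is to establish the three estimates in turn, using Taylor expansion about $m$ for $P'$, and iterated applications of the differentiation rule \eqref{diffz} for the derivatives of $\ul{G}$ and $P$, together with Ward/delocalization to pass from Green's function entries to scalars.

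For the first bound in \eqref{1}, $P' \prec \Psi+\sqrt{|\kappa|+\eta}$, I would Taylor expand $\partial_2 P(z,\cdot)$ around $\ul G = m$:
\[
P' = \partial_2 P(z,\ul{G}) = \partial_2 P(z,m) + \sum_{j\ge 1}\frac{1}{j!}\,\partial_2^{j+1} P(z,m)\,(\ul{G}-m)^j.
\]
By Lemma \ref{lem:m} combined with \eqref{5.22} we have $|\partial_2 P(z,m)| \asymp \sqrt{\tau+\eta}\asymp \sqrt{\kappa+\eta}$ and $|\partial_2^2 P(z,m)| = 2 + O(q^{-2})$, while $\partial_2^{j+1} P(z,m) = O(1)$ for $j\ge 2$ since $P$ has fixed degree with uniformly bounded coefficients. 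Plugging in the hypothesis $|\ul{G}-m|\prec\Psi\le 1$ yields the claim.

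For the second estimate in \eqref{1}, I would prove $|\partial^k_{H_{ij}}\ul{G}|\prec \max_{x,y} N^{-1}|(G^2)_{xy}|$ by induction on $k$. The base case follows from \eqref{diffz} by setting $k=l$ and averaging:
\[
\frac{\partial \ul{G}}{\partial H_{ij}} = \frac{1}{N(1+\delta_{ij})}\bigl(-2(G^2)_{ij} + 4H_{ij}\,\ul{G^2}\bigr),
\]
where the second term is strictly subleading because $|H_{ij}|\prec q^{-1}$ by Definition \ref{def:sperse} and $|\ul{G^2}|\le \max_x |(G^2)_{xx}|$. For the inductive step, each additional differentiation either replaces a $G_{ab}$ factor by $-(G_{\cdot i}G_{j\cdot}+G_{\cdot j}G_{i\cdot})(1+\delta_{ij})^{-1}$ or by $4N^{-1}H_{ij}(G^2)_{\cdot\cdot}(1+\delta_{ij})^{-1}$; summations over contracted indices can at most turn a $(G^m)$-factor into a $(G^{m+1})$-factor. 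By spectral decomposition and delocalization (Lemma \ref{lem:delocalization}) one has $|(G^m)_{xy}| \prec \eta^{-(m-2)}|(G^2)_{xy}|$, and the $\eta^{-1}$ inflation is dominated by the accompanying $q^{-1}N^{-1}$ from each new $H_{ij}$-factor thanks to the $\f Y$ lower bound $\eta\ge N^{-\delta}N^{-5/8}q^{-1/4}$. Thus the bound $\max_{x,y}N^{-1}|(G^2)_{xy}|$ is preserved at every iteration. The companion inequality $\max_{x,y}N^{-1}|(G^2)_{xy}|\prec \Upsilon$ follows by writing $(G^2)_{xy}=\sum_k \f u_k(x)\f u_k(y)/(\lambda_k-z)^2$ and using delocalization to obtain $|(G^2)_{xy}|\prec \im \ul{G}/\eta$. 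Combining with $\im \ul{G}\le |\ul{G}-m|+\im m \prec \Psi + \eta/\sqrt{\kappa+\eta}$ and $\eta/\sqrt{\kappa+\eta}\le \sqrt{\kappa+\eta}$ gives $N^{-1}|(G^2)_{xy}|\prec \Upsilon$.

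Finally, \eqref{2} is obtained by the chain rule for $P(z,\ul{G})$, where both arguments depend on $H$. The dominant contribution to $\partial^k_{H_{ij}} P$ is $P' \cdot \partial^k_{H_{ij}}\ul{G}$, controlled by $(\Psi+\sqrt{\kappa+\eta})\Upsilon$ from the first two bounds in \eqref{1}. All remaining terms involve either $\partial_1 P = \ul{G}\prec 1$ times $\partial_{H_{ij}} z \prec q^{-1}/N$, or higher derivatives $\partial_2^a\partial_1^b P$ (uniformly bounded) times products of derivatives of $\ul{G}$; these are strictly subdominant because $(\sqrt{\kappa+\eta})\Upsilon \ge (\kappa+\eta)/(N\eta)\ge 1/N$.

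The main obstacle will be the inductive bookkeeping for $\partial^k_{H_{ij}}\ul{G}$: one must verify systematically that the higher-power Green's function entries $(G^m)_{xy}$ with $m\ge 3$ which proliferate under iterated differentiation are always suppressed by accompanying $N^{-1}H_{ij}$ factors and by the lower bound on $\eta$ in $\f Y$. This is straightforward but tedious, and the estimates carry substantial slack because the claim is only needed for fixed $k$.
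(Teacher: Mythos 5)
Your proof is correct and follows essentially the same route as the paper's (mean value theorem/Taylor expansion for $P'$, iterated use of \eqref{diffz} together with the Ward identity and delocalization for $\partial^k\ul G/\partial H_{ij}^k$, and the chain rule plus the observation \eqref{ruo} for $\partial^k P/\partial H_{ij}^k$), just with more of the bookkeeping spelled out than the paper's very terse proof.

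One small imprecision worth flagging: the pointwise inequality $|(G^m)_{xy}|\prec\eta^{-(m-2)}|(G^2)_{xy}|$ that you invoke for fixed $x,y$ is not quite right, since $(G^2)_{xy}$ has no useful lower bound for a given pair of indices. What actually holds, and what you need, is the uniform estimate
\[
\max_{x,y}|(G^m)_{xy}|\;\prec\;\frac{\im\ul G}{\eta^{m-1}}\,,\qquad m\ge 2\,,
\]
obtained from the spectral decomposition, Cauchy--Schwarz and delocalization (which also gives $\max_{x,y}N^{-1}|(G^2)_{xy}|\prec\Gamma\prec\Upsilon$). Substituting this into your induction shows that each extra power of $G$ costs at worst a factor $\eta^{-1}$ which is indeed compensated by the accompanying $N^{-1}H_{ij}\prec N^{-1}q^{-1}$, since $N^{-1}q^{-1}\eta^{-1}\prec N^{\delta-3/8}q^{-3/4}\prec 1$ on $\f Y$; so the final bound $|\partial^k\ul G/\partial H_{ij}^k|\prec\Upsilon$ goes through. (For $k\ge 2$ the exact intermediate bound $\max_{x,y}N^{-1}|(G^2)_{xy}|$ is the clean statement for $k=1$ that controls the leading contribution; the point is just that everything is $\prec\Upsilon$, which is what is used.) Similarly, for \eqref{2} it is cleaner to note that the $z$-derivative of $P$ through $\cal Z$ contributes $(\ul G+\ul G^2)\,\partial_{H_{ij}}\cal Z$ rather than literally $\partial_1 P\cdot\partial_{H_{ij}}z$, but since $\ul G+\ul G^2\prec 1$ this has no effect on the bound.
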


\begin{proof}
	By the mean value theorem,
	\[
	P'=\partial_2 P(z,\ul{G})=\partial_2P(z,m)+\partial_2^2 P(z,\xi)(m-\ul{G})
	\]
	for some $\xi$ between $m$ and $\ul{G}$. Then the first estimate in \eqref{1} is proved using Lemma \ref{lem:m} and \eqref{5.22}. The second estimate in \eqref{1} is proved by Lemmas \ref{lem:delocalization} and \ref{Ward}. By \eqref{diffP} and \eqref{1}, one easily checks that
	\[
	\frac{\partial^k P}{\partial H_{ij}^k} \prec \sup_{x,y}P' N^{-1}|(G^2)_{xy}|+\sup_{x,y}\big(N^{-1}|(G^2)_{xy}|\big)^2+N^{-1}\prec (\Psi+\sqrt{\kappa+\eta})\Upsilon+N^{-1}\,,
	\]
	and combing with \eqref{ruo} one concludes \eqref{2}.
 \end{proof}

\subsection{The first expansion}
By $(H-z)G=I$, we have
   \[
	\bb E |P|^{2n}=\bb E \Big(\ul{HG} +\cal Z \ul{G}^2+Q_0 \Big) P^{n-1}P^{*n}=\bb E \Big(\cal Z \ul{G}^2+Q_0 \Big) P^{n-1}P^{*n}+\frac{1}{N} \sum_{i,j} \bb E H_{ij}G_{ji}P^{n-1}P^{*n}\,.
	\]
	We use Lemma \ref{lem:cumulant_expansion} to calculate the last term. By setting $h=H_{ij}$, $f=f_{ji}(H)=G_{ji}P^{n-1}P^{*n}$, we get
	\begin{equation} \label{KKK}
	\frac{1}{N} \sum_{i,j} \bb E H_{ij}G_{ji}P^{n-1}P^{*n}=\frac{1}{N}\sum_{k=1}^\ell\frac{1}{k!}\sum_{i,j} \cal C_{k+1}(H_{ij}) \bb E \bigg[  \frac{\partial^k (G_{ij}P^{n-1}P^{*n})}{\partial H_{ij}^k} \bigg]+O_{\prec}(N^{-4n})
	\end{equation}
	where, as in \eqref{?}, we choose a large enough $\ell\in \bb N_+$ such that the remainder term is is negligible. By splitting the differentials in \eqref{KKK} basing on if $P,\ol P$ are differentiated, we have 
	\begin{multline} \label{7.1}
	\bb E |P|^{2n}	=\bb E Q_0   P^{n-1}P^{*n}+\bb E\cal Z \ul{G}^2P^{n-1}P^{*n}+\frac{1}{N}\sum_{k=1}^\ell\frac{1}{k!}\sum_{i,j} \cal C_{k+1}(H_{ij}) \bb E \bigg[  \frac{\partial^k G_{ij}}{\partial H_{ij}^k} P^{n-1}P^{*n}\bigg]\\
	+\frac{1}{N}\sum_{k=1}^\ell\frac{1}{k!}\sum_{s=1}^k {k \choose s}\sum_{i,j} \cal C_{k+1}(H_{ij}) \bb E \bigg[  \frac{\partial^s( P^{n-1}P^{*n})}{\partial H_{ij}^s} \frac{\partial^{k-s} G_{ij}}{\partial H_{ij}^{k-s}}\bigg]+O_{\prec}(N^{-4n})\\
	\eqd\mbox{(I)}+\mbox{(II)}+\mbox{(III)}+\mbox{(IV)}+O_{\prec}(N^{-4n})\,.
	\end{multline}
	 We have the following result, which handles the terms on right-hand side of \eqref{7.1} and directly implies \eqref{goal sec7}.
	
\begin{lemma} \label{lem7.1}
	Let $\mathrm{(I)} - \mathrm{(IV)}$ be as in \eqref{7.1}. We have
\begin{equation} \label{7.2}
\mathrm{(II)}+\mathrm{(IV)}\prec\sum_{r=1}^{2n}\cal E^{r} \cal P^{2n-r}
\end{equation}
as well as
\begin{equation} \label{7.3}
\mathrm{(I)}+\mathrm{(III)}\prec \sum_{r=1}^{2n}\cal E^{r} \cal P^{2n-r}\,.
\end{equation}
\end{lemma}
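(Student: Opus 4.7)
The plan is to analyze the four pieces via two distinct mechanisms: for $\mathrm{(I)}+\mathrm{(III)}$ we exploit the cancellation built into $P_0$ by the construction in Section \ref{sec6.1}, whereas for $\mathrm{(II)}+\mathrm{(IV)}$ we must perform further cumulant re-expansions on the derivatives of $P^{n-1}P^{*n}$, together with the device sketched in Section \ref{sec} that rewrites factors of $P'$ as derivatives of $P$ in the spectral parameter $w$, so that the smallness of $\bb E P$ can be harvested a second time.

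To obtain \eqref{7.3}, I would reproduce the $P_0$-derivation of Section \ref{sec4.2} with the inert test function $P^{n-1}P^{*n}$ sitting inside every expectation. Crucially, the derivatives $\partial_{H_{ij}}^k G_{ij}$ appearing in $\mathrm{(III)}$ are monomials in Green's function entries that do not touch $P^{n-1}P^{*n}$; consequently the algebraic cancellation between $-\bb E Q_0(\ul G)$ (appearing through $\mathrm{(I)}$) and the sum of cumulant terms in $\mathrm{(III)}$ goes through verbatim. What remains is a collection of error terms of the schematic form $\bb E[X\cdot P^{n-1}P^{*n}]$ with $|X|$ bounded either deterministically by $\cal E$ or stochastically by $\Gamma+N^{-1}$ (through Lemmas \ref{lem4.2}, \ref{lem4.22}, \ref{lem:nte} and the averaging map $\cal M$). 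By Hölder's inequality each such expectation factors as a bound on $X$ times $\cal P^{2n-1}$, and one checks $\Gamma+N^{-1}\prec\cal E$ for $w\in\f Y$ using $\Gamma\prec\Upsilon$ and \eqref{ruo}.

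For $\mathrm{(II)}+\mathrm{(IV)}$ the situation is harder because the naive bounds on $\partial_{H_{ij}}^s P$ given by Lemma \ref{lemP} are too weak: differentiating $P$ destroys the cancellation by which $\bb E P$ is small. Following the roadmap of Section \ref{sec}, the dangerous contributions in $\mathrm{(IV)}$, of the form
\[
\frac{1}{N}\cal C_{k+1}(H_{ij})\,\bb E\qb{(\partial_2\ol P)\cdot N^{-1}(G^{*2})_{ii}G^{*}_{jj}G_{ii}G_{jj}\cdot|P|^{2n-2}}
\]
and their higher-$k$ analogues, must be rewritten using the identity $(\partial_2\ol P)\,\ul{G^{*2}}=\partial_{\ol w}P(\ol z,\ul G^{*})-\ul{G^{*}}$ together with the off-diagonal decay replacements $(G^{*2})_{ii}\approx\ul{G^{*2}}$, $G^{*}_{jj}\approx\ul{G^{*}}$, $G_{ii},G_{jj}\approx\ul G$ justified by Proposition \ref{refthm1} and the Ward identity. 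Once the term is rewritten as $N^{-1}\partial_{\ol w}P(\ol z,\ul G^{*})\,\ul{G^{*}}\,\ul G^{2}|P|^{2n-2}$ plus controllable errors, the commutativity of $\partial_{\ol w}$ with $\partial_{H_{ij}}$ permits another application of Lemma \ref{lem:cumulant_expansion} in a fresh index pair, yielding a further cancellation of the same structural type that produced $P_0$. The term $\mathrm{(II)}$ is handled by the same re-expansion strategy: expanding $\cal Z=N^{-1}\sum_{i,j}H_{ij}^{2}-1$ through a cumulant expansion in $H_{ij}$ generates pieces that cancel the $k=1$ and diagonal contributions in $\mathrm{(IV)}$, leaving residues of size $\cal E\cdot\cal P^{2n-1}$.

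The main obstacle is the bookkeeping required to propagate the $P_0$-cancellation through an arbitrary number of cumulant re-expansions and formal-derivative manipulations while tracking the dependence on $P^{n-1}P^{*n}$. To that end I would use an extension of the abstract monomial formalism of Section \ref{sec4.1}, as promised at the end of Section \ref{sec6.1}, enlarging $\cal T$ by admitting $P$, $P^{*}$, $P'$, and their $w$-derivatives as formal symbols. In this enlarged formalism each cumulant expansion step sends a monomial either to a structurally simpler one (terminating the recursion after finitely many steps via a gain in $q^{-1}$ or $N^{-1}$), to a term directly absorbable into $\cal E^{r}\cal P^{2n-r}$ via Hölder, or to a term with an uncancelled $\partial_2 P$-factor, to which the $\partial_{\ol w}$-trick above is applied. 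Termination of this recursion, combined with Lemma \ref{lemP}, the Ward identity, and the moment bound $\cal Z\prec(\sqrt N\,q)^{-1}$, then yields both \eqref{7.2} and \eqref{7.3}, and hence \eqref{goal sec7}.
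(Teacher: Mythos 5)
Your overall strategy is the paper's: for $\mathrm{(I)}+\mathrm{(III)}$ you rerun the $P_0$-construction with the inert factor $P^{n-1}P^{*n}$ attached (the paper does this in Section \ref{sec7.4}, using the extended formalism $\cal V$ of Definition \ref{def:cal_T} and the new Lemmas \ref{lem7.4}(ii) and \ref{lem7.10}), and for $\mathrm{(II)}+\mathrm{(IV)}$ you re-expand via cumulants and use the $\partial_{\ol w}$-trick to recover the $P_0$-cancellation inside derivative terms (Sections \ref{sec7.2}--\ref{sec7.3.3}). Two points in your description of the $\mathrm{(II)}+\mathrm{(IV)}$ half are off and would derail the calculation if followed literally.

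First, the cancellation between $\mathrm{(II)}$ and $\mathrm{(IV)}$ is not at the $k=1$ cumulant level. The $k=1$ piece $X_1$ of $\mathrm{(IV)}$ satisfies $X_1 \prec \Upsilon^2 \cal P^{2n-2} \prec \cal E^2 \cal P^{2n-2}$ on its own; nothing in $\mathrm{(II)}$ is needed to control it. The cancellation of the ``$-1$'' in $\cal Z$ against the second-cumulant term in the expansion of $N^{-1}\sum H_{ij}^2$ is internal to $\mathrm{(II)}$ (compare \eqref{7.43}--\eqref{7.45}). The genuine cancellation between $\mathrm{(II)}$ and $\mathrm{(IV)}$ occurs at the $k=3$ (fourth-cumulant) level: after the $\partial_{\ol w}$-trick and re-expansion, $X_3\subset\mathrm{(IV')}$ leaves the explicit residues $-\frac{b_1}{N^{2+2\beta}}\bb E[\ul{G^3}\,\ul G^3 P^{2n-2}]$ and $-\frac{b_1}{N^{2+2\beta}}\bb E[\ul{G^3}\,\ul G^2 P^{2n-2}]$ of \eqref{dingding}, and the matching term $\widehat X_3\subset\mathrm{(II')}$ produces exactly their negatives, \eqref{7.48}, with the \emph{same} constant $b_1$.

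Second, and relatedly, your account of the recursion --- that every term is eventually ``directly absorbable into $\cal E^r\cal P^{2n-r}$'' once the $\partial_{\ol w}$-trick has been played --- hides the main obstacle. After the $\partial_{\ol w}$-trick is applied to $X_{3,2,1}$ and fully re-expanded, one does \emph{not} land inside $O_\prec(\cal E^2\cal P^{2n-2})$: the residues are of order $\frac{\Upsilon}{N\eta}\cal P^{2n-2}$ and $\frac{1}{N^{1+2\beta}}\cal P^{2n-2}$, both $\gg \cal E^2 \cal P^{2n-2}$ (see the discussion around \eqref{people}). They are controlled only because they cancel against $\mathrm{(II)}$. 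This forces you to track the explicit constant $b_1 = -(4n-2)N^{2\beta-1}\sum_{i,j}\cal C_4(H_{ij})$ from \eqref{eqnb_1} through the entire computation, not merely to bound each term. A plan that bounds pieces individually and relies on Hölder at the end would not close.
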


The rest of Section \ref{sec4} is devoted to showing Lemma \ref{lem7.1}. To simplify notation, we drop the complex conjugates in (I)--(IV)  (which play no role in the subsequent analysis), and estimate the quantities
\begin{equation} \label{pengpeng}
\mbox{(II')}+\mbox{(IV')}\deq  \bb E\cal Z \ul{G}^2P^{2n-1}	+\frac{1}{N}\sum_{k=1}^\ell\frac{1}{k!}\sum_{s=1}^k {k \choose s}\sum_{i,j} \cal C_{k+1}(H_{ij}) \bb E \bigg[  \frac{\partial^s P^{2n-1}}{\partial H_{ij}^s} \frac{\partial^{k-s} G_{ij}}{\partial H_{ij}^{k-s}}\bigg]
\end{equation}
and
\begin{equation} \label{dingman}
\mbox{(I')}+\mbox{(III')}\deq \bb E Q_0 P^{2n-1}+\frac{1}{N}\sum_{k=1}^\ell\frac{1}{k!}\sum_{i,j} \cal C_{k+1}(H_{ij}) \bb E \bigg[  \frac{\partial^k G_{ij}}{\partial H_{ij}^k} P^{2n-1}\bigg]\,.
\end{equation}

\subsection{Abstract polynomials, Part II} \label{sec 6.2}
In order to estimate \eqref{pengpeng} and \eqref{dingman}, we introduce the following class of abstract polynomials, which generalizes the class $\cal T$ from Definition \ref{def:cal_V}.

\begin{definition}\label{def:cal_T}
Let $\{i_1,i_2,\dots\}$ be an infinite set of formal indices. To integers $s,k, \nu_1,\nu_3 \in \N$, digits $\nu_4,\nu_5 \in \{0,1\}$ satisfying $\nu_4 \leq \nu_5$, a real number $\theta \in \R$, formal indices $x,y,x_1, y_1, \dots, x_\sigma, y_\sigma \in \{i_1, \dots, i_{\nu_1}\}$, and a family $(a_{i_1,\dots,i_{\nu_1}})_{1\leq i_1,\dots,i_{\nu_1}\leq N}$ of uniformly bounded complex numbers we assign a formal monomial
\begin{equation} \label{def_V}
V = a_{i_1,\dots,i_{\nu_1}}N^{-\theta} (P')^{\nu_4}(N^{-1}(G^2)_{xy})^{\nu_5} G_{x_1y_1}G_{x_2y_2}\cdots G_{x_{k}y_{k}}\ul{G}^s P^{\nu_3}\,,
\end{equation}
We denote $\sigma(V) = s+k$, $\nu_i(V) = \nu_i$ for $i = 1,3,4,5$, $\theta(V) = \theta$, and \begin{equation*}
\nu_2(V) \deq \f 1_{x \neq y} + \sum_{l = 1}^k \f 1_{x_l \neq y_l}\,.
\end{equation*}
We denote by $\cal V$ the set of formal monomials $V$ of the form \eqref{def_V}.
\end{definition}

We extend the evaluation from Definition \ref{def:evaluation} to the set $\cal V$, and denote the evaluation of $V$ as in \eqref{def_V} by $V_{i_1, \dots, i_{\nu_1}}$. We also extend the operation $\cal S$ from \eqref{5.11} to $\cal V$.

The next lemma is an analogue of Lemma \ref{lem4.2}, whose proof is postponed to Section \ref{sec10.3}.

\begin{lemma} \label{lem7.4}
Let $V \in \cal V$ and abbreviate $\nu_i = \nu_i(V)$ and $\theta = \theta(V)$. Suppose that $\nu_2 \ne 0$.
\begin{enumerate}
\item
We have
\begin{multline*}
\bb E \cal S (V) \prec N^{\nu_1-\theta}(\Psi+\sqrt{\kappa+\eta})^{\nu_4}  (N\eta)^{-\nu_5}\Upsilon\bb E |P^{\nu_3}|
\\
+\sum_{t=1}^{\nu_3}  N^{\nu_1-\theta}(\Psi+\sqrt{\kappa+\eta})^{\nu_4} \Upsilon^{\nu_5}((\Psi+\sqrt{\kappa+\eta}) \Upsilon)^{t}\bb E |P^{\nu_3-t}|\,.
\end{multline*}
\item
Moreover, when $\nu_4(V)=\nu_5(V)=0$, we have the stronger estimate
\[
\bb E \cal S (V) \prec N^{\nu_1-\theta}\Upsilon\bb E |P^{\nu_3}|+ N^{\nu_1-\theta}\Upsilon^{2}\bb E |P^{\nu_3-1}|+\sum_{t=2}^{\nu_3}  N^{\nu_1-\theta}((\Psi+\sqrt{\kappa+\eta}) \Upsilon)^{t}\bb E |P^{\nu_3-t}|\,.
\]
\end{enumerate}
\end{lemma}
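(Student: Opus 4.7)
The plan is to extend the proof of Lemma \ref{lem4.2} in Section \ref{sec10.1} to accommodate the additional factors $(P')^{\nu_4}$, $(N^{-1}(G^2)_{xy})^{\nu_5}$, and $P^{\nu_3}$ that enlarge $\cal V$ compared to $\cal T$. The first two are handled as (essentially) deterministic prefactors via Lemma \ref{lemP}: one has $|(P')^{\nu_4}| \prec (\Psi+\sqrt{\kappa+\eta})^{\nu_4}$, and combining the Ward identity with the local-law bound $\im G_{ii} \prec 1$ gives $|N^{-1}(G^2)_{xy}| \prec (N\eta)^{-1}$, yielding the factor $(N\eta)^{-\nu_5}$ in part (i). Since $P'$, $\ul{G}^s$, and $P^{\nu_3}$ do not depend on the summation indices $i_1,\dots,i_{\nu_1}$, they pull out of $\cal S(V)$ and contribute only to the $\E|P^{\nu_3}|$ factor in the final bound.

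For the sum of the remaining off-diagonal $G$-entries, the plan is to adapt the Schwinger--Dyson plus cumulant expansion argument underlying Lemma \ref{lem4.2}. Applying the identity $zG_{xy}=(HG)_{xy}$ (valid for $x\ne y$) to a chosen off-diagonal entry and then expanding by Lemma \ref{lem:cumulant_expansion} in one of the $H_{xa}$'s replaces that entry by a pair of $G$-entries, which is controlled by Ward and Cauchy--Schwarz. This produces the desired factor of $\E\Gamma\prec \E\Upsilon$ via \eqref{theshy}. When $\nu_2\geq 2$ the same mechanism pairs two of the $\nu_2$ off-diagonal entries directly; the delicate case, as in Remark \ref{remark 4.3}, is $\nu_2=1$, where an additional Schwinger--Dyson step is needed to expose a second off-diagonal entry to pair with the original via Ward. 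This gives the leading term in both parts (i) and (ii): $(N\eta)^{-\nu_5}\Upsilon$ in general, and the sharper $\Upsilon$ when $\nu_4=\nu_5=0$ (since for part (ii) the deterministic $(N\eta)^{-\nu_5}$ prefactor is absent).

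When derivatives generated by the cumulant expansion land on the $P^{\nu_3}$ factor, the differential rule \eqref{diffP} expresses each such derivative as a sum of $P'N^{-1}(G^2)_{ij}$, $P'H_{ij}N^{-1}\ul{G^2}$, and $H_{ij}N^{-1}\ul G^2$ terms, each deterministically $O_\prec((\Psi+\sqrt{\kappa+\eta})\Upsilon)$ by Lemma \ref{lemP} together with $\Upsilon \geq N^{-1}$ from \eqref{ruo}. Each additional differentiation of $P$ thus drops its exponent by one and multiplies the running bound by $(\Psi+\sqrt{\kappa+\eta})\Upsilon$, producing the $t$-sum in the statement; iterating the cumulant expansion $O(1)$ times (until all off-diagonal entries have been integrated out) closes the recursion. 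For the improvement to $\Upsilon^2\E|P^{\nu_3-1}|$ in part (ii), the sole $N^{-1}(G^2)_{ij}$ generated by differentiating $P$ once is fed back into the Schwinger--Dyson argument from the previous paragraph, converting its deterministic $(N\eta)^{-1}$ bound into a sharper $\Upsilon$ via Ward pairing; from $t\ge 2$ onward the cruder bound inherited from part (i) already suffices.

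The main technical obstacle is the $\nu_2=1$ regime: a naive Cauchy--Schwarz application of Ward to a single off-diagonal entry yields only $\sqrt\Gamma$, whereas the statement demands $\Upsilon$; this gap must be closed by an extra Schwinger--Dyson step generating a second off-diagonal entry. A secondary challenge is the bookkeeping of the cumulant hierarchy, so that the $t$-th summand in parts (i) and (ii) correctly reflects the interplay between the $t$ differentiations of $P$ and the Ward gains on the remaining off-diagonal entries, and so that the sharp distinction between the $\Upsilon^2$ bound in part (ii) and the coarser $(\Psi+\sqrt{\kappa+\eta})\Upsilon$ bound inherited from part (i) is preserved.
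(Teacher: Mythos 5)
Your plan correctly identifies the overall architecture (Schwinger--Dyson expansion via $G_{ij} = \delta_{ij}\ul G + G_{ij}\ul{HG} - (HG)_{ij}\ul G$, followed by cumulant expansion and iteration to accumulate $N^{-\beta}$ factors, with Ward pairing for the $\nu_2 = 1$ case), which is what the paper does. However there is a real gap in how you treat the $(G^2)_{xy}$ factor. You bound $(N^{-1}(G^2)_{xy})^{\nu_5}$ as a deterministic prefactor $\prec (N\eta)^{-\nu_5}$ and then run the Schwinger--Dyson argument only on the remaining off-diagonal $G_{x_ly_l}$'s. This works when $\nu_2 \neq 0$ comes from some $G_{x_ly_l}$, but fails when the off-diagonality comes from $(G^2)_{xy}$ itself having $x \neq y$ — and this case actually occurs, e.g.\ in $X_{2,2,1}$ of Section 7, where $V$ contains $P'N^{-1}(G^2)_{ij}G_{ii}G_{jj}$. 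There all the $G_{x_ly_l}$ are diagonal, so after pulling out $(G^2)_{ij}$ you are left with a sum of $\nu_2 = 0$, which gives no $\Upsilon$ gain; bounding $N^{-1}|(G^2)_{ij}| \prec \Upsilon$ via Lemma \ref{lemP} instead of $(N\eta)^{-1}$ still leaves you a factor of $N\eta$ short of the claimed $(N\eta)^{-1}\Upsilon$. A direct Cauchy--Schwarz on $(G^2)_{ij}$ fares no better, giving $\sqrt{\Gamma}/(N\eta)$. The paper resolves this by running a separate Schwinger--Dyson expansion on $(G^2)_{ij}$ via the identity $(G^2)_{ij} = G_{ij}\ul G + (G^2)_{ij}\ul{HG} - (HG^2)_{ij}\ul G$ (Case 2 of Section 10.3): after the leading cumulant cancellation, the $(G^2)_{i_1i_2}$ factor is traded for $\ul{G^2}\,G_{i_1i_2}\ul G$, where $N^{-1}\ul{G^2} \prec \Gamma \leq \Upsilon$ and the new off-diagonal $G_{i_1i_2}$ contributes a second $\Upsilon$ by further iteration, giving $\Upsilon^2 \leq (N\eta)^{-1}\Upsilon$. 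Without this second expansion the first term of part (i) cannot be established.

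There is also an imprecision in your argument for part (ii). You propose replacing the deterministic $(N\eta)^{-1}$ bound on the $N^{-1}(G^2)_{ij}$ generated by $\partial P / \partial H_{ij}$ with the Ward bound $\Upsilon$. Together with $P' \prec \Psi + \sqrt{\kappa+\eta}$ this yields only $(\Psi + \sqrt{\kappa+\eta})\Upsilon\,\bb E|P^{\nu_3-1}|$ for the $t=1$ contribution, which is larger than the required $\Upsilon^2\,\bb E|P^{\nu_3-1}|$ by a factor $N\eta$. The paper's route is to note that the term produced by hitting $P$ once is $\bb E\cal S(V')$ with $\nu_4(V') = \nu_5(V') = 1$ and to apply part (i) to $V'$ as a black box: the prefactors $(\Psi+\sqrt{\kappa+\eta})$ and $(N\eta)^{-1}$ from $\nu_4 = \nu_5 = 1$ together contribute $\Upsilon$, and the remaining Ward pairing contributes a second independent $\Upsilon$, giving exactly $\Upsilon^2$.
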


In the sequel, we also need the subset
\[
\cal V_0 \deq \{V\in \cal V \col \nu_2(V)=0, \nu_4(V)=\nu_5(V)\}\,.
\]
In analogy to \eqref{MT}, we define an \emph{averaging map} $\cal M$ from $\cal V_0$ to the space of random variables through
\[ 
\cal M(V)=\sum_{i_1,\dots,i_{\nu_1}}a_{i_1,\dots,i_{\nu_1}}N^{-\theta} (P'N^{-1}\ul{G^2})^{\nu_4}\,\ul{G}^{s+k}P^{\nu_3}
\]
for
\begin{equation*}
V =a_{i_1,\dots,i_{\nu_1}}N^{-\theta}(P'N^{-1}(G^2)_{xx})^{\nu_4}G_{x_1x_1}G_{x_2x_2}\cdots G_{x_{k}x_{k}}\ul{G}^sP^{\nu_3}\,.
\end{equation*}
The following is an analogue of Lemma \ref{lem4.22}, whose proof is given in Section \ref{sec10.4}. 

\begin{lemma}  \label{lem7.5}
	Let $V \in \cal V_0$. There exist $V^{(1)},\dots,V^{(k)} \in \cal V_0$ such that, abbreviating $\nu_i = \nu_i(V)$ and $\theta = \theta(V)$,
	\begin{multline*} 
	\bb E \,\cal S(V) =\bb E \cal M(V)+\sum_{l=1}^k\bb E\, \cal S(V^{(l)})\\ +O_{\prec}\big( N^{\nu_1-\theta}\Upsilon^{1+\nu_4}\bb E |P^{\nu_3}|\,\big)+\sum_{t=1}^{\nu_3}  O_{\prec}\big(N^{\nu_1-\theta}((\Psi+\sqrt{\kappa+\eta}) \Upsilon)^{\nu_4+t}\bb E |P^{\nu_3-t}|\,\big)\,,
	\end{multline*}
	where $k$ is fixed, and each $V^{(l)}$ satisfies $ V^{(l)} \in \cal V_0$, $\sigma(V^{(l)})- \sigma(V) \in 2\bb N+4$, $\nu_i(V^{(l)})=\nu_i(V)$ for $i=2,3,4,5$,
	\[
	\nu_1(V^{(l)})=\nu_1(V)+1\,,
	\quad  \mbox{and} \quad  \theta(V^{(l)})=\theta(V)+1+\beta (\sigma(V^{(l)})-\sigma(V)-2)\,.
	\]
	As a result, each $V^{(l)}$ satisfies
	\[
	\bb E \cal S (V^{(l)}) \prec  N^{\nu_1(V)-\theta(V)-2\beta}(\Psi+\sqrt{\kappa+\eta}) \Upsilon\bb E |P^{\nu_3}|\,.
	\]
\end{lemma}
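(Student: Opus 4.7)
The proof will parallel that of Lemma \ref{lem4.22}, extended to the richer class $\cal V_0$. Since $V \in \cal V_0$ we have $\nu_2(V) = 0$ and $\nu_4(V) = \nu_5(V)$, so every Green's function entry appearing in $V$ is diagonal and the two ``extra'' factors $P'$ and $N^{-1}(G^2)_{xx}$ occur together. The strategy is to replace each $G_{xx}$ by $\ul G$ and each $N^{-1}(G^2)_{xx}$ by $\ul{G^2}$ via a telescoping identity, thereby producing $\cal M(V)$ plus fluctuation correction terms, and then to analyze these corrections by a cumulant expansion.

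More concretely, the plan is to write each diagonal factor as $G_{x_l x_l} = \ul G + (G_{x_l x_l} - \ul G)$ and $N^{-1}(G^2)_{xx} = \ul{G^2} + (N^{-1}(G^2)_{xx} - \ul{G^2})$, then iterate. The leading contributions assemble into $\bb E \cal M(V)$; each fluctuation piece is of the shape $\bb E[\widetilde V \cdot (G_{x_l x_l} - \ul G)]$, where $\widetilde V$ is $V$ with one diagonal factor removed. To estimate such an expression, apply the resolvent identity $zG_{x_l x_l} = -1 + \sum_j H_{x_l j} G_{j x_l}$ together with its trace-averaged analogue and subtract. The deterministic $-1$ contributions cancel, and each resulting $H$-sum is expanded via Lemma \ref{lem:cumulant_expansion}, where a cumulant $\cal C_{k+1}(H_{ij})$ contributes a factor of order $N^{-1-(k-1)\beta}$ by Lemma \ref{Tlemh}, accompanied by $k$ new Green's function factors and one new index summation.

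Each cumulant contribution is a monomial in $\cal V$. Those with $\nu_2 = 0$ and $\nu_4 = \nu_5$ lie in $\cal V_0$ and become the $V^{(l)}$ in the conclusion; since the raw cumulant expansion produces pairs of off-diagonal Green's function entries that, after forcing indices to match so as to return to $\cal V_0$, must coincide, the resulting $\sigma(V^{(l)}) - \sigma(V)$ is even and at least $4$, with $\nu_1$ increased by one and $\theta$ increased by $1 + \beta(\sigma(V^{(l)}) - \sigma(V) - 2)$, matching the stated constraints. Contributions where the derivative falls on $P^{\nu_3}$ are evaluated using $\partial_{H_{ij}} P \prec (\Psi + \sqrt{\kappa + \eta})\Upsilon$ from Lemma \ref{lemP}, each such derivative reducing the power of $P$ by one and contributing one factor of $(\Psi + \sqrt{\kappa + \eta})\Upsilon$; combined with the pre-existing $\nu_4$ power carried by the $P'$ factor, this produces the second error sum with exponent $\nu_4 + t$. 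Contributions where derivatives act only on the Green's function factors, but the result fails to lie in $\cal V_0$ after reindexing, are estimated directly via Lemma \ref{lem7.4}(i) and produce the $N^{\nu_1 - \theta} \Upsilon^{1+\nu_4} \bb E |P^{\nu_3}|$ error.

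The main obstacle will be the combinatorial bookkeeping: one must systematically identify which cumulant contributions reassemble into members of $\cal V_0$ (the $V^{(l)}$) versus which must be absorbed into the two error families, and check uniformly that the parameters $(\sigma, \nu_1, \theta)$ transform as prescribed. A secondary technical point is the differentiation of $P'$ in the case $\nu_4 = 1$: since $P'$ depends on $H$ through $\ul G$, its derivatives produce $P''$ times derivatives of $\ul G$, which must be re-expressed as elements of $\cal V$. This is feasible because the constraint $\nu_4 = \nu_5$ ensures that $P'$ always appears multiplied by $N^{-1}(G^2)_{xx}$, so the combined factor can be treated as a single unit whose derivatives remain compatible with the $\cal V_0$ framework, ultimately permitting the iteration and closing the argument.
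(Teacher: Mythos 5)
Your proposal captures the right strategy — replace diagonal factors by $\ul G$, expand the fluctuation via a resolvent identity, cumulant-expand the resulting $H$-sums, and iterate — and it correctly predicts how the parameters $(\sigma,\nu_1,\theta,\nu_2,\dots,\nu_5)$ transform and how the two error families arise. However, the specific resolvent identity you propose is mechanically weaker than the one the paper uses. You write $z(G_{x_l x_l}-\ul G)=(HG)_{x_l x_l}-\ul{HG}$ and then cumulant-expand; the $k=1$ terms of $(HG)_{x_l x_l}$ and $\ul{HG}$ produce $-\ul G\,G_{x_l x_l}$ and $-\ul G^2$ respectively (times $z^{-1}$), which subtract to $-z^{-1}\ul G(G_{x_l x_l}-\ul G)$. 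That is, the leading order does not cancel but instead reproduces the quantity being estimated with coefficient $-z^{-1}\ul G$, so you are left with a relation of the form $\bb E[\widetilde V(1+z^{-1}\ul G)(G_{x_l x_l}-\ul G)]=\text{errors}$ and must separately argue that $1+z^{-1}\ul G$ is bounded away from zero (and handle the small fluctuation in that factor). In contrast, Lemma \ref{lem:replace} writes $G_{x_l x_l}-\ul G=G_{x_l x_l}\ul{HG}-(HG)_{x_l x_l}\ul G$; the factors $\ul{HG}$ and $(HG)_{x_l x_l}$ already carry matching companions $G_{x_l x_l}$ and $\ul G$, so the $k=1$ cumulant terms are both $-\bb E[\widetilde V\,G_{x_l x_l}\ul G^2]$ and cancel \emph{exactly}, with no stability step needed. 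A secondary point is that your identity introduces $z^{-1}$ inside the expectation, and $z=L_0+\cal Z+w$ depends on $H$ through $\cal Z$; this generates extra small but nonzero derivative terms in the cumulant expansion that you would need to track (the paper avoids this by keeping all $H$-dependence in the $G$ and $\ul G$ factors, whose derivatives are handled by \eqref{diffz}). Both difficulties are fixable, but the paper's multiplicative identity is the cleaner route, and you should adopt it rather than the additive form you wrote. Your observations about the $(G^2)_{xx}\to\ul{G^2}$ replacement in the $\nu_4=1$ case and about treating $P'N^{-1}(G^2)_{xx}$ as a single unit are both on target.
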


Repeatedly using Lemma \ref{lem7.5}, and together with \eqref{theshy}, we obtain the following result.

\begin{lemma}  \label{lem7.6}
	Let $V \in \cal V_0$ and abbreviate $\nu_i = \nu_i(V)$, $\theta = \theta(V)$, and $\sigma=\sigma(V)$. Then there exist deterministic uniformly bounded $b_1,\dots,b_{\ceil{\beta^{-1}}}$ such that
	\begin{equation*}
	\cal M_{\infty}(V)\deq M(V)+N^{\nu_1-\theta}\sum_{l=2}^{\ceil{\beta^{-1}}} b_{l} N^{-l\beta} (P'N^{-1}\ul{G^2})^{\nu_4}\,\ul{G}^{\sigma+2l} P^{\nu_3}
	\end{equation*}
	satisfies
	\[
	\bb E \cal S(V)=\bb E \cal M_{\infty}(V) +O_{\prec}\big( N^{\nu_1-\theta}\Upsilon^{1+\nu_4}\bb E |P^{\nu_3}|\,\big)+\sum_{t=1}^{\nu_3}  O_{\prec}\big(N^{\nu_1-\theta}((\Psi+\sqrt{\kappa+\eta}) \Upsilon)^{\nu_4+t}\bb E |P^{\nu_3-t}|\,\big)\,.
	\]
\end{lemma}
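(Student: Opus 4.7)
The approach is to apply Lemma~\ref{lem7.5} iteratively a bounded number of times. A single application produces
\[
\bb E \cal S(V) = \bb E \cal M(V) + \sum_{l=1}^{k_0} \bb E \cal S(V^{(l)}) + \cal R,
\]
where $\cal R$ already has the structure of the error in the statement. The key observation is that each $V^{(l)}$ lies in $\cal V_0$, preserves $\nu_3$ and $\nu_4$, has $\sigma$ increased by at least $4$, and satisfies
\[
N^{\nu_1(V^{(l)})-\theta(V^{(l)})} = N^{\nu_1-\theta -(\sigma(V^{(l)})-\sigma-2)\beta} \le N^{\nu_1-\theta - 2\beta}.
\]
Hence Lemma~\ref{lem7.5} reapplies to each $V^{(l)}$, and each further application yields an extra gain of at least $N^{-2\beta}$ in the prefactor.

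After $r:=\lceil\beta^{-1}\rceil$ iterations, every fully nested remainder $\bb E \cal S(V^{(l_1,\dots,l_r)})$ carries a prefactor of at most $N^{\nu_1-\theta-2r\beta}$; combined with the crude bounds from \eqref{1}, with $\Gamma \prec \Upsilon$ from \eqref{theshy}, and with the lower bound on $\Upsilon$ from \eqref{ruo}, this is dominated by $N^{\nu_1-\theta}\Upsilon^{1+\nu_4}\bb E|P^{\nu_3}|$ throughout $\f Y$, so these remainders absorb into the first error term in the statement. At each of the $r$ iterations a Lemma~\ref{lem7.5}-type error is produced with prefactor at most $N^{\nu_1-\theta}$; since $r=O(1)$, summing the $O(1)$ many such errors only changes the implicit constants in $O_\prec$.

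The final algebraic step is to assemble the accumulated intermediate main terms into the prescribed polynomial. Each $\bb E \cal M(V^{(l_1,\dots,l_j)})$ (for $j<r$) is of the form
\[
c_{l_1,\dots,l_j}\, N^{\nu_1-\theta-\lambda\beta}\,(P'N^{-1}\ul{G^2})^{\nu_4}\,\ul G^{\sigma+2\mu}\, P^{\nu_3},
\]
with deterministic bounded $c_{l_1,\dots,l_j}$, where $\mu = L_1+\cdots+L_j$ with $L_i\ge 2$, and $\lambda = \sum_i (2L_i-2) = 2\mu - 2j$. Since $\mu \ge 2j$ we always have $\lambda\ge \mu$, so writing $N^{-\lambda\beta} = N^{-\mu\beta}\cdot N^{-(\lambda-\mu)\beta}$ and grouping by the $\ul G$-degree $2l$ gives a deterministic bounded coefficient $b_l := \sum N^{-(\lambda-l)\beta}c_{l_1,\dots,l_j}$ over the finitely many chains with $\mu=l$. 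Terms with $l>\lceil\beta^{-1}\rceil$ have prefactor at most $N^{\nu_1-\theta-1}$, which by \eqref{ruo} is absorbed into the first error term. This identifies $\cal M_\infty(V)$ in the claimed form.

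The main obstacle I anticipate is the uniform bookkeeping of the error terms along the tree of iterations, i.e., to verify that the $r$-fold nested remainders \emph{and} all $r$ intermediate errors — which individually carry various powers of $\Upsilon$, $(\Psi+\sqrt{\kappa+\eta})\Upsilon$, and $\bb E|P^{\nu_3-t}|$ — can indeed all be absorbed into the two structural error terms of the statement uniformly in $w\in \f Y$. This is precisely where the stability estimates \eqref{ruo}, \eqref{theshy}, and the crucial preservation of $\nu_3$ and $\nu_4$ under Lemma~\ref{lem7.5} play their role, ensuring that the error structure produced at each iteration step fits inside the next.
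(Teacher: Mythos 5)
Your proof is correct and follows the same route the paper intends: the statement is presented in the paper as a direct consequence of iterating Lemma \ref{lem7.5} together with \eqref{theshy}, and your argument fills in exactly that iteration, with the right bookkeeping — preservation of $\nu_3$, $\nu_4$, $\nu_5$, the gain of $N^{-2\beta}$ per step in $\nu_1-\theta$, absorption of the $r$-fold nested remainder and the accumulated per-step errors via \eqref{ruo} and \eqref{theshy}, and the regrouping of the $\bb E\cal M(V^{(l_1,\dots,l_j)})$ terms by $\ul G$-degree to produce bounded coefficients $b_l$ (using $\lambda\ge\mu$ so $N^{-(\lambda-l)\beta}\le 1$). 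This matches the paper's intended proof.
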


Finally, we have the following extension of Lemma \ref{lem:nte}, which is proved in Section \ref{sec10.5}.
\begin{lemma} \label{lem:ntt}
	Fix $r,u,v\in \bb N$. Let $T \in \cal T_0$ and let $\cal M(r,T)$ be as in Lemma \ref{lem:nte}. Then 
\begin{multline*}
\bb E [\partial_{w}(\cal S(T))\ul{G}^u P^v]=\bb E [\partial_{w}(\cal M(r, T)) \ul{G}^u P^v] +O_{\prec}\big(N^{\nu_1(T)-\theta(T)+1}\Upsilon((N\eta)^{-1}+N^{-\beta(r+1)})\bb E |P|^v\big)\\
+\sum_{t=1}^v O_{\prec}\big(N^{\nu_1(T)-\theta(T)+1}\Upsilon((\Psi+\sqrt{\kappa+\eta})\Upsilon)^t\bb E |P|^{v-t}\big)\,.
\end{multline*}
\end{lemma}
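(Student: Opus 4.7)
The plan is to adapt the proof of Lemma \ref{lem:nte}, exploiting that $\partial_w$ commutes with $\partial/\partial H_{ij}$, so that the cancellation structure producing $\cal M(r, T)$ survives differentiation, while $\ul{G}^u P^v$ is treated as a spectator in the cumulant expansion, exactly as $P^{\nu_3}$ is handled in Lemmas \ref{lem7.4}--\ref{lem7.6}.

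First, I would apply the product rule together with $\partial_w G_{xy} = (G^2)_{xy}$ to $\partial_w \cal S(T)$. For $T = a_{i_1, \dots, i_{\nu_1}} N^{-\theta(T)} \prod_{l=1}^{\sigma(T)} G_{x_l x_l} \in \cal T_0$ this gives
\[
\partial_w \cal S(T) \cdot \ul{G}^u P^v = \sum_{l=1}^{\sigma(T)} \cal S(\widetilde V^{(l)})\,,
\]
where each $\widetilde V^{(l)} \in \cal V$ has $\nu_2 = 0$, $\nu_4 = 0$, $\nu_5 = 1$, $\nu_3 = v$, and $\theta(\widetilde V^{(l)}) = \theta(T) - 1$; the $-1$ absorbs the factor of $N$ introduced when rewriting $(G^2)_{x_l x_l} = N \cdot N^{-1}(G^2)_{x_l x_l}$. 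Analogously, using $\partial_w \ul G = \ul{G^2}$ and the explicit form of $\cal M(r, T)$ from Lemma \ref{lem:nte}, I would expand $\partial_w \cal M(r, T) \cdot \ul{G}^u P^v$ as a polynomial in $\ul G$ and $\ul{G^2}$.

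Next, I would estimate $\bb E \cal S(\widetilde V^{(l)})$ by the recursive cumulant expansion used in Lemma \ref{lem4.22} and Lemma \ref{lem:nte}: pick one factor $G_{x_m x_m}$ with $m \ne l$, cumulant-expand it via Lemma \ref{lem:cumulant_expansion} and $(H-z)G = I$, and iterate $O(1/\beta)$ times. The $N^{-1}(G^2)_{x_l x_l}$ factor together with the spectator $\ul G^u P^v$ is left untouched by the chosen expansion step; the higher-order contributions from $\partial/\partial H_{ij}$ derivatives of these spectators are controlled by Lemma \ref{lem7.4}(i). The leading (averaging) term reassembles, upon summation over $l$, into $\partial_w \cal M(r, T) \cdot \ul{G}^u P^v$; this reassembly relies on the commutation of $\partial_w$ with the averaging of diagonal $G$-entries, which is a direct consequence of the fact that $\cal M(r, T)$ is a polynomial in $\ul G$ alone. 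The resulting errors match those in the statement: the factor $N$ in $N^{\nu_1(T) - \theta(T) + 1}$ is the one pulled out in the first step; the factor $\Upsilon$ is the bound on $N^{-1}(G^2)_{x_l x_l}$ from Lemma \ref{lemP}; the $(N\eta)^{-1}$ (replacing the $\bb E \Gamma + N^{-1}$ of Lemma \ref{lem:nte}) arises from controlling the $G^2$-factor via the Ward identity at the cost of $\eta^{-1}$; $N^{-\beta(r+1)}$ is the truncation error of the $N^{-\beta}$-expansion at level $r$; and the sum $\sum_{t=1}^v ((\Psi + \sqrt{\kappa + \eta}) \Upsilon)^t \bb E |P|^{v-t}$ is produced by $t$-fold differentiation of the spectator $P^v$ during the cumulant expansion, exactly as in Lemmas \ref{lem7.4}(i) and \ref{lem7.5}.

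The main obstacle is that $\widetilde V^{(l)}$ does not lie in $\cal V_0$ (since $\nu_4 = 0 < \nu_5 = 1$), so Lemma \ref{lem7.6} does not apply verbatim. One has to extend its averaging argument to the case $\nu_4 = 0, \nu_5 = 1$ and verify, summand-by-summand, that the modified averaging of $\partial_w T$ reconstructs $\partial_w \cal M(r, T)$. The commutation $[\partial_w, \partial/\partial H_{ij}] = 0$ together with the explicit polynomial form of $\cal M(r, T)$ ensures this, but the bookkeeping is delicate, especially in tracking how the Ward factor $\eta^{-1}$ enters through the $(G^2)$-bound on $\Upsilon$.
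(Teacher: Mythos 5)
Your plan takes a genuinely different route from the paper's, and the step you yourself flag as ``the main obstacle'' is left open rather than resolved.

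You propose to distribute $\partial_w$ over the factors of $T$ via the product rule, producing monomials $\widetilde V^{(l)}$ carrying an explicit $N^{-1}(G^2)_{x_l x_l}$ factor (so $\nu_4=0$, $\nu_5=1$, placing them outside $\cal V_0$), then to cumulant-expand each $\widetilde V^{(l)}$ with the $(G^2)$-factor and $\ul{G}^u P^v$ as spectators, and finally to argue that the leading averaging terms reassemble into $\partial_w\cal M(r,T)\,\ul{G}^u P^v$. This last step is where the proposal is incomplete: the averaging map $\cal M$ is only defined on $\cal V_0$; the passage $N^{-1}(G^2)_{x_l x_l}\to\ul{G^2}$ is not a replacement any existing lemma supplies (it occurs only if $x_l$ ends up a free summation index after all other factors are averaged, which must be checked against the structure of the corrections generated along the way); and the fact that the output has \emph{exactly} the coefficients of $\partial_w\cal M(r,T)$ --- including the $b_l N^{-l\beta}$ corrections, whose degrees exceed $\sigma(T)$ so that the product rule for $\partial_w\cal M(r,T)$ has more than $\sigma(T)$ terms --- would require a separate combinatorial verification that you do not carry out.

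The paper avoids the issue entirely by never distributing $\partial_w$. In the proof (Section \ref{sec10.5}, via Lemma \ref{lem10.2}), one applies the resolvent identity of Lemma \ref{lem:replace} to a single diagonal factor $G_{x_1 x_1}$ \emph{inside} $T$, multiplies by $\ul{G}^u P^v$, and then applies $\partial_w$ to the whole expression. Cumulant-expanding the two $H$-bearing terms and using $\partial_w H_{ij}=0$ together with $[\partial_w,\partial/\partial H_{ij}]=0$ shows that the expansion is word-for-word that of $\bb E[\cal S(T)\ul{G}^u P^v]$ in the proof of Lemma \ref{lem:nte}, with $\partial_w$ carried along as an outer operator. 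The leading term is therefore $\bb E[\partial_w(\cal M(T))\ul{G}^u P^v]$, the subleading terms are $\bb E[\partial_w\cal S(T^{(l)})\ul{G}^u P^v]$ with $T^{(l)}\in\cal T_0$ (no $(G^2)$-factor ever appears in a monomial slot), and iterating gives $\partial_w\cal M(r,T)$ by construction --- no reassembly and no extension of the averaging map is required. Your instinct that the commutation $[\partial_w,\partial/\partial H_{ij}]=0$ is the key mechanism is correct; the way to exploit it is to keep $\partial_w$ unexpanded throughout, not to apply the Leibniz rule first and reconstruct afterward.
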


\subsection{The computation of (IV') in \eqref{pengpeng}} \label{sec7.2}
   We write $\mbox{(IV')}=\sum_{k=1}^lX_k$, where
	\begin{equation} \label{X_k}
	X_k\deq\frac{1}{N}\frac{1}{k!}\sum_{s=1}^k {k \choose s}\sum_{i,j} \cal C_{k+1}(H_{ij}) \bb E \bigg[  \frac{\partial^s P^{2n-1}}{\partial H_{ij}^s} \frac{\partial^{k-s} G_{ij}}{\partial H_{ij}^{k-s}}\bigg]\,.
	\end{equation}
	
\subsubsection{The estimate of $X_1$} \label{sec7.3.1}	
By \eqref{diffP} and $\cal C_2(H_{ij})=N^{-1}(1+O(\delta_{ij}))$, we have
\begin{multline} \label{7.13}
X_1=\frac{1}{N}\sum_{i,j}\cal C_2(H_{ij})  \bb E \bigg[  \frac{\partial P^{2n-1}}{\partial H_{ij}}  G_{ij}\bigg]=\frac{2n-1}{N}\sum_{i,j}\cal C_2(H_{ij})  \bb E \bigg[  \frac{\partial P}{\partial H_{ij}} P^{2n-2}G_{ij} \bigg]\\
=\frac{2n-1}{N^2}\sum_{i,j} \bb E (-2P'N^{-1}(G^2)_{ij}+4P'N^{-1}H_{ij}\ul{G^2}+4N^{-1}H_{ij}\ul{G}^2)P^{2n-2}G_{ij}\\+\frac{2n-1}{N^2}\sum_{i} \cal (N \cal C_2(H_{ii})-2) \bb E \big(-2P'N^{-1}(G^2)_{ii}+4P'N^{-1}H_{ii}\ul{G^2}+4N^{-1}H_{ii}\ul{G}^2\big)P^{2n-2}G_{ij}\,.
\end{multline}
Estimating the last term using Lemma \ref{lemP}, we conclude
\begin{equation} \label{7.19}
X_1=\frac{2n-1}{N^2}\bb E (-2P'\ul{G^3}+4P'\ul{HG}\,\ul{G^2}+4\ul{HG}\,\ul{G}^2)P^{2n-2}+O_{\prec}\big(N^{-1}(\Psi+\sqrt{\kappa+\eta}) \Upsilon\big)\bb E |P^{2n-2}|\,.
\end{equation}
By $HG=zG+I$ and $z \prec  1$, we deduce that $\ul{HG} \prec 1$. In addition, it is easy to check that $\ul{G^3} \prec \Upsilon (N\eta)^{-1}$. Thus the first term on right-hand side of \eqref{7.19} can be estimated by
\[
O_{\prec}\Big((\Psi+\sqrt{\kappa+\eta})\Upsilon N^{-1}\eta^{-1}+(\Psi+\sqrt{\kappa+\eta})\Upsilon N^{-1}+N^{-2}\Big) \bb E |P^{2n-2}|\prec \Upsilon^2 \bb E |P^{2n-2}|\,.
\]
As a result,
\[
X_1 \prec \Upsilon^2 \bb E |P^{2n-2}| \leq \Upsilon^2  \cal P^{2n-2}\,.
\]
where in the second step we used H\"{o}lder's inequality. Since
\[
\Upsilon =\frac{\Psi+\sqrt{\kappa+\eta}}{N\eta} \leq \frac{\Psi+\sqrt{\kappa}}{N\eta}+\frac{1}{N\sqrt{\eta}} \leq  (\Psi^2+\kappa) N^{-\delta} +\frac{1}{N^2\eta^2}N^{\delta} +\frac{1}{N\sqrt{\eta}}\prec \cal E  
\]
for all $w \in \f Y$, we have $X_1 \prec \cal E^2 \cal P^{2n-2}$ as desired.

\subsubsection{The estimate of $X_2$} \label{sec7.2.3}
Let us split
\[
X_2=X_{2,1}+X_{2,2}\deq \frac{1}{2N}\sum_{i,j} \cal C_{3}(H_{ij}) \bb E \bigg[  \frac{\partial^2 P^{2n-1}}{\partial H_{ij}^2}  G_{ij}\bigg]+\frac{1}{N}\sum_{i,j} \cal C_{3}(H_{ij}) \bb E \bigg[  \frac{\partial P^{2n-1}}{\partial H_{ij}} \frac{\partial G_{ij}}{\partial H_{ij}}\bigg]\,.
\]
Since $\cal C_3(H_{ij})=O(N^{-1-\beta})$, we have
\begin{multline} \label{7.14}
X_{2,1}\prec \frac{1}{N^{2+\beta}}\sum_{i,j}\bb E\Big|\frac{\partial^2 P}{\partial H_{ij}^2} P^{2n-2}G_{ij}\Big|+\frac{1}{N^{2+\beta}}\sum_{i,j}\bb E\Big|\Big(\frac{\partial P}{\partial H_{ij}}\Big)^2 P^{2n-3}G_{ij}\Big|\\
\prec \frac{(\Psi+\sqrt{\kappa+\eta})\Upsilon}{N^{2+\beta}}\sum_{i,j}\bb E|P^{2n-2}G_{ij}|+ \frac{((\Psi+\sqrt{\kappa+\eta})\Upsilon)^2}{N^{2+\beta}}\sum_{i,j}\bb E|P^{2n-3}G_{ij}|\\
 \prec \frac{((\Psi+\sqrt{\kappa+\eta})\Upsilon)\sqrt{\Upsilon}}{N^{\beta}}\bb E|P^{2n-2}|+ \frac{((\Psi+\sqrt{\kappa+\eta})\Upsilon)^2\sqrt{\Upsilon}}{N^{\beta}}\bb E|P^{2n-3}|\,,
\end{multline}
where in the second and third step we used Lemma \ref{lemP} and Lemma \ref{Ward} respectively. Note that
\begin{equation} \label{7.17}
 \frac{(\Psi+\sqrt{\kappa+\eta})\Upsilon\sqrt{\Upsilon}}{N^{\beta}} = \frac{(\Psi+\sqrt{\kappa+\eta})^{5/2}}{(N\eta)^{3/2}N^{\beta}} \prec  \frac{(\Psi+\sqrt{\kappa})^4}{N^{\beta}}+\frac{1}{(N\eta)^4N^{\beta}}+\frac{1}{N^{3/2}\eta^{1/4}N^{\beta}} \prec \cal E^2\,,
\end{equation}
and similarly
\[
 \frac{((\Psi+\sqrt{\kappa+\eta})\Upsilon)^2\sqrt{\Upsilon}}{N^{\beta}} \prec \cal E^3\,.
\]
Thus by \eqref{7.14} we get
\begin{equation} \label{7.18}
X_{2,1}\prec \cal E^2 \bb E |P^{2n-2}|+\cal E^3 \bb E |P^{2n-3}| \prec \cal E^2 \cal P^{2n-2}+\cal E^3 \cal P^{2n-3}
\end{equation}
as desired. As for the term $X_{2,2}$, we see from \eqref{diffz} that the most dangerous term is
\begin{multline} \label{7.16}
 \frac{1}{N}\sum_{i,j} \cal C_3(H_{ij}) \bb E \Big[(2n-1)P^{2n-2} \frac{\partial P}{\partial H_{ij}}(-G_{ii}G_{jj})(1+\delta_{ij})^{-1}\Big]\\
=\frac{1}{N^{2+\beta}}\sum_{i,j} a_{ij}\bb E \Big[P^{2n-2}(-2P'N^{-1}(G^2)_{ij}+4P'N^{-1}H_{ij}\ul{G^2}+4N^{-1}H_{ij}\ul{G}^2G_{ii}G_{jj}\Big]\\
\eqd X_{2,2,1}+X_{2,2,2}+X_{2,2,3}\,,
\end{multline}
where $a_{ij}$ is deterministic and uniformly bounded. Note that we write 
$$
X_{2,2,1}=\bb E\cal S(V)\,,
$$
where $\nu_1(V)=2,\theta(V)=2+\beta,\nu_3(V)=2n-2,\nu_4(V)=\nu_5(V)=1$. By Lemma \ref{lem7.4} (i), we have
\begin{equation*}
X_{2,2,1} \prec N^{-\beta}(\Psi+\sqrt{\kappa+\eta}) (N\eta)^{-1}\Upsilon\bb E |P^{2n-2}|+\sum_{t=1}^{2n-2}  N^{-\beta}((\Psi+\sqrt{\kappa+\eta}) \Upsilon)^{t+1}\bb E |P^{2n-2-t}|\,.
\end{equation*}
One can easily check
\begin{equation} \label{7.21}
N^{-\beta}(\Psi+\sqrt{\kappa+\eta}) (N\eta)^{-1}\Upsilon\prec \cal E^2 \quad \mbox{and}\quad N^{-\beta}((\Psi+\sqrt{\kappa+\eta}) \Upsilon)^{t+1} \prec \cal E^{2+t}
\end{equation}
for all $t\geq 1$. Thus we have
\[
X_{2,2,1} \prec \sum_{r=2}^{2n} \cal E^r \bb E |P^{2n-r} |\prec \sum_{r=2}^{2n} \cal E^r \cal P^{2n-r}\,.
\]
For $X_{2,2,2}$, we can again apply Lemma \ref{lem:cumulant_expansion} for $h=H_{ij}$ and get
\[
X_{2,2,2}=\frac{4}{N^2q}\sum_{i,j} a_{ij}\sum_{k=1}^\ell\cal C_{k+1}(H_{ij})\bb E \bigg[\frac{\partial^k P^{2n-2} P'N^{-1}\ul{G^2}}{\partial H_{ij}^k}\bigg]+O_{\prec}(N^{-4n})\,.
\]
Note that
\[
\frac{\partial^s P' N^{-1}\ul{G^2}}{\partial H_{ij}^s} \prec (\Psi+\sqrt{\kappa+\eta})\Upsilon
\]
for all fixed $s\geq 0$. Together with \eqref{2} and the trivial bound $N^{-1} \prec \cal E$, we see that
\[
X_{2,2,2} \prec \frac{1}{Nq} \sum_{t=0}^{2n-2} ((\Psi+\sqrt{\kappa+\eta})\Upsilon)^{t+1}\cal P^{2n-2-t}+\cal E^{4n}\prec\sum_{r=2}^{2n}\cal E^r \cal P^{2n-r}\,,
\]
where in the last step we also used $\Upsilon\prec \cal E$. Similar steps also work for $X_{2,2,3}$. As a result, we have $\eqref{7.16}  \prec  \sum_{r=2}^{2n}\cal E^r \cal P^{2n-r}$. Other terms in $X_{2,2}$ can be estimated in a similar fashion, which leads to
\[
X_{2,2} \prec \sum_{r=2}^{2n}\cal E^r \cal P^{2n-r}\,.
\]
Combining with \eqref{7.18} we get
\[
X_2\prec \sum_{r=2}^{2n}\cal E^r \cal P^{2n-r} 
\]
as desired.

\subsubsection{The computation of $X_3$} \label{sec7.3.3}
Let us split
\[
X_{3}=X_{3,1}+X_{3,2}+X_{3,3}\,,
\]
where
\[
X_{3,s}\deq \frac{1}{N}\frac{1}{3!} {3 \choose s}\sum_{i,j} \cal C_{4}(H_{ij}) \bb E \bigg[  \frac{\partial^s P^{2n-1}}{\partial H_{ij}^s} \frac{\partial^{3-s} G_{ij}}{\partial H_{ij}^{3-s}}\bigg]
\]
for $s=1,2,3$.

\paragraph{Step 1} When $s=1,3$, it is easy to see from \eqref{diffz} that
\[
\frac{\partial^{3-s} G_{ij}}{\partial H_{ij}^{3-s}}\prec |G_{ij}|+ \Upsilon\,.
\]
Using \eqref{2}, we can deduce
\[
\frac{\partial^s P^{2n-1}}{\partial H_{ij}^s} \prec \sum_{t=0}^{2n-2} ((\Psi+\sqrt{\kappa+\eta})\Upsilon)^{t+1} P^{2n-2-t}\,.
\]
Thus 
\begin{multline*}
X_{3,s} \prec \frac{1}{N^{2+2\beta}} \sum_{i,j}\sum_{t=0}^{2n-2} ((\Psi+\sqrt{\kappa+\eta})\Upsilon)^{t+1}   \bb E |P^{2n-2-t}(|G_{ij}|+ \Upsilon)|\\
 \prec \frac{1}{N^{2\beta}} \sum_{t=0}^{2n-2} ((\Psi+\sqrt{\kappa+\eta})\Upsilon)^{t+1} \sqrt{\Upsilon}  \bb E |P^{2n-2-t}| \prec \frac{1}{N^{2\beta}} \sum_{t=0}^{2n-2} ((\Psi+\sqrt{\kappa+\eta})\Upsilon)^{t+1} \sqrt{\Upsilon}  \cal P^{2n-2-t}\,,
\end{multline*}
where in the second step we used Lemma \ref{Ward}. As in \eqref{7.17} and \eqref{7.21}, we have
\[
\frac{1}{N^{2\beta}}((\Psi+\sqrt{\kappa+\eta})\Upsilon)^{t+1} \sqrt{\Upsilon} \prec \cal E^{2+t}
\]
for all $ t \geq 0$. Thus $X_{3,s} \prec \sum_{r=2}^{2n} \cal E^r \cal P^{2n-r}$ for $s=1,3$.

\paragraph{Step 2} Let us consider 
\[
X_{3,2}= \frac{1}{2N}\sum_{i,j} \cal C_{4}(H_{ij}) \bb E \bigg[  \frac{\partial^2 P^{2n-1}}{\partial H_{ij}^2} \frac{\partial G_{ij}}{\partial H_{ij}}\bigg]\,.
\]
Similar as in the previous steps, we can show that
\begin{multline} \label{7.26}
X_{3,2}=\frac{1}{2N}\sum_{i,j} \cal C_{4}(H_{ij}) \bb E \bigg[  \frac{\partial^2 P^{2n-1}}{\partial H_{ij}^2} (-G_{ii}G_{jj}-(1-\delta_{ij})G_{ij}^2)+4N^{-1}(G^2)_{ij}H_{ij}(1+\delta_{ij})^{-1}\bigg]\\
=-\frac{1}{2N}\sum_{i,j} \cal C_{4}(H_{ij}) \bb E \bigg[  \frac{\partial^2 P^{2n-1}}{\partial H_{ij}^2} G_{ii}G_{jj}\bigg]+O_{\prec}(\cal E^2 )\bb E |P^{2n-2}|\,.
\end{multline}
By Lemma \ref{lemP} and \eqref{diffP}, we have
\[
\frac{\partial^2 P^{2n-1}}{\partial H_{ij}^2}=(2n-1)P^{2n-2}\frac{\partial^2 P}{\partial H_{ij}^2}+O_{\prec} ((\Psi+\sqrt{\kappa+\eta})^2\Upsilon^2) P^{2n-3}
\]
and
\begin{multline*}
\frac{\partial^2 P}{\partial H_{ij}^2}=\frac{\partial (-2P'N^{-1}(G^2)_{ij}+4P'N^{-1}H_{ij}\ul{G^2}+4N^{-1}H_{ij}(\ul{G}+\ul{G}^2))}{\partial H_{ij}}(1+\delta_{ij})^{-1}\\
=2P'N^{-1}((G^2)_{ii}G_{jj}+(G^2)_{jj}G_{ii})(1+\delta_{ij})^{-2}+4P'N^{-1}\ul{G^2}(1+\delta_{ij})^{-1}+4N^{-1}(\ul{G}+\ul{G}^2)(1+\delta_{ij})^{-1}\\
+O_{\prec}((\Psi+\sqrt{\kappa+\eta})\Upsilon) (|G_{ij}|+N^{-1}\eta^{-1})\,.
\end{multline*}
Together with Lemma \ref{Ward} we get
\begin{multline*}
X_{3,2}=-\frac{4n-2}{N^2}\sum_{i,j} \cal C_4(H_{ij}) \bb E[ P^{2n-2}P'(G^2)_{ii}G_{ii}G^2_{jj}+P^{2n-2}P'\ul{G^2}G_{ii}G_{jj}+P^{2n-2}(\ul{G}+\ul{G}^2)G_{ii}G_{jj}]\\
+O_{\prec}((\Psi+\sqrt{\kappa+\eta})\Upsilon^{3/2})\bb E |P^{2n-2}|+O_{\prec}(\cal E^2 )\bb E |P^{2n-2}|\,.
\end{multline*}
As the last two terms can be estimated by $O_{\prec}(\cal E^2\cal P^{2n-2})$, we have
\begin{multline} \label{7.29}
X_{3,2}=-\frac{4n-2}{N^2}\sum_{i,j} \cal C_4(H_{ij}) \bb E[ P^{2n-2}P'(G^2)_{ii}G_{ii}G^2_{jj}+P^{2n-2}P'\ul{G^2}G_{ii}G_{jj}+P^{2n-2}(\ul{G}+\ul{G}^2)G_{ii}G_{jj}]\\+O_{\prec}(\cal E^2\cal P^{2n-2})
\eqd X_{3,2,1}+X_{3,2,2}+X_{3,2,3}+O_{\prec}(\cal E^2\cal P^{2n-2})\,.
\end{multline}

\paragraph{Step 3} 
Let us compute $X_{3,2,1}$. We write
\begin{equation} \label{hello}
X_{3,2,1}=\bb E \cal S(V)\,, \quad \mbox{where} \quad V_{ij}=-(4n-2)N^{-1}\cal C_4(H_{ij}) P^{2n-2}P'N^{-1}(G^2)_{ii} G_{ii}G^2_{jj}\,.
\end{equation}
Note that $V \in \cal V_0$ with $\nu_1(V)=2,\theta(V)=2+2\beta,\nu_3(V)=2n-2$ and $\nu_4(V)=\nu_5(V)=1$. By Lemma \ref{lem7.6} we have
\begin{equation} \label{7.27}
X_{3,2,1}-\bb E \cal M_{\infty}(V) \\\prec N^{-2\beta} (\Psi+\sqrt{\kappa+\eta}) (N\eta)^{-1}\Upsilon\bb E |P^{2n-2}|+\sum_{t=1}^{2n-2}  N^{-2\beta}((\Psi+\sqrt{\kappa+\eta}) \Upsilon)^{t+1}\bb E |P^{2n-2-t}|\,,
\end{equation}
where 
\begin{equation*} 
\bb E \cal M_{\infty}(V)= \bb E \cal M(V)+ N^{-2\beta}\sum_{l=2}^{\ceil{\beta^{-1}}} b_{l} N^{-l\beta} \bb EP'N^{-1}\ul{G^2}\,\ul{G}^{2+2l} P^{2n-2}\,,
\end{equation*}
and $b_2,\dots,b_{\ceil{\beta^{-1}}}$ are bounded. We can estimate the right-hand side of \eqref{7.27} by $\sum_{r=2}^{2n} O_{\prec}(\cal E^{r} \cal P^{2n-2})$, so that
\begin{equation} \label{7.28}
X_{3,2,1}= \bb E \cal M(V)+ N^{-2\beta}\sum_{l=2}^{\ceil{\beta^{-1}}} b_{l} N^{-l\beta} \bb EP'N^{-1}\ul{G^2}\,\ul{G}^{2+2l} P^{2n-2}+\sum_{r=2}^{2n} O_{\prec}(\cal E^{r} \cal P^{2n-2})\,.
\end{equation}

\paragraph{Step 4} Let us consider the term $\bb E \cal M(V)$ in \eqref{7.28}. Explicitly,
\[
\bb E \cal M(V)=-(4n-2)N^{2\beta-1}\sum_{i,j}\cal C_4(H_{ij}) \cdot N^{-2\beta} \bb E P'N^{-1} \ul{G^2}\,\ul{G}^2 P^{2n-2} \eqd b_1 N^{-2\beta} \bb E P' N^{-1} \ul{G^2}\,\ul{G}^2 P^{2n-2}\,,
\]
where
\begin{equation}\label{eqnb_1}
b_1=-(4n-2)N^{2\beta-1}\sum_{i,j}\cal C_4(H_{ij})
\end{equation}
is bounded by Lemma \ref{Tlemh}. Since
\begin{equation} \label{7.30}
\partial_{w} G_{ij}=(G^2)_{ij}\,,
\end{equation}
we have
$$
P'N^{-1}\ul{G^2}=N^{-1}(\partial_w P -\ul{G})=N^{-1}\Big(\partial_{w}(\ul{HG})+\partial_{w}(Q)-\ul{G}\Big)\,.
$$ 
In addition,
\[
b_{1} N^{-2\beta-1}
\bb E\partial_w(Q)\ul{G}^{3} P^{2n-2}=b_{1} N^{-2\beta-1}
\bb E\partial_w(Q_0)\ul{G}^{3} P^{2n-2} + O_{\prec}(\cal E^2 \cal P^{2n-2})\,.
\]
Thus,
\begin{multline} \label{wings}
\bb E \cal M(V)=
b_{1} N^{-2\beta-1}
 \bb E\partial_{w}(\ul{HG})\ul{G}^3P^{2n-2}+ b_{1} N^{-2\beta-1}
\bb E\partial_w(Q_0)\ul{G}^{3} P^{2n-2}\\
-N^{-2\beta-1} b_{1}
\bb E \ul{G}^{4} P^{2n-2} + O_{\prec}(\cal E^2 \cal P^{2n-2}) \eqd \mbox{(A)}+\mbox{(B)}+\mbox{(C)}+ O_{\prec}(\cal E^2 \cal P^{2n-2})\,.
\end{multline}

\paragraph{Step 5} We expand the term (A) again by Lemma \ref{lem:cumulant_expansion}, and get
\begin{equation*}
\mbox{(A)}=b_{1} N^{-2-2\beta}\sum_{k=1}^{\ell}\sum_{i,j}\cal C_{k+1}(H_{ij})\bb E \bigg[\frac{\partial^k \partial_{w}(G_{ji}) \ul{G}^3 P^{2n-2}}{\partial H_{ij}^k}\bigg]+O_{\prec}(\cal E^{2n})\,.
\end{equation*}
By Lemma \ref{lemP}, whenever the derivative $\partial^k/\partial H_{ij}^k$ on the right-hand side hits $\ul{G}^3P^{2n-2}$, the corresponding term can be bounded by $O_{\prec}\big(\sum_{r=2}^{2n} \cal E^r \cal P^{2n-r}\big)$. Furthermore, since $\partial^k/\partial H_{ij}^k$ commutes with $\partial_{w}$,
\begin{multline*}
(A)
=\frac{b_1}{N^{2+2\beta}}\sum_{k=1}^{\ell}\sum_{i,j}\cal C_{k+1}(H_{ij})\bb E \bigg[ \partial_{w}\Big(\frac{\partial^kG_{ji}}{\partial H_{ij}^k}\Big) \ul{G}^3 P^{2n-2}\bigg]
+O_{\prec}\Big(\sum_{r=2}^{2n} \cal E^r \cal P^{2n-r}\Big)\\\eqd \sum_{k=1}^{\ell}Y_k +O_{\prec}\Big(\sum_{r=2}^{2n} \cal E^r \cal P^{2n-r}\Big)\,.
\end{multline*}
The analysis of $Y_k$ is similar to that of $\widetilde{X}_k$ in Section \ref{sec4.2}. For $k=1$, by \eqref{diffz}, \eqref{7.30} and Lemma \ref{lem7.4} (i), we have
\begin{multline} \label{7.32}
Y_1=\frac{b_1}{N^{3+2\beta}}\sum_{i,j}\bb E \big[\partial_{w} (-G_{ii}G_{jj}-G_{ij}^2) \ul{G}^3P^{2n-2}\big]+O_{\prec}\Big(\sum_{r=2}^{2n} \cal E^r \cal P^{2n-r}\Big)\\
=-\frac{b_1}{N^{1+2\beta}}\bb E \big[\partial_{w} (\ul{G}^2) \ul{G}^3P^{2n-2}\big]-\frac{b_1}{N^{2+2\beta}}\bb E \big[\ul{G^3}\, \ul{G}^3P^{2n-2}\big]+O_{\prec}\Big(\sum_{r=2}^{2n} \cal E^r \cal P^{2n-r}\Big)\,.
\end{multline}
For $k=2$,  by \eqref{diffz} and \eqref{7.30} we see that the most dangerous term is
\[
\frac{b_1}{N^{1+2\beta}}\sum_{i,j}\cal C_3(H_{ij})\bb E \big[G_{ii}G_{jj}N^{-1}(G^2)_{ij} \ul{G}^3P^{2n-2}\big] \eqd \bb E\cal S (\widetilde{V})\,.
\]
Since $\cal C_3(H_{ij})=O(N^{-1-\beta})$, we see that $\nu_1(\wt V) = 2$, $\nu_2(\widetilde{V})=1$, $\nu_4(\widetilde{V})=0$, $\nu_5(\widetilde{V})=1$, and $\theta(\wt V) = 2 + 3 \beta$. Thus by, Lemma \ref{lem7.4} (i),
\[
\bb E\cal S (\widetilde{V}) \prec N^{-3\beta}\Upsilon (N\eta)^{-1} \bb E |P|^{2n-2}+\sum_{t=1}^{2n-2}  N^{-3\beta} \Upsilon((\Psi+\sqrt{\kappa+\eta}) \Upsilon)^{t}\bb E |P^{2n-2-t}| \prec \sum_{r=2}^{2n} \cal E^r \cal P^{2n-r}\,,
\]
where in the last step we again used $w \in \f Y$ and H\"older's inequality.
Other terms in $Y_2$ also satisfy the same bound. A similar estimate can also be obtained for all even $k$, which yields
\begin{equation} \label{7.33}
\sum_{s=1}^{\floor{\ell/2}} Y_{2s} \prec \sum_{r=2}^{2n} \cal E^r \cal P^{2n-r}\,.
\end{equation} 
For odd $k \geq 3$, we split
\[
Y_k=Y_{k,1}+Y_{k,2}\,,
\] 
where by definition terms in $Y_{k,1}$ contain no off-diagonal entries of $G$ or $G^2$. Use Lemma \ref{lem7.4} (i), we can again show that
\[
Y_{k,2} \prec\sum_{r=2}^{2n} \cal E^r \cal P^{2n-r}\,.
\]
By Lemma \ref{Tlemh}, we see that
\[
Y_{k,1}=-\frac{b_1}{N^{3+(k+1)\beta}}\sum_{i,j}a^{(k)}_{ij}\bb E \big[\partial_{w}(G^{(k+1)/2}_{ii}G^{(k+1)/2}_{jj}) \ul{G}^3P^{2n-2}\big]\,,
\]
where $a^{(k)}_{ij}$ is deterministic and uniformly bounded. Combining with \eqref{7.32}--\eqref{7.33}, we obtain
\begin{multline} \label{7.35}
\mbox{(A)}+\frac{b_1}{N^{2+2\beta}}\bb E \big[\ul{G^3}\, \ul{G}^3P^{2n-2}\big]+\frac{b_1}{N^{1+2\beta}}\bb E \big[\partial_{w} (\ul{G}^2) \ul{G}^3P^{2n-2}\big]\\
=-\sum_{s=2}^{\ceil{\ell/2}}\frac{b_1}{N^{3+2s\beta}}\sum_{i,j} a^{(2s-1)}_{ij}\bb E [\partial_{w}(G^{s}_{ii}G_{jj}^{s})\ul{G}^3P^{2s-2}]+O_{\prec}\Big(\sum_{r=2}^{2n} \cal E^r \cal P^{2n-r}\Big)\\
\eqd -\sum_{s=2}^{\ceil{\ell/2}}\frac{b_1}{N^{1+2\beta}} \bb E[\cal \partial_{w}(\cal S(T^{(s)}))\ul{G}^3P^{2n-2}] +O_{\prec}\Big(\sum_{r=2}^{2n} \cal E^r \cal P^{2n-r}\Big)\,,
\end{multline}
where 
\begin{equation} \label{7.36}
T^{(s)}=\frac{1}{N^{2+(2s-2)\beta}} a_{ij}^{(2s-1)} G_{ii}^sG_{jj}^s\in \cal T_0\,.
\end{equation}
By Lemma \ref{lem:ntt}, we have
\begin{multline*}
\frac{b_1}{N^{1+2\beta}} \bb E[\cal \partial_{w}(\cal S(T^{(s)}))\ul{G}^3P^{2n-2}]=\frac{b_1}{N^{1+2\beta}} \bb E[\cal \partial_{w}(\cal M(\ceil{\beta^{-1}-2s+2},T^{(s)}))\ul{G}^3P^{2n-2}]\\
+O_{\prec}\big(N^{-2s\beta}\Upsilon((N\eta)^{-1}+N^{-1+(2s-2)\beta})\bb E |P|^{2n-2}\big)
+\sum_{t=1}^{2n-2} O_{\prec}\big(N^{-2s\beta}\Upsilon((\Psi+\sqrt{\kappa+\eta})\Upsilon)^t\bb E |P|^{2n-2-t}\big)\,.
\end{multline*}
Since $s \geq 2$, one readily checks that the last two terms can be bounded by $O_{\prec}(\sum_{r=2}^{2n} \cal E^r \cal P^{2n-r})$. Thus \eqref{7.35} reads
\begin{multline} \label{7.37}
\mbox{(A)}=\frac{b_1}{N^{1+2\beta}}\sum_{s=2}^{\ceil{\ell/2}}\bb E[\cal \partial_{w}(\cal M(\ceil{\beta^{-1}-2s+2},T^{(s)}))\ul{G}^3P^{2n-2}]-\frac{b_1}{N^{1+2\beta}}\bb E \big[\partial_{w} (\ul{G}^2) \ul{G}^3P^{2n-2}\big]\\-\frac{b_1}{N^{2+2\beta}}\bb E \big[\ul{G^3}\, \ul{G}^3P^{2n-2}\big]+O_{\prec}\Big(\sum_{r=2}^{2n} \cal E^r \cal P^{2n-r}\Big)\,.
\end{multline}
Note that by construction, $T^{(s)}$ in \eqref{7.36} is the same as in \eqref{TS}. From Lemma \ref{lem:ntt}, we see that the term $\cal M(\ceil{\beta^{-1}-2s+2},T^{(s)})$ in \eqref{7.37} is the same as in \eqref{5.13}, which implies
\[
\ul{G}^2-\sum_{s=2}^{\ceil{\ell/2}}\bb E \cal M\big(\ceil{\beta^{-1}}-2s+2,T^{(s)}\big)=Q_0(\ul{G})\,.
\]
Thus \eqref{7.37} reduces to 
\begin{equation} \label{xiaomaomao}
\mbox{(A)}=-\frac{b_1}{N^{1+2\beta}}\bb E \big[\partial_{w} (Q_0) \ul{G}^3P^{2n-2}\big]-\frac{b_1}{N^{2+2\beta}}\bb E \big[\ul{G^3}\, \ul{G}^3P^{2n-2}\big]+O_{\prec}\Big(\sum_{r=2}^{2n} \cal E^r \cal P^{2n-r}\Big)\,.
\end{equation}

\paragraph{Final Step} By \eqref{wings} and \eqref{xiaomaomao}, we see that there is a cancellation between $\mathrm{(A)}$ and $\mathrm{(B)}$, which leads to
\begin{equation}  \label{imagine}
\bb E \cal M(V)=-\frac{b_1}{N^{2+2\beta}}\bb E \big[\ul{G^3}\, \ul{G}^3P^{2n-2}\big]
-\frac{b_1}{N^{1+2\beta}}
\bb E \ul{G}^{4} P^{2n-2}+O_{\prec}\Big(\sum_{r=2}^{2n} \cal E^r \cal P^{2n-r}\Big)\,.
\end{equation}
The first two terms on right-hand side of \eqref{imagine} are stochastically dominated by
\begin{equation} \label{people}
\frac{\Upsilon}{N\eta}\bb E|P^{2n-2}|+\frac{1}{N^{1+2\beta}}\bb E |P^{2n-2}|\,,
\end{equation}
and one can check that $\Upsilon/(N\eta)\gg \cal E^2$ and $N^{-1-2\beta} \gg \cal E^2$, so that we need to keep track of these terms in order to obtain a further cancellation.

So far we have been dealing with $\bb E \cal M(V)$ in \eqref{7.28}, and other terms in \eqref{7.28} can be handled in the same way as in Steps 4 and 5. Compared to $\bb E\cal M(V)$, each $N^{-2\beta}b_{l} N^{-l\beta} \bb EP'N^{-1}\ul{G^2}\,\ul{G}^{2+2l} P^{2n-2}$ contains an additional factor $N^{-l\beta}$. Similarly to \eqref{imagine} and \eqref{people}, it can be shown that
\begin{multline*}
N^{-2\beta}b_{l} N^{-l\beta} \bb EP'N^{-1}\ul{G^2}\,\ul{G}^{2+2l} P^{2n-2} \\\prec N^{-l\beta}\Big(\frac{\Upsilon}{N\eta}\bb E|P^{2n-2}|+\frac{1}{N^{1+2\beta}}\bb E |P^{2n-2}|\Big)+\sum_{r=2}^{2n} \cal E^r \cal P^{2n-r}
\prec \sum_{r=2}^{2n} \cal E^r \cal P^{2n-r}
\end{multline*}
for all $l \geq 2$. As a result, we have
\begin{equation} \label{7.41}
X_{3,2,1}=-\frac{b_1}{N^{2+2\beta}}\bb E \big[\ul{G^3}\, \ul{G}^3P^{2n-2}\big]
-\frac{b_1}{N^{1+2\beta}}
\bb E [\ul{G}^{4} P^{2n-2}]+O_{\prec}\Big(\sum_{r=2}^{2n} \cal E^r \cal P^{2n-r}\Big)\,,
\end{equation}
where $b_1$ is defined as in \eqref{eqnb_1}.

Next, we consider the other terms on right-hand side of \eqref{7.29}. Similarly to \eqref{7.41}, we can also show that
\[
X_{3,2,2}=-\frac{b_1}{N^{2+2\beta}}\bb E \big[\ul{G^3}\, \ul{G}^2P^{2n-2}\big]
-\frac{b_1}{N^{1+2\beta}}
\bb E [\ul{G}^{3} P^{2n-2}]+O_{\prec}\Big(\sum_{r=2}^{2n} \cal E^r \cal P^{2n-r}\Big)
\]
as well as
\[
X_{3,2,3}=\frac{b_1}{N^{1+2\beta}} \bb E [(\ul{G}^3+\ul{G}^4) P^{2n-2}]+O_{\prec}\Big(\sum_{r=2}^{2n} \cal E^r \cal P^{2n-r}\Big)\,.
\]
Note that this results in two cancellations on right-hand side of \eqref{7.29}, and we have
\[
X_{3,2}=-\frac{b_1}{N^{2+2\beta}}\bb E \big[\ul{G^3}\, \ul{G}^3P^{2n-2}\big]
-\frac{b_1}{N^{2+2\beta}}\bb E \big[\ul{G^3}\, \ul{G}^2P^{2n-2}\big]+O_{\prec}\Big(\sum_{r=2}^{2n} \cal E^r \cal P^{2n-r}\Big)\,.
\] 
As we have already estimated $X_{3,1}$ and $X_{3,3}$ in Step 1, we conclude that
\begin{equation} \label{7.42}
X_{3}=-\frac{b_1}{N^{2+2\beta}}\bb E \big[\ul{G^3}\, \ul{G}^3P^{2n-2}\big]
-\frac{b_1}{N^{2+2\beta}}\bb E \big[\ul{G^3}\, \ul{G}^2P^{2n-2}\big]+O_{\prec}\Big(\sum_{r=2}^{2n} \cal E^r \cal P^{2n-r}\Big)\,.
\end{equation}

\begin{remark}
	The crucial step in analysing $X_3$ is the computation of $X_{3,2,1}$ in \eqref{7.41}. As in \eqref{hello}, we can write $X_{3,2,1}=\bb E\cal S(V)$, with $V\in \cal V_0$, $\nu_1(V)-\theta(V)=-2\beta$, $\nu_3(V)=2n-2$, and $\nu_4(V)=\nu_5(V)=1$. Since
\[
\Big|\frac{1}{N^2}\ul{G^3}\Big|\leq \frac{\im \ul{G}}{N^2\eta^2}\leq \frac{\Upsilon}{N\eta}\,,
\]
the formula \eqref{7.41} implies the estimate 
\[
X_{3,2,1} \prec \Big(\frac{1}{N^{1+2\beta}}+\frac{\Upsilon}{N^{1+2\beta}\eta}\Big)\cal P^{2n-2}+  \sum_{r=2}^{2n}\cal E^{r} \cal P^{2n-r}\,.
\]
The argument for $X_{3,2,1}$ can be repeated for general $\bb E \cal S(V)$, which allows one to show the following result.
\end{remark}

\begin{lemma} \label{lem7.8}
	Let $V\in \cal V_0$, with $\nu_1(V)-\theta(V)\leq -2\beta$, $\nu_3(V)=2n-2$, and $\nu_4(V)=\nu_5(V)=1$. Then
	\[
	\bb E \cal S(V) \prec \Big(\frac{1}{N^{1+\theta-\nu_1}}+\frac{\Upsilon}{N^{1+\theta-\nu_1}\eta}\Big)\cal P^{2n-2}+  \sum_{r=2}^{2n}\cal E^{r} \cal P^{2n-r}
	\]
\end{lemma}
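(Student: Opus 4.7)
The plan is to repeat verbatim, for a general $V \in \cal V_0$ satisfying the hypotheses of the lemma, the chain of reductions carried out for the special instance $X_{3,2,1}$ in Sections \ref{sec7.3.3}, Steps 3--5 and the final step of Section \ref{sec7.3.3}. The only data that enters the argument are $\nu_4(V) = \nu_5(V) = 1$, $\nu_3(V) = 2n-2$, and the overall power $N^{\nu_1 - \theta}$ coming from the prefactor in $V$. The bound $\nu_1 - \theta \leq -2\beta$ ensures that the errors produced at each step remain of the required size $\sum_{r=2}^{2n} \cal E^r \cal P^{2n-r}$.

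First, I apply Lemma \ref{lem7.6} to $\bb E \cal S(V)$. Arguing as in \eqref{7.27}--\eqref{7.28} (with the same bookkeeping $\Upsilon \prec \cal E$ and $(\Psi+\sqrt{\kappa+\eta})\Upsilon \prec \cal E^2$ that was used there), each error term produced by Lemma \ref{lem7.6} is absorbed into $\sum_{r=2}^{2n} \cal E^{r}\cal P^{2n-r}$. The remaining summands in $\cal M_\infty(V) - \cal M(V)$ all carry a factor $N^{-l\beta}$ with $l \geq 2$ relative to $\cal M(V)$; repeating the analysis below for each of them gives an extra $N^{-l\beta}$ factor in the final bound, so they too can be placed in $\sum_{r=2}^{2n}\cal E^r\cal P^{2n-r}$. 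Thus it suffices to estimate $\bb E \cal M(V)$, which has the schematic form $c\, N^{\nu_1 - \theta}\,\bb E [P'\,N^{-1}\ul{G^2}\,\ul G^{\sigma}\,P^{2n-2}]$ for some deterministic bounded $c$ and $\sigma = \sigma(V)$.

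Next, as in \eqref{wings}, I invoke $\partial_w G_{ij} = (G^2)_{ij}$ to write $P'\,N^{-1}\ul{G^2} = N^{-1}(\partial_w(\ul{HG}) + \partial_w Q - \ul G)$, splitting $\bb E \cal M(V)$ into three terms $\mathrm{(A)} + \mathrm{(B)} + \mathrm{(C)}$. Term $\mathrm{(C)}$ is the deterministic contribution $-c\,N^{\nu_1-\theta-1}\bb E[\ul G^{\sigma+1} P^{2n-2}]$, which is bounded by $N^{-1-(\theta-\nu_1)}\cal P^{2n-2}$, matching the first target term. Term $\mathrm{(A)}$ is reopened via Lemma \ref{lem:cumulant_expansion}; since $\partial_w$ commutes with $\partial/\partial H_{ij}$, the same cumulant-by-cumulant analysis done in \eqref{7.32}--\eqref{7.35} applies, producing on one hand the $k=1$ survivor $-c\,N^{-1-(\theta-\nu_1)}\bb E[\partial_w(\ul G^2)\ul G^{\sigma} P^{2n-2}]$ together with a tail $N^{\nu_1 - \theta - 2}\,\bb E[\ul{G^3}\,\ul G^\sigma P^{2n-2}]$, and on the other hand a sum over $s \geq 2$ of $\partial_w$-images of the monomials $T^{(s)}$ from \eqref{TS}. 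Applying Lemma \ref{lem:ntt} to each of these replaces $\cal S(T^{(s)})$ by $\cal M(\lceil\beta^{-1}\rceil - 2s+2, T^{(s)})$ at the cost of errors of the size already targeted; by the construction of $P_0$ (cf.\ Section \ref{sec4.2}), the resulting sum equals $\partial_w Q_0$ plus $\partial_w(\ul G^2)$, and exactly cancels $\mathrm{(B)}$ up to an error of $O_\prec(\sum_{r=2}^{2n}\cal E^r\cal P^{2n-r})$ (the difference $\partial_w(Q - Q_0)$ is handled by \eqref{cal Z} as in \eqref{wings}).

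What is left after the cancellation is precisely the $\ul{G^3}$ tail noted above together with $\mathrm{(C)}$. Using the elementary estimate $|\ul{G^3}|/N \leq \im \ul G/(N\eta^2) \leq \Upsilon/\eta$, the tail contributes at most $\Upsilon\, N^{-1-(\theta-\nu_1)}\eta^{-1}\cal P^{2n-2}$, which is exactly the second target term. Combining everything gives the claimed bound. The main obstacle is not the algebraic skeleton, which is already in place, but the careful verification that at each of the three expansions (Lemma \ref{lem7.6} on $V$, the cumulant expansion of $\mathrm{(A)}$, and Lemma \ref{lem:ntt} on each $\partial_w\cal S(T^{(s)})$) the residual terms fit into $\sum_{r=2}^{2n}\cal E^{r}\cal P^{2n-r}$; this uses crucially the hypothesis $\nu_1 - \theta \leq -2\beta$, together with the size bounds $\Upsilon \prec \cal E$, $N^{-1}\prec \cal E$ and $(\Psi + \sqrt{\kappa+\eta})\Upsilon \prec \cal E^2$ valid on $\f Y$.
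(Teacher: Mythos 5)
Your proof is correct and follows the exact strategy the paper intends: the paper itself does not provide a separate argument for this lemma but states that "the argument for $X_{3,2,1}$ can be repeated for general $\bb E\cal S(V)$," and you carry out that repetition faithfully (Lemma \ref{lem7.6}, then the $\partial_w$-splitting into $\mathrm{(A)}+\mathrm{(B)}+\mathrm{(C)}$, the cumulant re-expansion of $\mathrm{(A)}$, the cancellation with $\mathrm{(B)}$, and isolating $\mathrm{(C)}$ and the $\ul{G^3}$-tail as the two non-absorbable terms), arriving at exactly the two displayed error terms with the correct powers of $N^{\nu_1-\theta}$.
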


\subsubsection{Conclusion} \label{sec7.3.4} After the steps in Sections \ref{sec7.3.1} -- \ref{sec7.3.3}, it remains to estimate $X_k$ for $k\geq 4$.

When $k\geq 4$ is even, the estimate of $X_k$ is similar to that of $X_2$ in Section \ref{sec7.2.3}. In fact, by Lemma \ref{Tlemh}, we see that there will be additional factors of $N^{-\beta}$ in $X_k$ when $k \geq 4$, which makes the estimate easier. Using Lemma \ref{lem7.4} (i), one can show that
\[
\sum_{s=2}^{\ceil{\ell/2}}X_{2s} \prec \sum_{r=2}^{2n}\cal E^{r} \cal P^{2n-r}\,.
\] 

When $k\geq 4$ is odd, the estimate of $X_k$ is similar to that of $X_3$ in Section \ref{sec7.3.3}. By Lemma \ref{Tlemh}, we see that there will be additional factors of $N^{-(k-2)\beta}$ in $X_k, k\geq 4$. Using Lemmas \ref{lemP}, \ref{lem7.4} and \ref{lem7.8}, one can show that
\[
\sum_{s=2}^{\ceil{\ell/2}}X_{2s+1} \prec 
\Big(\frac{1}{N^{1+4\beta}}+\frac{\Upsilon}{N^{1+4\beta}\eta}\Big)\cal P_n^{2n-2}+  \sum_{r=2}^{2n}\cal E^{r} \cal P^{2n-r} \prec \sum_{r=2}^{2n}\cal E^{r} \cal P^{2n-r}\,.
\]
As a result, we arrive at
\begin{equation} \label{dingding}
\mbox{(IV')}=\sum_{k=1}^{\ell}X_k=-\frac{b_1}{N^{2+2\beta}}\bb E \big[\ul{G^3}\, \ul{G}^3P^{2n-2}\big]
-\frac{b_1}{N^{2+2\beta}}\bb E \big[\ul{G^3}\, \ul{G}^2P^{2n-2}\big]+O_{\prec}\Big(\sum_{r=2}^{2n} \cal E^r \cal P^{2n-r}\Big)\,,
\end{equation}
where $b_1=-(4n-2)N^{2\beta-1}\sum_{i,j}\cal C_4(H_{ij})$ is bounded.

\subsection{The computation of (II') in \eqref{pengpeng}} 
Using Lemma \ref{lem:cumulant_expansion} with $h=H_{ij}$, we have
\begin{multline} \label{7.43}
\mbox{(II')}=\bb E \cal Z \ul{G}^2P^{2n-1}=\frac{1}{N}\sum_{i,j} \bb E \Big(H_{ij}^2-\frac{1}{N}\Big) \ul{G}^2 P^{2n-1}\\
=\frac{1}{N}\sum_{k=1}^{\ell}  \frac{1}{k!}  \sum_{i,j}\cal C_{k+1}(H_{ij})\bb E \bigg[\frac{\partial^k H_{ij}\ul{G}^2P^{2n-1}}{\partial H_{ij}^k}\bigg]+O_{\prec}(\cal E^{2n})-\bb E\ul{G}^2 P^{2n-1}\,.
\end{multline}
For each $k$, we write
\begin{multline*}
\frac{1}{N} \frac{1}{k!}  \sum_{i,j}\cal C_{k+1}(H_{ij})\bb E \bigg[\frac{\partial^k H_{ij}\ul{G}^2P^{2n-1}}{\partial H_{ij}^k}\bigg]\\
=\frac{1}{N} \frac{1}{k!}  \sum_{i,j}\cal C_{k+1}(H_{ij})\bb E \bigg[H_{ij}\frac{\partial^k \ul{G}^2P^{2n-1}}{\partial H_{ij}^k}\bigg]+\frac{1}{N}  \frac{1}{(k-1)!}  \sum_{i,j}\cal C_{k+1}(H_{ij})\bb E \bigg[\frac{\partial^{k-1} \ul{G}^2P^{2n-1}}{\partial H_{ij}^{k-1}}\bigg]\eqd Z_k+\widehat{X}_k\,.
\end{multline*}
Each $Z_k$ can be handled again by applying Lemma \ref{lem:cumulant_expansion} with $h=H_{ij}$. One easily shows that
\begin{equation} \label{7.44}
\sum_{k=1}^{\ell}Z_k \prec \sum_{r=1}^{2n} \cal E^r \cal P^{2n-r}\,.
\end{equation}
By $\cal C_2(H_{ij})=N^{-1}(1+O(\delta_{ij}))$, we have
\[
\widehat{X}_{1}=\frac{1}{N}\sum_{i,j} \frac{1}{N} \bb E \ul{G}^2P^{2n-1}+\frac{1}{N}\sum_{i} \Big( \cal C_2(H_{ii})-\frac{1}{N}\Big)\bb \ul{G}^2 E P^{2n-1}
=\bb E \ul{G}^2P^{2n-1}+O_{\prec}(N^{-1}\cal P^{2n-1})\,.
\]
Combining with \eqref{7.43} and \eqref{7.44}, we have
\begin{equation} \label{7.45}
\mathrm{(II')}= \sum_{k=2}^{\ell}\widehat{X}_{k}+O_{\prec}\bigg(\sum_{r=1}^{2n} \cal E^r \cal P^{2n-r}\bigg)\,.
\end{equation}
The analysis of $\widehat{X}_{k}$ is similar to those of $X_k$ in Section \ref{sec7.2}, and we only sketch the key steps.

For $k=2$, we see from \eqref{diffz} that the most dangerous term in $\widehat{X}_{2}$ is
\begin{equation} \label{maomao}
\frac{1}{N}\sum_{i,j}\cal C_{3}(H_{ij})\bb E \bigg[\ul{G}^2 (2n-1)P^{2n-2}\frac{\partial P}{\partial H_{ij}}\bigg]\,,
\end{equation}
which is very close to the left-hand side of \eqref{7.16}. We can apply Lemma \ref{lem7.4} (i) and show that \eqref{maomao} is bounded by $O_{\prec}(\sum_{r=2}^{2n} \cal E^r \cal P^{2n-r})$. Similarly, we can also handle all the other terms in $\widehat{X}_{2}$, which leads to
\begin{equation}
\widehat{X}_{2} \prec \sum_{r=2}^{2n} \cal E^r \cal P^{2n-r}\,.
\end{equation}

For $k=3$, by the differential rule \eqref{diffz}, we see that the most dangerous term in $\widehat{X}_{3}$ is
\[
\widehat{X}_{3,2}\deq\frac{1}{2N}\sum_{i,j}\cal C_{4}(H_{ij})\bb E \bigg[\ul{G}^2 \frac{\partial^2 P^{2n-1}}{\partial H_{ij}^2}\bigg]\,,
\]
which is very close to the right-hand side of \eqref{7.26}. Similarly to \eqref{7.29}, we have
\begin{multline*}
\widehat{X}_{3,2}=\frac{4n-2}{N^2}\sum_{i,j} \cal C_4(H_{ij}) \bb E[ P^{2n-2}P'(G^2)_{ii}G_{jj}\ul{G}^2+P^{2n-2}P'\ul{G^2}\,\ul{G}^2+P^{2n-2}(\ul{G}+\ul{G}^2)\ul{G}^2]\\+O_{\prec}(\cal E^2\cal P^{2n-2})
\eqd \widehat{X}_{3,2,1}+\widehat{X}_{3,2,2}+\widehat{X}_{3,2,3}+O_{\prec}(\cal E^2\cal P^{2n-2})\,,
\end{multline*}
and the right-hand side can be computed similarly to $X_{3,2,1}, X_{3,2,2}, X_{3,2,3}$ in \eqref{7.29}. As a result, we can show that
\begin{equation} \label{7.46}
\widehat{X}_3=\frac{b_1}{N^{2+2\beta}}\bb E \big[\ul{G^3}\, \ul{G}^3P^{2n-2}\big]
+\frac{b_1}{N^{2+2\beta}}\bb E \big[\ul{G^3}\, \ul{G}^2P^{2n-2}\big]+O_{\prec}\Big(\sum_{r=2}^{2n} \cal E^r \cal P^{2n-r}\Big)\,,
\end{equation}
where $b_1=-(4n-2)N^{2\beta-1}\sum_{i,j}\cal C_4(H_{ij})$.

For $k \geq 4$, the argument is similar to that in Section \ref{sec7.3.4}. We can show that
\[
\sum_{k=4}^{\ell}\widehat{X}_k \prec \sum_{r=2}^{2n} \cal E^r \cal P^{2n-r}\,.
\]
Combining the above with \eqref{7.45}--\eqref{7.46}, we have
\begin{equation} \label{7.48}
\mbox{(II')}=\frac{b_1}{N^{2+2\beta}}\bb E \big[\ul{G^3}\, \ul{G}^3P^{2n-2}\big]
+\frac{b_1}{N^{2+2\beta}}\bb E \big[\ul{G^3}\, \ul{G}^2P^{2n-2}\big]+O_{\prec}\Big(\sum_{r=2}^{2n} \cal E^r \cal P^{2n-r}\Big)\,.
\end{equation}

Now observe the cancellation between \eqref{dingding} and \eqref{7.48}, which leads to 
\[
\mbox{(II')}+\mbox{(IV')} \prec\sum_{r=1}^{2n}\cal E^{r} \cal P^{2n-r}
\]
as desired.

\subsection{The estimate of \eqref{dingman}} \label{sec7.4}
From the construction of $P_0$ in Section \ref{sec4.2}, we can easily show that
\[
\bb E Q_0 +\frac{1}{N}\sum_{k=1}^\ell\frac{1}{k!}\sum_{i,j} \cal C_{k+1}(H_{ij}) \bb E \bigg[  \frac{\partial^k G_{ij}}{\partial H_{ij}^k} \bigg] \prec \Upsilon\,,
\]
and in this section we shall see that the analogue holds when the factor $P^{2n-1}$ is added inside the expectations. Let us write
\[
\sum_{k=1}^{\ell} X^{(1)}_k\deq \sum_{k=1}^\ell\frac{1}{N}\frac{1}{k!}\sum_{i,j} \cal C_{k+1}(H_{ij}) \bb E \bigg[  \frac{\partial^k G_{ij}}{\partial H_{ij}^k} P^{2n-1}\bigg]=\mathrm{(III')}
\]
and analyse each $X_k^{(1)}$. 

Let us first consider the case when $k$  is odd. For $k=1$, it is easy to see from \eqref{diffz} and Lemma \ref{Ward} that
\begin{equation} \label{varus}
X_1^{(1)}=\frac{1}{N^2}\sum_{i,j}\bb E[-G_{ii}G_{jj}P^{2n-1}]+O_{\prec}\big(\Upsilon \bb E |P^{2n-1}|\big)=-\bb E \ul{G}^2 P^{2n-1}+O_{\prec} \big(\cal E \cal P^{2n-1}\big)\,.
\end{equation}
For odd $k \geq 3$, we see from \eqref{diffz} and Lemma \ref{Ward} that
\begin{equation} \label{varus1}
X_k^{(1)}=\frac{1}{N^{2+(k-1)\beta}} \sum_{i,j} a_{ij}^{(k)}\bb E \big[G_{ii}^{(k+1)/2}G_{jj}^{(k+1)/2}P^{2n-1}\big]+O_{\prec} \big(\cal E \cal P^{2n-1}\big)\,,
\end{equation}
where $a^{(k)}_{ij}$ is deterministic and uniformly bounded.

For even $k$, we follow a similar strategy as in Section \ref{sec7.2.3}. We see from \eqref{diffz} and Lemma \ref{Ward} that
\begin{equation} \label{7.47}
X_k^{(1)}=\frac{1}{N^{2+(k-1)\beta}} \sum_{i,j} a_{ij}^{(k)}\bb E \big[G_{ij}G_{ii}^{k/2}G_{jj}^{k/2}P^{2n-1}\big]+O_{\prec} \big(\cal E \cal P^{2n-1}\big)\,,
\end{equation}
where $a^{(k)}_{ij}$ is deterministic and uniformly bounded. The first term on right-hand side of \eqref{7.47} can be written as $\bb E \cal S(V)$, where $V\in \cal V$, $\nu_2(V)\ne 0$ and $\nu_4(V)=\nu_5(V)=0$. Thus we can apply Lemma \ref{lem7.4} (ii) to estimate this term, and show that it is bounded by $ O_{\prec}(\sum_{r=1}^{2n} \cal E^r \cal P^{2n-r})$. This implies
\begin{equation} \label{varus3}
\sum_{s=2}^{\ceil{\ell/2}}X^{(1)}_{2s} \prec \sum_{r=1}^{2n}\cal E^{r} \cal P^{2n-r}\,.
\end{equation}

Combining \eqref{varus}, \eqref{varus1} and \eqref{varus3}, we have
\begin{multline} \label{7.444}
\mbox{(III')}=-\bb E[ \ul{G}^2 P^{2n-1}]+\sum_{s=2}^{\ceil{\ell/2}}\frac{1}{N^{2+(2s-2)\beta}}\sum_{i,j}a_{ij}^{(2s-1)}\bb E [G_{ii}^sG_{jj}^sP^{2n-1}]+O_{\prec}\Big(\sum_{r=1}^{2n}\cal E^{r} \cal P^{2n-r}\Big)\\\eqd -\bb E [\ul{G}^2 P^{2n-1}]+\sum_{s=2}^{\ceil{\ell/2}}\bb E [\cal S(T^{(s)})P^{2n-1}]+O_{\prec}\Big(\sum_{r=1}^{2n}\cal E^{r} \cal P^{2n-r}\Big)\,,
\end{multline}
 where we recall the definition of $\cal S(T)$ in \eqref{5.11}. Observe that from the above steps, $T^{(s)}$ in \eqref{7.444} is the same as in \eqref{TS}. To handle $\bb E[ \cal S(T^{(s)})P^{2n-1}]$, we introduce the following analogue of Lemmas \ref{lem:nte} and \ref{lem:ntt}.

\begin{lemma} \label{lem7.10}
	Let $T \in \cal T_0$ with $\nu_1(T)-\theta(T) \leq -2\beta$. Fix $r \in \bb N$ and let $\cal M(r,T)$ be as in Lemma \ref{lem:nte}. Then
	\[
	\bb E [\cal S(T)P^{2n-1}]=\bb E [\cal M(r, T)P^{2n-1}] +O_{\prec}\big(N^{\nu_1(T)-\theta(T)}(\Upsilon+N^{-\beta(r+1)})\cal P^{2n-1}\big)+O_{\prec}\Big(\sum_{r=1}^{2n}\cal E^{r} \cal P^{2n-r}\Big)\,.
	\]
\end{lemma}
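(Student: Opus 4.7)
The proof would mirror that of Lemma~\ref{lem:nte}, with Lemma~\ref{lem7.5} (the $P^{\nu_3}$-weighted analogue of Lemma~\ref{lem4.22}) playing the role of Lemma~\ref{lem4.22}. The crucial observation is that $\cal S(T)P^{2n-1}=\cal S(V)$ for the monomial $V\in\cal V_0$ obtained from $T$ by setting $\nu_4(V)=\nu_5(V)=0$ and $\nu_3(V)=2n-1$, keeping $\nu_1,\theta,\sigma$ inherited from $T$. Applying Lemma~\ref{lem7.5} to $V$ produces
\[
\bb E[\cal S(T)P^{2n-1}]=\bb E[\cal M(T)P^{2n-1}]+\sum_{l=1}^{k}\bb E[\cal S(T^{(l)})P^{2n-1}]+R_{0}\,,
\]
where each $V^{(l)}$ factorises as $V^{(l)}=T^{(l)}P^{2n-1}$ with $T^{(l)}\in\cal T_0$ satisfying $\nu_1(T^{(l)})-\theta(T^{(l)})\leq\nu_1(T)-\theta(T)-2\beta$, and $R_{0}$ collects the two error families displayed in Lemma~\ref{lem7.5} specialised to $\nu_4=0$, $\nu_3=2n-1$.

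I would then iterate, applying Lemma~\ref{lem7.5} recursively to each $\bb E[\cal S(T^{(l)})P^{2n-1}]$. Because $\nu_1-\theta$ decreases by at least $2\beta$ per iteration, after $r$ such steps the remaining unexpanded monomials admit a naive bound of order $O_\prec(N^{\nu_1(T)-\theta(T)-(r+1)\beta}\cal P^{2n-1})$ via Lemma~\ref{lem7.4}, producing the $N^{-\beta(r+1)}$ contribution in the claim. The averaged pieces $\cal M(T),\cal M(T^{(l)}),\cal M(T^{(l,l_1)}),\ldots$ harvested across these iterations telescope into $\cal M(r,T)P^{2n-1}$ by the very construction \eqref{5.6} of $\cal M(r,T)$; this is precisely the combinatorial bookkeeping used in Section~\ref{sec4.2}, with the factor $P^{2n-1}$ inert under the underlying cumulant expansions.

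It remains to package the accumulated error terms. The $O_\prec(N^{\mu}\Upsilon\cal P^{2n-1})$ contributions (with $\mu\leq\nu_1(T)-\theta(T)$) combine into the $N^{\nu_1(T)-\theta(T)}\Upsilon\cal P^{2n-1}$ term in the claim. The tails of shape $N^{\mu}((\Psi+\sqrt{\kappa+\eta})\Upsilon)^{t}\cal P^{2n-1-t}$ with $t\geq 1$ are reduced to $O_\prec(\cal E^{t}\cal P^{2n-1-t})$ via the inequality $(\Psi+\sqrt{\kappa+\eta})\Upsilon\prec\cal E$, which I would verify by case analysis on whether $\eta\leq\Psi^{2}+\kappa$, using $\Psi\geq\sqrt{\kappa}N^{-\delta}$ and the lower bound $\eta\geq N^{-\delta}N^{-5/8}q^{-1/4}$ available in $\f Y$. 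The main obstacle is that $\cal E^{t}\cal P^{2n-1-t}$ does not cleanly match the shape $\cal E^{r}\cal P^{2n-r}$ appearing in the claimed omnibus error: the absorption must be understood in the context of the recursive bound for $\cal P^{2n}$ that ultimately closes Proposition~\ref{prop1}, where a Young-type inequality converts each such tail into a combination of $\cal E^{2n-1}$ and $\cal P^{c}$ for some $c<2n$, the latter being dominated by $1+\cal P^{2n}$.
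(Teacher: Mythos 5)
Your structural plan — identify $\cal S(T)P^{2n-1}$ with $\cal S(V)$ for $V\in\cal V_0$ having $\nu_4(V)=\nu_5(V)=0$ and $\nu_3(V)=2n-1$, iterate the $\cal V_0$ expansion lemma, and telescope the averaged pieces into $\cal M(r,T)P^{2n-1}$ — is exactly what the paper's one-line sketch intends, and the bookkeeping $\cal M(V)=\cal M(T)P^{2n-1}$, $V^{(l)}=T^{(l)}P^{2n-1}$ is right. The gap is in the absorption of the errors. Lemma~\ref{lem7.5}, as stated, yields for $\nu_4=0$, $\nu_3=2n-1$ a $t=1$ error of size $N^{\nu_1-\theta}(\Psi+\sqrt{\kappa+\eta})\Upsilon\,\bb E|P^{2n-2}|$. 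Reducing this via $(\Psi+\sqrt{\kappa+\eta})\Upsilon\prec\cal E$ gives $O_\prec(\cal E\cal P^{2n-2})$, which has total degree $2n-1$ in $(\cal E,\cal P)$, not $2n$, and so cannot be placed under $\sum_{r=1}^{2n}\cal E^{r}\cal P^{2n-r}$. The Young rescue you sketch does not close this: feeding an error of shape $\cal E\cal P^{2n-2}$ into the closure of Proposition~\ref{prop1} delivers only $\cal P\prec\cal E^{1/2}$, not $\cal P\prec\cal E$. Nor does the prefactor $N^{\nu_1-\theta}\leq N^{-2\beta}$ help; for small $\beta$ with $\kappa$ and $\eta$ near their lower endpoints in $\f Y$ one checks directly that $N^{-2\beta}(\Psi+\sqrt{\kappa+\eta})\Upsilon\not\prec\cal E^2$.

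The ingredient you are missing is that the $t=1$ term of Lemma~\ref{lem7.5} is stated suboptimally, and the paper's hint (replace diagonals via Lemma~\ref{lem:replace}, cumulant-expand, estimate directly) is designed precisely to retain the extra structure. When a derivative $\partial/\partial H_{ij}$ from the cumulant expansion hits a factor of $P$, its leading contribution is $-2P'N^{-1}(G^2)_{ij}$ by \eqref{diffP}; the resulting monomial therefore has $\nu_4=\nu_5=1$, and Lemma~\ref{lem7.4}~(i) applied to \emph{that} monomial produces the factor $(\Psi+\sqrt{\kappa+\eta})(N\eta)^{-1}\Upsilon=\Upsilon^2$, one power of $\Upsilon$ better than $(\Psi+\sqrt{\kappa+\eta})\Upsilon$. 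This is the same mechanism that underlies the improvement in Lemma~\ref{lem7.4}~(ii). With it, the $t=1$ contribution becomes $N^{\nu_1-\theta}\Upsilon^2\cal P^{2n-2}\prec\cal E^2\cal P^{2n-2}$, of the correct degree $2n$, and the tails with $t\geq 2$ are handled as in \eqref{7.21}. So rather than invoking Lemma~\ref{lem7.5} as a black box, you should carry out its proof in the $P^{2n-1}$-weighted setting and track the pair $(P',N^{-1}(G^2)_{ij})$ whenever a $P$ is differentiated.
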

\begin{proof}
	The proof analogous to those of Lemmas \ref{lem:nte} and \ref{lem:ntt}. We use the identity
	\[
	G_{ii}=\ul{G}+G_{ii}\ul{HG}-(HG)_{ii}\ul{G}
	\]
	to replace the diagonal entries in $\cal S(T)$, and then expand the terms containing $H$ using Lemma \ref{lem:cumulant_expansion}. We omit the details. 
\end{proof}

By Lemma \ref{lem7.10} we have, for any $s \in \{2,3,\dots,\ceil{\ell/2}\}$,
\[
\bb E [\cal S(T^{(s)})P^{2n-1}]=\bb E [\cal M(\ceil{\beta^{-1}-2s+2},T^{(s)})P^{2n-1}]+O_{\prec}\Big(\sum_{r=1}^{2n}\cal E^{r} \cal P^{2n-r}\Big)\,.
\]
Together with \eqref{7.444} we have
\begin{equation} \label{zed}
\mbox{(III')}=-\bb E[ \ul{G}^2 P^{2n-1}]+\sum_{s=2}^{\ceil{\ell/2}}\bb E [\cal M(\ceil{\beta^{-1}-2s+2},T^{(s)})P^{2n-1}]+O_{\prec}\Big(\sum_{r=1}^{2n}\cal E^{r} \cal P^{2n-r}\Big)\,.
\end{equation}
From Lemma \ref{lem7.10}, we see that the term $\cal M(\ceil{\beta^{-1}-2s+2},T^{(s)})$ in \eqref{zed} is the same as in \eqref{5.13}, which implies
\[
\ul{G}^2-\sum_{s=2}^{\ceil{\ell/2}}\bb E \cal M\big(\ceil{\beta^{-1}}-2s+2,T^{(s)}\big)=Q_0(\ul{G})\,.
\]
Thus
\[
\mbox{(III')}=-\bb E [Q_0P^{2n-1}]+O_{\prec}\Big(\sum_{r=1}^{2n}\cal E^{r} \cal P^{2n-r}\Big)\,,
\]
and together with \eqref{dingman} we conclude that 
\[
\mbox{(I')}+\mbox{(III')} \prec \sum_{r=1}^{2n}\cal E^{r} \cal P^{2n-r}
\]
as desired. This concludes the proof of Lemma \ref{lem7.1} and hence also that of Proposition \ref{prop1}.

\section{Proof of Proposition \ref{prop3.3}} \label{sec5}

\begin{convention*}
Throughout this section, $z$ is given by \eqref{def_z_random}, where $w \in \f Y_*$ is deterministic.
\end{convention*}

Let us fix $n \in \bb N_+$ and set
\[
\cal P_{\im} \deq \|\im P(z,\ul{G})\|_{2n}=\Big(\bb E |\im P(z,\ul{G})|^{2n}\Big)^{\frac{1}{2n}}\,, \quad \cal E_{\im} \deq \Big(\frac{1}{N\eta} +\Phi\Big) \sqrt{\kappa}N^{-\delta}\,.
\]
We shall show that
\begin{equation} \label{goal sec8}
\bb E |\im P(z,\ul{G})|^{2n}=\cal P_{\im}^{2n} \prec \cal E_{\im}^{2n}\,,
\end{equation}
from which Proposition \ref{prop3.3} follows by Chebyshev's inequality. We shall see that the proof of \eqref{goal sec8} is much simpler than that of \eqref{goal sec7}, as it does not require a secondary expansion as in Section \ref{sec7.3.3}. We define the parameter
\[
\Theta \deq \frac{\Phi+\frac{\eta}{\sqrt{\kappa}}}{N\eta}\,.
\]
Recall the definition of $\Gamma$ from \eqref{WWard}. It is easy to check that
\[
\Gamma \prec \Theta\,.
\]
In addition, recall the definitions of $P'$, $Q$ and $Q_0$ from \eqref{eqn PQQ_0}. With the help of \eqref{3.1}, we obtain the following improved version of Lemma \ref{lemP}.

\begin{lemma} \label{lemimP}
	We have
	\begin{equation} \label{1im}
	P' \prec \sqrt{\kappa}\,, \quad	 \quad \bigg|\frac{\partial^k \ul{G}}{\partial H_{ij}^k}\bigg| \prec  \max_{x,y} N^{-1}|(G^2)_{xy} |\prec \Theta
	\end{equation}
	and
	\begin{equation} \label{2im}
	\frac{\partial^k P}{\partial H_{ij}^k} \prec \sqrt{\kappa}\,\Theta\,.
	\end{equation}
\end{lemma}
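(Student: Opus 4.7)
The proof plan is to follow the template of Lemma \ref{lemP} line by line, plugging in the two new ingredients available in the present regime: the improved rigidity \eqref{3.1}, giving $|\ul G - m| \prec \sqrt{\kappa}N^{-\delta}$, and the hypothesis $|\im \ul G - \im m|\prec \Phi$ of Proposition \ref{prop3.3}, combined with the fact that on the narrow spectral domain $\f Y_*$ the inequality $\eta \leq \kappa$ holds (direct verification: $\eta/\kappa \leq N^{-1/8}q^{3/4} \leq 1$ since $q = N^\beta$ with $\beta < 1/6$), so that $\sqrt{\kappa+\eta}\asymp \sqrt\kappa$ and $\im m \asymp \eta/\sqrt{\kappa+\eta} \asymp \eta/\sqrt{\kappa}$ by Lemma \ref{lem:m}.

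For the first bound of \eqref{1im}, the mean value theorem gives $P' = \partial_2 P(z,m) + \partial_2^2 P(z,\xi)(\ul G - m)$ for some $\xi$ between $m$ and $\ul G$. By Lemma \ref{lem:m} together with \eqref{5.22} we have $|\partial_2 P(z,m)|\asymp \sqrt{\tau+\eta}\asymp \sqrt{\kappa}$, and $|\partial_2^2 P(z,\xi)| \prec 1$ by a further Taylor expansion around $m$ using $|\partial_2^2 P(z,m)| = 2 + O(q^{-2})$. Combined with \eqref{3.1} this yields $P' \prec \sqrt{\kappa} + \sqrt{\kappa} N^{-\delta} \prec \sqrt{\kappa}$, which is the desired sharpening of the bound $P' \prec \Psi+\sqrt{\kappa+\eta}$ from Lemma \ref{lemP}.

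For the second bound of \eqref{1im}, the differential rule \eqref{diffz} expresses $\partial^k \ul G /\partial H_{ij}^k$ as a finite sum of terms of the form $N^{-1}(G^\ell)_{xy}$ (with $\ell\geq 2$) and subleading terms carrying extra factors of $H_{ij} \prec N^{-1/2}$ or $N^{-1}$; all of these are controlled by $\max_{x,y} N^{-1}|(G^2)_{xy}|$. To bound this maximum, Cauchy-Schwarz and Lemma \ref{Ward} give $N^{-1}|(G^2)_{xy}| \leq (N\eta)^{-1}\sqrt{\im G_{xx}\im G_{yy}}$. By the spectral decomposition $\im G_{xx} = \eta\sum_k \f u_k(x)^2 /|\lambda_k-z|^2$ and eigenvector delocalization (Lemma \ref{lem:delocalization}) we obtain $\im G_{xx} \prec \im \ul G$ for every $x$; combining with the hypothesis $|\im \ul G - \im m|\prec \Phi$ and $\im m \asymp \eta/\sqrt{\kappa}$ yields $\im G_{xx} \prec \eta/\sqrt{\kappa} + \Phi$, hence $N^{-1}|(G^2)_{xy}| \prec (\eta/\sqrt{\kappa}+\Phi)/(N\eta) = \Theta$.

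For \eqref{2im}, I would insert \eqref{1im} into the right-hand side of \eqref{diffP} and its iterated derivatives. The dominant term $P'N^{-1}(G^2)_{ij}$ is bounded by $\sqrt{\kappa}\,\Theta$ directly. The auxiliary terms $P'N^{-1}H_{ij}\ul{G^2}$ and $N^{-1}H_{ij}\ul G^2$ carry an extra factor $H_{ij}N^{-1/2} \prec N^{-1}$, and the trivial bound $|\ul{G^2}| \leq \im \ul G/\eta \prec (\eta/\sqrt\kappa+\Phi)/\eta$ together with $\sqrt{\kappa}\Theta \geq 1/(N\sqrt{\kappa})$ shows these are subdominant; iterating the same computation handles the higher derivatives. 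There is no real obstacle here — this is a bookkeeping lemma whose only substantive input is the already-established rigidity \eqref{3.1} and the hypothesis on $\im \ul G$, the rest being identical to the proof of Lemma \ref{lemP}.
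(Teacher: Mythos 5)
The paper states Lemma \ref{lemimP} without proof, noting only that it follows from the improved rigidity \eqref{3.1}; your proof supplies exactly the argument the paper intends, following the proof of Lemma \ref{lemP} line by line and plugging in the sharper inputs. The decomposition $P' = \partial_2 P(z,m) + \partial_2^2 P(z,\xi)(\ul G - m)$, the Ward/Cauchy--Schwarz bound $N^{-1}|(G^2)_{xy}| \le (N\eta)^{-1}\sqrt{\im G_{xx}\,\im G_{yy}}$, the delocalization bound $\im G_{xx}\prec \im\ul G$ (also used in \eqref{marin}), and the check $\eta\le\kappa$ on $\f Y_*$ are all correct and are the right ingredients. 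Two small numerical slips are worth pointing out, though neither affects the validity of the argument. First, $H_{ij}\prec N^{-1/2}$ is false for the sparse matrices considered here: from Lemma \ref{Tlemh} and Chebyshev one only gets $H_{ij}\prec q^{-1}$, and since $q<N^{1/6}$ this is much larger than $N^{-1/2}$. The bound you actually need is the trivial $H_{ij}\prec 1$ together with $\sqrt{\kappa}\,\Theta\geq N^{-1}$. Second, the inequality $\sqrt{\kappa}\,\Theta\geq 1/(N\sqrt{\kappa})$ (equivalently $\Theta\geq 1/(N\kappa)$) does not hold throughout $\f Y_*$; the correct and sufficient statement, obtained from $\Theta\geq \frac{\eta/\sqrt\kappa}{N\eta}=\frac{1}{N\sqrt\kappa}$, is $\sqrt{\kappa}\,\Theta\geq 1/N$, which is the exact analogue of \eqref{ruo} used in the proof of Lemma \ref{lemP}. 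With these corrections your proof is complete.
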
 

By $zG=HG-I$, we have
\begin{multline} \label{ning}
\bb E (\im P(z,\ul{G}))^{2n} =\bb E (\im \ul{HG}+\im Q)(\im P)^{2n-1}
\\
=\bb E [\im Q(\im P)^{2n-1}]+\frac{1}{N}\sum_{i,j} \bb E [H_{ij} \im G_{ji} (\im P)^{2n-1}]\,,
\end{multline}
where in the second step we used that $H$ has real entries.
 
\begin{remark} \label{rem:complex_proof}
Although we used that the entries of $H$ are real in \eqref{ning}, our argument easily extends to complex entries of $H$. To see how, for any holomorphic $f:\bb C_+\to \bb C$ we define $J f(z) \deq \frac{1}{2 \ii} (f(z) - f(\ol z))$. We view all quantities appearing in our arguments as functions of $z$ and use the operator $J$ instead of $\im$.  Then it is easy to check that in both real and complex cases, Proposition \ref{prop3.3} as well as all its consequences remain true if we replace $\im$ by $J$ everywhere. Note that $\im \ul G = J \ul G$ and $\im P = J P$, but in general $\im G_{ij} \neq J G_{ij}$. An alternative point of view is to regard all of our quantities as functions of $z$ and $H$, and to take the imaginary part with respect to the Hermitian conjugation of $z$ and $H$. \end{remark}

Similarly to \eqref{7.1}, we can use Lemma \ref{lem:cumulant_expansion} on the last term of \eqref{ning}, and get
\begin{multline} \label{8.1}
\bb E (\im P(z,\ul{G}))^{2n} =\bb E[ \im Q  (\im P)^{2n-1}]+\frac{1}{N}\sum_{k=1}^\ell\frac{1}{k!}\sum_{i,j} \cal C_{k+1}(H_{ij}) \bb E \bigg[  \frac{\partial^k \im G_{ij}}{\partial H_{ij}^k} (\im P)^{2n-1}\bigg]\\
+\frac{1}{N}\sum_{k=1}^\ell\frac{1}{k!}\sum_{s=1}^k {k \choose s}\sum_{i,j} \cal C_{k+1}(H_{ij}) \bb E \bigg[  \frac{\partial^s( \im P)^{2n-1}}{\partial H_{ij}^s} \frac{\partial^{k-s} \im G_{ij}}{\partial H_{ij}^{k-s}}\bigg]+O_{\prec}(N^{-4n})\\
\eqd\mbox{(V)}+\mbox{(VI)}+\mbox{(VII)}+O_{\prec}(N^{-4n})\,.
\end{multline}
We shall prove the following result, which directly implies \eqref{goal sec8}.

\begin{lemma} \label{lem 8.2}
	Let $\mathrm{(V)}-\mathrm{(VII)}$ be as in \eqref{8.1}. Then
	\begin{equation} \label{8.3}
	\mathrm{(V)}+\mathrm{(VI)} \prec \sum_{r=1}^{2n}\cal E_{\im}^{r} \cal P_{\im}^{2n-r} 
	\end{equation}
	and
	\begin{equation} \label{8.4}
	\mathrm{(VII)} \prec \sum_{r=1}^{2n} \cal E_{\im}^{r} \cal P_{\im}^{2n-r}\,.
	\end{equation}
\end{lemma}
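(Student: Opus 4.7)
The proof follows the same overall architecture as that of Lemma \ref{lem7.1} in Sections \ref{sec7.2}--\ref{sec7.4}, but is considerably simpler. Throughout, we replace $\cal P$, $\cal E$, $\Psi$, $\Upsilon$ and the factor $P^{2n-1}$ by $\cal P_{\im}$, $\cal E_{\im}$, $\Phi$, $\Theta$ and $(\im P)^{2n-1}$, respectively. Three ingredients drive the simplification. First, the improved bounds $P' \prec \sqrt{\kappa}$ and $\partial^k P / \partial H_{ij}^k \prec \sqrt{\kappa}\,\Theta$ from Lemma \ref{lemimP}, which rest on the rigidity estimate \eqref{3.1}. Second, a direct computation shows $\sqrt{\kappa}\,\Theta \prec \cal E_{\im}$ on $\f Y_*$, using the hypothesis $\Phi \geq N^{-1-\delta}\eta^{-1}$ and the special choice $\eta = N^{-\delta}N^{-5/8}q^{-1/4}$ in \eqref{leijun}. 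Third, the product rule $\im(AB) = (\re A)(\im B) + (\im A)(\re B)$ lets us distribute imaginary parts through all products and reduce every estimate to monomials with a single $\im$ factor, to which the abstract polynomial class $\cal V$ and Lemmas \ref{lem7.4}--\ref{lem7.6} apply.

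To establish \eqref{8.4}, we decompose (VII)$=\sum_{k=1}^\ell X_k$ in analogy with \eqref{X_k} and estimate each $X_k$ by following Sections \ref{sec7.3.1}--\ref{sec7.3.4} with the derivatives of $(\im P)^{2n-1}$ expanded via the product rule and Lemma \ref{lemimP}. Every differentiation of $\im P$ produces a factor of $O_\prec(\sqrt{\kappa}\,\Theta) = O_\prec(\cal E_{\im})$, while each residual $\im G_{ij}$ or $\im \ul G$ is bounded using Lemma \ref{lem:m} and $\im \ul G \prec \Phi + \eta/\sqrt{\kappa+\eta}$. The decisive simplification over Section \ref{sec7.3} is that \emph{no secondary cumulant expansion is required}: in Section \ref{sec7.3.3} the treatment of $X_{3,2,1}$ forced a further re-expansion in order to harvest an additional cancellation from $P_0$, precisely because $\Upsilon$ was not dominated by $\cal E$. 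Here the bound $\sqrt{\kappa}\,\Theta \prec \cal E_{\im}$ holds from the outset, so a single round of cumulant expansion closes each $X_k$ directly, yielding (VII)$\prec \sum_{r=1}^{2n}\cal E_{\im}^{r} \cal P_{\im}^{2n-r}$. Note in particular that the $(\mathrm{II'})$--$(\mathrm{IV'})$ cancellation exploited in Section \ref{sec7.2}--\ref{sec7.3} is no longer needed: the corresponding contribution $\bb E[\im(\cal Z \ul G^2)(\im P)^{2n-1}]$ appearing in (V) is controlled on its own by using $\cal Z \prec (\sqrt{N}q)^{-1}$ together with $\im \ul G \prec \Phi + \eta/\sqrt{\kappa+\eta}$, which one checks directly yields the bound $O_\prec(\cal E_{\im} \cal P_{\im}^{2n-1})$ on $\f Y_*$.

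To establish \eqref{8.3}, we follow Section \ref{sec7.4} verbatim, taking imaginary parts throughout. Writing $\im Q = \im Q_0 + \im(\cal Z \ul G^2)$, the piece $\bb E[\im Q_0 (\im P)^{2n-1}]$ cancels with the main contribution of (VI) via the imaginary-part analogue of the identity used in Section \ref{sec4.2} to construct $P_0$: cumulant-expanding $\bb E[\partial^k \im G_{ij}/\partial H_{ij}^k (\im P)^{2n-1}]$ reproduces, modulo errors of the required size, the terms defining $\im Q_0$, since $\partial^k/\partial H_{ij}^k$ (with $H_{ij}$ real) commutes with $\im$. The technical vehicle is the imaginary-part analogue of Lemma \ref{lem7.10}, whose proof is identical and which transfers the cancellation to expressions multiplied by $(\im P)^{2n-1}$; the resulting errors are absorbed into $\sum_r \cal E_{\im}^r \cal P_{\im}^{2n-r}$. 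The remaining piece $\bb E[\im(\cal Z \ul G^2)(\im P)^{2n-1}]$ is handled by the direct estimate described above. The main bookkeeping concern, and the likely source of most effort, is verifying at every stage that the imaginary parts distributed through products preserve the algebraic structure needed by Lemmas \ref{lem7.4}--\ref{lem7.6}; this is why one resolves each product into its real and imaginary parts immediately, and only afterwards invokes the abstract polynomial estimates.
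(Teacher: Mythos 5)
Your proposal is correct and follows the same strategy as the paper: the improved derivative bounds of Lemma \ref{lemimP}, the domination $\sqrt{\kappa}\,\Theta\prec\cal E_{\im}$ on $\f Y_*$, the imaginary-part analogues of the abstract-polynomial lemmas (the paper states these as Lemmas \ref{lem 8.3} and \ref{lem8.4}), and the direct bound on $\E[\im(\cal Z\ul G^2)(\im P)^{2n-1}]$ all appear exactly as you describe, and your explanation of why no secondary cumulant expansion is needed matches the paper's reasoning. The one place where you slightly overstate the machinery is \eqref{8.4}: the paper's treatment of (VII) is even more direct than ``following Sections \ref{sec7.3.1}--\ref{sec7.3.4}'' and never invokes the class $\cal V$ there, instead bounding each term via $\partial^s(\im P)^{2n-1}/\partial H_{ij}^s\prec\sum_r(\sqrt{\kappa}\Theta)^r|\im P|^{2n-1-r}$ from \eqref{2im} together with the pointwise estimate $\im\bigl(\partial^{k-s}G_{ij}/\partial H_{ij}^{k-s}\bigr)\prec\im\ul G+|G_{ij}|+\Theta$ (using $\im G_{xx}\prec\im\ul G$ from delocalization) and Ward's identity.
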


\subsection{Proof of \eqref{8.4}}
Define
\begin{equation} \label{faker}
X_k^{(2)}\deq \frac{1}{N}\frac{1}{k!}\sum_{s=1}^k {k \choose s}\sum_{i,j} \cal C_{k+1}(H_{ij}) \bb E \bigg[  \frac{\partial^s( \im P)^{2n-1}}{\partial H_{ij}^s} \frac{\partial^{k-s} \im G_{ij}}{\partial H_{ij}^{k-s}}\bigg]\,,
\end{equation} 
so that $\mathrm{(VII)}=\sum_{k=1}^{\ell}X_{k}^{(2)}$. Note that for $f\col \bb R \to \bb C$ and $h$ real, $\frac{\dd \im f(h)}{\dd h}=\im \frac{\dd f(h)}{\dd h}$, so that the derivatives in \eqref{faker} can be computed through \eqref{diffz}. Let us estimate each $X_k^{(2)}$.

For any fixed $k \in \bb N_+$, it is easy to see from \eqref{2im} that
\[
X_{k}^{(2)}\prec \frac{1}{N^2}\sum_{s=1}^k \sum_{r=1}^{2n-1}
\sum_{i,j}\bb E \bigg|(\sqrt{\kappa}\Theta)^r(\im P)^{2n-1-r} \im \frac{\partial^{k-s} G_{ij}}{\partial H_{ij}^{k-s}}\bigg|\,.
\]
By \eqref{diffz} and Proposition \ref{refthm1}, we see that
\begin{multline} \label{marin}
\im \frac{\partial^{k-s} G_{ij}}{\partial H_{ij}^{k-s}} \prec \big|\im\big( G_{ii}^{\floor{(k-s+1)/2}}G_{jj}^{\floor{(k-s+1)/2}}\big)\big|+|G_{ij}|+\max_{x,y \in \{i,j\}}\big|N^{-1}(G^2)_{xy}\big|\\
\prec \max_{x \in \{i,j\}}\im G_{xx}+|G_{ij}|+\max_{x,y \in \{i,j\}}\big|N^{-1}(G^2)_{xy}\big| \prec \im \ul{G}+|G_{ij}|+\Theta \,,
\end{multline}
where in the last step we estimated $\im G_{xx}$ by $O_\prec(\im \ul G)$, using its spectral decomposition and Lemma \ref{lem:delocalization}. Here we see the crucial effect of taking imaginary part of $P$, which results $\im \ul{G}$ on right-hand side of \eqref{marin} instead of $\ul{G}$. Note that $\im \ul{G} \leq \Phi+\im m\asymp\Phi+\eta/\sqrt{\kappa}$, and together with Lemma \ref{Ward} we have
\[
\frac{1}{N^2} \sum_{i,j}\bigg|\im \frac{\partial^{k-s} G_{ij}}{\partial H_{ij}^{k-s}} \bigg|\prec\Phi+\frac{\eta}{\sqrt{\kappa}}+\frac{1}{N^2}\sum_{i,j}|G_{ij}|+\Theta  \prec\Phi+\frac{\eta}{\sqrt{\kappa}}+\Theta^{1/2}+\Theta \prec \Phi+ \Theta^{1/2}\,.
\]
Thus
\begin{equation} \label{last modifier}
X_k^{(2)} \prec \sum_{r=1}^{2n-1} (\sqrt{\kappa}\Theta)^{r} (\Phi+\Theta^{1/2}) \cal P_{\im}^{2n-1-r}\,.
\end{equation}
By Cauchy-Schwarz and \eqref{leijun} we have $\Theta^{1/2}\prec \Psi+\frac{1}{N\eta}+\frac{\eta}{\sqrt{\kappa}}\prec \Psi+\frac{1}{N\eta}$, thus
\[
(\sqrt{\kappa}\Theta) (\Phi+\Theta^{1/2}) \prec (\sqrt{\kappa}\Theta) \Big(\Psi+\frac{1}{N\eta}\Big)=N^{\delta}\Theta \cdot \cal E_{\im} \prec \cal E_{\im}^2\,.
\]
Together with
$
(\sqrt{\kappa}\Theta)^{s} \prec \cal E_{\im}^{s}
$
for all $s\geq 0$, we get
\[
(\sqrt{\kappa}\Theta)^{r} (\Phi+\Theta^{1/2}) \prec \cal E_{\im}^{r+1}
\]
for all $r\geq 1$. Combining the above estimate with \eqref{last modifier}, we have
\[
X_k^{(2)} \prec \sum_{r=1}^{2n-1} \cal E_{\im}^{r+1} \cal P_{\im}^{2n-1-r}\,.
\]
This concludes the proof of \eqref{8.4}.

\subsection{Proof of \eqref{8.3}} The proof is similar to the estimate of \eqref{dingman} in Section \ref{sec7.4}. Define
\[
 X^{(3)}_k\deq \frac{1}{N}\sum_{k=1}^\ell\frac{1}{k!}\sum_{i,j} \cal C_{k+1}(H_{ij}) \bb E \bigg[  \frac{\partial^k \im G_{ij}}{\partial H_{ij}^k} (\im P)^{2n-1}\bigg]\,,
\]
so that $\mathrm{(VI)}=\sum_{k=1}^{\ell}X_k^{(3)}$. We analyse each $X_k^{(3)}$. 

Consider first the case when $k$  is odd. For $k=1$, it is easy to see from \eqref{diffz} and Lemma \ref{Ward} that
\begin{multline} \label{huawei}
X_1^{(3)}=\frac{1}{N^2}\sum_{i,j}\bb E[-\im (G_{ii}G_{jj}) (\im P)^{2n-1}]+O_{\prec}\big(\Theta \bb E [|\im P|^{2n-1}]\big)
\\
=-\bb E [\im (\ul{G}^2) (\im P)^{2n-1}]+O_{\prec} \big(\cal E_{\im} \cal P_{\im}^{2n-1}\big)\,.
\end{multline}
When $k \geq 3$ is odd, we see from \eqref{diffz} and Lemma \ref{Ward} that
\begin{equation} \label{ruok}
X_k^{(3)}=\frac{1}{N^{2+(k-1)\beta}} \sum_{i,j} a_{ij}^{(k)}\bb E \big[\im \big(G_{ii}^{(k+1)/2}G_{jj}^{(k+1)/2}\big)(\im P)^{2n-1}\big]+O_{\prec} \big(\cal E_{\im} \cal P_{\im}^{2n-1}\big)\,,
\end{equation}
where $a^{(k)}_{ij}$ is deterministic and uniformly bounded.

For even $k$, we see from \eqref{diffz} and Lemma \ref{Ward} that
\begin{equation} \label{eight}
X_{k}^{(3)}=\frac{1}{N^{2+(k-1)\beta}} \sum_{i,j} a_{ij}^{(k)}\bb E \big[\im \big(G_{ij}G_{ii}^{k/2}G_{jj}^{k/2}\big)(\im P)^{2n-1}\big]+O_{\prec} \big(\cal E_{\im} \cal P_{\im}^{2n-1}\big)\,,
\end{equation}
where $a^{(k)}_{ij}$ is deterministic and uniformly bounded. Note that the analogue of \eqref{eight} has appeared in \eqref{7.47}. To handle this term, we use the following result.

\begin{lemma} \label{lem 8.3}
Fix an even $k \geq 2$. Let $\big(a^{(k)}_{ij}\big)_{i,j=1}^N$ be deterministic and uniformly bounded. Then
\[
\frac{1}{N^{2}} \sum_{i,j} a_{ij}^{(k)}\bb E \big[\im \big(G_{ij}G_{ii}^{k/2}G_{jj}^{k/2}\big)(\im P)^{2n-1}\big] \prec \sum_{r=1}^{2n}\cal E_{\im}^r \cal P_{\im}^{2n-r}\,.
\]	
\end{lemma}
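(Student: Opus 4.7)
The plan is to mimic the treatment of $X_k^{(1)}$ in \eqref{7.47} from Section \ref{sec7.4}, where the analogous sum (without the outer $\im$ and with $P^{2n-1}$ in place of $(\im P)^{2n-1}$) was bounded via Lemma \ref{lem7.4}(ii). Two modifications are required: handling the outer imaginary part, and propagating $(\im P)^{2n-1}$ through the cumulant expansion.

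First, I would write $\im X = (X - \ol X)/(2\ii)$ with $X = G_{ij}G_{ii}^{k/2}G_{jj}^{k/2}$, reducing to estimating
\[
\frac{1}{N^2}\sum_{i,j}a_{ij}^{(k)}\bb E\bigl[G_{ij}G_{ii}^{k/2}G_{jj}^{k/2}(\im P)^{2n-1}\bigr]
\]
and its complex conjugate, the latter being handled identically using $\ol{G(z)} = G(\ol z)$. This expression has the form $\bb E\,\cal S(V)$ for a formal monomial $V$ satisfying $\nu_1(V) = 2$, $\theta(V) = 2$, $\nu_2(V) = 1$ (the off-diagonal $G_{ij}$), $\nu_3(V) = 2n-1$, and $\nu_4(V) = \nu_5(V) = 0$, except that the factor $P^{\nu_3}$ is replaced by $(\im P)^{\nu_3}$.

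Second, I would establish an $\im P$-analogue of Lemma \ref{lem7.4}(ii): for such $V$,
\[
\bb E\,\cal S(V) \prec \Theta\,\bb E|\im P|^{2n-1} + \Theta^2\,\bb E|\im P|^{2n-2} + \sum_{t=2}^{2n-1}(\sqrt{\kappa}\,\Theta)^{t}\,\bb E|\im P|^{2n-1-t}\,.
\]
The proof reproduces that of Lemma \ref{lem7.4}(ii) from Section \ref{sec10.3} (based on a cumulant expansion in the off-diagonal entry $G_{ij}$), with two modifications. (a) Since $H$ is real, $\partial_{H_{ij}}(\im P) = \im\,\partial_{H_{ij}}P$, so every derivative bound for $\partial^s_{H_{ij}}(\im P)^{2n-1}$ is inherited unchanged from the corresponding bound for $\partial^s_{H_{ij}}P^{2n-1}$, using the estimates in Lemma \ref{lemimP}. (b) On $\f Y_*$, the sharper estimates of Lemma \ref{lemimP} replace $\Upsilon$ by the smaller quantity $\Theta$, and (using $\eta \ll \kappa$ on $\f Y_*$) $\sqrt{\kappa+\eta}$ by $\sqrt{\kappa}$.

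Third, I would verify $\Theta \prec \cal E_{\im}$ and $\sqrt{\kappa}\,\Theta \prec \cal E_{\im}$ on $\f Y_*$ by direct comparison of the definitions, using the constraints on $w$ from \eqref{leijun}, the relation $q = N^\beta$ with $\beta \in (0,1/6)$, and the choice of $\delta$ from \eqref{delta} (the required inequalities reduce to simple power-counting, analogous to \eqref{7.17} and \eqref{7.21}). Combined with H\"older's inequality $\bb E|\im P|^{k} \leq \cal P_{\im}^{k}$, each term in the bound of Step 2 is dominated by a term of the form $\cal E_{\im}^{s}\,\cal P_{\im}^{2n-1-t}$ with $s \geq 1$, and hence fits into $\sum_{r=1}^{2n}\cal E_{\im}^{r}\cal P_{\im}^{2n-r}$ as it is used throughout Section \ref{sec5}.

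The principal obstacle is Step 2, the adaptation of Lemma \ref{lem7.4}(ii) to the $\im P$-setting. Structurally the calculation is identical to the $P$-case and there is no new conceptual idea, since $\partial_{H_{ij}}$ commutes with $\im$; the work lies in carefully tracking that every intermediate estimate uses the sharper bounds on $\f Y_*$ encoded in Lemma \ref{lemimP} rather than those on the larger domain $\f Y$.
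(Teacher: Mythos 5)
Your approach follows the paper's overall strategy (replacement identity $G_{ij}=\delta_{ij}\ul{G}+G_{ij}\ul{HG}-(HG)_{ij}\ul{G}$ plus cumulant expansion) and is essentially sound, but there are two points where the details are more delicate than the sketch suggests.

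First, the initial decomposition $\im X = (X-\ol X)/(2\ii)$ discards the extra smallness the paper obtains by keeping $\im$ wrapped around the whole product of Green's function entries. Compare the bound \eqref{marin}, where the paper's reason for keeping the outer $\im$ is precisely to replace a diagonal entry $G_{xx}=O_\prec(1)$ by $\im G_{xx}\prec \im\ul G \prec \Phi+\eta/\sqrt\kappa$, producing the factor $(\Phi+\Theta^{1/2})$ in \eqref{last modifier}. Once you split the $\im$, that factor is gone, so you have less slack in the power counting than the paper does.

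Second, precisely because of this lost slack, the verification in your Step 3 is stated too weakly. You claim to check $\Theta\prec\cal E_{\im}$ and $\sqrt\kappa\,\Theta\prec\cal E_{\im}$. The first suffices for the leading ($t=0$) term. But for the $t$-th term ($t\geq 2$) to fit into $\cal E_{\im}^{t+1}\cal P_{\im}^{2n-1-t}$ you need $(\sqrt\kappa\,\Theta)^{t}\prec\cal E_{\im}^{t+1}$, which reduces to the genuinely stronger inequality $\sqrt\kappa\,\Theta\prec\cal E_{\im}^{3/2}$; the bound $\sqrt\kappa\,\Theta\prec\cal E_{\im}$ alone leaves you with $\cal E_{\im}^{t}\cal P_{\im}^{2n-1-t}$, which does not fit the target sum since $\cal P_{\im}\leq 1$ cannot be used a priori. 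Relatedly, the $t=1$ term in your $\im P$-analogue of Lemma \ref{lem7.4}(ii) comes from a monomial with $\nu_4=\nu_5=1$, and the bound $\eqref{1im}$ gives $\sqrt\kappa\,(N\eta)^{-1}\Theta\,\cal P_{\im}^{2n-2}$, not $\Theta^2\,\cal P_{\im}^{2n-2}$ (the two are not comparable in general, since $\sqrt\kappa$ need not be dominated by $\Phi+\eta/\sqrt\kappa$); you then need $\sqrt\kappa\,(N\eta)^{-1}\Theta\prec\cal E_{\im}^2$. Both of the stronger inequalities $\sqrt\kappa\,\Theta\prec\cal E_{\im}^{3/2}$ and $\sqrt\kappa\,(N\eta)^{-1}\Theta\prec\cal E_{\im}^2$ do hold on $\f Y_*$ with $\delta$ as in \eqref{delta}, but the exponent comparison is tight as $\beta\uparrow 1/6$ and is not a cosmetic relabeling of \eqref{7.17}--\eqref{7.21}. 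If you carry out these two sharper power-counts explicitly, the argument closes.
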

\begin{proof}
	The proof essentially follows from the strategy of showing Lemmas \ref{lem4.2} and \ref{lem7.4}. We use the identity
	\[
	G_{ij}=\delta_{ij}\ul{G}+G_{ij}\ul{HG}-(HG)_{ij}\ul{G}
	\]
	to replace the $G_{ij}$ in the equation, and then expand the terms containing $H$ using Lemma \ref{lem:cumulant_expansion}. We omit the details.
\end{proof}

Lemma \ref{lem 8.3} immediately implies
\begin{equation} \label{8.8}
\sum_{s=2}^{\ceil{\ell/2}}X^{(3)}_{2s} \prec \sum_{r=1}^{2n}\cal E_{\im}^{r} \cal P_{\im}^{2n-r}\,.
\end{equation}
Combining \eqref{huawei} -- \eqref{8.8}, we have
\begin{multline*}
\mbox{(VI)}=-\bb E[ \im (\ul{G}^2) (\im P)^{2n-1}]+\sum_{s=2}^{\ceil{\ell/2}}\frac{1}{N^{2+(2s-2)\beta}}\sum_{i,j}a_{ij}^{(2s-1)}\bb E [\im (G_{ii}^sG_{jj}^s)(\im P)^{2n-1}]\\
+O_{\prec}\Big(\sum_{r=1}^{2n}\cal E_{\im}^{r} \cal P_{\im}^{2n-r}\Big)\eqd -\bb E [\im (\ul{G}^2) (\im P)^{2n-1}]+\sum_{s=2}^{\ceil{\ell/2}}\bb E [\im \cal S(T^{(s)}) (\im P)^{2n-1}]+O_{\prec}\Big(\sum_{r=1}^{2n}\cal E_{\im}^{r} \cal P_{\im}^{2n-r}\Big)\,.
\end{multline*}
Here we recall the definition of $\cal S(T)$ in \eqref{5.11}, and observe that $T^{(s)}$ above is the same as in \eqref{TS}. To handle the last relation, we introduce the following analogue of Lemmas \ref{lem:nte} and \ref{lem:ntt}.

\begin{lemma} \label{lem8.4}
	Let $T \in \cal T_0$ with $\nu_1(T)-\theta(T) \leq 0$. Fix $r \in \bb N$, and let $\cal M(r,T)$ be as in Lemma \ref{lem:nte}. Then
	\[
	\bb E [\im \cal S(T)(\im P)^{2n-1}]=\bb E [\im \cal M(r, T)(\im P)^{2n-1}] +O_{\prec}\big((\Theta+N^{-\beta(r+1)})\cal P_{\im}^{2n-1}\big)+O_{\prec}\Big(\sum_{r=1}^{2n}\cal E_{\im}^{r} \cal P_{\im}^{2n-r}\Big)\,.
	\]
\end{lemma}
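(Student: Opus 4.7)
The plan is to mirror the iterative strategy in the proofs of Lemmas \ref{lem:nte}, \ref{lem:ntt}, and \ref{lem7.10}, adapted to the imaginary-part setting with multiplier $(\im P)^{2n-1}$. The main tool is the deterministic identity $G_{ii} = \ul{G} + G_{ii}\ul{HG} - (HG)_{ii}\ul{G}$ (which follows from $HG = zG + I$), which lets each diagonal entry $G_{ii}$ appearing in $\cal S(T)$ be replaced by its average $\ul{G}$ plus terms carrying one explicit factor of $H$. First I would apply this substitution to every diagonal $G$ in the summand of $\cal S(T)$, producing the leading piece $\cal M(T)$ together with a collection of correction terms of the form $\frac{1}{N}\sum_{i,j}H_{ij}\,\Phi_{ij}$, where $\Phi_{ij}$ is a rational expression in Green's function entries.

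Next I would apply Lemma \ref{lem:cumulant_expansion} to each such remainder, with the cumulant order $\ell$ chosen large enough that the truncation remainder is negligible. Since $H$ is real, $\partial/\partial H_{ij}$ commutes with $\im$ (equivalently, with the operator $J$ of Remark \ref{rem:complex_proof}), so every resulting term still has $\im$ outside a rational function of $G$. These split into three classes: (a) the leading contribution $\bb E[\im \cal M(T)(\im P)^{2n-1}]$; (b) new $\cal V_0$-type terms of strictly higher degree, carrying at least one extra factor $N^{-2\beta}$ from $\cal C_{k+1}$ with $k\ge 2$; and (c) terms containing an off-diagonal $G_{xy}$ or $N^{-1}(G^2)_{xy}$, possibly together with derivatives of $(\im P)^{2n-1}$. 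Iterating the same reduction on the class (b) contributions $r-1$ further times yields the averaged object $\cal M(r,T)$ plus a leftover remainder bounded by $N^{\nu_1(T)-\theta(T)-\beta(r+1)}\cal P_{\im}^{2n-1}\le N^{-\beta(r+1)}\cal P_{\im}^{2n-1}$, exactly as in the proof of Lemma \ref{lem:nte}.

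The class (c) contributions are estimated with the imaginary-part improvement already used for $X_k^{(2)}$ in \eqref{marin}: by the spectral decomposition and Lemma \ref{lem:delocalization} one has $\im G_{xx}\prec \im \ul{G}\prec \Phi+\eta/\sqrt{\kappa}$, so any diagonal imaginary product is dominated by $\Theta$ (using also Lemma \ref{Ward} to handle the off-diagonal factor). Combined with the derivative bound \eqref{2im}, this gives $O_\prec(\Theta\cal P_{\im}^{2n-1})$ when no derivative hits $(\im P)^{2n-1}$, and $O_\prec\big((\sqrt{\kappa}\,\Theta)^t\,\Theta^{1/2}\,\cal P_{\im}^{2n-1-t}\big)$ when $t\ge 1$ derivatives do. Since $(\sqrt{\kappa}\,\Theta)^t\,\Theta^{1/2}\prec \cal E_{\im}^{\,t+1}$ (exactly the calculation carried out after \eqref{last modifier}), H\"older's inequality collects the $t\ge 1$ contributions into the $\sum_r \cal E_{\im}^{\,r}\cal P_{\im}^{2n-r}$ remainder, while the $t=0$ contribution feeds the $\Theta\cal P_{\im}^{2n-1}$ piece of the claimed bound.

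The hard part is bookkeeping rather than any new idea: one must verify that each term generated by the iterated substitution and cumulant expansion falls into one of the three categories, and that the exponents of $\Theta$, $\sqrt{\kappa}$, and $N^{-\beta}$ line up to produce exactly the target $(\Theta+N^{-\beta(r+1)})\cal P_{\im}^{2n-1}+\sum_r \cal E_{\im}^{\,r}\cal P_{\im}^{2n-r}$. The hypothesis $\nu_1(T)-\theta(T)\le 0$ is weaker than the $\le -2\beta$ used in Lemma \ref{lem7.10}, but this is precisely what is available and what is needed in the imaginary-part setting, where the target size $\cal E_{\im}$ is itself smaller than $\cal E$ by a factor of $(1/(N\eta)+\Phi)\sqrt{\kappa}$, so that the balance of powers is unchanged.
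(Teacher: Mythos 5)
Your proposal is correct and follows the same route as the paper's own (very terse) proof: replace each diagonal $G_{ii}$ via the identity $G_{ii}=\ul{G}+G_{ii}\ul{HG}-(HG)_{ii}\ul{G}$, expand the $H$-carrying remainders with Lemma~\ref{lem:cumulant_expansion} (using that $\im$ commutes with $\partial/\partial H_{ij}$ for real $H$), observe the leading-order cancellation, iterate on the residual $\cal T_0$-terms to build $\cal M(r,T)$, and estimate the off-diagonal and $(\im P)$-derivative residues via Ward's identity and \eqref{2im}, collecting them into $(\Theta+N^{-\beta(r+1)})\cal P_{\im}^{2n-1}+\sum_r\cal E_{\im}^r\cal P_{\im}^{2n-r}$ exactly as in Lemmas~\ref{lem:nte}, \ref{lem:ntt}, \ref{lem7.10}. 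The only quibbles are cosmetic — the iterated monomials live in $\cal T_0$ (not $\cal V_0$), and the number of reduction steps needed to reach $N^{-\beta(r+1)}$ is about $\ceil{(r+1)/2}$ rather than $r-1$ since each step gains $N^{-2\beta}$ — but neither affects the validity of the argument.
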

\begin{proof}
	The proof is similar to those of Lemmas \ref{lem:nte} and \ref{lem:ntt}. We use the identity
	\[
	G_{ii}=\ul{G}+G_{ii}\ul{HG}-(HG)_{ii}\ul{G}
	\]
	to replace the diagonal entries in $\cal S(T)$, and then expand the terms containing $H$ using Lemma \ref{lem:cumulant_expansion}. We omit the details. 
\end{proof}

By Lemma \ref{lem8.4}, we have, for any $s \in \{2,3,\dots,\ceil{\ell/2}\}$,
\[
\bb E [\im \cal S(T^{(s)}) (\im P)^{2n-1}]=\bb E [\im \cal M(\ceil{\beta^{-1}-2s+2},T^{(s)}) (\im P)^{2n-1}]+O_{\prec}\Big(\sum_{r=1}^{2n}\cal E_{\im}^{r} \cal P_{\im}^{2n-r}\Big)\,.
\]
Thus
\[
\mbox{(VI)}=-\bb E[ \im (\ul{G}^2) (\im P)^{2n-1}]+\sum_{s=2}^{\ceil{\ell/2}}\bb E [ \im\cal M(\ceil{\beta^{-1}-2s+2},T^{(s)}) (\im P)^{2n-1}]+O_{\prec}\Big(\sum_{r=1}^{2n}\cal E_{\im}^{r} \cal P_{\im}^{2n-r}\Big)\,.
\]
From Lemma \ref{lem8.4}, we see that the term $\cal M(\ceil{\beta^{-1}-2s+2},T^{(s)})$ above is the same as in \eqref{5.13}, which implies
\[
\ul{G}^2-\sum_{s=2}^{\ceil{\ell/2}}\bb E \cal M\big(\ceil{\beta^{-1}}-2s+2,T^{(s)}\big)=Q_0(\ul{G})\,.
\]
Thus
\[
\mbox{(VI)}=-\bb E [ \im Q_0 (\im P)^{2n-1}]+O_{\prec}\Big(\sum_{r=1}^{2n}\cal E_{\im}^{r} \cal P_{\im}^{2n-r}\Big)\,.
\]
In addition, note that
\[
\bb E [ \im (\cal Z \ul{G}^2) (\im P)^{2n-1}] \prec \frac{1}{N^{1/2+\beta}} \bb E[\im \ul{G}\, |\im P|^{2n-1}]\prec \frac{\Phi}{N^{1/2+\beta}} \cal P_{\im}^{2n-1}\,.
\]
From the definition of $\mathrm{(V)}$ in \eqref{8.1}, we conclude that 
\[
\mbox{(V)}+\mbox{(VI)} \prec \sum_{r=1}^{2n}\cal E_{\im}^{r} \cal P_{\im }^{2n-r}
\]
as desired. This concludes the proof of Lemma \ref{lem 8.2}, and hence also that of Proposition \ref{prop3.3}.

\section{Proof of Lemma \ref{theorem 2.1}} \label{sec8}

\begin{convention*}
Throughout this section, $z$ is given by \eqref{def_z_random}, where $w$ is deterministic and contained in
\begin{equation} \label{D}
\f D=\bigg\{w=\kappa+\ii \eta \in \bb C_+\col \eta\in [N^{-1+c},1], \kappa \in [-3,3], \eta+|\kappa|\geq \frac{1}{N^{1/2+\delta}q} \bigg\}
\end{equation}
where $c>0$ is fixed.
\end{convention*}

The key in proving Lemma \ref{theorem 2.1} is the following result. 

\begin{proposition} \label{prop9}
	Suppose $|\ul{G}-m| \prec \Psi$ for some deterministic $\Psi \in [N^{-1},1]$. Then
	\[
	P(z, \ul{G}) \prec \Big(\frac{1}{N\eta}+\frac{1}{\sqrt{N\eta}q^{3/2}}\Big)\big(\Psi+\sqrt{\eta+|\kappa|}\,\big)\,.
  \]
	uniformly for all $w \in \f D$.
\end{proposition}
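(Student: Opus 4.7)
The plan is to adapt the high-moment cumulant-expansion strategy of the proof of Proposition \ref{prop1} to the broader domain $\f D$ with a looser target bound. Fix $n\in\N_+$ and set $\cal P\deq\|P(z,\ul G)\|_{2n}$ together with
\[
\widetilde{\cal E}\deq\Big(\frac{1}{N\eta}+\frac{1}{\sqrt{N\eta}\,q^{3/2}}\Big)\big(\Psi+\sqrt{\eta+|\kappa|}\,\big).
\]
The goal is to show $\bb E|P|^{2n}\prec\widetilde{\cal E}^{2n}$ for every fixed $n$, after which Proposition \ref{prop9} follows by Chebyshev's inequality. The natural analogue of the edge parameter $\Upsilon$ of \eqref{Upsilon} is $\widetilde\Upsilon\deq(\Psi+\sqrt{\kappa+\eta})/(N\eta)$, which coincides with the first piece of $\widetilde{\cal E}$; the second piece $(\Psi+\sqrt{\kappa+\eta})/(\sqrt{N\eta}\,q^{3/2})$ is the sparseness contribution. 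Both are dominated by $\widetilde{\cal E}$ uniformly on $\f D$ by construction.

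Starting from $(H-z)G=I$ and $P=1+z\ul G+\cal Z\ul G^2+Q_0$, write
\[
\bb E|P|^{2n}=\bb E\big(\cal Z\ul G^2+Q_0\big)P^{n-1}P^{*n}+\frac{1}{N}\sum_{i,j}\bb E H_{ij}G_{ji}P^{n-1}P^{*n},
\]
and apply Lemma \ref{lem:cumulant_expansion} to the last sum to obtain the decomposition \eqref{7.1} into four contributions $\mathrm{(I)}+\mathrm{(II)}+\mathrm{(III)}+\mathrm{(IV)}$. The aim is to show $\mathrm{(I)}+\mathrm{(II)}+\mathrm{(III)}+\mathrm{(IV)}\prec\sum_{r=1}^{2n}\widetilde{\cal E}^r\cal P^{2n-r}$. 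The ``diagonal'' part $\mathrm{(I)}+\mathrm{(III)}$ follows the argument of Section \ref{sec7.4} essentially verbatim: the construction of $P_0$ in Section \ref{sec6.1} produces a near-cancellation with the diagonal component of the cumulant derivatives, and the residual off-diagonal error is handled by Lemma \ref{lem7.4}(ii) together with the analogue of Lemma \ref{lem7.10} (whose proof is unchanged, with $\widetilde{\cal E}$ in place of $\cal E$).

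The ``cross'' part $\mathrm{(II)}+\mathrm{(IV)}$, which couples $P$ and $\ol P$, is treated following Sections \ref{sec7.2}--\ref{sec7.3.4} with one crucial simplification: the target $\widetilde{\cal E}$ now contains the sparseness factor $(\Psi+\sqrt{\kappa+\eta})/(\sqrt{N\eta}\,q^{3/2})$ from the outset. This factor is precisely the size of the $\cal C_4$-derived ``dangerous'' term \eqref{2.1} which, at the edge, required the delicate secondary expansion of Section \ref{sec7.3.3} (the ``$\partial_w P$ trick''). Here one can bypass that secondary expansion entirely and apply Lemma \ref{lem7.4}(i) directly to every cumulant term. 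The $k=1$ contribution gives the $\widetilde\Upsilon^2\cal P^{2n-2}$ bound of Section \ref{sec7.3.1}; for $k\ge2$ the factor $N^{-(k-1)\beta}$ from Lemma \ref{Tlemh}, combined with the $\sqrt{\widetilde\Upsilon}$ arising from one Ward-type off-diagonal estimate (cf.\ \eqref{7.14}), produces exactly the sparseness factor $(\sqrt{N\eta}\,q^{3/2})^{-1}$ needed to fit into $\widetilde{\cal E}^r$.

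The main obstacle will be the algebraic bookkeeping: verifying that each naive bound produced by Lemma \ref{lem7.4}(i) is indeed $\prec\widetilde{\cal E}^r\cal P^{2n-r}$ for some $r\ge1$. This reduces to elementary size comparisons among $\widetilde\Upsilon$, $N^{-\beta}$, $q^{-1}$, and the mixed products $(\Psi+\sqrt{\kappa+\eta})^s\widetilde\Upsilon^t$, which follow from the defining conditions $\eta\ge N^{-1+c}$ and $\eta+|\kappa|\ge(N^{1/2+\delta}q)^{-1}$ of $\f D$. The abstract polynomial framework of Section \ref{sec 6.2} was already set up in sufficient generality to apply directly in the present setting, so no new machinery is needed; the proof is effectively a streamlined variant of Section \ref{sec4}.
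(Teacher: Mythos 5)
Your overall strategy is right and agrees with the paper on most points: set $\cal E_1 \deq \widetilde{\cal E}$, run a single cumulant expansion, avoid the secondary $\partial_w$-expansion of Section~\ref{sec7.3.3}, and control the resulting terms using Lemma~\ref{lem7.4} and the rest of the abstract-polynomial machinery. You are also correct that the target $\cal E_1$ absorbs the sparseness factor and that no $\partial_w$-trick is needed. But the claim that ``one can bypass that secondary expansion entirely and apply Lemma~\ref{lem7.4}(i) directly to every cumulant term'' is where the proof proposal breaks down, and the gap is genuine.

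The obstruction occurs already at $k=3$, $s=2$ in the cross part $\mathrm{(IX)}+\mathrm{(XI)}$ (your $\mathrm{(II)}+\mathrm{(IV)}$). Differentiating $P^{2n-1}$ twice in $H_{ij}$ produces, via \eqref{diffP}, a contribution proportional to $4N^{-1}(\ul G + \ul G^2)$ which carries \emph{no} off-diagonal Green's function factor; pairing it with $\partial G_{ij}/\partial H_{ij} \approx -G_{ii}G_{jj}$ gives a term of the form
\[
\frac{1}{N^{2}}\sum_{i,j}\cal C_4(H_{ij})\,\bb E\big[(\ul G + \ul G^2)\,G_{ii}G_{jj}\,P^{n-2}P^{*n}\big]\,,
\]
which is of size $\asymp (Nq^2)^{-1}\cal P^{2n-2}$. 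This monomial has $\nu_2 = 0$, so Lemma~\ref{lem7.4}\,(i) does not apply to it, and there is no Ward-type gain of $\sqrt{\Upsilon}$ available. And $(Nq^2)^{-1}$ is in general \emph{not} $\prec\cal E_1^2$: take $\Psi = N^{-1}$ and $\eta\asymp |\kappa|\asymp 1$, then $\cal E_1^2\asymp (N q^3)^{-1}$, a full factor of $q$ smaller. So this term, and its relative with $P'\ul{G^2}G_{ii}G_{jj}$, cannot be closed by the naive bounds.

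The paper's proof handles this precisely by the cancellation between $\mathrm{(IX)}$ and $\mathrm{(XI)}$ that your proposal omits. The leading piece $X^{(4)}_{3,2,1}$ of $\mathrm{(XI)}$ is matched against the leading piece of $\mathrm{(IX)}$, which is the same expression with $G_{ii}G_{jj}$ replaced by $\ul G^2$; Proposition~\ref{refthm1} gives $\ul G^2 - G_{ii}G_{jj}\prec q^{-1} + (N\eta)^{-1/2}$, and the resulting extra smallness factor brings the difference down to $(Nq^3)^{-1} + (Nq^2\sqrt{N\eta})^{-1}\prec \cal E_1^2$. So while the $\partial_w$-expansion is indeed bypassable on $\f D$, the $\mathrm{(IX)}$/$\mathrm{(XI)}$ cancellation is essential and cannot be replaced by a direct application of Lemma~\ref{lem7.4}\,(i); your proposal as stated does not close the bound on the cross part.
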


The stability analysis of $P$ was dealt for the region $\f Y$ in Lemma \ref{lem6.2}, and one easily checks that the same result holds for the region $\f D$. This leads to the next lemma.

\begin{lemma} \label{lem9.2}
Lemma \ref{lem6.2} holds provided that $\f Y$ is replaced with $\f D$.
\end{lemma}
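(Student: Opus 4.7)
The plan is to repeat the proof of Lemma \ref{lem6.2} (which is essentially \cite[Proposition 2.11]{HLY}) and verify that every ingredient remains available on the larger domain $\f D$. That proof rests on three intrinsic ingredients attached to the polynomial $P$ and its associated Stieltjes transform $m$, and none of them is sensitive to whether we work on $\f Y$ or on $\f D$.

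First, one needs the Taylor expansion
\begin{equation*}
P(z,\ul G) = \partial_2 P(z,m)(\ul G-m) + \tfrac{1}{2} \partial_2^2 P(z,m)(\ul G-m)^2 + \sum_{k\ge 3} \tfrac{1}{k!}\partial_2^k P(z,m)(\ul G-m)^k,
\end{equation*}
together with the quantitative bounds $|\partial_2 P(z,m)| \asymp \sqrt{\tau+\eta}$ and $|\partial_2^2 P(z,m)| = 2+O(q^{-2})$ from Lemma \ref{lem:m}, both of which are valid on the full spectral domain $\wt{\f S}$ and hence on $\f D$. A direct computation with $z = L_0+\cal Z+\kappa+\ii \eta$ shows that $\tau = |E^2-L^2|=|\kappa|\,|E+L| \asymp |\kappa|$ with very high probability for $w \in \f D$, since $L=2+O(q^{-2})+O_\prec((\sqrt{N}q^3)^{-1})$ and $\kappa \in [-3,3]$ together keep $|E+L|$ of order one. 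Thus $\sqrt{\tau+\eta} \asymp \sqrt{|\kappa|+\eta}$ uniformly on $\f D$, and inverting the Taylor expansion yields the implication
\begin{equation*}
|\ul G-m| \ll \sqrt{|\kappa|+\eta} \quad\Longrightarrow\quad |\ul G-m| \prec \frac{|P(z,\ul G)|}{\sqrt{|\kappa|+\eta + |P(z,\ul G)|}}\,,
\end{equation*}
with the quadratic-vs-linear dichotomy identical to that in \cite[Proposition 2.11]{HLY}.

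Second, one needs an initial estimate to seed the bootstrap. At $\eta=1$, Proposition \ref{refthm1} combined with $|m-m_{\mathrm{sc}}|\prec q^{-2}$ (from Lemma \ref{lem:m}) yields $|\ul G-m|\prec q^{-1}+(N\eta)^{-1/2} \ll 1$, which comfortably satisfies the hypothesis of the implication above.

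Third, the monotonicity in $\eta$ of $\varepsilon$ and its Lipschitz continuity with constant $N$ enable the standard grid/continuity argument: establish $P(z,\ul G) \prec \varepsilon(w)$ at a polynomially fine lattice in $\f D$, interpolate to all of $\f D$ via the Lipschitz constant (absorbing the polynomial loss into $\prec$), and then descend in $\eta$ from the initial estimate via the implication above. The one point warranting checking is whether the bootstrap can be propagated down to the smallest allowed $\eta \asymp N^{-1+c}$ when $|\kappa|$ is small; however, the lower bound $\eta+|\kappa|\ge N^{-1/2-\delta}q^{-1}$ built into the definition \eqref{D} plays precisely the role that the analogous lower bounds in the definition \eqref{hahaha} of $\f Y$ play, ensuring that $\sqrt{|\kappa|+\eta+\varepsilon}$ remains large enough to keep the self-consistent equation stable at every step. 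No new ideas beyond Lemma \ref{lem6.2} are needed; the proof amounts to verifying that each step transfers verbatim.
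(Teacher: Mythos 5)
The paper itself offers no explicit proof of this lemma — it simply declares that ``one easily checks'' the stability analysis transfers from $\f Y$ to $\f D$ — and your proposal correctly spells out what that check consists of, following the same route (Taylor expansion of $P$ around $m$, the bounds of Lemma~\ref{lem:m}, an initial estimate at $\eta=1$, and a Lipschitz/bootstrap argument in $\eta$). One small imprecision worth fixing: the chain $\tau=|E^2-L^2|=|\kappa|\,|E+L|\asymp|\kappa|$ is not quite right on $\f D$, because $E-L=\kappa+O_\prec\bigl(1/(\sqrt N q^3)\bigr)\ne\kappa$ and on $\f D$ only $|\kappa|+\eta$ (not $|\kappa|$ alone) is bounded below; when $|\kappa|\ll 1/(\sqrt N q^3)$ one has $\tau\not\asymp|\kappa|$. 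The correct statement, and the one you actually need, is $\tau+\eta\asymp|\kappa|+\eta$, which does follow because the constraint $\eta+|\kappa|\ge N^{-1/2-\delta}q^{-1}$ dominates the error $O_\prec\bigl(1/(\sqrt N q^3)\bigr)$ in $L-L_0-\cal Z$ (using $\delta+\beta<3\beta$); so your conclusion stands, but the intermediate identity should be replaced by this comparison of $\tau+\eta$ with $|\kappa|+\eta$.
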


Combining Proposition \ref{prop9} and Lemma \ref{lem9.2}, we obtain the implication
\[
|\ul{G}-m| \prec \Psi \implies |\ul{G}-m| \prec \Big(\frac{1}{N\eta}+\frac{1}{\sqrt{N\eta}q^{3/2}}\Big)^{1/2} \Psi^{1/2}+ \frac{1}{N\eta}+\frac{1}{\sqrt{N\eta}q^{3/2}}\,,
\]
and thus
	\begin{equation} \label{2.2}
|\ul{G}-{m}| \prec \frac{1}{N\eta} +\frac{1}{(N\eta)^{1/2}q^{3/2}}
\end{equation}
uniformly for all $w \in \f D$. By the rigidity estimate \eqref{2.2}, together with a standard analysis using Helffer-Sj\"{o}strand formula (e.g.\ \cite[Proposition 3.2]{H19}), one immediately concludes the proof of Lemma \ref{theorem 2.1}.

The rest of the section is devoted to the proof of Proposition \ref{prop9}. It is simpler than that of Proposition \ref{prop1}, and we only give a sketch.  A detailed proof of a slightly weaker result can be found in \cite[Proposition 2.9]{HLY}.

\subsection{Proof of Proposition \ref{prop9}}
Fix $n \in \bb N_+$ and set
\[
\cal P \deq \|P(z,\ul{G})\|_{2n}=\Big(\bb E |P(z,\ul{G})|^{2n}\Big)^{\frac{1}{2n}}\,, \quad \cal E_1 \deq \Big(\frac{1}{N\eta}+\frac{1}{\sqrt{N\eta}q^{3/2}}\Big)\big(\Psi+\sqrt{\eta+|\kappa|}\,\big)\,.
\]
We shall show that
\begin{equation} \label{goal sec9.1}
\bb E |P(z,\ul{G})|^{2n}=\cal P^{2n} \prec \cal E_1^{2n}\,,
\end{equation}
and Proposition \ref{prop9} is obtained by Chebyshev's inequality.

We shall see that the proof of \eqref{goal sec9.1} is much simpler than that of \eqref{goal sec7}, as it does not require a secondary expansion as in Section \ref{sec7.3.3}. Recall the definitions of $P'$, $Q$ and $Q_0$ from \eqref{eqn PQQ_0}, and recall the definition of $\Upsilon$ from \eqref{Upsilon}. We have the bound
\begin{equation} \label{e_1}
\Upsilon \prec \cal E_1\,.
\end{equation}
In addition, note that Lemma \ref{lemP} remains true for $w \in \f D$.

Similarly to \eqref{7.1}, we have
\begin{multline} \label{9.1}
\bb E |P|^{2n}	=\bb E Q_0   P^{n-1}P^{*n}+\bb E\cal Z \ul{G}^2P^{n-1}P^{*n}+\frac{1}{N}\sum_{k=1}^\ell\frac{1}{k!}\sum_{i,j} \cal C_{k+1}(H_{ij}) \bb E \bigg[  \frac{\partial^k G_{ij}}{\partial H_{ij}^k} P^{n-1}P^{*n}\bigg]\\
+\frac{1}{N}\sum_{k=1}^\ell\frac{1}{k!}\sum_{s=1}^k {k \choose s}\sum_{i,j} \cal C_{k+1}(H_{ij}) \bb E \bigg[  \frac{\partial^s( P^{n-1}P^{*n})}{\partial H_{ij}^s} \frac{\partial^{k-s} G_{ij}}{\partial H_{ij}}\bigg]+O_{\prec}(N^{-4n})\\
\eqd\mbox{(VIII)}+\mbox{(IX)}+\mbox{(X)}+\mbox{(XI)}+O_{\prec}(N^{-4n})\,.
\end{multline}
The following result directly implies \eqref{goal sec9.1}.

\begin{lemma} \label{lem9.3}
	We have
	\begin{equation} \label{9.5}
	\mathrm{(IX)}+\mathrm{(XI)} \prec \sum_{r=1}^{2n}\cal E_1^r \cal P^{2n-r}
	\end{equation}
	as well as
	\begin{equation} \label{9.6}
	\mathrm{(VIII)}+\mathrm{(X)} \prec \sum_{r=1}^{2n}\cal E_1^r \cal P^{2n-r}\,.
	\end{equation}
\end{lemma}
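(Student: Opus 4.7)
The plan is to follow the proof of Lemma~\ref{lem7.1} given in Section~\ref{sec7.2}, the subsequent analysis of $\mathrm{(II')}$, and Section~\ref{sec7.4}, with appropriate simplifications afforded by the weaker target error $\cal E_1$ in place of the sharp $\cal E$ of Proposition~\ref{prop1}.

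For \eqref{9.5}, I expand $\mathrm{(XI)} = \sum_{k=1}^\ell X_k$ with $X_k$ as in \eqref{X_k}, and separately expand $\mathrm{(IX)}$ via the cumulant-expansion argument applied to $\cal Z = N^{-1}\sum_{i,j}(H_{ij}^2 - 1/N)$, as in the analysis of $\mathrm{(II')}$ in the excerpt. The structure is identical to that of Proposition~\ref{prop1}: the two expansions produce common leading contributions at order $k=3$, $s=2$ of schematic form $N^{-2-2\beta}\bb E[\ul{G^3}\,\ul G^{m}P^{2n-2}]$, which cancel exactly between $\mathrm{(IX)}$ and $\mathrm{(XI)}$ by the same mechanism as in \eqref{7.42} and \eqref{7.48}. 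All remaining error terms are controlled term-by-term by Lemmas~\ref{lemP}, \ref{Ward}, \ref{lem7.4}, and~\ref{lem7.6} (each of which remains valid on $\f D$ after routine modification), and are bounded by $\sum_{r=1}^{2n}\cal E_1^r\cal P^{2n-r}$; the coarser target $\cal E_1$ obviates the need for several of the sharp estimates used in Section~\ref{sec7.2}.

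For \eqref{9.6}, I follow Section~\ref{sec7.4} verbatim. Using Lemma~\ref{lem7.10} (whose proof extends unchanged to $\f D$) to reduce the diagonal entries of $G$ in the evaluations $\cal S(T^{(s)})$ appearing in the odd-$k$ terms of $\mathrm{(X)}$ to $\ul G$, one obtains $\mathrm{(X)} = -\bb E[Q_0 P^{n-1}P^{*n}] + O_{\prec}\bigl(\sum_{r=1}^{2n}\cal E_1^r \cal P^{2n-r}\bigr)$. The leading term cancels $\mathrm{(VIII)}$ exactly by the defining identity of $Q_0$ from Section~\ref{sec4.2}, namely
\[
\ul{G}^2 - \sum_{s=2}^{\ceil{\ell/2}}\cal M\bigl(\ceil{\beta^{-1}}-2s+2,T^{(s)}\bigr) = Q_0(\ul G).
\]

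The main obstacle is the systematic verification that every error term arising from the cumulant expansions of $\mathrm{(IX)}$, $\mathrm{(X)}$, and $\mathrm{(XI)}$ is dominated by a suitable power of $\cal E_1$ times a corresponding power of $\cal P$. Thanks to the extra summand $\tfrac{1}{\sqrt{N\eta}\,q^{3/2}}(\Psi+\sqrt{\eta+|\kappa|})$ in $\cal E_1$ — absent from the sharp estimate $\cal E$ of Proposition~\ref{prop1} — the $N^{-1-2\beta}$-type contributions that are the most delicate in Proposition~\ref{prop1} can here be absorbed directly, so the intricate bookkeeping of Section~\ref{sec7.3.3} is substantially shorter.
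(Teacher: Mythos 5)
Your treatment of \eqref{9.6} matches the paper: reduce the odd-$k$ terms of $\mathrm{(X)}$ via Lemma~\ref{lem7.10} to $\cal M\bigl(\ceil{\beta^{-1}}-2s+2, T^{(s)}\bigr)$, then invoke the defining identity of $Q_0$ to cancel $\mathrm{(VIII)}$.

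For \eqref{9.5}, however, you misidentify the cancellation mechanism. You appeal to the exact $\ul{G^3}$-type formulas \eqref{7.42} and \eqref{7.48}, which are the end products of the heavy secondary $\partial_w$-expansion of Section~\ref{sec7.3.3}, but the paper emphasizes that this secondary expansion is precisely what the proof of Proposition~\ref{prop9} \emph{avoids}. Instead, the paper keeps the leading $k=3$, $s=2$ contributions of $\mathrm{(IX)}$ and $\mathrm{(XI)}$ (and the analogous odd $k\geq 5$ contributions) in raw form: they share the common factor $P'(G^2)_{ii}\cdots$ and differ only through $\ul{G}^2$ versus $G_{ii}G_{jj}$. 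Proposition~\ref{refthm1} then gives $\ul{G}^2-G_{ii}G_{jj}\prec \frac{1}{q}+\frac{1}{\sqrt{N\eta}}$, which supplies exactly the extra smallness needed, on top of the naive estimate $\frac{(\Psi+\sqrt{|\kappa|+\eta})^2}{N\eta q^2}\cal P^{2n-2}$, to get below $\cal E_1^2\cal P^{2n-2}$ (this is where the new $\frac{1}{\sqrt{N\eta}q^{3/2}}$ piece of $\cal E_1$ is used). That is the whole point of the coarser target: a crude, direct cancellation between $\mathrm{(IX)}$ and $\mathrm{(XI)}$ suffices and the $\partial_w$-machinery is unnecessary. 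Note also the internal tension in your proposal — you invoke \eqref{7.42}--\eqref{7.48} as the cancellation mechanism while asserting that the bookkeeping of Section~\ref{sec7.3.3} becomes "substantially shorter"; those formulas \emph{are} the output of that bookkeeping. Carrying out the full secondary expansion would presumably also work, but it is considerably more work, would require re-verifying Lemmas~\ref{lem7.5}--\ref{lem:ntt} on the different domain $\f D$, and misses the simplification that makes Lemma~\ref{lem9.3} markedly easier than Lemma~\ref{lem7.1}.
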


We now sketch the proof of \eqref{9.5}. The proof of \eqref{9.6} follows in a similar fashion. Let us first consider (XI). We write $\mbox{(XI)}=\sum_{k=1}^lX^{(4)}_k$, where
\begin{equation*} 
X_k^{(4)}\deq\frac{1}{N}\frac{1}{k!}\sum_{s=1}^k {k \choose s}\sum_{i,j} \cal C_{k+1}(H_{ij}) \bb E \bigg[  \frac{\partial^s (P^{n-1}P^{*n})}{\partial H_{ij}^s} \frac{\partial^{k-s} G_{ij}}{\partial H_{ij}^{k-s}}\bigg]\,.
\end{equation*}
For $k=1$, one can repeat the steps in Section \ref{sec7.3.1} and show that
\begin{equation*}
X_{1}^{(4)} \prec \Upsilon^2 \bb E |P^{2n-2} | \prec \Upsilon^2 \cal P^{2n-2}\,.
\end{equation*}
Note that we have the bound \eqref{e_1}, which implies
\begin{equation} \label{9.8}
X_{1}^{(4)} \prec \cal E_1^2 \cal P^{2n-2}\,.
\end{equation}
For $k=2$, one can follow the steps in Section 2 of \cite{HLY},  and show that $X_2^{(4)} \prec \sum_{r=2}^{2n} \Upsilon^{r} \cal P^{2n-r}$. Thus,
\begin{equation*}
X_2^{(4)} \prec \sum_{r=2}^{2n} \cal E_1^{r} \cal P^{2n-r}\,.
\end{equation*}
A similar strategy works for all even $k \geq 4$. This gives
\begin{equation} \label{9.9}
\sum_{s=2}^{\ceil{\ell/2}}X^{(4)}_{2s} \prec \sum_{r=2}^{2n}\cal E_1^{r} \cal P^{2n-r}\,.
\end{equation}
For $k=3$, we split $X_3^{(4)}=X_{3,1}^{(4)}+X_{3,2}^{(4)}+X_{3,3}^{(4)}$, where
\[
X_{3,s}^{(4)}\deq \frac{1}{N}\frac{1}{3!} {3 \choose s}\sum_{i,j} \cal C_{4}(H_{ij}) \bb E \bigg[  \frac{\partial^s (P^{n-1}P^{*n})}{\partial H_{ij}^s} \frac{\partial^{3-s} G_{ij}}{\partial H_{ij}^{3-s}}\bigg]
\]
for $s=1,2,3$. Similarly to Step 1 of Section \ref{sec7.3.3}, we can show that
\begin{equation*}
X_{3,1}^{(4)}+X_{3,3}^{(4)} \prec  \frac{1}{N^{2\beta}} \sum_{t=0}^{2n-2} ((\Psi+\sqrt{|\kappa|+\eta})\Upsilon)^{t+1} \sqrt{\Upsilon}  \cal P^{2n-2-t} \prec \sum_{r=2}^{2n} \cal E_1^{r} \cal P^{2n-r}\,,
\end{equation*}
where in the last step we used
\[
\frac{1}{N^{2\beta}} ((\Psi+\sqrt{|\kappa|+\eta})\Upsilon)^{t+1} \sqrt{\Upsilon} \prec \cal E_1^t \cdot \frac{1}{N^{2\beta}} ((\Psi+\sqrt{\kappa+\eta})\Upsilon) \sqrt{\Upsilon} \prec  \cal E_1^t\cdot \frac{\Psi+\sqrt{|\kappa|+\eta}}{N\eta} \cdot \frac{\Psi+\sqrt{|\kappa|+\eta}}{\sqrt{N\eta}N^{2\beta}} \prec \cal E_1^{2+t}\,.
\]
Now consider $X_{3,2}^{(4)}$. Similarly to \eqref{7.29}, we have
\begin{multline*}
X^{(4)}_{3,2}=-\frac{2n-2}{N^2}\sum_{i,j} \cal C_4(H_{ij}) \bb E[ P^{*n}P^{n-2}P'(G^2)_{ii}G_{ii}G^2_{jj}+P^{*n}P^{n-2}P'\ul{G^2}G_{ii}G_{jj}\\+P^{*n}P^{n-2}(\ul{G}
+\ul{G}^2)G_{ii}G_{jj}]
-\frac{2n}{N^2}\sum_{i,j} \cal C_4(H_{ij}) \bb E[ |P|^{2n-2}\overline{P}'(G^{*2})_{ii}G_{ii}|G_{jj}|^{2}\\
+|P|^{2n-2}\overline{P}'\ul{G^{*2}}G_{ii}G_{jj}
+|P|^{2n-2}(\ul{G^*}+\ul{G^*}^{2})G_{ii}G_{jj}]
+O_{\prec}(\cal E_1^2\cal P^{2n-2})\eqd X_{3,2,1}^{(4)}+O_{\prec}(\cal E_1^2\cal P^{2n-2})\,.
\end{multline*}
Thus
\begin{equation} \label{9.12}
X^{(4)}_3=X^{(4)}_{3,2,1}+\sum_{r=2}^{2n} O_{\prec}(\cal E_1^{r} \cal P^{2n-r})\,.
\end{equation}
A similar strategy works for all odd $k \geq 5$. Note that \eqref{9.12} implies the bound
\[
X_{3}^{(4)} \prec \frac{(\Psi+\sqrt{\eta+|\kappa|})^2}{N\eta q^2} \cal P^{2n-2}+\sum_{r=2}^{2n} \cal E_1^{r} \cal P^{2n-r}\,.
\] 
By Lemma \ref{Tlemh}, we see that, compared to $X^{(4)}_3$, there will be additional factors of $N^{-(k-2)\beta}$ in $X^{(4)}_k$ for all $k\geq 4$. Thus we can shown that
\[
\sum_{s=2}^{\ceil{\ell/2}}X^{(4)}_{2s+1} \prec \frac{1}{N^{2\beta}}\bigg(\frac{(\Psi+\sqrt{\eta+|\kappa|})^2}{N\eta q^2} \cal P^{2n-2}+\sum_{r=2}^{2n} \cal E_1^{r} \cal P^{2n-r}\bigg) \prec \sum_{r=2}^{2n} \cal E_1^{r} \cal P^{2n-r}\,.
\]
Using the above relation, together with \eqref{9.8}--\eqref{9.12}, we get
\begin{equation} \label{9.11}
\mathrm{(XI)}=X^{(4)}_{3,2,1}+\sum_{r=2}^{2n} O_{\prec}(\cal E_1^{r} \cal P^{2n-r})\,.
\end{equation}

The computation of (IX) is similar, and we can show that
\begin{multline*}
\mathrm{(IX)}=\frac{2n-2}{N^2}\sum_{i,j} \cal C_4(H_{ij}) \bb E[ P^{*n}P^{n-2}P'(G^2)_{ii}G_{jj}\ul{G}^2+P^{*n}P^{n-2}P'\ul{G^2}\ul{G}^2+P^{*n}P^{n-2}(\ul{G}+\ul{G}^2)\ul{G}^2]\\
+\frac{2n}{N^2}\sum_{i,j} \cal C_4(H_{ij}) \bb E[ |P|^{2n-2}\overline{P}'(G^{*2})_{ii}G^*_{jj} \ul{G}^2
+|P|^{2n-2}\overline{P}'\ul{G^{*2}}\,\ul{G}^2
+|P|^{2n-2}(\ul{G^*}+\ul{G^*}^{2})G_{ii}G_{jj}]
+O_{\prec}(\cal E_1^2\cal P^{2n-2})\,.
\end{multline*}
By Proposition \ref{refthm1}, we have
\[
\ul{G}^2-G_{ii}G_{jj} \prec \frac{1}{q}+\frac{1}{\sqrt{N\eta}}\,.
\]
Thus, there is a cancellation between the leading order terms of (IX) and (XI), which implies
\[
\mathrm{(IX)+(XI)} \prec \sum_{r=2}^{2n} \Upsilon^{r} \cal P^{2n-r}
\]
as desired. This concludes the proof of Lemma \ref{lem9.3}, and also that of Lemma \ref{theorem 2.1}.

\section{Proof of the improved estimates for abstract polynomials} \label{sec inf}
In this section we repeatedly use the following identity.
\begin{lemma} \label{lem:replace}
	We have
	\[
	G_{ij}=\delta_{ij}\ul{G}+G_{ij}\ul{HG}-(HG)_{ij}\ul{G}\,.
	\]
\end{lemma}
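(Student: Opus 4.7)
The plan is to derive the identity directly from the defining resolvent relation for $G$. Since $G = (H-z)^{-1}$, we have $(H-z)G = I$, i.e.\ $HG = zG + I$. Taking the $(i,j)$ entry gives $(HG)_{ij} = z G_{ij} + \delta_{ij}$, and taking the normalized trace gives $\ul{HG} = z\ul{G} + 1$.

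With these two formulas in hand, the identity is a one-line algebraic verification. I would compute
\begin{equation*}
G_{ij}\ul{HG} - (HG)_{ij}\ul{G} = G_{ij}(z\ul{G}+1) - (zG_{ij}+\delta_{ij})\ul{G} = G_{ij} - \delta_{ij}\ul{G},
\end{equation*}
where the two terms proportional to $zG_{ij}\ul{G}$ cancel. Rearranging yields $G_{ij} = \delta_{ij}\ul{G} + G_{ij}\ul{HG} - (HG)_{ij}\ul{G}$, which is the claim.

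There is no real obstacle here; the lemma is just a convenient algebraic rewriting of the trivial identity $HG = zG+I$ in a form that isolates a factor of $\ul{G}$ multiplying either $\delta_{ij}$ or $(HG)_{ij}$. The utility of this particular form (rather than $zG_{ij} = (HG)_{ij}-\delta_{ij}$) is that it expresses the off-diagonal or diagonal structure of $G_{ij}$ without any explicit factor of $z$, which is why it is well suited to be inserted inside cumulant expansions where one wants to replace individual Green's function entries by $\ul{G}$ up to terms containing $H$ that can then be expanded further (as done in Lemmas~\ref{lem:nte}, \ref{lem:ntt}, \ref{lem7.10}, and \ref{lem8.4}).
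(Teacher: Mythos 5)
Your proof is correct and is essentially identical to the paper's one-line argument: both derive the claim by combining the entrywise and trace consequences of $(H-z)G=I$ (namely $(HG)_{ij}=zG_{ij}+\delta_{ij}$ and $\ul{HG}=z\ul{G}+1$) and observing the cancellation of the $zG_{ij}\ul{G}$ terms. Your added remarks on why this particular rewriting is useful are accurate but go beyond what the paper states; the core verification matches.
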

\begin{proof}
	The resolvent identity $(H-z)G=I$ shows
	\[
	zG_{ij}\ul{G}=G_{ij}\ul{HG}-G_{ij}=(HG)_{ij}\ul{G}-\delta_{ij}\ul{G}\,,
	\]
	and from which the proof follows.
\end{proof}

Let $f(G)$ be a function of the entries of $G$. We compute $\bb E f(G)G_{ij}$ through
\[
\bb E f(G)G_{ij}=\bb E f(G)\delta_{ij}\ul{G}+\bb E f(G)G_{ij}\ul{HG}-\bb E f(G)(HG)_{ij}\ul{G}\,,
\]
and we shall see that the last two terms above cancel each other up to leading order, by Lemma \ref{lem:cumulant_expansion}. As a result, we can replace $\bb E f(G)G_{ij}$ by a slightly nicer quantity $\bb E f(G)\delta_{ij}\ul{G}$. This is the idea that we use throughout this section.

In each of the following subsections, the assumptions on $z$ are given by the assumptions of the corresponding lemma being proved.

\subsection{Proof of Lemma \ref{lem4.2}} \label{sec10.1}
	As discussed in Remark \ref{remark 4.3}, it suffices to look at the case $\nu_2=1$.
	
	Without loss of generality, let $T_{i_1,\dots,i_{\nu_1}}=a_{i_1,\dots,i_{\nu_1}}N^{-\theta}G_{i_1i_2}G_{x_2x_2}\cdots G_{x_\sigma x_\sigma}$, where $x_2,\dots,x_\sigma \in \{i_1,\dots,i_{\nu_1}\}$, and $a_{i_1,\dots,i_{\nu_1}}$ is uniformly bounded. Using Lemma \ref{lem:replace} for $i=i_1$ and $j=i_2$, we have	
	\begin{multline} \label{4.2}
	\bb E \,\cal S (T)=\sum_{i_2,\dots,i_{\nu_1}} a_{i_2,\dots,i_{\nu_1}}N^{-\theta}\bb E \ul{G}G_{x_2x_2}\cdots G_{x_\sigma x_\sigma}\\
   +\sum_{i_1,\dots,i_{\nu_1},x,y} a_{i_1,\dots,i_{\nu_1}}N^{-\theta-1}\bb E  H_{xy}G_{yx}G_{i_1i_2}G_{x_2x_2}\cdots G_{x_\sigma x_\sigma}\\
   -\sum_{i_1,\dots,i_{\nu_1},x} a_{i_1,\dots,i_{\nu_1}}N^{-\theta}\bb E  H_{i_1x}G_{xi_2}\ul{G}G_{x_2x_2}\cdots G_{x_\sigma x_\sigma}\,.
	\end{multline}
	By Lemma \ref{lem:cumulant_expansion} and estimating the remainder term for large enough $\ell$, the second last term in \eqref{4.2} becomes
	\begin{multline*}
	\sum_{k=1}^\ell \sum_{i_1,\dots,i_{\nu_1},x,y} a_{i_1,\dots,i_{\nu_1}}N^{-\theta-1}\frac{1}{k!}\cal C_{k+1}(H_{xy})\bb E \frac{\partial^k G_{yx}G_{i_1i_2}G_{x_2x_2}\cdots G_{x_\sigma x_\sigma}}{\partial H_{xy}^k}+O_{\prec}(N^{\nu_1(T)-\theta(T)-1})\\
	\eqd \sum_{k=1}^\ell X^{(5)}_k+O_{\prec}(N^{\nu_1(T)-\theta(T)-1})\,.
	\end{multline*}
	Similarly, the last term in \eqref{4.2} becomes
	\begin{multline*}
   -\sum_{k=1}^\ell \sum_{i_1,\dots,i_{\nu_1},x} a_{i_1,\dots,i_{\nu_1}}N^{-\theta}\frac{1}{k!}\cal C_{k+1}(H_{i_1x})\bb E \frac{\partial^k G_{xi_2}\ul{G}G_{x_2x_2}\cdots G_{x_\sigma x_\sigma}}{\partial H_{i_1x}^k}+O_{\prec}(N^{\nu_1(T)-\theta(T)-1})\\
  \eqd \sum_{k=1}^\ell X^{(6)}_k+O_{\prec}(N^{\nu_1(T)-\theta(T)-1})\,.
  \end{multline*}
  Let us estimate each $X_k^{(5)}$ and $X^{(6)}_k$. 
	
	For $k=1$, by $\cal C_2(H_{ij})=N^{-1}(1+O(\delta_{ij}))$ and Lemma \ref{Ward} we have
	\[
	X^{(5)}_1=-\sum_{i_1,\dots,i_{\nu_1},x,y} a_{i_1,\dots,i_{\nu_1}}N^{-\theta-2}\bb E G_{xx}G_{yy}G_{i_1i_2}G_{x_2x_2}\cdots G_{x_\sigma x_\sigma }+O_{\prec}(N^{\nu_1(T)-\theta(T)}(\bb E \Gamma+N^{-1}))
	\]
	and
	\[
	X^{(6)}_1=\sum_{i_1,\dots,i_{\nu_1},x} a_{i_1,\dots,i_{\nu_1}}N^{-\theta-1}\bb E G_{xx}\ul{G}G_{i_1i_2}G_{x_2x_2}\cdots G_{x_\sigma x_\sigma}+O_{\prec}(N^{\nu_1(T)-\theta(T)}(\bb E  \Gamma+N^{-1}))\,.
	\]
  Notice the cancellation between the above two equations. This gives
  \begin{equation} \label{A1}
  X^{(5)}_1+X^{(6)}_1=O_{\prec}(N^{\nu_1(T)-\theta(T)}(\bb E  \Gamma+N^{-1}))\,.
   \end{equation}
	
	For $k=2$, the most dangerous type of term in $X_2^{(6)}$ contains only one off-diagonal entry of $G$,  e.g.
	\begin{equation} \label{slightly}
	-\sum_{i_1,\dots,i_{\nu_1},x} a_{i_1,\dots,i_{\nu_1}}N^{-\theta}\cal C_{3}(H_{i_1x})\bb E G_{xi_2} G_{i_1i_1}G_{xx}G_{x_2x_2}\cdots G_{x_\sigma x_\sigma}\eqd X^{(6)}_{2,1}\,.
	\end{equation}
	Note that \eqref{slightly} can be written as $\bb E\cal S(T')$, where $T' \in \cal T$ and $\nu_1(T') = \nu_1(T) + 1$, $\theta(T') = \theta(t) + 1 + \beta$, and $\sigma(T')=\sigma(T)+1$. When a term in $X_2^{(6)}$ contains at least two off-diagonal entries of $G$, one can use Lemma \ref{Ward} to show that it is bounded by $O_{\prec}(N^{\theta-\nu_1}\bb E \Gamma)$. A similar argument works for all $X_k^{(5)}$ and $X^{(6)}_k$ when $k \geq 2$.
	
	To sum up, we have
\begin{equation} \label{baobaobao}
   \bb E \,\cal S(T)=\sum_{l=1}^m \bb E \,\cal S(T^{(l)})+O_{\prec}(N^{\nu_1(T)-\theta(T)}(\bb E\Gamma+N^{-1}))
\end{equation}
	for some fixed integer $m$. Each $T^{(l)}$ satisfies $\nu_1(T^{(l)})=\nu_1(T)+1$,  $\sigma(T^{(l)}) \geq \sigma(T)+1$, $\theta(T^{(l)})=\theta(T)+1+\beta (\sigma(T^{(l)})-\sigma(T))$ and $\nu_2(T^{(l)})=1$, which implies
	\[
    \bb E\, \cal S(T^{(l)})\prec N^{\nu_1(T)-\theta(T)-\beta (\sigma(T^{(l)})-\sigma(T))}\prec N^{\nu_1(T)-\theta(T)-\beta}\,.
	\]
Note that we can repeat \eqref{baobaobao} for each $\bb E\, \cal S(T^{(l)})$, and get
\[
\bb E \,\cal S(T^{(l)})=\sum_{l'=1}^{m'} \bb E \,\cal S(T^{(l,l')})+O_{\prec}(N^{\nu_1(T)-\theta(T)}(\bb E\Gamma+N^{-1}))\,,
\]	
and each $\bb E \,\cal S(T^{(l,l')})$ satisfies
$
\bb E\, \cal S(T^{(l,l)})\prec  N^{\nu_1(T)-\theta(T)-2\beta}.
$
Repeating the step \eqref{baobaobao} $\ceil{\beta^{-1}}$ times concludes the proof of Lemma \ref{lem4.2}.
	
	\subsection{Proof of Lemma \ref{lem4.22}} \label{sec10.2}
	Let $T_{i_1,\dots,i_{\nu_1}}=a_{i_1,\dots,i_{\nu_1}}N^{-\theta}G_{x_1x_1}G_{x_2x_2}\cdots G_{x_\sigma x_\sigma}$, where $x_1,\dots,x_\sigma \in \{i_1,\dots,i_{\nu_1}\}$, and $a_{i_1,\dots,i_{\nu_1}}$ is uniformly bounded. Using Lemma \ref{lem:replace} we have
	\begin{multline*} 
	\bb E \,\cal S (T)=\sum_{i_1,\dots,i_{\nu_1}} a_{i_2,\dots,i_{\nu_1}}N^{-\theta}\bb E \ul{G}G_{x_2x_2}\cdots G_{x_\sigma x_\sigma}
	-\sum_{i_1,\dots,i_{\nu_1},x} a_{i_1,\dots,i_{\nu_1}}N^{-\theta}\bb E  H_{x_1x}G_{xx_1}\ul{G}G_{x_2x_2}\cdots G_{x_\sigma x_\sigma}\\
	+\sum_{i_1,\dots,i_{\nu_1},x,y} a_{i_1,\dots,i_{\nu_1}}N^{-\theta-1}\bb E  H_{xy}G_{yx}G_{x_1x_1}G_{x_2x_2}\cdots G_{x_\sigma x_\sigma}\,.
	\end{multline*}
	Now let us expand the last two terms by Lemma \ref{lem:cumulant_expansion}. As in Section \ref{sec10.1}, we shall  see a cancellation among the leading terms, which gives
	\begin{multline} \label{104}
	\bb E \,\cal S (T)=\sum_{i_1,\dots,i_{\nu_1}} a_{i_2,\dots,i_{\nu_1}}N^{-\theta}\bb E \ul{G}G_{x_2x_2}\cdots G_{x_\sigma x_\sigma}\\
	-\sum_{k=2}^{\ell}\sum_{i_1,\dots,i_{\nu_1},x} a_{i_1,\dots,i_{\nu_1}}N^{-\theta}\frac{1}{k!}\cal C_{k+1}(H_{x_1x})\bb E \frac{\partial^k G_{xx_1}\ul{G}G_{x_2x_2}\cdots G_{x_\sigma x_\sigma}}{\partial H_{x_1x}^k}\\
 +\sum_{k=2}^\ell \sum_{i_1,\dots,i_{\nu_1},x,y} a_{i_1,\dots,i_{\nu_1}}N^{-\theta-1}\frac{1}{k!}\cal C_{k+1}(H_{xy})\bb E \frac{\partial^k G_{yx}G_{x_1x_1}G_{x_2x_2}\cdots G_{x_\sigma x_\sigma}}{\partial H_{xy}^k}\\
+O_{\prec}\big(N^{\nu_1(T)-\theta(T)}(\bb E \Gamma+N^{-1})\big)\,.
	\end{multline}
 For the terms on right-hand side of \eqref{104} that are not in $\cal T_0$, we can use Lemma \ref{lem4.2} and show that they are bounded by $O_{\prec}(N^{\nu_1(T)-\theta(T)}(\bb E\Gamma+N^{-1}))$. As a result, we find
	\begin{equation} \label{axiba}
	\begin{aligned}
	\bb E \,\cal S(T)=&\sum_{i_1,\dots,i_{\nu_1}} a_{i_2,\dots,i_{\nu_1}}N^{-\theta}\bb E \ul{G}G_{x_2x_2}\cdots G_{x_kx_k}\\
	&+\sum_{l=1}^m \bb E \,\cal S(T^{(l)})+O_{\prec}(N^{\nu_1(T)-\theta(T)}(\bb E\Gamma+N^{-1}))
	\end{aligned}
	\end{equation}
    for some fixed integer $m$. Each $T^{(l)}$ satisfies $T^{(l)} \in \cal T_0$, $\nu_1(T^{(l)})=\nu_1(T)+1$,  $\sigma(T^{(l)})- \sigma(T)\in 2\bb N+4$, and $\theta(T^{(l)})=\theta(T)+1+\beta (\sigma(T^{(l)})-\sigma(T)-2)$. We can then repeat \eqref{axiba} on the term
    \[
    \sum_{i_1,\dots,i_{\nu_1}} a_{i_2,\dots,i_{\nu_1}}N^{-\theta}\bb E \ul{G}G_{x_2x_2}\cdots G_{x_\sigma x_\sigma}\,.
    \]
    After $k-1$ times of repetition we get the desired result. This concludes the proof of Lemma \ref{lem4.22}.
    
    \subsection{Proof of Lemma \ref{lem7.4}} \label{sec10.3}
   (i) Let $V$ be of the form \eqref{def_V}. By Lemma \ref{Ward}, we see that the result is trivially true for $\nu_2\geq 2$, and hence we assume $\nu_2=1$. Define
    \[
    \cal E_2(V)\deq N^{\nu_1-\theta}(\Psi+\sqrt{\kappa+\eta})^{\nu_4}  (N\eta)^{-\nu_5}\Upsilon\bb E |P^{\nu_3}|+ \sum_{t=1}^{\nu_3}  N^{\nu_1-\theta}(\Psi+\sqrt{\kappa+\eta})^{\nu_4} \Upsilon^{\nu_5}((\Psi+\sqrt{\kappa+\eta}) \Upsilon)^{t}\bb E |P^{\nu_3-t}|\,.
    \]
    By the definition of $\nu_2$, we consider two cases.
   
 \paragraph{Case 1} The contribution of $\nu_2$ comes from $G_{x_1y_1}G_{x_2y_2}\cdots G_{x_ky_k}$. Without loss of generality, we assume $x_1 \neq y_1$, and $x_1=i_1,y_1=i_2$. Furthermore, we denote $\widehat{V}_{i_1,\dots,i_{\nu_1}}=V_{i_1,\dots,i_{\nu_1}}/G_{i_1i_2}$. From Lemma \ref{lem:replace} we know that
   \begin{equation} \label{10.4}
   \bb E \cal S(V)=\sum_{i_1,\dots,i_{\nu_1}}\bb E \widehat{V}_{i_1,\dots,i_{\nu_1}}\delta_{i_1i_2} \ul{G}+\sum_{i_1,\dots,i_{\nu_1}}\bb E \widehat{V}_{i_2,\dots,i_{\nu_1}} G_{i_1i_2}\ul{HG}-\sum_{i_1,\dots,i_{\nu_1}}\bb E \widehat{V}_{i_2,\dots,i_{\nu_1}} (HG)_{i_1i_2}\ul{G}\,.
   \end{equation}
By Lemma \ref{lem:cumulant_expansion} and estimating the remainder term for large enough $\ell$, the second last term in \eqref{10.4} becomes
\begin{equation} \label{10.5-}
\sum_{k=1}^\ell \sum_{i_1,\dots,i_{\nu_1},x,y} N^{-1}\frac{1}{k!}\cal C_{k+1}(H_{xy})\bb E \frac{\partial^k \widehat{V}_{i_1,\dots,i_{\nu_1}}G_{i_1i_2}G_{yx}}{\partial H_{xy}^k}+O_{\prec}(N^{\nu_1(V)-\theta(V)-2}\bb E |P^{\nu_3}|)\,,
\end{equation}
and we denote the first sum by $\sum_{k=1}^\ell X^{(7)}_k$. Similarly, the last term in \eqref{10.4} becomes
\begin{equation} \label{10.5}
-\sum_{k=1}^\ell \sum_{i_1,\dots,i_{\nu_1},x} \frac{1}{k!}\cal C_{k+1}(H_{i_1x})\bb E \frac{\partial^k \widehat{V}_{i_1,\dots,i_{\nu_1}}G_{xi_2}\ul{G}}{\partial H_{i_1x}^k}+O_{\prec}(N^{\nu_1(V)-\theta(V)-2}\bb E |P^{\nu_3}|)\,,
\end{equation}
 and we denote the first sum by $\sum_{k=1}^\ell X^{(8)}_k$. Similarly to Section \ref{sec10.1}, we see that when expanded by Lemma \ref{lem:cumulant_expansion}, the leading terms of $X_1^{(7)}$ and $X_1^{(8)}$ cancel, and together with Lemma \ref{lemP} we can show that
 \[
 X_1^{(7)}+X_1^{(8)} \prec \cal E_2(V)\,.
 \]
 For $k=2$, the most dangerous type of term in $X_2^{(8)}$ contains $\nu_3$ factors of $P$, and only one off-diagonal entry of $G$ or $G^2$, e.g.
 \[
 -\sum_{i_1,\dots,i_{\nu_1},x}\cal C_{3}(H_{i_1x})\bb E \widehat{V}_{i_1,\dots,i_{\nu_1}}{G_{xx}}G_{i_1i_1}G_{xi_2}\ul{G}\eqd X^{(8)}_{2,1}\,.
 \]
 Note that this term can be written as $\bb E\cal S(V')$, where $V' \in \cal V$, $\nu_1(V') = \nu_1(V) + 1$, $\theta(V') = \theta(V) + 1 + \beta$, $\sigma(V')=\sigma(V)+1$, and $\nu_i(V')=\nu_i(V)$ for $i=2,3,4,5$. When a term in $X_2^{(8)}$ contains at least two factors of off-diagonal entries of $G$ or $G^2$, or the differential $\partial^2/\partial H^2_{i_1x}$ hits $P^{\nu_3}$, one can easily use Lemma \ref{lemP} to show that it is bounded by $O_{\prec}(N^{\nu_1(V)-\theta(V)-2}\bb E |P^{\nu_3}|)$. A similar argument works for all $X_k^{(5)}$ and $X^{(6)}_k$ when $k \geq 2$.
 
 To sum up, we have
 \begin{equation} \label{faye}
 \bb E \,\cal S(V)=\sum_{l=1}^m \bb E \,\cal S(V^{(l)})+O_{\prec}(\cal E_2(V))
 \end{equation}
 for some fixed integer $m$. Each $V^{(l)}$ satisfies $\nu_1(V^{(l)})=\nu_1(V)+1$,  $\sigma(V^{(l)}) \geq \sigma(V)+1$, $\theta(V^{(l)})=\theta(V)+1+\beta (\sigma(V^{(l)})-\sigma(V))$, and $\nu_i(V^{(l)})=\nu_i(V)$ for $i=2,3,4,5$. Thus Lemma \ref{lemP} implies
 \[
 \bb E\, \cal S(V^{(l)})\prec  N^{\nu_1-\theta}(\Psi+\sqrt{\kappa+\eta})^{\nu_4}  (N\eta)^{-\nu_5}\Upsilon^{1/2}\bb E |P^{\nu_3}|\cdot N^{-\beta}\,.
 \]
Note that we can repeat \eqref{faye} for each $\bb E \,\cal S(V^{(l)})$ on right-hand side of \eqref{faye}. Doing this  $\ceil{(2\beta)^{-1}}$ times concludes the proof.
 
 \paragraph{Case 2} The contribution to $\nu_2$ comes from $N^{-1}(G^2)_{xy}$. Without loss of generality, we assume $x=i_1,y=i_2$, and we denote $\widetilde{V}_{i_1,\dots,i_{\nu_1}}=V_{i_1,\dots,i_{\nu_1}}/(G^2)_{i_1i_2}$. Note that
 \begin{equation*}
 (G^2)_{ij}=G_{ij}\ul{G}+(G^2)_{ij}\ul{HG}-(HG^2)_{ij}\ul{G}\,,
 \end{equation*}
and hence
 \begin{multline}  \label{naike}
 \bb E \cal S(V)=\sum_{i_1,\dots,i_{\nu_1}}\bb E \widetilde{V}_{i_1,\dots,i_{\nu_1}}G_{i_1i_2} \ul{G}+\sum_{i_1,\dots,i_{\nu_1}}\bb E \widetilde{V}_{i_2,\dots,i_{\nu_1}} (G^2)_{i_1i_2}\ul{HG}-\sum_{i_1,\dots,i_{\nu_1}}\bb E \widetilde{V}_{i_2,\dots,i_{\nu_1}} (HG^2)_{i_1i_2}\ul{G}\\
=\sum_{i_1,\dots,i_{\nu_1}}\bb E \widetilde{V}_{i_2,\dots,i_{\nu_1}} (G^2)_{i_1i_2}\ul{HG}-\sum_{i_1,\dots,i_{\nu_1}}\bb E \widetilde{V}_{i_2,\dots,i_{\nu_1}} (HG^2)_{i_1i_2}\ul{G}+O_{\prec}(\cal E_2(V))\,.
 \end{multline}
We can then expand the first two terms on right-hand side of \eqref{naike} using Lemma \ref{lem:cumulant_expansion}. The first term on right-hand side of \eqref{naike} gives
 \begin{equation}
 \sum_{k=1}^\ell \sum_{i_1,\dots,i_{\nu_1},x,y} N^{-1}\frac{1}{k!}\cal C_{k+1}(H_{xy})\bb E \frac{\partial^k \widetilde{V}_{i_1,\dots,i_{\nu_1}}(G^2)_{i_1i_2}G_{yx}}{\partial H_{xy}^k}+O_{\prec}(N^{\nu_1(V)-\theta(V)-2}\bb E |P^{\nu_3}|)\,,
 \end{equation} 
 and we abbreviate the first sum above by $ \sum_{k=1}^{\ell}X_{k}^{(9)}$. The second term on right-hand side of \eqref{naike} gives
 \[
-\sum_{k=1}^\ell \sum_{i_1,\dots,i_{\nu_1},x} \frac{1}{k!}\cal C_{k+1}(H_{i_1x})\bb E \frac{\partial^k \widetilde{V}_{i_1,\dots,i_{\nu_1}}(G^2)_{xi_2}\ul{G}}{\partial H_{i_1x}^k}+O_{\prec}(N^{\nu_1(V)-\theta(V)-2}\bb E |P^{\nu_3}|)\,,
 \] 
 and we abbreviate the first sum above by $ \sum_{k=1}^{\ell}X_{k}^{(10)}$. By \eqref{diffP}, we see that
 \[
 X_1^{(9)}=  -\sum_{i_1,\dots,i_{\nu_1},x,y} N^{-2}\bb E  \widetilde{V}_{i_1,\dots,i_{\nu_1}}(G^2)_{i_1i_2}G_{xx}G_{yy}+O_{\prec}(\cal E_2(V))\,,
 \]
 and
 \[
 X_1^{(10)}=  \sum_{i_1,\dots,i_{\nu_1},x} N^{-1}\bb E \widetilde{V}_{i_1,\dots,i_{\nu_1}}\big((G^2)_{i_1i_2}G_{xx}+(G^2)_{xx}G_{i_1i_2}\big)\ul{G}+O_{\prec}(\cal E_2(V))\,.
 \]
 Thus there is a cancellation between $ X_1^{(9)}$ and $ X_1^{(10)}$, which shows
 \begin{equation} \label{aaa}
  \bb E \cal S(V)=\sum_{i_1,\dots,i_{\nu_1}} \bb E \widetilde{V}_{i_1,\dots,i_{\nu_1}}\ul{G^2}G_{i_1i_2}\ul{G}+\sum_{k=2}^{\ell}X_{k}^{(9)}+\sum_{k=2}^{\ell}X_{k}^{(10)}+O_{\prec}(\cal E_2(V))\,.
 \end{equation}
The first term on right-hand side of \eqref{aaa} is the leading term, and it no longer contains $(G^2)_{i_1i_2}$. The rest of the proof is analogues to Case 1. We omit the details.
 
 (ii) Let $V \in \cal V$ satisfy $\nu_2(V)\ne 0$ and $\nu_4(V)=\nu_5(V)=0$.
From the result in (i), we have the bound
 \[
 \bb E \cal S (V) \prec N^{\nu_1(V)-\theta(V)} \Upsilon\bb E |P^{\nu_3(V)}|+\sum_{t=1}^{\nu_3(V)}  N^{\nu_1(V)-\theta(V)}((\Psi+\sqrt{\kappa+\eta}) \Upsilon)^{t}\bb E |P^{\nu_3(V)-t}|\,,
 \]
so that we only need to improve the bound for the term $t=1$. Once again it suffices to assume $\nu_2(V)=1$, and $x_1=i_1$, $y_1=i_2$. We denote $\widehat{V}_{i_1,\dots,i_{\nu_1}}=V_{i_1,\dots,i_{\nu_1}}/G_{i_1i_2}$. As in \eqref{10.4}--\eqref{10.5}, we have
\[
\bb E \cal S(V)=\sum_{i_1,\dots,i_{\nu_1}}\bb E \widehat{V}_{i_1,\dots,i_{\nu_1}}\delta_{i_1i_2} \ul{G}+\sum_{k=1}^\ell X^{(7)}_k+\sum_{k=1}^\ell X^{(8)}_k+O_{\prec}(N^{\nu_1(V)-\theta(V)-2}\bb E |P^{\nu_3}|)\,.
\]
Let us pick a term $\cal X$ in $\sum_{k=1}^\ell X^{(7)}_k+\sum_{k=1}^\ell X^{(8)}_k$, which, we recall, are given by the sums in \eqref{10.5-} and \eqref{10.5}. When $\nu_3(\cal X) \ne \nu_3(V)-1$, we handle this term as in the proof of (i). When $\nu_3(\cal X)=\nu_3(V)-1$, then from \eqref{diffP}, we must have $\nu_4(\cal X)=\nu_5(\cal X)=1$. Thus from (i), we have
\begin{multline*}
\bb E \cal S(\cal X) \prec  \cal E_2(\cal X)\\
\prec N^{\nu_1-\theta}(\Psi+\sqrt{\kappa+\eta}) (N\eta)^{-1}\Upsilon\bb E |P^{\nu_3-1}|+ \sum_{t=1}^{\nu_3-1}  N^{\nu_1-\theta}(\Psi+\sqrt{\kappa+\eta}) \Upsilon((\Psi+\sqrt{\kappa+\eta}) \Upsilon)^{t}\bb E |P^{\nu_3-1-t}|\\
\prec N^{\nu_1-\theta}\Upsilon^{2}\bb E |P^{\nu_3-1}|+\sum_{t=2}^{\nu_3}  N^{\nu_1-\theta}((\Psi+\sqrt{\kappa+\eta}) \Upsilon)^{t}\bb E |P^{\nu_3-t}|
\end{multline*}
as desired. This concludes the proof of Lemma \ref{lem7.4}.

    \subsection{Proof of Lemma \ref{lem7.5}} \label{sec10.4}
    Let $V=a_{i_1,\dots,i_{\nu_1}}N^{-\theta}(P'N^{-1}(G^2)_{xx})^{\nu_4}G_{x_1x_1}G_{x_2x_2}\cdots G_{x_{k}x_{k}}\ul{G}^sP^{\nu_3}$. We abbreviate
    $\widehat{V}_{i_1,\dots,i_{\nu_1}}\deq V_{i_1,\dots,i_{\nu_1}}/G_{x_1x_1}$, and denote
    \[
    \cal E_3(V)\deq  N^{\nu_1-\theta}\Upsilon^{1+\nu_4}\bb E |P^{\nu_3}|+\sum_{t=1}^{\nu_3}  N^{\nu_1-\theta}((\Psi+\sqrt{\kappa+\eta}) \Upsilon)^{\nu_4+t}\bb E |P^{\nu_3-t}|\,.
    \] Using Lemma \ref{lem:replace} we have
    \begin{equation*} 
    \bb E \cal S(V)=\sum_{i_1,\dots,i_{\nu_1}}\bb E \widehat{V}_{i_1,\dots,i_{\nu_1}} \ul{G}+\sum_{i_1,\dots,i_{\nu_1}}\bb E \widehat{V}_{i_2,\dots,i_{\nu_1}} G_{x_1x_1}\ul{HG}-\sum_{i_1,\dots,i_{\nu_1}}\bb E \widehat{V}_{i_2,\dots,i_{\nu_1}} (HG)_{x_1x_1}\ul{G}\,.
    \end{equation*}
Now let us expand the last two terms by Lemma \ref{lem:cumulant_expansion}. As in Section \ref{sec10.1}, we shall  see a cancellation among the leading terms. For other terms that are not in $\cal T_0$, we can use Lemma \ref{lem4.2} and show that they are bounded by $O_{\prec}(\cal E_3(V))$. As a result, we can show that
   \begin{equation} \label{ccc}
    \bb E \,\cal S(V)=\sum_{i_1,\dots,i_{\nu_1}}\bb E \widehat{V}_{i_1,\dots,i_{\nu_1}} \ul{G}+\sum_{l=1}^m \bb E \,\cal S(V^{(l)})+O_{\prec}(\cal E_3(V))
      \end{equation}
    for some fixed integer $m$. Each $V^{(l)}$ satisfies $V^{(l)} \in \cal V_0$, $\nu_1(V^{(l)})=\nu_1(V)+1$,  $\sigma(V^{(l)})- \sigma(V)\in 2\bb N+4$, $\theta(V^{(l)})=\theta(V)+1+\beta (\sigma(V^{(l)})-\sigma(V)-2)$, and $\nu_i(V^{(l)})=\nu_i(V)$ for $i=2,3,4,5$. One can then repeat \eqref{ccc} process on the term
    \[
   \sum_{i_1,\dots,i_{\nu_1}}\bb E \widehat{V}_{i_1,\dots,i_{\nu_1}} \ul{G}\,.
    \]
    After $k$ times of repetition we conclude the proof of Lemma \ref{lem7.5}.

 \subsection{Proof of Lemma \ref{lem:ntt}} \label{sec10.5}
The proof follows by repeatedly using the following result.

\begin{lemma}  \label{lem10.2}
Fix $r,u,v\in \bb N$. For any fixed $T \in \cal T_0$ there exists $T^{(1)},\dots,T^{(k)} \in \cal T_0$, such that
	\begin{multline} \label{10.6}
	\bb E [\partial_{w}(\cal S(T))\ul{G}^u P^v] =\bb E [\partial_{w}(\cal M(T))\ul{G}^u P^v]+\sum_{l=1}^k\bb E\, [\partial_{w}\cal S(T^{(l)})\ul{G}^uP^{v}]\\
	+O_{\prec}\big(N^{\nu_1(T)-\theta(T)+1}\Upsilon((N\eta)^{-1}+N^{-\beta(r+1)})\bb E |P|^v\big)\\
	+\sum_{t=1}^v O_{\prec}\big(N^{\nu_1(T)-\theta(T)+1}\Upsilon((\Psi+\sqrt{\kappa+\eta})\Upsilon)^t\bb E |P|^{v-t}\big)\,,
	\end{multline}
	where $k$ is fixed. Each $T^{(l)}$ satisfies $\sigma(T^{(l)})-\sigma(T) \in 2\bb N+4$,
	\[
	\nu_1(T^{(l)})=\nu_1(T)+1\,, 
	\quad  \mbox{and} \quad  \theta(T^{(l)})=\theta(T)+1+\beta (\sigma(T^{(l)})-\sigma(T)-2)\,.
	\]
\end{lemma}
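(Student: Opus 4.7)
The strategy is to repeat the proof of Lemma \ref{lem4.22} from Section \ref{sec10.2} essentially verbatim, with two modifications: (i) every computation is performed inside an expectation that now includes the spectator factor $\ul{G}^u P^v$, and (ii) the whole expression carries an overall derivative $\partial_w$ in front of $\cal S(T)$. The crucial point is that $\partial_w$ commutes with the expectation $\bb E$ (since $w$ is deterministic) and with every partial derivative $\partial/\partial H_{ij}$ arising in the cumulant expansion; so the algebraic manipulations are identical and only the error bookkeeping changes.

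Concretely, write $T_{i_1,\dots,i_{\nu_1}} = a_{i_1,\dots,i_{\nu_1}} N^{-\theta} G_{x_1 x_1} G_{x_2 x_2}\cdots G_{x_\sigma x_\sigma}$ with all entries diagonal. As in \eqref{104}, apply Lemma \ref{lem:replace} to $G_{x_1 x_1}$ to obtain three pieces: one $\ul{G}$-piece which will give the $\cal M(T)$ contribution after iteration, and two pieces containing a factor $H_{x_1 x}$ or $H_{xy}$. For each of the latter, apply Lemma \ref{lem:cumulant_expansion} with $h$ equal to the relevant entry of $H$, treating the entire product $\partial_w\p{\cdots} \cdot \ul G^u P^v$ as the function $f(H)$. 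Since $\partial_w$ commutes with each $\partial/\partial H_{ij}$, one may freely move $\partial_w$ past these derivatives, so the leading-order cancellations among the second- and third-piece contributions happen exactly as in \eqref{104}--\eqref{axiba}, producing $\bb E[\partial_w\cal M(T)\,\ul G^u P^v]$ and a finite list of terms $\bb E[\partial_w \cal S(T^{(l)})\,\ul G^u P^v]$ with $T^{(l)}\in\cal T_0$ satisfying the stated index relations.

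The new error contributions, on top of those already present in the proof of Lemma \ref{lem4.22}, come from two sources. First, whenever the cumulant derivative $\partial^k/\partial H_{ij}^k$ inside the expansion acts on the spectator factor $\ul G^u P^v$ rather than on the $T$-factors, each derivative on $\ul G$ produces a factor $\prec \Upsilon$ by Lemma \ref{lemP}, while each derivative on $P$ produces a factor $\prec (\Psi+\sqrt{\kappa+\eta})\Upsilon$ by \eqref{2}. Summing over how many times $P$ is differentiated (with the count of surviving $P$'s dropping by the same amount) yields exactly the error sum $\sum_{t=1}^v O_\prec\pb{N^{\nu_1-\theta+1}\Upsilon\,((\Psi+\sqrt{\kappa+\eta})\Upsilon)^t\,\bb E|P|^{v-t}}$. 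Second, the overall $\partial_w$ converts one $G_{ab}$ somewhere in the expression into $(G^2)_{ab}$; after the usual $\cal S$-summation and Ward's identity this costs a factor $N\Upsilon$ in absolute value relative to the corresponding bound in Lemma \ref{lem4.22}, which explains the single overall factor $\Upsilon$ (and the extra power of $N$) appearing in every error term of \eqref{10.6}. The $(N\eta)^{-1}$ contribution reflects the residual error $\bb E\Gamma \prec \Upsilon$ from the Lemma \ref{lem4.22}-style leftover after the $H$-cancellation, while the $N^{-\beta(r+1)}$ piece is produced by iterating the expansion $r$ times along the same lines as in the proof of Lemma \ref{lem:nte}.

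The final step is to iterate: Lemma \ref{lem10.2} gives one expansion, and one then applies it $r$ times to each of the newly-produced terms $\bb E[\partial_w\cal S(T^{(l)})\,\ul G^u P^v]$ (each $T^{(l)}$ satisfies $\sigma(T^{(l)})\geq \sigma(T)+4$, so naive power counting shows $\bb E\cal S(T^{(l)})\prec N^{\nu_1(T)-\theta(T)-2\beta}$ and the iteration terminates after $\ceil{\beta^{-1}}$ steps up to error $N^{-\beta(r+1)}$). This gives the full statement of Lemma \ref{lem:ntt}. The main technical obstacle is verifying, term by term, that the leading-order cancellation between the $\ul{HG}$ and $(HG)$ contributions in \eqref{104} is preserved when the spectator factor $\ul G^u P^v$ is present; this is precisely where commutativity of $\partial_w$ with $\partial/\partial H_{ij}$ is used, and where one must check that the action of $\partial/\partial H_{ij}$ on $\ul G^u P^v$ is subleading and contributes only to the error hierarchy listed above.
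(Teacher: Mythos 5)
Your proof follows the paper's (admittedly very brief) argument closely: apply Lemma \ref{lem:replace} to a single diagonal Green's-function entry of $T$, cumulant-expand the two resulting $H$-bearing pieces exactly as in Lemma \ref{lem4.22}, and observe that $\partial_w H_{ij}=0$ together with $[\partial_w,\partial/\partial H_{ij}]=0$ lets the leading-order cancellation and error bookkeeping go through unchanged in the presence of the spectator factor $\ul G^u P^v$, with the extra $\bb E|P^{v-t}|$ tail coming from derivatives that hit the spectator. The one slight expository wobble is your attribution of the $N^{-\beta(r+1)}$ term in \eqref{10.6} to "iterating $r$ times": Lemma \ref{lem10.2} is the single-step statement and the iteration producing that error lives in the deduction of Lemma \ref{lem:ntt}, so including $N^{-\beta(r+1)}$ here is simply a harmless enlargement of the one-step bound rather than the result of iteration; this does not affect the validity of the argument.
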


\begin{proof}[Proof of Lemma \ref{lem10.2}]
We abbreviate the error, i.e.\ the last two terms on right-hand side of \eqref{10.6}, by $\cal E_4\equiv \cal E_4(T,u,v)$.

Let $T_{i_1,\dots,i_{\nu_1}}=a_{i_1,\dots,i_{\nu_1}}N^{-\theta}G_{x_1x_1}G_{x_2x_2}\cdots G_{x_kx_k}$, where $x_1,\dots,x_k \in \{i_1,\dots,i_{\nu_1}\}$ and $a_{i_1,\dots,i_{\nu_1}}$ is uniformly bounded. We abbreviate $\widehat{T}_{i_1,\dots,i_{\nu_1}}\deq T_{i_1,\dots,i_{\nu_1}}/G_{x_1x_1}$. By Lemma \ref{lem:replace}, we have
\begin{multline} \label{10.16}
\bb E [\partial_{w}(\cal S(T))\ul{G}^u P^v]=\sum_{i_1,\dots,i_{\nu_1}}\bb E [\partial_{w}(\widehat{T}_{i_1,\dots,i_{\nu_1}}\ul{G})\ul{G}^u P^v]\\
+\sum_{i_1,\dots,i_{\nu_1}}\bb E [\partial_{w}(\widehat{T}_{i_1,\dots,i_{\nu_1}}G_{x_1x_1}\ul{HG})\ul{G}^u P^v]-\sum_{i_1,\dots,i_{\nu_1}}\bb E [\partial_{w}(\widehat{T}_{i_1,\dots,i_{\nu_1}}(HG)_{x_1x_1}\ul{G})\ul{G}^u P^v]\,.
\end{multline}
Now we expand the last two terms in \eqref{10.16} using Lemma \ref{lem:cumulant_expansion}. Note that we have
\[
\partial_{w}(H_{ij})=0 \quad \mbox{and} \quad \bigg[\partial_{w} ,\frac{\partial }{\partial H_{ij}}\bigg]=0\,.
\]
The rest of the proof is analogous to that of Lemma \ref{lem4.2}. We omit the details.
\end{proof}

\appendix 

\section{Proof of Lemma \ref{lem 3.9}} \label{appA} 

We prove the result for $i \in \{1,2,\dots,\ceil{N/2}\}$. The same analysis works for the other half of the spectrum. Let us denote
\[
R(x)\deq P(z,x)-(1+zx+(1+\cal Z)x^2)\,,
\]
and recall that $P_0(z,x)=P(z,x)-\cal Z x^2$, and $m_0$ satisfies $P_0(z,m_0(z))=0$. Recall the definition of $ \widetilde{\f S}$ from \eqref{ss}, and define $F: \widetilde{\f S}\times \bb C \to \bb C$ by
\[
F(z,\Delta)= \frac{\cal Z-\Delta}{1+\Delta}m_0(z)^2+R\bigg(\frac{m_0(z)}{\sqrt{1+\Delta}}\bigg)-R(m_0(z))\,.
\] 
Since $|m_0(z)|\asymp 1$ for $z \in \widetilde{\f S}$, and $R(x),R'(x)=O(1/q^2)$ uniformly for $|x|\leq 100$, it is easy to check that
\[
\frac{\partial F(z,\Delta)}{\partial \Delta} \bigg\vert_{F(z,\Delta)=0} \ne 0
\]
with very high probability. By implicit function theorem, we can define a map $\Delta:\widetilde{\f S}\to \bb C$ satisfying \begin{equation} \label{App1}
F(z,\Delta(z))=0\,.
\end{equation}
By \eqref{cal Z}, it is east to check that
\begin{equation} \label{A2}
\Delta=\cal Z+ O_{\prec}\Big(\frac{1}{\sqrt{N}q^3}\Big)
\end{equation}
uniformly for $z \in \widetilde{\f S}$. Set $M=m_0/\sqrt{1+\Delta}$. We have
\begin{equation*}
0=P_0(z,m_0)=1+zm_0+m_0^2+R(m_0)
=1+\sqrt{1+\Delta}zM+(1+\Delta)M^2+R\big(\sqrt{1+\Delta}M\big)\,,
\end{equation*}
and \eqref{App1} implies
\[
\big(\cal Z-\Delta\big)M^2+R(M)-R\big(\sqrt{1+\Delta}M\big)=0\,.
\]
Combining the above two relations gives
\[
P\big(\sqrt{1+\Delta} \, z,M\big)=1+\sqrt{1+\Delta}\, zM+(1+\cal Z)M^2+R(M)=0\,.
\]
Thus 
\begin{equation} \label{niu}
m\big(\sqrt{1+\Delta} \, z\big)=M=m_0/\sqrt{1+\Delta}\,.
\end{equation}
Let $\cal E_5\deq 1/(\sqrt{N}q^3)$. Then \eqref{A2} and \eqref{niu} imply
\[
m(z)=m_0\Big(\frac{z}{1+\cal Z/2}(1+O_{\prec}(\cal E_5)\Big)\Big(1+\frac{\cal Z}{2}+O_{\prec}(\cal E_5)\Big)^{-1}\,.
\]
Let us write $z=E+\ii \eta$. We have
\[
\varrho(E)=\frac{1}{\pi}\lim _{\eta \to 0_+} \im m(E+\ii \eta)= \varrho_0\Big(\frac{E}{1+\cal Z/2}(1+O_{\prec}(\cal E_5))\Big)\Big(1+\frac{\cal Z}{2}+O_{\prec}(\cal E_5)\Big)^{-1}+O_{\prec}(\cal E_5)\,.
\]
We set $\widetilde{L}\deq L_0(1+\cal Z/2)$, and note that 
\begin{equation} \label{lubinghua}
L-\widetilde{L}\prec \cal E_5\,.
\end{equation}
Since $\varrho_0$ has square root behaviour near the edge, we have
\[
\varrho(E)=\varrho_0\Big(\frac{E}{1+\cal Z/2}\Big)\Big(1+\frac{\cal Z}{2}\Big)^{-1}+O_{\prec}\bigg(\frac{\cal E_5}{|\widetilde{L}^2-E^2|^{1/2}}\bigg)\,.
\]
For any $i \in \{1,2,\dots,\ceil{N/2}\}$, we have
\[
\frac{i}{N}=\int_{-L}^{\gamma_i} \varrho(E)\, \dd E=\int_{-\widetilde{L}}^{\gamma_i} \varrho_0\Big(\frac{E}{1+\cal Z/2}\Big)\Big(1+\frac{\cal Z}{2}\Big)^{-1}\, \dd E+O_{\prec} \big(\cal E_5^{3/2}+\cal E_5|\widetilde{L}+\gamma_i|^{1/2}\big)\,,
\]
and
\[
\frac{i}{N}=\int_{-\widetilde{L}}^{\widetilde{\gamma}_{0,i}} \varrho_0\Big(\frac{E}{1+\cal Z/2}\Big)\Big(1+\frac{\cal Z}{2}\Big)^{-1}\, \dd E\,,
\]
where $\widetilde{\gamma}_{0,i}\deq \gamma_{0,i}(1+\cal Z/2)$. Thus
\begin{equation} \label{E6}
\cal E_6\deq \int^{\widetilde{\gamma}_{0,i}}_{\gamma_i} \varrho_0\Big(\frac{E}{1+\cal Z/2}\Big)\Big(1+\frac{\cal Z}{2}\Big)^{-1}\, \dd E \prec \cal E_5^{3/2}+\cal E_5|\widetilde{L}+\gamma_i|^{1/2}\,.
\end{equation}
We claim that 
\begin{equation*}
\gamma_i-\widetilde{\gamma}_{0,i}=\gamma_i-\gamma_{0,i}(1+\cal Z/2) \prec \frac{1}{\sqrt{N}q^3}\,,
\end{equation*}
which together with the trivial estimate
\[
\gamma_{0,i}=\gamma_{\mathrm{sc},i}+O(q^{-2})
\]
implies the desired result.

If $\gamma_i<-\widetilde{L}$, then $\gamma_i \geq -L$ and \eqref{lubinghua} imply $\gamma_i+\widetilde{L} \prec \cal E_5$. Thus \eqref{E6} shows $\cal E_6 \prec \cal E_5^{3/2}$. We also have
\[
\cal E_6= \int^{\widetilde{\gamma}_{0,i}}_{\gamma_i} \varrho_0\Big(\frac{E}{1+\cal Z/2}\Big)\Big(1+\frac{\cal Z}{2}\Big)^{-1}\, \dd E =\int^{\widetilde{\gamma}_{0,i}}_{-\widetilde{L}} \varrho_0\Big(\frac{E}{1+\cal Z/2}\Big)\Big(1+\frac{\cal Z}{2}\Big)^{-1}\asymp |\widetilde{\gamma}_{0,i}+\widetilde{L}|^{3/2}\,,
\]
which implies $\widetilde{\gamma}_{0,i}+\widetilde{L} \prec \cal E_5$. The claim then follows from $\gamma_i+\widetilde{L} \prec \cal E_5$ and a triangle inequality.

If $\gamma_i \in [ -\widetilde{L}, \widetilde{\gamma}_{0,i}]$ then it suffices to assume $\widetilde{\gamma}_{i,0}+\widetilde{L}\geq \cal E_5$. We have
\[
\cal E_6= \int^{\widetilde{\gamma}_{0,i}}_{\gamma_i} \varrho_0\Big(\frac{E}{1+\cal Z/2}\Big)\Big(1+\frac{\cal Z}{2}\Big)^{-1}\, \dd E \asymp |\widetilde{\gamma}_{0,i}-\gamma_i|\cdot |\widetilde{\gamma}_{0,i}+\widetilde{L}|^{1/2}\,,
\]
and together with \eqref{E6} we get
\[
|\widetilde{\gamma}_{0,i}-\gamma_i| \prec  \cal E_5^{3/2}|\widetilde{\gamma}_{0,i}+\widetilde{L}|^{-1/2}+\cal E_5|\widetilde{L}+\gamma_i|^{1/2}|\widetilde{\gamma}_{0,i}+\widetilde{L}|^{-1/2} \prec \cal E_5\,.
\]

If $\gamma_i >\widetilde{\gamma}_{0,i}$ then it suffices to assume $\gamma_i+\widetilde{L} \geq \cal E_5$. We have
\[
|\cal E_6|= \int_{\widetilde{\gamma}_{0,i}}^{\gamma_i} \varrho_0\Big(\frac{E}{1+\cal Z/2}\Big)\Big(1+\frac{\cal Z}{2}\Big)^{-1}\, \dd E \asymp |\widetilde{\gamma}_{0,i}-\gamma_i|\cdot |{\gamma}_{i}+\widetilde{L}|^{1/2}\,,
\]
which together with \eqref{E6} implies the claim. This concludes the proof of Lemma \ref{lem 3.9}.

{\small
	
	\bibliography{bibliography} 
	
	\bibliographystyle{amsplain}
}

\bigskip

\noindent
Yukun He, University of Z\"{u}rich, Institute of Mathematics, \href{mailto:yukun.he@math.uzh.ch}{yukun.he@math.uzh.ch}.
\\[0.3em]
Antti Knowles, University of Geneva, Section of Mathematics, \href{mailto:antti.knowles@unige.ch}{antti.knowles@unige.ch}.

\end{document}